\documentclass[12pt, twoside]{article}

\begin{filecontents}{literature_maria_luennemann_a_geometric_approach_to_the_relative_de_rham-witt_complex_2023.bib}
@article{Bas60,
	author="Bass, H.",
	title="Finitistic dimension and a homological generalization of semi-primary rings",
	journal="Transactions of the American Mathematical Society",
	volume="95",
	number="1",
	year="1960",
	pages="466-488",
}

@misc{BLM20, 
	author = {Bhatt, Bhargav and Lurie, Jacob and Mathew, Akhil}, 
	title = {Revisiting the de Rham-Witt complex},  
	howpublished={\\ \href{https://arxiv.org/abs/1805.05501v3}{arXiv:1805.05501v3}},
	year = {2020}
}

@misc{Bor15,     
	author={Borger, J.},
	title={The basic geometry of Witt vectors II: Spaces}, 
	year={2015},
	howpublished={\href{https://arxiv.org/abs/1006.0092}{arXiv:1006.0092v2}}
}

@misc{Bor15a,
	title={The basic geometry of Witt vectors I: The affine case}, 
	author={Borger, J},
	year={2015},
	howpublished={\\ \href{https://arxiv.org/abs/0801.1691}{arXiv:0801.1691v6}}
}

@misc{CD15,
	author={Cuntz, J. and Deninger, C.},
	title={\textit{Witt Vector Rings and the Relative de Rham Witt Complex}},
	year={2015},
	howpublished={\\ \href{https://arxiv.org/abs/1410.5249}{arXiv:1410.5249v4}}
}

@misc{Chat12,
	shorthand={Cha12},
	author={Chatzistamatiou, A.},
	title={\textit{Big de Rham-Witt Cohomology: Basic Results}},
	howpublished={\href{https://arxiv.org/abs/1211.6006}{arXiv:1211.6006v2}},
	year={2012}
}

@misc{Cha13,
	shorthand={Cha13},
	author={Chatzistamatiou, A.},
	title={\textit{Big de Rham-Witt Cohomology: Basic Results}},
	howpublished={\href{https://arxiv.org/abs/1211.6006}{arXiv:1211.6006v4}},
	year={2013}
}

@misc{Hes14,
	author={Hesselholt, L.},
	title={\textit{ The big de Rham-Witt complex}},
	howpublished={\href{https://arxiv.org/abs/1006.3125}{arXiv:1006.3125v3}},
	year={2014}
}

@book{Lam12,
	title={Lectures on Modules and Rings},
	author={Lam, T.-Y.},
	series={Graduate Texts in Mathematics},
	year={2012},
	publisher={Springer New York}
}

@article{LZ04,
	author = {Langer, A. and Zink, T.},
	year = {2004},
	month = {04},
	pages = {231 - 314},
	title = {De Rham-Witt cohomology for a proper and smooth morphism},
	volume = {3},
	journal = {Journal of the Institute of Mathematics of Jussieu}
}

@book{Lue22,
	title={A geometric approach to the relative de Rham-Witt complex},
	author={L\"unnemann, M.},
	year={2022},
	publisher={Dissertation published at Universität Münster}
}

@misc{Rab14,
	title={\textit{The Theory of Witt Vectors}}, 
	author={Rabinoff, J.},
	year={2014},
	howpublished={\href{https://arxiv.org/abs/1409.7445}{arXiv:1409.7445}}
}

@misc{StacksProject,
	shorthand    = {SP},
	author       = {The {Stacks Project Authors}},
	title        = {\textit{Stacks Project}},
	howpublished = {\url{https://stacks.math.columbia.edu}},
	year         = {2018},
}

@article{vdK86,
	shorthand={vdK86},
	author = {Kallen, W. van der},
	journal = {Mathematische Zeitschrift},
	pages = {405-416},
	title = {Descent for the $K$-Theory of Polynomial Rings.},
	volume = {191},
	year = {1986},
}
\end{filecontents}

\usepackage[style=alphabetic,natbib=true,doi=false,isbn=false,backend=biber]{biblatex}\addbibresource{literature_maria_luennemann_a_geometric_approach_to_the_relative_de_rham-witt_complex_2023.bib}
\usepackage[leqno]{amsmath}
\usepackage{amsthm}
\usepackage{amssymb,amscd,latexsym,stmaryrd}
\usepackage[shortlabels]{enumitem}
\setlist[enumerate]{nosep}
\usepackage[colorlinks,pdfpagelabels,pdfstartview = FitH,bookmarksopen = true,bookmarksnumbered = true,linkcolor = black, urlcolor = black,plainpages = false,hypertexnames = false,citecolor = black]{hyperref}
\usepackage{bm}
\usepackage{fonttable}           
\usepackage[english]{babel}
\usepackage{tocloft}
\usepackage{aurical}
\usepackage{csquotes}
\addto{\captionsenglish}{}
\usepackage[T1]{fontenc}
\usepackage[bbgreekl]{mathbbol}
\usepackage[md]{titlesec}
\titleformat{\subsubsection}{\normalfont\itshape}{\thesubsubsection.}{1mm}{}
\titleformat{\section}{\normalfont\centering}{\thesection.}{1mm}{}
\titleformat{\subsection}[runin]{\normalfont}{\thesubsection.}{1mm}{}
\usepackage[all]{xy}

\newcommand{\A}{{\mathbb{A}}}

\newcommand{\F}{{\mathbb{F}}}

\newcommand{\N}{{\mathbb{N}}}

\newcommand{\Q}{{\mathbb{Q}}}

\newcommand{\T}{\ensuremath{\mathcal{T}}}

\newcommand{\Z}{{\mathbb{Z}}}

\newcommand{\rsim}{\mathbin{\text{\rotatebox[origin=c]{90}{$\sim$}}}}

\newcommand{\sHom}{\mathop{\mathcal{H}\! \mathit{om}}\nolimits}

\newcommand{\la}{\langle}
\newcommand{\ra}{\rangle}

\newcommand{\FF}{\ensuremath{\mathcal{F}}}
\newcommand{\GG}{\ensuremath{\mathcal{G}}}

\newcommand{\ev}{\mathrm{ev}}

\newcommand{\OO}{\ensuremath{\mathcal{O}}}

\newcommand{\id}{\mathrm{id}}

\newcommand{\pr}{\mathrm{pr}}

\renewcommand{\mod}{\;\mathrm{mod}\;}
\renewcommand{\epsilon}{\varepsilon}

\newcommand{\ord}{\mathrm{ord}}

\newcommand{\res}{\mathrm{res}}

\newcommand{\spec}{\mathrm{Spec}\,}

\newcommand{\Hom}{\mathrm{Hom}}

\DeclareMathOperator{\lcm}{lcm}

\DeclareMathOperator{\gh}{gh}

\DeclareMathOperator{\can}{can}

\DeclareMathOperator{\im}{im}

\DeclareMathOperator{\proj}{Proj}

\newcommand{\Af}{\ensuremath{\mathbb{A}}}

\newtheorem{theorem}{Theorem}[section]
\newtheorem{lemma}[theorem]{Lemma}
\newtheorem{theodef}[theorem]{Theorem/Definition}
\newtheorem{prop}[theorem]{Proposition}
\newtheorem{cor}[theorem]{Corollary}

\newtheorem{theorems}{Theorem}[subsection]
\newtheorem{lemmas}[theorems]{Lemma}
\newtheorem{cors}[theorems]{Corollary}
\newtheorem{props}[theorems]{Proposition}

\newtheorem{lemmaA}{Lemma}

\newtheorem{lemmaAIntro}{Lemma}

\newtheorem{theoremB}{Theorem}

\newtheorem{theoremBIntro}{Theorem}

\newtheorem{theoremC}{Theorem}

\newtheorem{theoremCIntro}{Theorem}

\theoremstyle{definition}
\newtheorem{definition}[theorem]{Definition}

\theoremstyle{remark}
\newtheorem{remark}[theorem]{Remark}

\newtheorem{example}[theorem]{Example}

\newtheorem{exams}[theorems]{Example}

\parskip2ex plus0.5ex minus0.5ex
\parindent0.em
\topmargin 0cm
\oddsidemargin0cm
\evensidemargin0cm
\textwidth90ex
\textheight21cm

\titlespacing*{\section}{0pt}{0mm}{0mm}
\let\oldthepage\thepage
\newcommand*\thefootpage{\footnotesize{\oldthepage}}
\usepackage{fancyhdr}
\pagestyle{fancy}

\fancyhf{}
\fancyhead[RE,RO]{\thefootpage}
\fancyhead[CE]{\footnotesize{\textsc{maria lünnemann}}}
\fancyhead[CO]{\footnotesize{\textsc{a geometric approach to the relative de rham-witt complex}}}
\setlength{\headsep}{5mm}
\fancypagestyle{plain}{%
	
	\fancyhf{}\fancyfoot[C]{}\fancyhead[RE,RO]{\thefootpage}\fancyhead[CE]{\footnotesize{\textsc{maria lünnemann}}}
	\fancyhead[CO]{\footnotesize{\textsc{a geometric approach to the relative de rham-witt complex}}}%
}

\usepackage{theoremref}
\newcommand*{\defeq}{\mathrel{\vcenter{\baselineskip0.5ex \lineskiplimit0pt
			\hbox{\scriptsize.}\hbox{\scriptsize.}}}%
	=}
\newcommand{\p}{\ensuremath{\mathfrak{p}}}
\newcommand{\q}{\ensuremath{\mathfrak{q}}}
\usepackage{tikz-cd}
\newenvironment{mycenter}[1][\topsep]
{\setlength{\topsep}{#1}\par\kern\topsep\centering}% \begin{mycenter}[<len>]
	{\par\kern\topsep}% \end{mycenter}

\setlength{\cftbeforesecskip}{1pt}

\usepackage{bbm}
\newcommand{\dd}{\ensuremath{\mathbbm{d}}}
\begin{document}%\include{test}
	\setlength{\abovedisplayskip}{5pt}
	\setlength{\belowdisplayskip}{5pt}
\title{\normalsize\textsc{\textbf{A GEOMETRIC APPROACH \\TO THE RELATIVE DE RHAM-WITT COMPLEX\\
IN THE SMOOTH, $\Z$-TORSION FREE CASE}\vspace{-5mm}}}
\author{\normalsize\textsc{maria lünnemann}\footnote{funded by the Deutsche Forschungsgemeinschaft (DFG, German Research Foundation) under Germany's Excellence Strategy EXC 2044--390685587, Mathematics M\"unster: Dynamics--Geometry--Structure and by the CRC 1442 Geo\-me\-try: Deformations and Rigidity}}
\date{}
 \maketitle\vspace{-13mm}
\thispagestyle{empty}
\begin{center}
	\begin{minipage}{30em}
		\small \textsc{Abstract.} Let $X$ be a smooth scheme over a finitely generated flat $\Z$-, $\Z_{(p)}$- or $\Z_p$-algebra $R$. Evaluated at finite truncation sets $S$, the relative de Rham-Witt complex $W_S\Omega_{X/R}^{\bullet}$ is a quotient of the de Rham complex $\Omega^{\bullet}_{W_S(X)/W_S(R)}$, which can be computed affine locally via explicit, but complicated relations. In this paper we prove that $W_S\Omega_{X/R}^{\bullet}$ is the torsionless quotient of the usual de Rham complex $\Omega^{\bullet}_{W_S(X)/W_S(R)}$ on the singular scheme $W_S(X)$. This result was suggested by comparison with a similar modification of the de Rham complex in the theory of singular varieties.
	\end{minipage}
\end{center}
\section*{\scshape{Introduction}}\addcontentsline{toc}{section}{\protect\numberline{}Introduction}
By means of \citeauthor{Hes14}'s construction of the absolute de Rham-Witt complex \cite{Hes14} \citeauthor{Cha13} defined the relative de Rham-Witt complex by linea\-rizing the differential with respect to a base ring \cite{Cha13}. 
Moreover, he used \citeauthor{LZ04}'s computations for polynomial algebras \cite{LZ04} as well as Borger's and van der Kallen's \'etale base change theorem for Witt vector rings \cites{Bor15a,vdK86} to show that the relative de Rham-Witt complex $W_S\Omega^{\bullet}_{X/R_0}$ of a smooth scheme $X$ over a flat $\Z$-algebra $R_0$ is $\Z$-torsion free.
Using this result a direct and much simpler description of the relative de Rham-Witt complex was given by \citeauthor{CD15} \cite{CD15}.\\
Recently, Bhatt, Lurie and Mathew \cite{BLM20} gave a new construction of a ``saturated de Rham-Witt complex'' over a perfect field of characteristic $p$. As we mostly focus on $\Z$-torsion free base rings, we do not discuss their approach.\\

The relative de Rham-Witt complex is not an easy object to define -- so what is its conceptual meaning?
Philosophically, the sheafified de Rham-Witt complex $W_S\Omega_{ X/R_0}^{\bullet}$ should be interpreted as a modification of the naive de Rham complex $\Omega_{ W_{S}(X)/W_S(R_0)}^{\bullet}$, which is required because the scheme $W_S(X)$ is singular over $\spec W_S(R_0)$. Our \hyperref[TheoremMAIN]{main result} can be interpreted as a realization of this philosophy (\thref{theoremmain}).\\
Inspired by known modifications of the de Rham complex of a singular variety, we investigated modifications of differential forms on the singular scheme $W_S(X)$ over $W_S(R_0)$: 
First we will show that the de Rham-Witt complex is not reflexive. The most obvious objects to analyse afterwards are the torsion free diffe\-rential forms. For technical reasons, the torsionless differential forms are much more convenient in our setting. It turns out that the torsionless differential forms form a sheaf for the \'etale topology of $W_S(X)$, which can be equipped with the extra structures of the relative de Rham-Witt complex.
We now explain the required notions and state the precise result.\footnote{In \cite[chapter 4]{Lue22} there is a more detailed analysis of torsion free and reflexive differential forms as well as differential forms in the $h$-topology in connection with the relative de Rham-Witt complex.}

Without further notice we assume all rings $R$ to be commutative unital. Every $R$-module $M$ is naturally equipped with an evaluation map
$$\epsilon_{M}\colon M\to M^{\vee\vee}\defeq \Hom_{R}(\Hom_{R}(M,R),R)$$
to its bidual module. The coimage $T_R(M)\defeq M/\ker\epsilon_M$ is called the \textit{torsionless quotient} of $M$. 
The notion can be extended to sheaves $\FF$ of $\OO_X$-modules: The torsionless quotient  $\T(\FF)$ is given by the sheaf $\FF/\ker \epsilon_{\FF}$, where $\epsilon_{\FF}$ denotes the ``evaluation map''
$$\epsilon_{\FF}\colon \FF\to \FF^{\vee\vee}\defeq \sHom_{\OO_X}(\sHom_{\OO_X}(\FF,\OO_X),\OO_X).$$
For coherent $\OO_X$-modules on locally noetherian schemes the torsionless quotient is again coherent and commutes with flat base change.\\
We call a morphism $X\to Y$ of flat $\Z$-schemes with $X$ locally noetherian \textit{generically smooth} if it is locally of finite type and the base change $ X_{\Q}\to  Y_{\Q}$ is smooth. A ring homomorphism is called generically smooth if the associated morphism of schemes is generically smooth.

\begin{lemmaAIntro}
	Given a generically smooth morphism $f\colon X\to Y$ of schemes, the torsionless sheaf $\T(\Omega^{\bullet}_{X/Y})$ is an $f^{-1}\OO_Y$-differential graded algebra (or an $f^{-1}\OO_Y$-dga for short).
\end{lemmaAIntro}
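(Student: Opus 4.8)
The plan is to prove that, under these hypotheses, the torsionless quotient of $\Omega^{\bullet}_{X/Y}$ is, degree by degree, nothing but its quotient by the $\Z$-torsion subsheaf, and then to observe that such a quotient automatically inherits the differential graded algebra structure. Throughout I read $\T(\Omega^{\bullet}_{X/Y})$ degreewise, as the graded sheaf with degree-$p$ part $\T(\Omega^p_{X/Y})$; since the assertion is local on $X$ and on $Y$, I may assume $Y=\spec A$ and $X=\spec B$ with $B$ noetherian, $A\to B$ of finite type, and $A,B$ flat over $\Z$, so that each $\Omega^p_{X/Y}$ is coherent and $\Omega^{\bullet}_{X/Y}$ is an $f^{-1}\OO_Y$-dga in the usual way.

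First I would identify $K^p\defeq\ker\epsilon_{\Omega^p_{X/Y}}$ with the $\Z$-torsion subsheaf $(\Omega^p_{X/Y})_{\tors}$. One inclusion is formal: $X$ being $\Z$-flat, $\OO_X$ is $\Z$-torsion free, so every $\OO_X$-linear functional $\Omega^p_{X/Y}\to\OO_X$ annihilates $\Z$-torsion sections, whence $(\Omega^p_{X/Y})_{\tors}\subseteq K^p$. For the reverse inclusion I would invoke generic smoothness together with the fact (recalled in the introduction) that $\T$ commutes with the flat base change $X_{\Q}\to X$ for coherent sheaves: since $\Omega^p_{X/Y}|_{X_{\Q}}=\Omega^p_{X_{\Q}/Y_{\Q}}$ is locally free — $X_{\Q}\to Y_{\Q}$ being smooth — hence reflexive, the canonical surjection $\Omega^p_{X/Y}\twoheadrightarrow\T(\Omega^p_{X/Y})$ restricts to an isomorphism over $X_{\Q}$, i.e.\ $K^p\otimes_{\Z}\Q=0$. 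A coherent sheaf killed by $\otimes_{\Z}\Q$ has every local section annihilated by a nonzero integer, so $K^p\subseteq(\Omega^p_{X/Y})_{\tors}$. Hence $\T(\Omega^p_{X/Y})=\Omega^p_{X/Y}/(\Omega^p_{X/Y})_{\tors}$ for every $p$.

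It then remains to see that $J^{\bullet}\defeq(\Omega^{\bullet}_{X/Y})_{\tors}$ is a differential graded ideal, which is elementary: if a local section $\omega$ satisfies $n\omega=0$ for some integer $n\ge1$, then $n\,d\omega=d(n\omega)=0$ (as $d$ is additive) and $n(\omega\wedge\eta)=(n\omega)\wedge\eta=0$ for every $\eta$, so $d(J^p)\subseteq J^{p+1}$ and $J^{\bullet}$ is a homogeneous ideal for the wedge product (the other side following from graded commutativity); moreover $J^{\bullet}$ is an $f^{-1}\OO_Y$-submodule, being even an $\OO_X$-submodule. The quotient of the $f^{-1}\OO_Y$-dga $\Omega^{\bullet}_{X/Y}$ by such an ideal then inherits a well-defined graded product, a well-defined $f^{-1}\OO_Y$-linear differential, and the identities of associativity, graded commutativity, $d^2=0$ and the graded Leibniz rule, together with the unit; by the previous paragraph this quotient is $\T(\Omega^{\bullet}_{X/Y})$, so the latter is an $f^{-1}\OO_Y$-dga. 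I expect the only real content to be the identification of $\T(\Omega^p_{X/Y})$ with the $\Z$-torsion free quotient; after that everything is formal, precisely because $d$ and $\wedge$ manifestly commute with multiplication by integers even though $d$ is not $\OO_X$-linear. The delicate point is the reverse inclusion above: this is where generic smoothness genuinely enters — through the reflexivity of $\Omega^p_{X_{\Q}/Y_{\Q}}$ and the base-change compatibility of $\sHom$ (hence of the bidual and of $\T$) — and also where it is indispensable, since without it $\ker\epsilon$ need not be stable under $d$.
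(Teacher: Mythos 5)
Your argument is correct, but it obtains the dga structure by a different mechanism than the paper. The paper (\thref{dga}, resting on \thref{embext}) never identifies $\ker\epsilon$ explicitly: it embeds $\T(\Omega^{q}_{X/Y})$ into $\iota_*\Omega^{q}_{X_{\Q}/Y_{\Q}}$ using flat base change for the torsionless quotient (\thref{cohsurjflat}) together with the local freeness of the generic-fibre forms, and then transfers $\wedge$ and $d$ through this embedding by taking unique preimages of the corresponding operations on the rationalized forms. You instead characterize the kernel: $\ker\epsilon_{\Omega^{q}_{X/Y}}$ is exactly the $\Z$-torsion subsheaf --- one inclusion from $\Z$-flatness of $X$, the other from the same two inputs (base change of $\T$ along the flat map $X_{\Q}\to X$ and local freeness of $\Omega^{q}_{X_{\Q}/Y_{\Q}}$, which forces $\iota^*\ker\epsilon=0$) --- and then everything descends formally because the $\Z$-torsion is a differential graded ideal and an $\OO_X$-submodule. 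So the substantive inputs coincide (coherence of $\Omega^{q}_{X/Y}$, \thref{cohsurjflat} (ii), smoothness of $X_{\Q}\to Y_{\Q}$), but the packaging differs: your quotient-by-a-dg-ideal route makes the compatibility of $d$ and $\wedge$ with the passage to $\T$ immediate and yields the clean intrinsic description $\T(\Omega^{\bullet}_{X/Y})\cong\Omega^{\bullet}_{X/Y}/(\Z\text{-torsion})$, which matches the description via \cite{CD15} invoked in the paper's example and in the identification with $X_S^{\bullet}(R)$ at the end of section \ref{sectionreldrwcomplexes}; the paper's embedding formulation, on the other hand, is the one that gets reused (\thref{embedding}) to define restriction, Frobenius and Verschiebung on $T_{W_S}(\Omega^{\bullet}_{R/R_0})$ through ghost components, which your formulation would still have to supply separately. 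Two minor points, neither a gap: your degreewise reading of $\T(\Omega^{\bullet}_{X/Y})$ agrees with the paper's because the torsionless quotient commutes with direct sums, and the reduction to the affine case is harmless since the torsion subsheaf and the quotient maps are canonical, so the local structures glue.
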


Moreover, the functor $(U\to X)\mapsto \T(\Omega^q_{U/Y})(U)$
defines a coherent sheaf of $\OO$-modules for the \'etale topology of $X$ (see \thref{etalesheaf}).\\
Coming back to morphisms of Witt vector rings with respect to finite truncation sets $S$, we set $T_{W_{S}}(\Omega_{R/R_0}^{\bullet})\defeq T_{W_S(R)}(\Omega_{W_S(R)/W_S(R_0)}^{\bullet})$ for a ring homomorphism $R_0\to R$ (and ana\-logously for $\T$).\\
In order to obtain a Witt complex (see \cite[Def. 4.1]{Hes14}) we furthermore need the following finiteness condition:
We call a morphism $X\to Y$ of schemes $W_S\hspace{0.2mm}$\textit{-finite} (or $W$\textit{-finite}) if $W_S(X)$ is locally noetherian and the induced morphism $W_S(X)\to W_S(Y)$ is locally of finite type for a (or for every) finite truncation set $S$ -- and analogously for ring homomorphisms.\\
This condition is satisfied in the commonly studied situations: If $R_0$ is a finitely generated $\Z$-, $\Z_{(p)}$- or $\Z_p\hspace{0.2mm}$-algebra and $f\colon X\to \spec R_0$ locally of finite type, then $f$ is $W$-finite.

\begin{theoremBIntro}\thlabel{TheoremB}
	For a generically smooth, $W$-finite ring homomorphism $R_0\to  R$ the functor
	\begin{equation*}
		(\text{finite truncation sets})\to (W(R_0)\text{-dga's}),~
		S\mapsto T_{W_{S}}(\Omega_{R/R_0}^{\bullet})
	\end{equation*}
	defines a Witt complex over $R$ with $W(R_0)$-linear differential.\\
	For an  \'etale homomorphism $R\to R'$ and a finite truncation set $S$ the induced map
	\begin{equation*}
		T_{W_S}(\Omega^q_{R/R_0})\otimes_{W_S(R)}W_S(R') \to T_{W_S}(\Omega^q_{R'/R_0})
	\end{equation*}
	is an isomorphism.
\end{theoremBIntro}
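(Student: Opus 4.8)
The plan is to equip $E^\bullet_S := T_{W_S}(\Omega^\bullet_{R/R_0})$ with the data of a Witt complex layer by layer, reducing the relations among $d$, $F_m$ and $V_m$ to an explicit computation after inverting $\Z$. First I would record the hypotheses that make everything well posed: $W_S(R_0)\to W_S(R)$ is generically smooth, because its base change along $\Z\to\Q$ is the finite product over $n\in S$ of the smooth morphism $R_{0,\Q}\to R_\Q$ via the ghost isomorphism $W_S(A)\otimes_\Z\Q\cong\prod_{n\in S}A_\Q$ valid for $\Z$-flat $A$; $W$-finiteness makes $W_S(R)$ locally noetherian with $\Omega^\bullet_{W_S(R)/W_S(R_0)}$ coherent; and flatness of $R_0,R$ over $\Z$ forces the same for $W_S(R_0),W_S(R)$. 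Lemma~A then supplies the underlying $W_S(R_0)$-dga structure on $E^\bullet_S$ with $W_S(R_0)$-linear differential; functoriality of the torsionless quotient (naturality of $\epsilon$) turns the Witt-vector restriction homomorphisms into a compatible system of restriction maps $E^\bullet_S\to E^\bullet_{S'}$ of dga's; and $\lambda_S:=\id_{W_S(R)}$ serves as structure map, since $\Omega^0_{W_S(R)/W_S(R_0)}=W_S(R)$ is free, hence already torsionless.

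The remaining structure I would obtain through ghost components. As $E^q_S$ is torsionless it is $\Z$-torsion free and embeds into $E^q_S\otimes_\Z\Q$; because $T$ commutes with the flat base change $\Z\to\Q$ on coherent modules and, by generic smoothness, $\Omega^q_{W_S(R)_\Q/W_S(R_0)_\Q}\cong\prod_{n\in S}\Omega^q_{R_\Q/R_{0,\Q}}$ is locally free (hence its own torsionless quotient), one gets a natural ghost embedding $E^q_S\hookrightarrow\prod_{n\in S}\Omega^q_{R_\Q/R_{0,\Q}}$ whose degree-$0$ part is the ghost map, intertwining $d$ with the componentwise differential and restriction with coordinate projection; tracing the identifications also shows $E^\bullet_S=\Omega^\bullet_{W_S(R)/W_S(R_0)}/(\Z\text{-torsion})$. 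On the target product there is the standard ghost Witt complex: $F_m$ and $V_m$ act componentwise by $(F_m\omega)_n=\omega_{mn}$ and $(V_m\omega)_n=m\,\omega_{n/m}$ (zero when $m\nmid n$), with the usual degree-dependent rescalings making Hesselholt's relations \cite[Def.~4.1]{Hes14} into formal identities over $\Q$. It therefore suffices to prove that the lattice $E^\bullet_S$ is stable under each $F_m$ and each $V_m$: stability under $d$ is Lemma~A, under restriction is functoriality of $T$, and $\lambda(W_S(R))=E^0_S$ is automatic; once stability is known, $E^\bullet_S$ inherits every relation from the ambient product because the ghost embedding is injective.

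This stability is the heart of the argument and the step I expect to be hardest. The Witt-vector Frobenius $F_m\colon W_S(R)\to W_{S/m}(R)$ is a ring homomorphism over $W_S(R_0)\to W_{S/m}(R_0)$, hence induces $\Omega^\bullet_{W_S(R)/W_S(R_0)}\to\Omega^\bullet_{W_{S/m}(R)/W_{S/m}(R_0)}$ and, after $T$, a map $E^\bullet_S\to E^\bullet_{S/m}$; on ghost components this is the ghost Frobenius up to the degree-$q$ scaling, so the de Rham--Witt $F_m$ is a divided version of it and one must check the division stays inside the lattice. For $V_m$ there is no ring-theoretic shortcut, so I would produce explicit integral formulas for $V_m$ on a form $a_0\,da_1\cdots da_q$, exploiting the relations $F_m\,dV_m=d$, $V_m(F_m(x)\,y)=x\,V_m(y)$ and $F_m\,d\lambda[a]=\lambda[a]^{m-1}\,d\lambda[a]$, which determine $V_m$ of a form up to $\Z$-torsion; the content is that the resulting classes are integral. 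In practice this verification would be reduced, via the étale base change of the second assertion and the étale-local structure of (generically) smooth morphisms, to the polynomial algebras computed by Langer--Zink \cite{LZ04} and Cuntz--Deninger \cite{CD15}, but now carried out on the cleaner quotient $\Omega^\bullet_{W_S(R)/W_S(R_0)}/(\Z\text{-torsion})$ rather than on the original relations presentation; in the smooth case it can equivalently be phrased as saying that the canonical surjection from $E^\bullet_S$ onto the $\Z$-torsion-free de Rham--Witt complex $W_S\Omega^\bullet_{R/R_0}$ of \cite{Cha13} is an isomorphism respecting $F_m$ and $V_m$, the obstacle being precisely the mismatch between the naive ring-induced operations and the de Rham--Witt normalization.

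For the second assertion I would argue formally. If $R\to R'$ is étale, then $W_S(R)\to W_S(R')$ is étale by the Borger--van der Kallen theorem \cites{Bor15a,vdK86}, whence $\Omega^q_{W_S(R')/W_S(R_0)}\cong\Omega^q_{W_S(R)/W_S(R_0)}\otimes_{W_S(R)}W_S(R')$. Applying $T_{W_S(R')}$ and using that the torsionless quotient commutes with the flat---here étale---base change $W_S(R)\to W_S(R')$ on coherent modules over locally noetherian schemes (equivalently, the étale-sheaf property of $U\mapsto T(\Omega^q_{U/Y})(U)$ from \thref{etalesheaf}) yields the natural isomorphism $T_{W_S}(\Omega^q_{R/R_0})\otimes_{W_S(R)}W_S(R')\xrightarrow{\ \sim\ }T_{W_S}(\Omega^q_{R'/R_0})$; naturality makes it compatible with $d$, $F_m$, $V_m$ and the restriction maps, so it is an isomorphism of Witt complexes.
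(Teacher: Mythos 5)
Your overall route is the paper's: embed $T_{W_S}(\Omega^{\bullet}_{R/R_0})$ into $(\Omega^{\bullet}_{R_{\Q}/R_{0,\Q}})^S$ via the ghost maps (this is \thref{embedding}), transport the structure maps from the rational side, check that the integral lattice is stable under them, and deduce étale base change from Borger--van der Kallen together with flat base change of the torsionless quotient. The genuine gap sits in the step you yourself single out as the heart of the matter: stability under $V_m$ is never actually proved, and the method you propose for it is unavailable under the stated hypotheses. You want to reduce the integrality of $V_m$ ``via the étale-local structure of (generically) smooth morphisms'' to the polynomial algebras of Langer--Zink and Cuntz--Deninger, or equivalently to the assertion that the natural map between $T_{W_S}(\Omega^{\bullet}_{R/R_0})$ and $W_S\Omega^{\bullet}_{R/R_0}$ is an isomorphism. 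But \thref{wittcomplex} assumes only that $R_{0,\Q}\to R_{\Q}$ is smooth: a generically smooth $R_0\to R$ admits no étale-local factorization through $\A^t_{R_0}$, and the comparison with the de Rham--Witt complex is precisely \thref{maintheorem}, which requires smoothness everywhere (that is where \thref{dRWtorsionless} and the Langer--Zink input enter). So the key verification in your argument is both deferred and routed through tools that do not exist at this level of generality.

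The paper sidesteps the difficulty by choosing the normalization so that $V$-stability is immediate: rationally $\frac{1}{n}V_n$ is a ring homomorphism, and the degree-$q$ Verschiebung is $n^{q+1}\cdot\frac{1}{n}V_n$ applied to forms, which sends the class of $r_0dr_1\cdots dr_q$ to the class of the \emph{integral} form $V_n(r_0)dV_n(r_1)\cdots dV_n(r_q)$; since such classes generate the source as an abelian group and the map is additive, the lattice is preserved with no polynomial-algebra computation at all. The only genuinely nontrivial integrality is the one you correctly locate at the Frobenius, where $F_n^q:=n^{-q}F_n$ must be shown to land in the lattice; the paper does this by reducing (via $V$-decomposition of Witt vectors and induction to primes) to the generators $dF_pV_k[r]$ and a short two-case computation -- a verification your proposal also leaves unperformed. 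Note finally that your componentwise formula $(V_m\omega)_n=m\,\omega_{n/m}$ belongs to the Cuntz--Deninger normalization with differential $(\frac{1}{n}d)_{n\in S}$; with the unnormalized embedding you use (componentwise $d$) the correct scaling in degree $q$ is $m^{q+1}$, and being precise about this is exactly what makes the Verschiebung step trivial. Your étale base change argument and the identification of $T_{W_S}(\Omega^q_{R/R_0})$ with $\Omega^q_{W_S(R)/W_S(R_0)}$ modulo $\Z$-torsion are fine and agree with the paper.
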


Note that every Witt complex is uniquely determined -- up to isomorphism -- by its values on finite truncation sets $S$. For infinite truncation sets it is obtained as the projective limit over the directed poset of finite truncation subsets $S'\subseteq S$.\\
For the proof of \thref{TheoremB} we establish the structure of a Witt complex on the torsionless quotient via an embedding $T_{W_{S}}(\Omega_{R/R_0}^{\bullet})\hookrightarrow (\Omega_{R_{\Q}/R_{0,\Q}}^{\bullet})^S$ induced by the ghost maps on $R_0$ and on $R$. The examination of the compatibility relations is straightforward.
The last part of the theorem follows from the fact that the functor $W_S\colon (\text{Rings})\to (\text{Rings})$ preserves \'etaleness for finite truncation sets $S$ \cites{Bor15a,vdK86}.\\
Finally, by using \citeauthor{LZ04}'s \cite{LZ04} analysis with respect to polynomial algebras we show that the de Rham-Witt complex $W_S\Omega_{X/R_0}^{\bullet}$ is torsionless. 
This induces a morphism of Witt complexes $\T_{W_S}(\Omega_{X/R_0}^{\bullet})\to W_S\Omega_{X/R_0}^{\bullet}$,
which turns out to be an isomorphism:
\begin{theoremCIntro}\thlabel{theoremmain}\label{TheoremMAIN}
	Let $R_0$ be a flat $\Z$-algebra, $f\colon X\to \spec R_0$ a smooth morphism of schemes and $S$ a finite truncation set. 
	If $f$ is $W_S\hspace{0.2mm}$-finite, then $\T_{W_S}(\Omega^{\bullet}_{X/R_0})$ is a $W_S(f)^{-1}\OO_{\spec W_S( R_0)}$-dga. In degree $q\geq 0$ it is the unique coherent sheaf of $W_S(\OO)$-modules $\T_{W_S}(\Omega^{q}_{X/R_0})$ for the \'etale topology of $W_S(X)$ such that 
	$$\T_{W_S}(\Omega^{q}_{X/R_0})(W_S(U))=W_S\Omega^q_{R/R_0}$$
	holds for every \'etale map $U=\spec R\to  X$.
	If moreover $f$ is $W$-finite, then the functor
	\begin{equation*}
		(\text{finite truncation sets})\to (W(R_0)\text{-dga's}),~
		S\mapsto  T_{W_{S}}(\Omega_{R/R_0}^{\bullet})
	\end{equation*}
	defines the relative de Rham-Witt complex over $R$ with $W(R_0)$-linear differential.
\end{theoremCIntro}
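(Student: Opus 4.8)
The plan is to read off the formal structures from Lemma~A and the \'etale-sheaf result quoted above, and then to pin down the sections of the resulting sheaf by reducing, via \'etale base change, to polynomial algebras, where \citeauthor{LZ04}'s computation applies. The dga statement is immediate from Lemma~A: $W_S(X)$ is locally noetherian and $W_S(X)\to\spec W_S(R_0)$ is locally of finite type (both by $W_S$-finiteness), $W_S(X)$ and $\spec W_S(R_0)$ are flat over $\Z$ (Witt vectors of a $\Z$-flat ring are $\Z$-torsion free, hence $\Z$-flat), and the base change to $\Q$ is smooth (via the ghost components, $W_S(-)\otimes_\Z\Q$ sends a smooth $R_0$-scheme to a smooth $(R_{0,\Q})^S$-scheme); so the morphism is generically smooth and $\T_{W_S}(\Omega^\bullet_{X/R_0})$ is a $W_S(f)^{-1}\OO_{\spec W_S(R_0)}$-dga. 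Likewise $q\mapsto\T_{W_S}(\Omega^q_{X/R_0})$ is a coherent sheaf of $W_S(\OO)$-modules for the \'etale topology of $W_S(X)$ by \thref{etalesheaf} applied to $W_S(X)\to\spec W_S(R_0)$; and for \'etale $U=\spec R\to X$ the induced map $W_S(U)\to W_S(X)$ is \'etale (Borger, van der Kallen), so this sheaf has value $T_{W_S}(\Omega^q_{R/R_0})$ on $W_S(U)$.

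The substance is the identification $T_{W_S}(\Omega^q_{R/R_0})=W_S\Omega^q_{R/R_0}$, as quotients of $\Omega^q_{W_S(R)/W_S(R_0)}$, for every \'etale $U=\spec R\to X$. Both sides are coherent \'etale sheaves that commute with \'etale base change (the torsionless quotient by the flat-base-change statement recalled in the introduction together with \thref{etalesheaf}; the de Rham-Witt complex by Chatzistamatiou, as $W_S$ preserves \'etaleness), and the asserted identification is natural, so since $X$ is smooth over $R_0$, hence \'etale-locally the spectrum of an \'etale $R_0[x_1,\dots,x_n]$-algebra, we may (using \'etale base change along $R_0[x_1,\dots,x_n]\to R$) assume $R=R_0[x_1,\dots,x_n]$. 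Here enters the key input: by \citeauthor{LZ04}'s description of $W_S\Omega^\bullet$ of a polynomial algebra, $W_S\Omega^q_{R/R_0}$ is $\Z$-torsion free. Put $M\defeq\Omega^q_{W_S(R)/W_S(R_0)}$ with $\Z$-torsion submodule $M_{\tors}$; since $\ker\epsilon_M\supseteq M_{\tors}$ and both $T_{W_S}(\Omega^q_{R/R_0})$ (torsionless over the $\Z$-torsion free ring $W_S(R)$) and $W_S\Omega^q_{R/R_0}$ are $\Z$-torsion free, the canonical surjections $M\twoheadrightarrow T_{W_S}(\Omega^q_{R/R_0})$ and $M\twoheadrightarrow W_S\Omega^q_{R/R_0}$ (using that the de Rham-Witt complex is a quotient of $\Omega^\bullet_{W_S(R)/W_S(R_0)}$, \cite{CD15}) both factor through $M/M_{\tors}$. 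Finally one rationalizes: as $R$ is $\Z$-flat, the ghost map gives $W_S(R)\otimes_\Z\Q\cong(R_\Q)^S$ and $W_S(R_0)\otimes_\Z\Q\cong(R_{0,\Q})^S$, so $M\otimes_\Z\Q\cong(\Omega^q_{R_\Q/R_{0,\Q}})^S$, which is already torsionless over $(R_\Q)^S$ (as $\Omega^q_{R_\Q/R_{0,\Q}}$ is projective, $R$ being smooth over $R_0$); since the torsionless quotient commutes with the flat base change $-\otimes_\Z\Q$ one gets $T_{W_S}(\Omega^q_{R/R_0})\otimes_\Z\Q\cong(\Omega^q_{R_\Q/R_{0,\Q}})^S$, and since $W_S\Omega^\bullet$ commutes with $-\otimes_\Z\Q$ (write $R_\Q$ as a filtered colimit of localizations $R[1/m!]$, \'etale over $R$) and over a $\Q$-algebra reduces to $(\Omega^\bullet)^S$ one gets $W_S\Omega^q_{R/R_0}\otimes_\Z\Q\cong(\Omega^q_{R_\Q/R_{0,\Q}})^S$, compatibly with the two surjections from $M/M_{\tors}$. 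Thus these surjections become isomorphisms after $-\otimes_\Z\Q$; being surjections of $\Z$-torsion free modules they are already isomorphisms, and composing them yields the canonical identification $T_{W_S}(\Omega^q_{R/R_0})=W_S\Omega^q_{R/R_0}$ (in particular $W_S\Omega^q_{R/R_0}$ is torsionless).

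It remains to assemble the pieces. By the above, $\T_{W_S}(\Omega^q_{X/R_0})$ is a coherent sheaf of $W_S(\OO)$-modules for the \'etale topology of $W_S(X)$ with $\T_{W_S}(\Omega^q_{X/R_0})(W_S(U))=W_S\Omega^q_{R/R_0}$ for every \'etale $U=\spec R\to X$, and it is the unique such sheaf because the $W_S(U)$ form a generating family for this site (Borger's comparison of \'etale sites), so that a coherent \'etale sheaf is determined by its sections over them. For the last assertion, \thref{TheoremB} gives that $S\mapsto T_{W_S}(\Omega^\bullet_{R/R_0})$ is a Witt complex over $R$ with $W(R_0)$-linear differential; the degreewise identifications above are compatible with $d$, $F$, $V$ and $R$ (on both sides these are induced from $\Omega^\bullet_{W_S(R)/W_S(R_0)}$, respectively from the de Rham-Witt structure, through the common quotient $M/M_{\tors}$), so they assemble to an isomorphism of Witt complexes $T_{W_S}(\Omega^\bullet_{R/R_0})\xrightarrow{\ \sim\ }W_S\Omega^\bullet_{R/R_0}$; since the target is by definition the relative de Rham-Witt complex, so is the source. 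I expect the main obstacle to be the polynomial-algebra step: extracting from \citeauthor{LZ04}'s computation that $W_S\Omega^q_{R/R_0}$ is $\Z$-torsion free, and checking that $W_S\Omega^\bullet$ commutes with the base change $-\otimes_\Z\Q$ so that the rationalization comparison is legitimate; granting these, the \'etale gluing and the compatibility of the identification with the Witt-complex operations are routine.
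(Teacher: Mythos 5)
Your skeleton agrees with the paper's up to the crux: generic smoothness of $W_S(X)\to \spec W_S(R_0)$ via the ghost map, \thref{dga} and \thref{etalesheaf} for the dga and \'etale-sheaf statements, local factorization of the smooth morphism through $\A^t_{R_0}$, and \'etale base change to reduce to a polynomial algebra. At the polynomial-algebra step you genuinely diverge: the paper proves that $W_S\Omega^q_{R/R_0}$ is torsionless (\thref{dRWtorsionless}) by embedding it into $(\Omega^q_{R/R_0})^S$ and constructing explicit separating $X_S(R)$-linear functionals (M\"obius combinations of Verschiebungs), and then produces the isomorphism with $T_{W_S}(\Omega^{\bullet}_{R/R_0})$ by the universal property of the de Rham--Witt complex, using that the torsionless quotient is a Witt complex (\thref{wittcomplex}); you instead try to identify both sides with $\Omega^q_{W_S(R)/W_S(R_0)}$ modulo its $\Z$-torsion by rationalization. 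That is an appealing shortcut, but as written it has two genuine gaps.

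First, the compatibility of your degreewise identification with $F_n$ and $V_n$ is not the formality you claim. On $T_{W_S}(\Omega^{\bullet})$ the operators are the \emph{twisted} ones $n^{-q}F_n$ and $n^{q}\cdot\tfrac1n V_n$ (see the construction in the proof of \thref{wittcomplex}), whereas the de Rham--Witt Frobenius is \emph{not} induced through $\alpha$ by the naive Frobenius on $\Omega^{\bullet}_{W_S(R)/W_S(R_0)}$ (cf.\ the footnote on Hesselholt's $\Delta_R$ in section \ref{sectionwittcomplexes}); so ``both induced through the common quotient'' is not a proof. You would have to verify that the de Rham--Witt $F_n,V_n$ agree with the twisted ones under the embedding into $(\Omega^{\bullet}_{R_\Q/R_{0,\Q}})^S$, i.e.\ the ghost-compatibility of the operators as in \cite{CD15}; the paper's universal-property ping-pong is designed precisely so that no such check is needed. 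Second, your rationalization and transport of the torsionless quotient rest on ``$T$ commutes with flat base change'' and on coherence/\'etale descent of $T$ \emph{over the polynomial ring}: but \thref{cohsurjflat} and \thref{flatpullbacktorsionless} require a locally noetherian scheme and a coherent module, and the $W_S$-finiteness hypothesis of the theorem controls only $W_S(X)$, not $W_S(\A^t_{R_0})$. For a general flat $\Z$-algebra $R_0$ the ring $W_S(R_0[x_1,\dots,x_t])$ need not be noetherian and $\Omega^q_{W_S(R_0[x_1,\dots,x_t])/W_S(R_0)}$ need not be finitely presented, so the step $T(M)\otimes_\Z\Q\cong T(M\otimes_\Z\Q)$ and the \'etale base change of $T$ along $R_0[x_1,\dots,x_t]\to R$ are unjustified there (they are fine when $R_0$ is a finitely generated $\Z$-, $\Z_{(p)}$- or $\Z_p$-algebra). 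This is exactly the case the paper's hypothesis-free functional construction in \thref{dRWtorsionless} handles, and your formal argument does not replace it; note also that the paper transports only the de Rham--Witt side \'etale-locally (\thref{etalechat}, no finiteness needed) rather than the torsionless quotient. Your remaining black boxes --- that $W_S\Omega^{\bullet}$ commutes with $-\otimes_\Z\Q$ and equals $(\Omega^{\bullet})^S$ for smooth $\Q$-algebras --- are true and citable from \cite{CD15}/\cite{Hes14} (and $\Z$-torsion freeness from \cite{LZ04}, cf.\ \thref{chatsmooth}), so I would count those as citations to supply rather than gaps.
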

In the appendix we investigate the geometry of Witt schemes. For an $S$-\textit{torsion free} scheme $X$, i.e.  multiplication by elements in $S$ is injective on $\OO_X$,  we show that the ghost map  $\gh_S\colon \coprod_{n\in S} X\to W_S(X)$ is obtained by a finite sequence of explicit blowing ups (see \thref{blowupS}). If moreover $X$ is normal, then the ghost map $\gh_S$ is simply the normalization of $W_S(X)$ in its total ring of fractions. In appendix \ref{AppendixB} we prove a generalization of Dwork's Lemma\footnote{suggested by \citeauthor{Chat12}} for the relative de Rham-Witt complex in the language of Cuntz and Deninger \cite[sec. 3]{CD15} (see \thref{DworkdeRham}).

\subsection*{\textbf{Acknowledgements.}}\addcontentsline{toc}{section}{\protect\numberline{}Acknowledgements} This paper is an abridged version of my dissertation ``A geometric approach of the relative de Rham-Witt complex'' \cite{Lue22}.
I am very grateful to my advisor Christopher Deninger for his support, his positive nature and professional feedback.
\tableofcontents
\section[Preliminaries]{\scshape{Preliminaries}}
In this section we quickly recall some basic facts on Witt rings, Witt schemes and the relative de Rham-Witt complex following Hesselholt \cite{Hes14}, Chatzistamatiou \cites{Chat12}{Cha13} and Borger \cites{Bor15a}{Bor15}.

A subset $S\subseteq \N$ is called \textit{truncation set} if it is stable under division, i.e. $k\in \N, n\in S$ with $k|n$ implies $k\in S$.
We view a non-empty truncation set $S$ as a partially ordered set. For integers $m,n\in \N$ we use the following notation:

\begin{tabular}{ll}
	$m\vert n$  &$m$ divides $n$  \\
	$m\Vert n$ & $m$ divides $n$ and $m\neq n$  \\
	$[m,n]$& least common multiple of $m$ and $n$  \\
	$(m,n)$ & greatest common divisor of $m$ and $n$  \\
	$S/n$& the set of all integers $k\in S$ such that $k n\in S$  \\
	$S(n)$& the set of all integers $k\in S$ with $n\nmid k$
\end{tabular}
\subsection[Witt vector rings and Witt schemes]{\textbf{Witt vector rings and Witt schemes.}}\label{section Witt vector rings and Witt schemes}
For a ring $R$ and a truncation set $S$ the \textit{ring of Witt vectors} $W_S(R)$ is given by $R^S\defeq\prod_{n\in S} R$ as a set such that the \textit{ghost map}
$$\gh_S=(\gh_n)_{n\in S}\colon W_S(R)\to R^S,~ \gh_n((r_k)_{k\in S})=\sum_{k|n}k\cdot r_k^{n/k}$$
is a functorial ring homomorphism.\\
For all $n\in\N$ there are a functorial ring homomorphism $F_n\colon W_S(R)\to W_{S/n}(R)$, called \textit{Frobenius} which is defined by $\gh_k\circ F_n=\gh_{kn}$, and a functorial homomorphism of $W_S(R)$-modules $V_n\colon W_{S/n}(R)\to W_S(R), (r_k)_{k\in S/n}\mapsto (\delta_{n|k}r_{k/n})_{k\in S}$, called \textit{Verschiebung}. Here we have $\delta_{n|k}=1$ if $n|k$ and $\delta_{n|k}=0$ otherwise. These maps satisfy the relations
$F_nV_n=n$, $F_nV_m=V_mV_n$ if $(m,n)=1$ as well as the projection formula
$$V_n(F_n(r)\cdot s)=r\cdot V_n(s),~ r\in W_S(R), s\in W_{S/n}(R).$$
Moreover, there is the multiplicative \textit{Teichm\"uller} map $$[.]\defeq [.]_S\colon R\to W_S(R),~{r\mapsto (r,0,0,\dots)}.$$
If $R$ is equipped with Frobenius lifts, then Dwork's Lemma characterizes the image of the ghost map by a system of congruences:
\begin{lemma}[{\cite[Lemma 1.1]{Hes14}}]\label{Dwork'sLemma}
	\thlabel{Dwork} 
	Given a truncation set $S$, let $R$ be a ring equipped with Frobenius lifts $\phi_p\colon R\to R$ for every prime number $p\in S$, i.e. homomorphisms such that $\phi_p(r)- r^p\in pR$ for all $r\in R$.
	For every $r=(r_n)_{n\in S}\in R^S$ the following are equivalent:
	\begin{enumerate}[(i)]
		\item the element $r$ lies in the image of the ghost map $\gh_S\colon W_S(R)\to R^S$;
		\item for every prime number $p\in S$ and all $n\in p\cdot S/p$ we have $\phi_p(r_{n/p})- r_{n}\in p^{\ord_p(n)}R$.
	\end{enumerate}
\end{lemma}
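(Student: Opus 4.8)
The plan is to prove both implications by direct manipulation of the ghost polynomials $\gh_n(x)=\sum_{k\mid n}k\,x_k^{n/k}$; the only analytic input is the elementary fact that if $x\equiv y\pmod{p^mR}$ with $m\geq 1$, then $x^p\equiv y^p\pmod{p^{m+1}R}$ (expand the binomial and use $p\mid\binom pj$ for $0<j<p$, together with $m\geq 1$). Iterating, $x\equiv y\pmod{pR}$ forces $x^{p^j}\equiv y^{p^j}\pmod{p^{j+1}R}$, and raising to any positive power as well as multiplying by any integer preserves such congruences. I will also use that if pairwise coprime prime powers $p_i^{a_i}$ each divide $z\in R$, then $\prod_i p_i^{a_i}\mid z$ (a Bézout argument).

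For (i) $\Rightarrow$ (ii) I would write $r=\gh_S(w)$ with $w=(w_k)_{k\in S}$, fix a prime $p\in S$ and $n\in p\cdot S/p$, and put $a=\ord_p(n)\geq 1$, $n=p^am$ with $p\nmid m$. Expanding $\gh_n(w)=\sum_{i=0}^{a}\sum_{d\mid m}p^id\,w_{p^id}^{\,p^{a-i}(m/d)}$ and $\phi_p(\gh_{n/p}(w))=\sum_{i=0}^{a-1}\sum_{d\mid m}p^id\,\phi_p(w_{p^id})^{\,p^{a-1-i}(m/d)}$, I compare the two sums summand by summand: the summands of $\gh_n(w)$ with $i=a$ already lie in $p^aR$, while for $0\leq i\leq a-1$ the relation $\phi_p(w_{p^id})\equiv w_{p^id}^{\,p}\pmod{pR}$, the elementary power lemma applied $a-1-i$ times, and then raising to the exponent $m/d$ give $\phi_p(w_{p^id})^{\,p^{a-1-i}(m/d)}\equiv w_{p^id}^{\,p^{a-i}(m/d)}\pmod{p^{a-i}R}$, so after multiplying by $p^id$ the $i$-th summands agree modulo $p^aR$. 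Hence $\phi_p(r_{n/p})-r_n=\phi_p(\gh_{n/p}(w))-\gh_n(w)\in p^aR$, which is (ii).

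For (ii) $\Rightarrow$ (i) I would construct $w=(w_n)_{n\in S}$ with $\gh_n(w)=r_n$ by induction on $n\in S$ ordered by magnitude (each $n$ has only finitely many divisors, all $\leq n$). Put $w_1=r_1$. For $n>1$, assume $w_k$ is defined for all $k\in S$ with $k<n$ so that $\gh_k(w)=r_k$ for every proper divisor $k\mid n$, and set
\[
y_n\;=\;r_n-\sum_{\substack{k\mid n\\ k\neq n}}k\,w_k^{\,n/k}.
\]
It suffices to show $n\mid y_n$ in $R$, since then any $w_n$ with $n\,w_n=y_n$ yields $\gh_n(w)=r_n$. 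By the Bézout remark it is enough to prove $p^{a}\mid y_n$ for each prime $p\mid n$, $a=\ord_p(n)$. Fixing such a $p$, we have $n\in p\cdot S/p$, so (ii) gives $r_n\equiv\phi_p(r_{n/p})\pmod{p^aR}$; using $\gh_{n/p}(w)=r_{n/p}$ (valid since $n/p$ is a proper divisor of $n$), the fact that $\phi_p$ is a ring homomorphism, and exactly the term-by-term estimate of the previous paragraph, one gets $\phi_p(r_{n/p})\equiv\sum_{j\mid n/p}j\,w_j^{\,n/j}\pmod{p^aR}$; and finally, writing $n=p^am$, the proper divisors of $n$ split into the divisors $j$ of $n/p$ and the divisors $p^ad$ with $d\mid m$, $d\neq m$, the latter contributing terms in $p^aR$, so $\sum_{k\mid n,\,k\neq n}k\,w_k^{\,n/k}\equiv\sum_{j\mid n/p}j\,w_j^{\,n/j}\pmod{p^aR}$. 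Combining the three congruences gives $y_n\equiv r_n-\phi_p(r_{n/p})\equiv0\pmod{p^aR}$. The same induction works verbatim for infinite $S$, since the definition of $w_n$ uses only the finitely many $w_k$ with $k\mid n$.

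The genuine content — and the only place requiring care — is the divisibility $n\mid y_n$ in the second implication, i.e.\ the assertion that the congruences (ii) are not merely necessary but carve out the image of $\gh_S$ exactly. Everything rests on the valuation bookkeeping in the comparison of $\gh_n(w)$ with $\phi_p(\gh_{n/p}(w))$; once the elementary power-congruence lemma is in hand this is routine, but it must be carried out carefully, because $R$ may have torsion and $n$ need not be a non-zero-divisor, so $w_n$ is in general not uniquely determined by $r$.
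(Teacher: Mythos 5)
Your proof is correct and is essentially the standard argument for this statement, which the paper does not prove itself but quotes from \cite[Lemma 1.1]{Hes14}: the same ghost-component congruence $\phi_p(\gh_{n/p}(w))\equiv \gh_n(w) \pmod{p^{\ord_p(n)}R}$, obtained term by term from the elementary power lemma, gives (i)$\Rightarrow$(ii), and the same inductive construction of the Witt components together with the prime-by-prime divisibility of $y_n$ gives (ii)$\Rightarrow$(i). Nothing is missing; in particular you correctly handle the non-uniqueness of $w_n$ (only existence is needed) and the case of infinite $S$.
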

\begin{lemma}[{\cite[Lemmas  1.0.7 \& 1.0.9]{Chat12}}]\thlabel{Wittlocali}
	Let $S$ be a finite truncation set and $R$ a ring.
	\begin{enumerate}[(i)]
		\item  For a multiplicative subset $\mathcal{S}\subseteq R$ we denote $[\mathcal{S}\defeq\{[s]| s\in \mathcal{S}\}\subseteq W_S(R)$. The map $[\mathcal{S}]^{-1}W_S(R) \to W_S(\mathcal{S}^{-1}R)$ is a ring isomorphism.
		\item  For a multiplicative subset $\mathcal{S}\subseteq \Z$ the map $W_S(R)\otimes_{\Z} \mathcal{S}^{-1} \Z\to W_S(\mathcal{S}^{-1}R)$ is a ring isomorphism.
	\end{enumerate}
\end{lemma}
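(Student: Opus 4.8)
The plan is to prove both parts by the same dévissage on the finite truncation set $S$, the engine being the short exact sequence attached to a maximal element; part (ii) needs one extra input. Every comparison map below is a ring homomorphism by construction (universal property of localisation, resp.\ of $-\otimes_{\Z}\mathcal{S}^{-1}\Z$), so it suffices to prove bijectivity. Here is the set-up: let $n$ be a maximal element of $S$, so that $S/n=\{1\}$, and put $S'\defeq S\setminus\{n\}$, again a truncation set; the restriction $W_S(R)\twoheadrightarrow W_{S'}(R)$ is the coordinate projection, with kernel the ideal $I_n\defeq V_n(W_{S/n}(R))=V_n(R)$, and $V_n\colon R\to I_n$ is an isomorphism of abelian groups. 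By the projection formula an element $w\in W_S(R)$ acts on $I_n\cong R$ through multiplication by $F_n(w)=\gh_n(w)\in R$; in particular $[s]$ acts through $s^n$ and $m\cdot 1$ acts through $m$. The short exact sequence $0\to I_n\to W_S(R)\to W_{S'}(R)\to 0$ together with the maps $V_n$, $F_n$ and $[\cdot]$ is natural in $R$, hence is compatible with the base changes $R\to\mathcal{S}^{-1}R$ occurring below.

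For part (i): since $[\cdot]$ is multiplicative, $[\mathcal{S}]$ is a multiplicative subset of $W_S(R)$, and $W_S(R)\to W_S(\mathcal{S}^{-1}R)$ sends $[s]$ to the unit $[s]$ with inverse $[s^{-1}]$, so it factors through a map $[\mathcal{S}]^{-1}W_S(R)\to W_S(\mathcal{S}^{-1}R)$. I induct on $|S|$, the cases $|S|\le 1$ being trivial. Localising the short exact sequence above at $[\mathcal{S}]$ is an exact operation and produces a morphism of short exact sequences onto the corresponding one for $\mathcal{S}^{-1}R$. On quotients the map is $[\mathcal{S}]^{-1}W_{S'}(R)\to W_{S'}(\mathcal{S}^{-1}R)$, an isomorphism by the inductive hypothesis (the $W_S(R)$-action on $W_{S'}(R)$ factors through the restriction, so this is a localisation over $W_{S'}(R)$). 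On subobjects the map becomes, after the $V_n$-identification, the natural map from the localisation of $R$ at $\{s^n:s\in\mathcal{S}\}$ to $\mathcal{S}^{-1}R$, an isomorphism because $R[1/s^n]=R[1/s]$. The five lemma closes the induction.

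For part (ii) the extra input needed is that a positive integer $m$ which is invertible in a ring $A$ is automatically invertible in $W_S(A)$. Writing $m$ as a product of primes reduces this to $m=p$ prime, and by functoriality of $W_S$ it suffices to treat the universal case $A=\Z[1/p]$: there $\gh_S$ is injective since $\Z[1/p]$ is $\Z$-torsion free, and $\phi_q=\mathrm{id}$ is a Frobenius lift for every prime $q$ (because $\Z[1/p]/q\Z[1/p]$ is either $0$ or $\F_q$, on which the $q$-power map is the identity), so by \thref{Dwork} the constant tuple $(p^{-1},\dots,p^{-1})$ — whose Dwork congruences read $0\in q^{\ord_q(n)}\Z[1/p]$ — lies in the image of $\gh_S$, and its preimage is the inverse of $p\cdot1$. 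Granting this, $\mathcal{S}$ maps to units in $W_S(\mathcal{S}^{-1}R)$, so $W_S(R)\to W_S(\mathcal{S}^{-1}R)$ extends to $\theta\colon W_S(R)\otimes_{\Z}\mathcal{S}^{-1}\Z\to W_S(\mathcal{S}^{-1}R)$. Applying the exact functor $-\otimes_{\Z}\mathcal{S}^{-1}\Z$ to the short exact sequence above and comparing with the one for $\mathcal{S}^{-1}R$: on quotients $\theta$ is an isomorphism by induction on $|S|$, on subobjects it is the canonical isomorphism $\mathcal{S}^{-1}R=I_n\otimes_{\Z}\mathcal{S}^{-1}\Z\xrightarrow{\sim}I_n(\mathcal{S}^{-1}R)=\mathcal{S}^{-1}R$ induced by $V_n$, and the five lemma shows $\theta$ is an isomorphism.

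I expect the main obstacle to be the bookkeeping rather than any individual computation: one must pin down the $W_S(R)$-module structure on $I_n$ precisely enough to evaluate both $[\mathcal{S}]^{-1}(-)$ and $-\otimes_{\Z}\mathcal{S}^{-1}\Z$ on it, and one must check that all maps in play commute with restriction and Verschiebung so that the five lemma legitimately applies. Once this is set up the arguments are short; for (ii) the Dwork-type observation that integers remain invertible under $W_S$ is the extra, but quick, ingredient.
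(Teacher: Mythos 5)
Your proof is correct: the dévissage on a maximal element $n\in S$ is sound (the kernel of $W_S(R)\twoheadrightarrow W_{S\setminus\{n\}}(R)$ is $V_n(R)$, on which $w$ acts by $\gh_n(w)$ via the projection formula, so $[s]$ acts by $s^n$ and integers act as themselves), the five-lemma ladders commute by naturality, and the Dwork-Lemma argument that an integer invertible in $A$ is invertible in $W_S(A)$ is valid and is in fact the same mechanism the paper uses to produce the structure map $z\mapsto \gh_S^{-1}((z,\dots,z))$ over $\Z_{\circ}$. Note that the paper gives no proof of this lemma at all — it is quoted from Chatzistamatiou — so there is nothing internal to compare against; your filtration argument is the standard one underlying the cited Lemmas 1.0.7 and 1.0.9, and your write-up is a complete, self-contained substitute for the citation.
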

\begin{definition}
	Let $S$ be a truncation set.
	We call a ring $R$ $S$\textit{-torsion free}, if multiplication by elements in $S$ is injective on $R$.
\end{definition}
Moreover, we define the ring $\Q_S\defeq \Z[1/p\mid p\in S]$ of $S$\textit{-rational numbers}.
Given a ring $R$ and an $R$-module $M$, we set $M_{\Q_S}\defeq M\otimes_{\Z}\Q_S$.
\begin{lemma}[{\cites[Lemma 1.3]{CD15}[Remark 2.5]{Rab14}}]
	Let $S$ be any truncation set.\thlabel{tensorqsghost}
	\begin{enumerate}[(i)]
		\item For an $S$-torsion free ring $R$ the ghost map $\gh_S\colon W_S(R)\to R^S$ is injective. If moreover $S$ is finite, then $\gh_S$ induces a ring isomorphism $W_S(R)_{\Q_S}\to R^S_{\Q_S}$.
		\item For a $\Q_S$-algebra $R$ the ghost map $\gh_S\colon W_S(R)\to R^S$ is a ring isomorphism.\thlabel{ghostisom}
	\end{enumerate} 
\end{lemma}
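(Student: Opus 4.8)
The plan is to run one and the same divisibility induction in all three parts, using that for $n\in S$ every divisor $k\mid n$ again lies in $S$ (as $S$ is a truncation set), so that the defining formula $\gh_n((r_k)_{k\in S})=\sum_{k\mid n}k\,r_k^{n/k}$ isolates the coordinate $r_n$ with coefficient the integer $n$. We may assume $S\neq\emptyset$, the empty case being trivial; then $1\in S$ and $\gh_1$ is the first projection.

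For the injectivity in (i), suppose $(r_k)_{k\in S},(r_k')_{k\in S}\in W_S(R)$ have the same ghost vector. I would proceed by well-founded induction on the divisibility poset of $S$: one has $r_1=r_1'$ from $\gh_1$, and if $r_k=r_k'$ for every $k\Vert n$, then subtracting the two identities $\gh_n((r_k))=\gh_n((r_k'))$ cancels all terms indexed by proper divisors and leaves $n(r_n-r_n')=0$. Since $n\in S$ and $R$ is $S$-torsion free, multiplication by $n$ is injective on $R$, hence $r_n=r_n'$; this closes the induction.

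For (ii), let $R$ be a $\Q_S$-algebra. Every prime factor of an element $n\in S$ lies in $S$ and is therefore invertible in $R$, so $n$ itself is a unit in $R$. I would then construct a two-sided inverse of $\gh_S$ by the same recursion: given $(x_n)_{n\in S}\in R^S$ put $r_1:=x_1$ and, once $r_k$ is defined for all $k\Vert n$, put $r_n:=n^{-1}\bigl(x_n-\sum_{k\Vert n}k\,r_k^{n/k}\bigr)$. By construction $\gh_S((r_k)_{k\in S})=(x_n)_{n\in S}$, and the recursion also shows this preimage is unique, so $\gh_S$ is bijective; being already a ring homomorphism, it is a ring isomorphism.

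For the remaining assertion of (i), assume $S$ finite. Then $W_S(R)_{\Q_S}=W_S(R)\otimes_{\Z}\Q_S\cong W_S(R\otimes_{\Z}\Q_S)=W_S(R_{\Q_S})$ by \thref{Wittlocali}(ii) applied to the multiplicative subset of $\Z$ generated by the primes in $S$, and, $S$ being finite, $R^S_{\Q_S}=R^S\otimes_{\Z}\Q_S\cong(R_{\Q_S})^S$. Under these identifications the $\Q_S$-linearization of $\gh_S$ becomes, by functoriality of the ghost map in $R$, the ghost map of the $\Q_S$-algebra $R_{\Q_S}$, which is an isomorphism by (ii). There is no serious obstacle here: the only points needing attention are that all divisors of an $n\in S$ stay in $S$ (so the recursion is well-posed on the divisibility poset) and that the chain of identifications in the last step is compatible with $\gh_S$, which is exactly naturality.
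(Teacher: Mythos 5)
Your argument is correct: the divisibility induction for injectivity, the recursive construction of the inverse when all $n\in S$ are units, and the reduction of the finite-$S$ statement to (ii) via \thref{Wittlocali}(ii) together with naturality of the ghost map are all sound (note in passing that this last part never actually uses $S$-torsion freeness, which is harmless). The paper itself gives no proof but cites \cite{CD15} and \cite{Rab14}, whose arguments are exactly this standard route, so your proposal matches the intended proof.
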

\begin{theorem}[{\cites[Thm. 1.25]{Hes14}[Thm. B]{Bor15a}[Cor. 15.4]{Bor15}[Thm. 2.4]{vdK86}}]\thlabel{BorKal}
	Let $f\colon A\to B$ be an \'etale ring homomorphism, $S$ a finite truncation set and $n\in\N$ an integer.
	The induced ring homomorphism $W_S(f)\colon W_S(A)\to W_S(B)$ is \'etale and the ring homomorphism
	\begin{equation*}
		W_{S/n}(A)	\otimes_{F_n,W_S(A)}W_S(B)\to W_{S/n}(B),~		a\otimes b\mapsto W_{S/n}(f)(a)\cdot F_n(b)
	\end{equation*}
	is an isomorphism.
\end{theorem}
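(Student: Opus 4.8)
The plan is to prove both statements at once, reducing first to a very concrete situation and then invoking the explicit structure of Witt vectors of polynomial rings. I would begin with three reductions. Since $W_S$ commutes with filtered colimits of rings (for finite $S$ its underlying-set functor is $R\mapsto R^S$ with polynomial structure maps), and both ``\'etale'' and the asserted base-change isomorphism pass to and from finite presentation, we may assume $A$ noetherian and $f$ of finite presentation. Writing $F_n$ as a composite of prime Frobenii and iterating the base-change isomorphism, it suffices to treat $n=p$ prime. Finally, by \thref{Wittlocali}(i) the functor $W_S$ carries a localization $A\to\mathcal S^{-1}A$ to the localization $W_S(A)\to[\mathcal S]^{-1}W_S(A)$, which is \'etale, and the base-change isomorphism is immediate in that case (note $F_p$ maps $[\mathcal S]$ into $[\mathcal S]$ up to a further localization); as the two assertions are visibly stable under composing $f$ with such localizations, we may assume $f$ is standard \'etale, $B=(A[t]/(g))_{g'}$ with $g$ monic, and hence reduce everything to the monogenic finite extension $A\to A[t]/(g)$ followed by inverting $[\overline{g'}\,]$.

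The induction on $|S|$ is organized by the fundamental short exact sequence
\[
0\longrightarrow W_{S/p}(R)\stackrel{V_p}{\longrightarrow}W_S(R)\stackrel{\res}{\longrightarrow}W_{S(p)}(R)\longrightarrow 0,
\]
which is functorial in $R$ and, by the projection formula, $W_S(R)$-linear once $W_{S/p}(R)$ carries the module structure via $F_p$; choosing a prime $p\mid\max S$ makes $S/p$ and $S(p)$ strictly smaller truncation sets (the case $|S|\le 1$ being trivial). Comparing this sequence for $R=A$ and $R=B$ along $W_\bullet(f)$ gives a ladder of short exact sequences of $W_S(A)$-modules whose outer terms involve only the smaller sets $S/p$ and $S(p)$. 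The Frobenius base-change isomorphism for $S$ forces $V_pW_{S/p}(A)\cdot W_S(B)=V_pW_{S/p}(B)$ --- because every element of $W_{S/p}(B)$ is a $W_{S/p}(A)$-linear combination of elements $F_p(b)$ --- hence also the ``restriction'' base change $W_{S(p)}(A)\otimes_{W_S(A)}W_S(B)\cong W_{S(p)}(B)$; together with flatness of $W_S(f)$ this turns the ladder into an isomorphism of short exact sequences. Combined with the inductive hypothesis that $W_{S/p}(f)$ and $W_{S(p)}(f)$ are \'etale, and a short bookkeeping with the conormal sequence and the identity $V_p(a)V_p(b)=p\,V_p(ab)$, one concludes that $W_S(f)$ is unramified and finitely presented, hence \'etale, and that the base-change isomorphism holds for $S$ and $n=p$.

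The real obstacle is therefore the input used above: the flatness of $W_S(f)$ and the Frobenius base-change isomorphism $W_{S/p}(A)\otimes_{F_p,W_S(A)}W_S(B)\cong W_{S/p}(B)$. These are genuinely non-formal, reflecting the fact that $W_S(\A^1_{\Z})$ is already singular over $W_S(\Z)$. I would establish them first in the case that $A$, hence every \'etale $A$-algebra, is $\Z$-torsion free and carries a compatible family of Frobenius lifts $\phi_p$ (these exist and are unique on \'etale algebras over such a ring). There Dwork's Lemma \thref{Dwork} presents $W_S(A)$ and $W_S(B)$ as the explicit subrings of $A^S$ and $B^S$ defined by the congruences $\phi_p(r_{m/p})\equiv r_m$ modulo $p^{\ord_p(m)}$, under which $F_p$ becomes the coordinate projection $(w_m)_{m\in S}\mapsto(w_{pm})_{m\in S/p}$; the monogenic extension $A\to A[t]/(g)$ can then be analysed explicitly, and this is exactly the step at which \citeauthor{LZ04}'s computation for polynomial algebras \cite{LZ04} is the decisive ingredient. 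The passage from this torsion-free, Frobenius-lifted case to an arbitrary noetherian base --- once more via a presentation and the fundamental exact sequence, with preservation of flatness the hardest point --- is carried out in \cite{Hes14}, \cite{vdK86} and \cites{Bor15a,Bor15}, the last using the $\Lambda$-ring formalism, in which an \'etale algebra over a $\Lambda_S$-ring inherits a unique compatible $\Lambda_S$-structure and thereby the base-change property.
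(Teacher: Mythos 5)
You should note first that the paper does not prove \thref{BorKal} at all: it is imported verbatim from the literature (Hesselholt, van der Kallen, Borger), so there is no internal proof to compare against. Judged as a standalone argument, your proposal has a genuine gap at exactly the point you yourself flag as the crux: the flatness of $W_S(f)$ and the Frobenius base-change isomorphism for a prime $p$ are in the end not established but referred back to \cite{Hes14}, \cite{vdK86}, \cite{Bor15a} and \cite{Bor15} --- the very sources the theorem is quoted from. Everything that is actually carried out in your text (reduction to $A$ noetherian, to $n=p$, to standard \'etale $f$, and the ladder built from the exact sequence $0\to W_{S/p}(R)\xrightarrow{V_p}W_S(R)\to W_{S(p)}(R)\to 0$ together with the projection formula) is correct but formal bookkeeping around that missing core; note also that as written the second paragraph is circular, since the Frobenius base-change isomorphism for $S$ and $n=p$ is fed in as an input to the ladder and then listed again among its conclusions.

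The concrete route you propose for supplying the missing core would moreover not work as stated. The claim that a $\Z$-torsion free ring carrying Frobenius lifts transmits unique compatible Frobenius lifts to every \'etale algebra is false without a completeness or formal-\'etaleness hypothesis: for $A=\Z[x]$ with $\phi_p(x)=x^p$ and the standard \'etale (indeed localization) map $B=A[1/(x+p)]$, any extension of $\phi_p$ would have to invert $\phi_p(x+p)=x^p+p$, which is not a unit of $B$. So the Dwork-Lemma presentation of $W_S(B)$ inside $B^S$ is not available in the generality you need, which is precisely why van der Kallen, Hesselholt and Borger argue differently (presentations and the $V$-filtration, resp. the $\Lambda$-ring formalism). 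Finally, the attribution of the decisive ingredient to \cite{LZ04} is misplaced: in this paper the polynomial computation \cite[Cor. 2.18]{LZ04} enters via \thref{chatsmooth} and \thref{dRWtorsionless}, i.e. in establishing that the relative de Rham--Witt complex of a polynomial algebra is ($\Z$-torsion free and) torsionless; it plays no role in the \'etale base-change theorem for Witt vector rings itself. In short: the reductions are fine, but the non-formal content of \thref{BorKal} is cited rather than proven, and the sketch offered for it would fail.
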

\begin{cor}[{\cite[Lemma 1.0.15, Cor. 1.0.16]{Chat12}}]
	Let $f\colon A\to B$ be an \'etale ring homomorphism and $S$ a finite truncation set.
	\begin{enumerate}[(i)]
		\item For any truncation subset $S'\subseteq S$ the ring homomorphism
		\begin{equation*}
			W_{S'}(A)	\otimes_{W_S(A)}W_S(B)\to W_{S'}(B),~
			a\otimes b\mapsto W_{S'}(f)(a)\cdot \res^S_{S'}(b)
		\end{equation*}
		is an isomorphism.
		
		\item For any $A$-algebra $C$ and $n\in \N$ the ring homomorphism
		\begin{equation*}
			W_{S/n}(C)	\otimes_{F_n,W_S(A)}W_S(B)\to W_{S/n}(C\otimes_AB),~
			c\otimes b\mapsto W_{S/n}(f)(c)\cdot F_n(b)
		\end{equation*}
		is an isomorphism.
	\end{enumerate}
\end{cor}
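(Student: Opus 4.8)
The plan is to deduce both isomorphisms from Theorem~\thref{BorKal}, using little more than the projection formula and the combinatorics of truncation sets. For part~(i) I would first reduce to deleting a single maximal element: if $S'\subsetneq S$ is a truncation subset, pick $m\in S\setminus S'$ that is divisibility-maximal in $S\setminus S'$; it is then divisibility-maximal in all of $S$, since any $m'\in S$ with $m\mid m'$, $m'\neq m$, cannot lie in $S'$ ($S'$ being division-stable) and so would lie in $S\setminus S'$, contradicting maximality. Hence $S\setminus\{m\}$ is again a truncation set containing $S'$, and as $S$ is finite, $S$ is connected to $S'$ by finitely many one-element deletions; using $W_{S'}(A)\otimes_{W_S(A)}W_S(B)\cong W_{S'}(A)\otimes_{W_{S\setminus\{m\}}(A)}\big(W_{S\setminus\{m\}}(A)\otimes_{W_S(A)}W_S(B)\big)$ and transitivity of restriction, it suffices to treat $S'=S\setminus\{m\}$ with $m$ divisibility-maximal in $S$.

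In that case $S/m=\{1\}$, so $W_{S/m}(R)=R$ and restriction $W_S(R)\to W_{S'}(R)$ merely forgets the $m$-th Witt component; its kernel is the image of $V_m\colon W_{S/m}(R)\to W_S(R)$, giving a short exact sequence $0\to W_{S/m}(A)\xrightarrow{V_m}W_S(A)\xrightarrow{\res}W_{S'}(A)\to 0$ of $W_S(A)$-modules ($W_{S/m}(A)$ being a module via $F_m$). Tensoring with $W_S(B)$ over $W_S(A)$ and using right exactness identifies $W_{S'}(A)\otimes_{W_S(A)}W_S(B)$ with $W_S(B)/\im(V_m\otimes 1)$, the domain of $V_m\otimes 1$ being $W_{S/m}(A)\otimes_{F_m,W_S(A)}W_S(B)$. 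By the projection formula together with naturality of $V$, its image is $V_m\big(A\cdot F_m(W_S(B))\big)$, the $A$-submodule of $W_{S/m}(B)=B$ generated by $F_m(W_S(B))$ (where $A$ acts through $f$). Now Theorem~\thref{BorKal} with $n=m$ supplies a surjection $W_{S/m}(A)\otimes_{F_m,W_S(A)}W_S(B)\twoheadrightarrow W_{S/m}(B)=B$, $a\otimes b\mapsto f(a)F_m(b)$, whose surjectivity says precisely that $A\cdot F_m(W_S(B))=B$. Hence $\im(V_m\otimes 1)=V_m(B)=\ker\big(\res\colon W_S(B)\to W_{S'}(B)\big)$, so the cokernel is $W_{S'}(B)$, and unwinding the identifications (via naturality of $\res$) shows the induced map is $a\otimes b\mapsto W_{S'}(f)(a)\cdot\res^S_{S'}(b)$; reinstating the iteration proves part~(i).

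For part~(ii), note that the case $n=1$ says that $W_S$ commutes with the \'etale base change $A\to B$, i.e.\ $W_S(C)\otimes_{W_S(A)}W_S(B)\xrightarrow{\sim}W_S(C\otimes_A B)$. Granting this, for general $n$ I would use associativity of tensor products and the factorization $F_n\colon W_S(A)\xrightarrow{W_S(g)}W_S(C)\xrightarrow{F_n}W_{S/n}(C)$ (naturality of Frobenius, $g\colon A\to C$ the structure map) to write
\[
W_{S/n}(C)\otimes_{F_n,W_S(A)}W_S(B)\;\cong\;W_{S/n}(C)\otimes_{F_n,W_S(C)}\big(W_S(C)\otimes_{W_S(A)}W_S(B)\big)\;\cong\;W_{S/n}(C)\otimes_{F_n,W_S(C)}W_S(C\otimes_A B),
\]
and the right-hand side equals $W_{S/n}(C\otimes_A B)$ by Theorem~\thref{BorKal} applied to the \'etale morphism $C\to C\otimes_A B$ (the base change of $f$); chasing the formulas identifies the composite with the asserted map. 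It remains to prove the $n=1$ statement. Since $W_S(f)$ is \'etale (Theorem~\thref{BorKal}), both $W_S(C)\otimes_{W_S(A)}W_S(B)$ (by base change) and $W_S(C\otimes_A B)$ (by Theorem~\thref{BorKal} again) are \'etale over $W_S(C)$, so the comparison map is a morphism of \'etale $W_S(C)$-algebras; to verify it is an isomorphism one may, by faithfully flat descent and Zariski localization on source and target, reduce to the case where $B$ is a finite product of localizations of $A$, where it holds by Lemma~\thref{Wittlocali} (compatibility of $W_S$ with localization) and the obvious behaviour for finite products --- the flatness used throughout being exactly the \'etaleness furnished by Theorem~\thref{BorKal}.

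The main obstacle is the $n=1$ case of part~(ii), that $W_S$ commutes with \emph{\'etale} base change: since $W_S$ does not commute with arbitrary base change, everything hinges on exploiting \'etaleness --- via Theorem~\thref{BorKal} --- to descend to the transparent case of localizations and finite products. Part~(i) and the reduction of part~(ii) to its case $n=1$ are, by contrast, formal once Theorem~\thref{BorKal} is available.
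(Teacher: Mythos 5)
The paper itself offers no argument for this corollary -- it is quoted from \cite{Chat12} -- so your proposal has to stand on its own. Part (i) does: the reduction to deleting a single divisibility-maximal element, the exact sequence $0\to W_{S/m}(A)\xrightarrow{V_m}W_S(A)\to W_{S\setminus\{m\}}(A)\to 0$ with module structure via $F_m$, right-exactness of the tensor product, the projection formula, and the surjectivity coming from \thref{BorKal} with $n=m$ (so that $A\cdot F_m(W_S(B))=B$) fit together correctly, and the identification of the induced map is right. The reduction of part (ii) to its case $n=1$ via the factorization of $F_n$ through $W_S(C)$ and \thref{BorKal} applied to the \'etale map $C\to C\otimes_AB$ is also correct.

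The genuine gap is exactly where you locate the crux: the $n=1$ case, i.e.\ $W_S(C)\otimes_{W_S(A)}W_S(B)\to W_S(C\otimes_AB)$ for arbitrary $C$. Your proposed verification -- ``by faithfully flat descent and Zariski localization on source and target, reduce to the case where $B$ is a finite product of localizations of $A$'' -- fails for two reasons. First, an \'etale $A$-algebra is not Zariski-locally a finite product of localizations of $A$ (it is only locally standard \'etale; already $\Z[1/2]\to\Z[i,1/2]$ is a counterexample), so the target of the reduction is unreachable by localization alone; to split $B$ one needs a non-localization \'etale cover. Second, the descent step is circular: to check the map after a faithfully flat base change $W_S(C)\to W_S(C')$ with $C\to C'$ \'etale, you must identify $W_S(C\otimes_AB)\otimes_{W_S(C)}W_S(C')$ with $W_S(C'\otimes_AB)$, and that identification is itself an instance of the statement being proved (it is covered by \thref{Wittlocali} only when $C\to C'$ is a localization, which is precisely the case that does not suffice). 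A correct route with the tools you already have is a d\'evissage as in your part (i): for $m\in S$ maximal, tensor the exact sequence $0\to W_{S/m}(C)\xrightarrow{V_m}W_S(C)\to W_{S\setminus\{m\}}(C)\to 0$ with $W_S(B)$, which is flat over $W_S(A)$ by \thref{BorKal}; the left-hand term is $C\otimes_{F_m,W_S(A)}W_S(B)\cong C\otimes_AB$ by \thref{BorKal} (since $S/m=\{1\}$), the right-hand term is handled by part (i) together with induction on $|S|$, and the five lemma then gives the middle term. Without such an argument, your ``$n=1$'' step silently assumes the hardest part of the \'etale base change theorem rather than deducing it.
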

For an affine scheme $X=\spec R$ we define $W_S(X)\defeq \spec W_S(R)$. 
If $X$ is a separated scheme with an affine open covering $\bigcup_{i} U_i, U_i=\spec R_i$, then $W_S(X)$ is obtained by gluing the affine schemes $W_S(U_i)$ along the affine schemes $W_S(U_i\times_X U_j)$. It can be shown that this procedure does not depend on the covering.\\
If $X$ is an arbitrary scheme with affine open covering $\bigcup_{i} U_i$, then the open subschemes $U_i\times_XU_j$ are separated. We can glue the schemes $W_S(U_i)$ along the open subschemes $W_S(U_i\times_X U_j)$ to obtain the scheme $W_S(X)=\bigcup_{i} W_S(U_i)$. Again, this does not depend on the chosen covering.
\begin{theorem}[{\cite[Cor. 15.2, 15.3 \& 15.6, Prop. 16.4]{Bor15}}]\thlabel{closedimmersion}
	Let $S$ be a finite truncation set and $X$ a scheme. Then $W_S(X)$ is a scheme. Moreover, the following statements hold:
	\begin{enumerate}[(i)]
		\item If $(U_i)_i$ is an \'etale (open) covering of $X$, then $(W_S(U_i))_i$ is an \'etale (open) covering of $W_S(X)$.
		\item Let $S'\subseteq S$ be a truncation subset. The surjective restriction map $\res^S_{S'}$ on Witt vector rings induces a closed immersion $\iota_{S',S} \colon W_{S'}(X)\to W_S(X)$ of schemes.
	\end{enumerate}
	Let $X\to Y$ be a morphism of schemes.
	\begin{enumerate}
		\item[(iii)]If $X\to Y$ is an open/closed immersion, then the induced map $W_S(X)\to W_S(Y)$ is an open/closed immersion.
		\item[(iv)] If $X\to Y$ is \'etale, then the induced map $W_S(X)\to W_S(Y)$ is \'etale. If moreover $X'\to Y$ is an arbitrary morphism of schemes, then the map
		$$ (W_S(\pr_X),W_S(\pr_{X'}))\colon W_S(X\times_YX')\to W_S(X)\times_{W_S(Y)}W_S(X')$$
		is an isomorphism.
	\end{enumerate}	
\end{theorem}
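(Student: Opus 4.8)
The strategy is to reduce each assertion to the affine case, where it becomes one of the statements recalled above, and then to propagate it through the gluing that defines $W_S(X)$. The one fact that makes both the gluing and the descent work is that $W_S$ commutes with localization: for $f\in R$ one has $W_S(R_f)=W_S(R)[[f]^{-1}]$ by \thref{Wittlocali}, so $W_S(\spec R_f)\to W_S(\spec R)$ is the open immersion onto the basic open $D([f])$. Writing any open $U\subseteq X$ as a union of basic opens of affine charts therefore exhibits $W_S(U)$ as a canonical open subscheme of $W_S(X)$; running this with $U=X$ and comparing two affine coverings through a common refinement by basic opens shows that $W_S(X)$ is a well-defined scheme, and it simultaneously yields the open-immersion half of (iii) and the open-covering half of (i).

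For the closed-immersion statements I would argue as follows. For a truncation subset $S'\subseteq S$ the restriction $\res^S_{S'}\colon W_S(R)\to W_{S'}(R)$ is surjective (it forgets the Witt components indexed by $S\setminus S'$), so $W_{S'}(\spec R)\hookrightarrow W_S(\spec R)$ is a closed immersion; since $\res^S_{S'}$ carries Teichm\"uller elements to Teichm\"uller elements, its kernel localizes correctly and these closed-subscheme structures glue to the closed immersion $\iota_{S',S}$ of (ii). The same mechanism handles the closed-immersion half of (iii): for $B=A/I$ the map $W_S(A)\to W_S(B)$ is surjective --- a Witt vector over $B$ lifts componentwise to one over $A$ --- its kernel is compatible with the localizations used to glue, and one glues over $Y$. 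For (iv), "$W_S(X)\to W_S(Y)$ is \'etale" is precisely \thref{BorKal} affine-locally on source and target, and \'etaleness can be tested on such a cover; the isomorphism $W_S(X\times_Y X')\xrightarrow{\sim}W_S(X)\times_{W_S(Y)}W_S(X')$ comes, affine-locally, from the isomorphism $W_S(B)\otimes_{W_S(A)}W_S(C)\xrightarrow{\sim}W_S(B\otimes_A C)$ for $A\to B$ \'etale and $C$ arbitrary (the corollary to \thref{BorKal} with $n=1$), which is again compatible with the gluing localizations.

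This leaves the surjectivity half of (i): if the $U_i\to X$ are jointly surjective, then so are the $W_S(U_i)\to W_S(X)$. This is the one point that is not formal. The plan is to exploit the ghost components: each $\gh_n\colon W_S(R)\to R$ with $n\in S$ is a ring homomorphism natural in $R$, hence globalizes to a morphism $\gh_n\colon X\to W_S(X)$, and these assemble into $\gh_S\colon\coprod_{n\in S}X\to W_S(X)$. Granting that $\gh_S$ is surjective, the claim follows: by naturality the image of $W_S(U_i)\to W_S(X)$ contains $\gh_n$ of the image of $U_i$ in $X$ for every $n$, so $\bigcup_i\im\bigl(W_S(U_i)\to W_S(X)\bigr)\supseteq\bigcup_{n\in S}\gh_n(X)=\im(\gh_S)=W_S(X)$.

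The heart of the matter --- and the step I expect to be the main obstacle --- is therefore the surjectivity of $\gh_S$, i.e., affine-locally, that the kernel of $\gh_S\colon W_S(R)\to R^S$ is a nil ideal. Here the plan is: first reduce to reduced rings, using that for the nilradical $N$ of $R$ the kernel of $W_S(R)\to W_S(R/N)$ is nil --- a direct computation, since $S$ is finite: a Witt vector with coordinates in $N$ is, up to an integer factor, a finite sum of terms $V_k([z_k])$ with $z_k\in N$, and the Verschiebung multiplication formulas drive sufficiently long products of such terms into arbitrarily high powers of $N$. Next, for $R$ reduced embed $R\hookrightarrow\prod_i K_i$ into a product of fields (the fraction fields of the quotients of $R$ by its minimal primes), whence $W_S(R)\hookrightarrow\prod_i W_S(K_i)$ reduces us to $R=K$ a field. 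If $\car K=0$, then $K$ is a $\Q_S$-algebra and $\gh_S$ is an isomorphism by \thref{ghostisom}, so its kernel vanishes. If $\car K=p$, use the $p$-typical decomposition $W_S(K)\cong\prod_\alpha W_{m_\alpha}(K)$ into truncated $p$-typical Witt vector rings, each of which is local with nilpotent maximal ideal equal to the kernel of its first ghost component; an element of $\ker\gh_S$ kills all ghost components, hence lands in the nilradical of every factor, hence in $\operatorname{nil}(W_S(K))$. This last input, the structure theory of Witt vectors of $\F_p$-algebras, is where the genuine difficulty lies; everything else is bookkeeping around \thref{Wittlocali} and \thref{BorKal}.
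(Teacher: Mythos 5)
Your overall strategy (affine-local statements from \thref{Wittlocali}, \thref{BorKal} and its corollary, glued along the basic opens $D([f])$) is reasonable, and note that the paper itself gives no proof of this theorem --- it is quoted from Borger --- so an independent hands-on argument of this kind is a legitimate route. You also correctly isolated the one non-formal point, namely the surjectivity of $\gh_S\colon\coprod_{n\in S}X\to W_S(X)$; this same fact is what guarantees that the Teichm\"uller elements $[f_j]$ generate the unit ideal of $W_S(R)$ whenever the $f_j$ generate the unit ideal of $R$, which your gluing, open-immersion and open-covering bookkeeping uses silently. The genuine gap is in your reduction of that surjectivity: you assert that it is, affine-locally, the same as the kernel of $\gh_S\colon W_S(R)\to R^S$ being a nil ideal. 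Nilness of the kernel only says that $\spec R^S\to\spec W_S(R)$ is \emph{dominant}: every prime of $W_S(R)$ contains $\ker\gh_S=\bigcap_{n\in S}\ker\gh_n$, hence ($S$ being finite) some $\ker\gh_n$. It does not give surjectivity --- an injective ring map such as $\Z\hookrightarrow\Q$ is far from surjective on spectra. The missing ingredient is lying over: one must know that $R$ is integral over $\im(\gh_n)\cong W_S(R)/\ker\gh_n$ for every $n\in S$. This is true, via the monic relation $f(x)-f(r)=0$ with $f(t)=\sum_{k\mid n}k\,t^{n/k}$ and $f(r)=\gh_n(r,r,\dots,r)$ --- exactly the computation the paper carries out in the proof of \thref{normalscheme}, and the reason \thref{ghostscheme} records $\gh_S$ as a surjective \emph{integral} morphism --- but your proof never supplies it. With it, every prime of $W_S(R)$ contains some $\ker\gh_n$ and is then hit by a prime of the $n$-th copy of $\spec R$, and your covering argument goes through.

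Two smaller remarks on the nilness argument itself, which is otherwise the right computation: a Witt vector $(r_n)_{n\in S}$ equals $\sum_{n\in S}V_n([r_n])$ exactly, no integer factor is needed; and in passing from a reduced ring to fields via $W_S(R)\hookrightarrow\prod_iW_S(K_i)$ you need a nilpotency exponent for $\ker\gh_S$ in $W_S(K_i)$ that is uniform in $i$, since the product over the (possibly infinitely many) minimal primes is infinite and an element whose components are nilpotent of unbounded order need not be nilpotent. Such a uniform bound exists and depends only on $S$: in each $p$-typical factor the kernel is killed by a power bounded by the length of that factor (using $p\,x=VF(x)$ in characteristic $p$), and the lengths and the number of factors are bounded in terms of $S$ alone. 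So that step is repairable, but it should be said; the integrality/lying-over input, by contrast, is a genuinely missing idea in the proposal.
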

\begin{lemma}[{\cite[10.6.8, Cor. 15.4 \& 15.7]{Bor15}}] \thlabel{ghostscheme}
Let $S$ be a finite truncation set. 
	\begin{enumerate}[(i)]
		\item  For every scheme $X$ the ghost map induces a surjective, integral morphism of schemes
		$$\gh_S \colon \coprod_{n\in S} X \to W_S(X).$$
		The projection onto the first component induces a closed immersion $\gh_1\colon X\to W_S(X)$.
		\item  Let $X\to Y$ be an \'etale morphism of schemes. For every $n\in S$ the $n$-th ghost component induces a cartesian diagram
		\begin{center}
			\begin{tikzcd}
				X\arrow[r,"\gh_n"]\arrow[d]&W_S(X)\arrow[d]\\
				Y\arrow[r,"\gh_n"]&W_S(Y).
			\end{tikzcd}
		\end{center}
	\end{enumerate}
\end{lemma}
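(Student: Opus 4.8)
\emph{Proof proposal.} Both statements are Zariski-local on $W_S(X)$, which is covered by the affine opens $W_S(U)$ for $U=\spec R\subseteq X$ affine (this is part of the construction; cf.\ \thref{closedimmersion}(iii)); over such an open, $\gh_S$ restricts to $\spec$ of the ghost homomorphism $\gh_S\colon W_S(R)\to R^S$ and each $\gh_n$ to $\spec$ of $\gh_n\colon W_S(R)\to R$. The plan is thus to reduce everything to the affine case and argue ring-theoretically, the key external inputs being \thref{BorKal} and \thref{Wittlocali}.

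\emph{Part (i).} To see that $\gh_S$ is integral I would show that $R^S$ is integral over the subring $A\defeq\gh_S(W_S(R))$: with $e_n\in R^S$ the idempotent of the $n$-th factor, any element of $R^S$ is a finite sum $\sum_n a_ne_n$ ($a_n\in R$), and since integral elements form a subring it suffices to treat one $a_ne_n$. From $\gh_m([a_n])=a_n^m$ one gets $(a_ne_n)^n=\gh_S([a_n])\cdot e_n$; setting $b\defeq\gh_S([a_n])\in A$, the element $b\cdot e_n$ is a root of the monic polynomial $t^2-bt$ over $A$, hence integral over $A$, and therefore so is $a_ne_n$ (a ring element whose $n$-th power is integral is itself integral). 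Thus $R^S$ is integral over $W_S(R)$ and --- integrality being local on the target --- $\gh_S$ is an integral morphism. For surjectivity: $A\hookrightarrow R^S$ is an injective integral extension, so $\spec R^S\to\spec A$ is surjective by lying-over, and $\spec A=\spec W_S(R)$ because $\Ker\gh_S$ is nilpotent; the latter vanishes for $S$-torsion-free $R$ by \thref{tensorqsghost}(i) and is nilpotent in general (\cite{Bor15}). Finally $\gh_1$ is affine-locally $\spec$ of the surjection $W_S(R)\to R$, $(r_k)_k\mapsto r_1$ (split by the Teichm\"uller map), hence a closed immersion; since $\gh_n([f])=f^n$ and \thref{Wittlocali}(i) yield $\gh_n^{-1}(W_S(U))=U$ for affine $U$, this globalizes.

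\emph{Part (ii).} For \'etale $f$ it suffices, affine-locally with $A\to B$ \'etale, that $A\otimes_{\gh_n,W_S(A)}W_S(B)\to B$ be an isomorphism. Writing $\gh_n=\gh_1\circ F_n$ and invoking the Frobenius base-change isomorphism $W_{S/n}(A)\otimes_{F_n,W_S(A)}W_S(B)\xrightarrow{\sim}W_{S/n}(B)$ of \thref{BorKal}, this reduces to the case $n=1$, i.e.\ to $A\otimes_{\gh_1,W_T(A)}W_T(B)\xrightarrow{\sim}B$ for any finite truncation set $T$. As $\gh_1\colon W_T(A)\twoheadrightarrow A$ has kernel the Verschiebung ideal $I_A$ and $W_T(A)\to W_T(B)$ is flat (\'etale, by \thref{BorKal}, \thref{closedimmersion}(iv)), this is the statement $I_A\,W_T(B)=I_B$, which I would prove by induction on $|T|$: for $m=\max T$, $T'=T\setminus\{m\}$, one flat-base-changes the fundamental short exact sequence $0\to A\xrightarrow{V_m}I_A\to I_A^{T'}\to 0$ (with $I_A^{T'}=\Ker(\gh_1\colon W_{T'}(A)\to A)$, the left $A$ acted on through $F_m$) to $B$ and compares with its analogue over $B$; the left vertical map is an isomorphism by \thref{BorKal} once more, the right one by the inductive hypothesis and the \'etale base-change isomorphism $W_{T'}(A)\otimes_{W_T(A)}W_T(B)\cong W_{T'}(B)$, so the five lemma finishes it.

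The single non-formal ingredient here is the nilpotence of $\Ker\gh_S$ for rings with $S$-torsion, on which the surjectivity in (i) hinges; I would take it from Borger's work and regard the rest as the bookkeeping sketched above.
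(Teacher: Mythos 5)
The paper does not prove this lemma at all: it is imported wholesale from Borger \cite{Bor15} (10.6.8, Cor.\ 15.4 \& 15.7), so there is no internal proof to compare with; judged on its own, your proposal is correct and would serve as a self-contained substitute. In (i) the integrality trick $(a_ne_n)^n=\gh_S([a_n])e_n$ together with the relation $t^2-bt$ is sound, and the globalization via $\gh_n^{-1}(W_S(U))=U$ (localization at Teichm\"uller lifts, \thref{Wittlocali}) is exactly the compatibility one must check. For surjectivity you only need $\ker\gh_S$ to be a nil ideal, so that $\spec \gh_S(W_S(R))\to \spec W_S(R)$ is a homeomorphism, and this ``single non-formal ingredient'' need not be outsourced to Borger: by induction on $|S|$, if $m\in S$ is maximal and $x\in\ker\gh_S$, then $x^k\in\ker(\res^S_{S\setminus\{m\}})=V_m(R)$ for some $k$, say $x^k=V_m(r)$; from $\gh_m(x^k)=mr=0$ one gets $x^{2k}=V_m(r)^2=mV_m(r^2)=V_m\bigl((mr)r\bigr)=0$. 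In (ii) the reduction $\gh_n=\gh_1\circ F_n$ to the case $n=1$ via \thref{BorKal}, followed by the induction on $|T|$ with the sequence $0\to A\xrightarrow{V_m}I_A\to I_A^{T'}\to 0$, flatness of $W_T(A)\to W_T(B)$ and the five lemma, does work; you should record that the left square commutes by the projection formula $V_m(f(a))\cdot b=V_m\bigl(f(a)F_m(b)\bigr)$ and that the right-hand identification combines the inductive hypothesis with the restriction base-change isomorphism $W_{T'}(A)\otimes_{W_T(A)}W_T(B)\cong W_{T'}(B)$, as you indicate. This Frobenius--Verschiebung bookkeeping is in the same spirit as the \'etale base-change results of Borger and van der Kallen that the paper quotes, so your route is faithful to the cited source rather than genuinely divergent.
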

\begin{definition}\thlabel{schemestorsionfree} \label{schemestorsionfreex}
	Given a truncation set $S$, a scheme $X$ is called $S$-\emph{torsion free} if there exists an affine open covering $X=\bigcup_i\spec R_i$, where each $R_i$ is an $S$-torsion free ring.\\ (Equivalently, $X$ is $S$-torsion free if multiplication by elements in $S$ is injective on $\OO_X$.)
\end{definition}

	Given a (locally) noetherian scheme $X$, the associated Witt scheme is not necessarily (locally) noetherian (see \cite[Counterexample 16.6]{Bor15}). 
To ensure this we need more information about the scheme $X$ with respect to a certain base ring. 
Indeed, Borger has defined Witt vector rings (and Witt schemes) relative to a given base ring and so-called supramaximal ideals generalizing the concept of truncation sets in \cite{Bor15a}. We only consider schemes $X$ over the base rings $\Z$, $\Z_{(p)}$ or $\Z_p$ for a fixed prime number $p$ and write $\Z_{\circ}$\label{Zcirc} as a placeholder for these three rings.\\
Given a truncation set $S$ and a $\Z_{\circ}$-algebra $R$, the supramaximal ideals in $\Z_{\circ}$ can be chosen in a way such that Borger's generalized Witt vector rings over $\Z_{\circ}$ agree with the classical ring of Witt vectors $W_S(R)$.
Given a scheme $X$ over $\Z_{\circ}$ and a finite truncation set $S$, \hyperref[Dwork'sLemma]{Dwork's Lemma} ensures the existence of a structure map $W_S(X)\to \spec \Z_{\circ}$. On an affine open subscheme $\spec R\subseteq X$ it corresponds to the ring homomorphism 
\begin{equation*}
	\Z_{\circ}\to W_S(\Z_{\circ})\to W_S(R),~
	z\mapsto \gh_S^{-1}((z,\dots,z)).
\end{equation*}
\begin{theorem}[{\cite[Prop. 16.5, 16.13 \& 16.19, Cor. 16.7]{Bor15}}]\thlabel{wittschemenl}
	Let $S$ be a finite truncation set and $X$ a $\Z_{\circ}$-scheme.
	\begin{enumerate}[(i)]
		\item  If $X$ is flat over $\Z_{\circ}$, then $W_S(X)$ is flat over $ \Z_{\circ}$.
		\item  If $X$ is (locally) of finite type over $ \Z_{\circ}$, then $W_S(X)$ is (locally) of finite type over $\Z_{\circ}$.
	\end{enumerate}
	From now on let $X$ be (locally) of finite type over $\Z_{\circ}$.
	\begin{enumerate}
		\item[(iii)]  Let $X\to Y$ be a morphism of $\Z_{\circ}$-schemes, where also $Y$ is (locally) of finite type over $\Z_{\circ}$. The induced map $W_S(X)\to W_S(Y)$ is (locally) of finite type.
		\item[(iv)] $W_S(X)$ is (locally) noetherian. If $X$ is quasi-compact, then $W_S(X)$ is noetherian.
	\end{enumerate}	
\end{theorem}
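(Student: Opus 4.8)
The plan is to verify everything affine-locally and to reduce all four assertions to a single finiteness statement: that $W_S(R)$ is a finitely generated $\Z_\circ$-algebra whenever $R$ is. I begin with (i). If $\spec R\subseteq X$ is an affine open with $R$ flat over $\Z_\circ$, then, $\Z_\circ$ being a principal ideal domain which is itself flat over $\Z$, the ring $R$ is torsion free over $\Z$, hence $S$-torsion free; by \thref{tensorqsghost}(i) the ghost map is therefore an injective (and $\Z$-linear) ring homomorphism $\gh_S\colon W_S(R)\hookrightarrow R^S$, and $R^S$ is a finite product of $\Z$-torsion free modules, so $W_S(R)$ is torsion free over $\Z$, hence over $\Z_\circ$, hence flat over $\Z_\circ$. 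The open covering $\bigl(W_S(\spec R_i)\bigr)_i$ of $W_S(X)$ — which exists because $W_S(X)$ is glued from the $W_S(\spec R_i)$ and, by \thref{closedimmersion}(iii), the $W_S(\spec R_i)$ are open in $W_S(X)$ — then gives flatness of $W_S(X)$ over $\Z_\circ$, flatness being local on the source.

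For (ii) the essential case is $X=\spec R$ with $R$ a finitely generated $\Z_\circ$-algebra, and I claim $W_S(R)$ is then finitely generated over $\Z_\circ$, which I prove by induction on $\#S$. If $S=\{1\}$ then $W_S(R)=R$. If $\#S\geq 2$, choose $n\in S$ maximal for the divisibility order and put $S'\defeq S\setminus\{n\}$, again a truncation set. Since $n$ is maximal we have $S/n=\{1\}$, so the Verschiebung $V_n$ identifies $R=W_{S/n}(R)$ with the kernel $I$ of the surjective restriction $\res^{S}_{S'}\colon W_S(R)\twoheadrightarrow W_{S'}(R)$; moreover $F_n\colon W_S(R)\to W_{S/n}(R)=R$ is the ghost component $\gh_n$, and by the projection formula $I$ carries the $W_S(R)$-module structure obtained from $R$ along $\gh_n$. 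Fix ring generators $r_1,\dots,r_d$ of $R$ over $\Z_\circ$; their Teichm\"uller lifts $[r_i]\in W_S(R)$ satisfy $\gh_n([r_i])=r_i^n$, so the $\Z_\circ$-algebra homomorphism $\gh_n$ has image containing $\Z_\circ[r_1^n,\dots,r_d^n]$, over which $R$ is generated as a module by the finitely many monomials $r^{\underline{a}}$ with $0\leq a_i<n$. Writing $r\in R$ as $\sum_{\underline{a}} g_{\underline{a}}(r_1^n,\dots,r_d^n)\,r^{\underline{a}}$ and applying $V_n(F_n(a)s)=aV_n(s)$ gives $V_n(r)=\sum_{\underline{a}} g_{\underline{a}}([r_1],\dots,[r_d])\,V_n(r^{\underline{a}})$; hence $I$ is generated, as a module over the $\Z_\circ$-subalgebra $B_0\subseteq W_S(R)$ generated by the $[r_i]$ together with lifts of a finite generating set of $W_{S'}(R)$ (finite by the inductive hypothesis), by the finite set $\{V_n(r^{\underline{a}})\}$. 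Since $B_0$ surjects onto $W_{S'}(R)$ under $\res^S_{S'}$ we get $W_S(R)=B_0+I$, so $W_S(R)$ is module-finite over $B_0$ and therefore a finitely generated $\Z_\circ$-algebra. Covering a general $X$ by such affines shows $W_S(X)$ is locally of finite type over $\Z_\circ$, and a finite subcover (for $X$ quasi-compact) shows it is of finite type.

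Part (iii) then follows formally. Given $f\colon X\to Y$ with both locally of finite type over $\Z_\circ$ and a point of $X$, I would choose affine opens $\spec A\subseteq X$ and $\spec B\subseteq Y$ with $f(\spec A)\subseteq\spec B$ and $A,B$ finitely generated over $\Z_\circ$; by (ii) $W_S(A)$ is finitely generated over $\Z_\circ$, and because $\Z_\circ\to W_S(B)\to W_S(A)$ is the structure map of $W_S(A)$ (by functoriality of $W_S$ and the definition of the structure maps) the same finite set generates $W_S(A)$ over $W_S(B)$; so $W_S(B)\to W_S(A)$ is of finite type, and as the $W_S(\spec A)$ cover $W_S(X)$ the morphism $W_S(f)$ is locally of finite type — quasi-compactness, hence ``of finite type'', when $f$ is quasi-compact follows using \thref{closedimmersion}(iv) to identify $W_S(f)^{-1}(W_S(V))=W_S(f^{-1}(V))$ for an affine open $V\subseteq Y$. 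For (iv): $\Z_\circ$ is noetherian, so by (ii) and the Hilbert basis theorem each $W_S(R_i)$ is noetherian; hence $W_S(X)$ is locally noetherian, and noetherian when $X$ is quasi-compact since then it is a finite union of noetherian affine opens.

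The one nonformal ingredient is the finite-generation statement in (ii), and that is where I expect the work to lie. Its crux is the interaction of two facts: that maximality of $n$ collapses $S/n$ to $\{1\}$, so the top Verschiebung layer of $W_S(R)$ is merely a copy of $R$ and not of a larger Witt ring; and that the projection formula, applied to the Teichm\"uller elements $[r_i]$, lets one rewrite the module generators of that layer over the finitely generated subalgebra $B_0$ rather than over all of $W_S(R)$. Without $[r_1],\dots,[r_d]\in B_0$ this rewriting fails, and without the finiteness of $S$ there is no base for the induction.
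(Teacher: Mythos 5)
Your proof is correct, and it is worth noting that it is genuinely your own: the paper does not prove this theorem at all, but imports it wholesale from Borger's \emph{The basic geometry of Witt vectors II} (Prop.\ 16.5, 16.13, 16.19, Cor.\ 16.7), where the statements are obtained inside Borger's general framework of Witt vectors relative to a base and supramaximal ideals, together with his descent and base-change machinery. You instead reduce everything to a single self-contained finiteness claim -- that $W_S(R)$ is a finitely generated $\Z_{\circ}$-algebra when $R$ is -- and prove it by induction on $|S|$, peeling off a divisibility-maximal $n\in S$, identifying $\ker(\res^S_{S\setminus\{n\}})$ with $V_n(R)$ (using $S/n=\{1\}$, so $F_n=\gh_n$), and rewriting $V_n(r)$ via the projection formula over the subalgebra generated by the Teichm\"uller lifts $[r_i]$, for which $\gh_n([r_i])=r_i^n$; this is in the spirit of the classical Langer--Zink/van der Kallen finiteness arguments rather than Borger's. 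Parts (i), (iii), (iv) then follow as you say: (i) from $S$-torsion freeness and the injectivity of the ghost map (the paper's \thref{tensorqsghost}) plus torsion-free $=$ flat over the principal ideal domain $\Z_{\circ}$, (iii) formally from (ii) and the compatibility of the structure maps, and (iv) from Hilbert's basis theorem. The trade-off is the expected one: your argument is elementary and exactly calibrated to the base rings $\Z$, $\Z_{(p)}$, $\Z_p$ needed here, while Borger's treatment yields the statements in much greater generality; for the purposes of this paper your route would serve as a legitimate stand-alone replacement for the citation.
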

Combining this with \thref{closedimmersion} (iii) gives
\begin{cor}
	Let $S$ be a finite truncation set and $X$ a scheme. If $W_S(X)$ is (locally) noetherian, then $W_{S'}(X)$ is (locally) noetherian for every truncation subset $S'\subseteq S$. \thlabel{subschemenoetherian}
\end{cor}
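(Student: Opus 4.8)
The plan is to exhibit $W_{S'}(X)$ as a closed subscheme of $W_S(X)$ and then appeal to the fact that a closed subscheme of a (locally) noetherian scheme is again (locally) noetherian.

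The decisive input is the part of \thref{closedimmersion} dealing with a truncation subset $S'\subseteq S$: the surjective restriction homomorphism $\res^S_{S'}\colon W_S(R)\twoheadrightarrow W_{S'}(R)$ induces, after gluing along an affine open cover of $X$, a closed immersion $\iota_{S',S}\colon W_{S'}(X)\hookrightarrow W_S(X)$ --- and this gluing, together with its independence of the chosen cover, is already contained in the cited statement, so nothing on that side has to be re-verified.

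It then only remains to recall the descent of the noetherian property along a closed immersion. Affine-locally a closed subscheme of $\spec A$ is of the form $\spec(A/I)$, and $A/I$ is noetherian whenever $A$ is; hence a closed subscheme of a locally noetherian scheme is locally noetherian. If moreover $W_S(X)$ is noetherian, then, a closed immersion being quasi-compact and $W_{S'}(X)$ mapping homeomorphically onto a closed --- hence noetherian --- subspace of $|W_S(X)|$, the scheme $W_{S'}(X)$ is noetherian too. Applying this to $\iota_{S',S}$ with $W_S(X)$ (locally) noetherian by hypothesis yields the corollary. There is no real obstacle here; the only points meriting a word are that ``(locally) noetherian'' bundles two assertions --- both covered by the same argument --- and that the closed immersion $\iota_{S',S}$ is handed to us verbatim by \thref{closedimmersion}, which is precisely why the statement is a corollary.
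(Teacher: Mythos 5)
Your proposal is correct and follows the same route the paper intends: the closed immersion $\iota_{S',S}\colon W_{S'}(X)\to W_S(X)$ from \thref{closedimmersion} (the statement for truncation subsets $S'\subseteq S$), combined with the standard fact that closed subschemes of (locally) noetherian schemes are (locally) noetherian. Nothing is missing; your extra remark handling quasi-compactness in the globally noetherian case is fine.
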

In particular, if $W_S(X)$ is (locally) noetherian, then $X$ is (locally) noetherian.
\subsection[The relative de Rham-Witt complex]{\textbf{The relative de Rham-Witt complex.}}\hspace{0mm}
For every ring $R$ we have the \textit{absolute de Rham-Witt complex} $S\mapsto W_S\Omega^{\bullet}_R$ constructed by Hesselholt \cite{Hes14}. It is the initial object in the category of Witt complexes over $R$:
\begin{definition}
	A \textit{Witt complex} over $R$ is a contravariant functor
	\begin{align*}
		(\text{truncation sets})~~~~&\to ~~~~ (\text{anticommutative graded rings})\\
		S~~~~&\mapsto ~~~~ E_S^{\bullet},
	\end{align*}
	which maps colimits to limits together with a natural ring homomorphism ${\eta_S\colon W_S(R)\to  E_S^0}$ and natural maps of graded abelian groups
	$d\colon E_S^q\to  E_S^{q+1},~F_n\colon E_S^q\to  E_{S/n}^q, ~V_n\colon  E_{S/n}^q\to  E_S^q$
	for all $n\in \N$ and $q\geq 0$ satisfying a long list of compatibility relations.\\
	A \textit{morphism of Witt complexes} over $R$ is a natural map of graded rings $f\colon E_S^{\bullet}\to  {E_S'}^{\bullet}$
	satisfying $f\eta=\eta'f,~fd=d'f,~fF_n=F_n'f\text{ and }fV_n=V_n'f$
	for all $n\in\N$.
\end{definition}
In a Witt complex over $R$ the relations
$$dF_n= nF_nd\quad \text{and}\quad V_nd= ndV_n$$ 
hold for all $m,n\in \N$, i.e. neither the Frobenius nor the Verschiebung maps are morphisms of dga's.
Every Witt complex is uniquely determined -- up to isomorphism -- by its values on finite truncation sets $S$. Thus we often reduce our considerations to the finite case.\\
There is a unique morphism of graded rings
		$\Omega_{ W_S(R)}^{\bullet}\to W_S\Omega_{R}^{\bullet}$, which turns out to be surjective.
		Moreover, the map $$ \Omega_R^q\to W_{\{1\}}\Omega_R^q,~r_0dr_1\cdots dr_q\mapsto \eta_{\{1\}}(r_0)d\eta_{\{1\}}(r_1)\cdots d\eta_{\{1\}}(r_q)$$ is an isomorphism for every $q\geq 0$.
		For every truncation set $S$ the ring homomorphism $\eta_S\colon\hspace{-1mm} W_S(R)\hspace{-1mm}\to \hspace{-1mm}W_S\Omega^0_R$ is an isomorphism.\\

For a finite truncation set $S$ the functor $\spec R\mapsto W_S\Omega^q_R$ defines a presheaf of $W_S(\OO)$-modules on the category of affine schemes, where $W_S(\OO)$ denotes the sheaf of rings given by  $\spec R\mapsto W_S(R) $.
The following theorem tells us that this is indeed a quasi-coherent sheaf of $W(\OO)$-modules for the \'etale topology.

\begin{theorem}[{\cite[Thm. C]{Hes14}}]\label{absetbasechange}
	Given an \'etale ring homomorphism $f\colon A\to B$ and a finite truncation set $S$, the map
	$$\alpha\colon W_S\Omega^q_A\otimes_{W_S(A)}  W_S(B)\to W_S\Omega^q_B, ~\omega\otimes b \mapsto b\cdot W_S(f)(\omega)$$
	induces an isomorphism of Witt complexes.
\end{theorem}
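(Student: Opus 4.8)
\textit{Proof proposal.} This is Hesselholt's \'etale base-change theorem. The plan is to show that the functor $S\mapsto E_S^{\bullet}:=W_S\Omega_A^{\bullet}\otimes_{W_S(A)}W_S(B)$ (with $E_S^{\bullet}:=\varprojlim_{T\subseteq S,\ T\ \text{finite}}E_T^{\bullet}$ for infinite $S$) is itself a Witt complex over $B$, and moreover the \emph{initial} one; since $W_{\bullet}\Omega_B^{\bullet}$ is also initial among Witt complexes over $B$, the two are then canonically isomorphic via mutually inverse morphisms, and $\alpha$ is forced to be that isomorphism. The whole argument rests on \thref{BorKal}: \textbf{(a)} $W_S(f)\colon W_S(A)\to W_S(B)$ is \'etale, and \textbf{(b)} $W_{S/n}(A)\otimes_{F_n,W_S(A)}W_S(B)\xrightarrow{\sim}W_{S/n}(B)$ for all $n$.

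First I would equip $E_{\bullet}^{\bullet}$ with its structure maps. The anticommutative graded $W_S(B)$-algebra structure and the restriction maps in $S$ are the evident base changes of those on $W_{\bullet}\Omega_A^{\bullet}$, and $\eta_S\colon W_S(B)\to E_S^0$ is the isomorphism induced by $\eta_S\colon W_S(A)\xrightarrow{\sim}W_S\Omega_A^0$. One sets $F_n(\omega\otimes b):=F_n\omega\otimes F_nb$, well defined because $F_n$ on $W_{\bullet}\Omega_A$ is $F_n$-semilinear, i.e.\ $F_n(x\omega)=F_n(x)F_n(\omega)$. For the Verschiebung, \textbf{(b)} gives a canonical identification $E_{S/n}^{\bullet}\cong W_{S/n}\Omega_A^{\bullet}\otimes_{F_n,W_S(A)}W_S(B)$, under which one defines $V_n:=V_n\otimes\mathrm{id}$; this is legitimate precisely because the projection formula $V_n(F_n(x)\omega)=xV_n(\omega)$ says $V_n\colon W_{S/n}\Omega_A^{\bullet}\to W_S\Omega_A^{\bullet}$ is $W_S(A)$-linear for the $F_n$-twisted source. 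Finally, \textbf{(a)} yields $\Omega_{W_S(B)/\Z}\cong W_S(B)\otimes_{W_S(A)}\Omega_{W_S(A)/\Z}$, so the derivation $d\colon W_S(A)\to W_S\Omega_A^1$ extends uniquely to a $\Z$-linear derivation $\widetilde d\colon W_S(B)\to E_S^1$; one lets $d$ act on all of $E_S^{\bullet}$ by the graded Leibniz rule and verifies $d^2=0$, reducing (via the derivation property of $d^2$) to $E_S^0$ and $E_S^1$, where it is inherited from $W_{\bullet}\Omega_A^{\bullet}$ by uniqueness of the \'etale extension.

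Next I would check the long list of Witt-complex relations for $E_{\bullet}^{\bullet}$. Every relation not involving $d$ descends from the corresponding identity in $W_{\bullet}\Omega_A^{\bullet}$ after applying $-\otimes_{W_S(A)}W_S(B)$, using the semilinearity and projection formulas above. The relations tying $d$ to $F_n$, $V_n$ and to Teichm\"uller elements (such as $dF_n=nF_nd$ and $V_nd=ndV_n$) follow by observing that, restricted to $E_S^0=W_S(B)$, both sides are $\Z$-linear derivations agreeing after restriction along $W_S(A)\to W_S(B)$, hence equal by uniqueness of the \'etale extension; on higher degrees they propagate by Leibniz. I expect this verification --- lengthy but individually routine --- to be the bulk of the work, the one genuinely delicate point being the compatibility of the new differential $\widetilde d$ with Frobenius and Verschiebung.

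Finally, universality. Given any Witt complex $E'$ over $B$, restricting $\eta'$ along $W_S(A)\to W_S(B)$ makes $E'$ a Witt complex over $A$, so initiality of $W_{\bullet}\Omega_A^{\bullet}$ provides a unique morphism $\mu\colon W_{\bullet}\Omega_A^{\bullet}\to E'$ over $A$; then $\omega\otimes b\mapsto \eta_S'(b)\,\mu(\omega)$ is a morphism $E_{\bullet}^{\bullet}\to E'$ of Witt complexes over $B$, and it is the only one, since any such morphism is $W_S(B)$-linear via $\eta$ and $E_S^{\bullet}$ is generated over $E_S^0=W_S(B)$ by the image of $W_S\Omega_A^{\bullet}$, on which it must coincide with $\mu$. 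Hence $E_{\bullet}^{\bullet}$ is initial among Witt complexes over $B$, and therefore canonically isomorphic to $W_{\bullet}\Omega_B^{\bullet}$. Since $\alpha\colon E_{\bullet}^{\bullet}\to W_{\bullet}\Omega_B^{\bullet}$ is a morphism of Witt complexes over $B$ --- compatible with $d$ again by uniqueness of \'etale extensions, and with $F_n$, $V_n$, $\eta$ by construction --- it is the unique such morphism out of the initial object $E_{\bullet}^{\bullet}$, hence the isomorphism, which proves the theorem.
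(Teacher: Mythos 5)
Your statement is quoted in the paper directly from Hesselholt (\cite[Thm.~C]{Hes14}); the paper contains no proof of it, so there is nothing in-text to compare with. Measured against the cited source, your overall plan is the right one and is essentially Hesselholt's: use the van der Kallen--Borger results collected in \thref{BorKal} (étaleness of $W_S(f)$, the $F_n$-twisted isomorphism $W_{S/n}(A)\otimes_{F_n,W_S(A)}W_S(B)\cong W_{S/n}(B)$) to put a Witt-complex structure on $E_S^{\bullet}=W_S\Omega^{\bullet}_A\otimes_{W_S(A)}W_S(B)$ -- with $V_n:=V_n\otimes\mathrm{id}$ justified by the projection formula and $d$ obtained from the unique étale extension of the derivation -- and then conclude by universal-property and generated-in-degree-zero arguments; whether one phrases the endgame as ``$E$ is initial'' (your version) or as ``the canonical map $W_S\Omega^{\bullet}_B\to E_S^{\bullet}$ and $\alpha$ are mutually inverse'' is an inessential reorganization. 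The balancedness checks, the $F_n$-semilinearity, and the étale-uniqueness-of-derivations argument for $d$-compatibility of $\alpha$ and of your map $\omega\otimes b\mapsto\eta'_S(b)\mu(\omega)$ are all sound.

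There is, however, a concrete hole in your plan for verifying the Witt-complex axioms on $E$. Among Hesselholt's axioms is the Teichmüller--Frobenius relation $F_n\,d\,\eta_S([b]_S)=\eta_{S/n}([b]_{S/n}^{\,n-1})\,d\,\eta_{S/n}([b]_{S/n})$, which must hold for \emph{every} $b\in B$. Your blanket argument (``both sides are derivations on $E_S^0$ agreeing on the image of $W_S(A)$, hence equal because $\Omega_{W_S(B)/W_S(A)}=0$'') does not apply here: $b\mapsto[b]_S$ is multiplicative but not additive, so the difference of the two sides is not a ($\Z$-linear or $F_n$-semilinear) derivation in $b$, and the Teichmüller lifts of elements of $B$ cannot be reached additively from the image of $W_S(A)$. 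This relation -- precisely the one that makes the de Rham--Witt complex what it is -- needs a dedicated argument, and it is where the genuine work of the étale base-change theorem lies; the same caveat applies, to a lesser extent, to the Verschiebung relations such as $V_nd=ndV_n$, which are likewise not degree-zero derivation identities because $V_n$ is not multiplicative. A smaller, definitional point: in Hesselholt's big setting the differential axiom is $dd\,x=d\log\eta_S([-1]_S)\cdot dx$ rather than $d^2=0$ (which can fail), though your étale-uniqueness device would verify that relation equally well where it applies. So: right architecture, but the axiom-verification step, which you yourself identify as the bulk of the work, is not actually covered by the method you propose at its most critical point.
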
\vspace{-3mm}
We are even more interested in the relative setting:
\begin{theodef}[{\cite[Def. 1.2.1, Prop. 1.2.4]{Cha13}}]
	Let $R_0\to R$ be a ring homomorphism.
	The category of Witt complexes over $R$ with $W(R_0)$-linear differential has an initial object. 
	It is called the \emph{relative de Rham-Witt complex} and defined by the functor $$S\mapsto W_S\Omega_{R/R_0}^{q}\defeq \varprojlim_{S'\subseteq S} W_{S'}\Omega^{q}_R/(W_{S'}\Omega^1_{R_0}\cdot W_{S'}\Omega^{q-1}_R), q\geq 0,$$
	from truncation sets to dga's with $W(R_0)$-linear differential (or $W(R_0)$-dga's). Here the limit runs over the directed poset of finite truncation subsets $S'\subseteq S$. \label{reldRW}
\end{theodef}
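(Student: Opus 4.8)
The plan is to realize the initial object as an explicit quotient of Hesselholt's absolute de Rham--Witt complex $W_\bullet\Omega_R^\bullet$, which the excerpt records as the initial object among \emph{all} Witt complexes over $R$. First I would note that Witt complexes over $R$ with $W(R_0)$-linear differential form a full subcategory of all Witt complexes over $R$: a morphism of Witt complexes is $\eta$-compatible, hence $W(R)$-linear in degree $0$, so being a $W(R_0)$-dga is a condition on objects only. Moreover, in any such complex $E$ one has $d\circ\eta_S^E|_{W_S(R_0)}=0$, because $d(\eta_S^E(a))=d(\eta_S^E(a)\cdot 1)=\eta_S^E(a)\cdot d(1)=0$. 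Thus a Witt complex $E$ over $R$ lies in the subcategory exactly when, for every finite truncation set $S$, the canonical morphism $W_\bullet\Omega_R^\bullet\to E$ coming from initiality kills all degree-$1$ elements $d\eta_S(a)$, $a\in W_S(R_0)$.

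Next, for each finite truncation set $S$ and $q\geq 0$ I would set $N_S^q\defeq W_S\Omega^1_{R_0}\cdot W_S\Omega_R^{q-1}\subseteq W_S\Omega_R^q$ (so $N_S^0=0$), using the functorial map $W_S\Omega^\bullet_{R_0}\to W_S\Omega^\bullet_R$ induced by $R_0\to R$. The claim is that $N_\bullet^\bullet$ is a homogeneous two-sided ideal, compatible with the restriction maps, and stable under $d$, under every $F_n$ and under every $V_n$. Granting this, the quotient $S\mapsto W_S\Omega_R^\bullet/N_S^\bullet$ (for finite $S$), extended to all truncation sets by $S\mapsto\varprojlim_{S'\subseteq S}W_{S'}\Omega_R^\bullet/N_{S'}^\bullet$ over finite $S'\subseteq S$ — the standard passage to infinite truncation sets, under which one again obtains a Witt complex since, as recorded in the excerpt, Witt complexes are determined by their restriction to finite truncation sets — is a Witt complex whose differential is $W(R_0)$-linear by construction, and it receives a canonical morphism from $W_\bullet\Omega_R^\bullet$. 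For universality: given any Witt complex $E$ over $R$ with $W(R_0)$-linear differential, the unique morphism $\phi\colon W_\bullet\Omega_R^\bullet\to E$ of Witt complexes has kernel a restriction-compatible family of homogeneous ideals stable under $d,F_n,V_n$, and the first paragraph shows $\phi(d\eta_S(a))=0$, hence $\ker\phi\supseteq N_\bullet^\bullet$; therefore $\phi$ factors uniquely through the quotient, which is thus the desired initial object and agrees with the functor in the statement.

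It remains to verify the stability properties of $N_\bullet^\bullet$. That it is an ideal compatible with restriction is clear. Since the canonical surjection $\Omega^\bullet_{W_S(R_0)}\twoheadrightarrow W_S\Omega^\bullet_{R_0}$ is a morphism of graded rings, $W_S\Omega^1_{R_0}=W_S(R_0)\cdot d\eta(W_S(R_0))$; hence $N_S^q$ is just the degree-$q$ part of the two-sided ideal of $W_S\Omega_R^\bullet$ generated by the $d\eta(a)$, $a\in W_S(R_0)$, and this ideal is automatically $d$-stable because $d(d\eta(a)\cdot\tau)=-d\eta(a)\cdot d\tau$. For $F_n$-stability, $F_n$ is a homomorphism of graded rings and $F_n(W_S\Omega^1_{R_0})\subseteq W_{S/n}\Omega^1_{R_0}$ because $W_\bullet\Omega^\bullet_{R_0}$ is itself a Witt complex and the functorial map $W_\bullet\Omega^\bullet_{R_0}\to W_\bullet\Omega^\bullet_R$ commutes with $F_n$; so $F_n(N_S^q)=F_n(W_S\Omega^1_{R_0})\cdot F_n(W_S\Omega^{q-1}_R)\subseteq N_{S/n}^q$.

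The main obstacle is $V_n$-stability, namely $V_n\bigl(W_{S/n}\Omega^1_{R_0}\cdot W_{S/n}\Omega^{q-1}_R\bigr)\subseteq W_S\Omega^1_{R_0}\cdot W_S\Omega^{q-1}_R$; since $V_n$ is not multiplicative, this is not formal. I would first reduce, using the $W_{S/n}(R_0)$-module generators $\eta(b)\,d\eta(c)$ of $W_{S/n}\Omega^1_{R_0}$ and absorbing the degree-$0$ factor $\eta(b)$, to the case of $V_n(d\eta(c)\cdot\tau)$ with $c\in W_{S/n}(R_0)$ and $\tau\in W_{S/n}\Omega^{q-1}_R$. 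When $\tau=F_n(\sigma)$ lies in the image of $F_n$, the projection formula $V_n(d\eta(c)\cdot F_n(\sigma))=V_n(d\eta(c))\cdot\sigma$ together with $V_n(d\eta(c))=n\,d\eta(V_n c)$ (from $V_nd=ndV_n$ and $V_n\eta=\eta V_n$) exhibits the result in $d\eta(W_S(R_0))\cdot W_S\Omega^{q-1}_R=N_S^q$. For general $\tau$ one reduces $\tau$ to Hesselholt's generators of $W_{S/n}\Omega^\bullet_R$ -- de Rham monomials $[a_0]\,d[a_1]\cdots d[a_{q-1}]$ in Teichm\"uller elements and their Verschiebungen -- and computes $V_n(d\eta(c)\cdot\tau)$ term by term using the Frobenius--Verschiebung relations, the projection formula and $V_nd=ndV_n$, checking that each term is a $W_S(R_0)$-combination of products of some $d\eta(c')$ with a form. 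This last computation is the technical heart of the argument (essentially \cite[Prop.~1.2.4]{Cha13}, building on \cite{LZ04}); once it is in place the theorem follows from the formal package above.
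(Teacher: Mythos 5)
The paper itself gives no proof of this statement --- it is quoted from \cite[Def.\ 1.2.1, Prop.\ 1.2.4]{Cha13} --- so your attempt can only be measured against that reference; and indeed your formal framework is the standard one: pass to the quotient of Hesselholt's absolute complex by the graded two-sided ideal $N_S^{\bullet}$ generated by the elements $d\eta_S(a)$, $a\in W_S(R_0)$, check that $N$ is stable under restriction, $d$, $F_n$, $V_n$, and then deduce initiality formally from the initiality of $W_{\bullet}\Omega^{\bullet}_R$. That formal package, including the identification $N_S^q=W_S\Omega^1_{R_0}\cdot W_S\Omega^{q-1}_R$ and the passage to infinite truncation sets by limits, is sound. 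The genuine gap is the $V_n$-stability: you verify it only for $\tau\in\im F_n$, which does not cover the general case, and for arbitrary $\tau$ you appeal to an unspecified ``term by term computation over Hesselholt's generators, essentially [Cha13, Prop.\ 1.2.4]'' --- that is, you defer the heart of the argument to the very proposition being proved. As written, the proof is therefore incomplete.

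The step can be closed without any generator analysis of $\tau$: apply the reduction to $d\eta(c)$ rather than to $\tau$. In Hesselholt's axioms the relation $F_ndV_n=d$ holds up to the $2$-torsion correction $(n-1)\,d\log\eta([-1])$, and moreover $F_n\bigl(d\log\eta_S([-1])\bigr)=d\log\eta_{S/n}([-1])$; hence, writing $d\eta_{S/n}(c)=F_n\bigl(d\eta_S(V_nc)\bigr)-(n-1)\,d\log\eta_{S/n}([-1])\,\eta_{S/n}(c)$ and using the projection formula together with the naturality of $V_n$ and of the Teichm\"uller map in $R_0\to R$, one gets for arbitrary $\tau\in W_{S/n}\Omega^{q-1}_R$
\[
V_n\bigl(d\eta_{S/n}(c)\cdot\tau\bigr)
= d\eta_S(V_nc)\cdot V_n(\tau)-(n-1)\,d\log\eta_S([-1])\cdot V_n\bigl(\eta_{S/n}(c)\,\tau\bigr),
\]
and both summands lie in $N_S^q$: the first because $V_nc\in W_S(R_0)$, the second because $-1$ comes from $R_0$, so $d\log\eta_S([-1])=\eta_S([-1])\,d\eta_S([-1])\in N_S^1$. (If you work with an axiom set in which $d\circ d=0$ and $F_ndV_n=d$ hold on the nose, the identity is simply $V_n(d\eta_{S/n}(c)\cdot\tau)=d\eta_S(V_nc)\cdot V_n(\tau)$.) The same remark repairs a small imprecision in your $d$-stability argument, where you used $d^2\eta(a)=0$: in the big Witt-complex axioms $dd\eta(a)=d\log\eta([-1])\cdot d\eta(a)$, which again lies in the ideal, so the conclusion is unaffected. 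With these points supplied, your argument is complete and agrees in substance with the cited proof.
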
\vspace{-5mm}
For $q= 0$ the definition means $W_S\Omega^0_{R/R_0}=W_S(R)$. 

\begin{lemma}[{\cite[Lemma 1.2.2]{Cha13}}]\thlabel{surjreldrw}
	For finite truncation sets $S$ there is a unique map
	$$\alpha\colon \Omega_{ W_{S}(R)/W_S(R_0)}^{\bullet}\to W_S\Omega^{\bullet}_{R/R_0}$$
	of $W_S(R_0)$-dga's. It is surjective.
\end{lemma}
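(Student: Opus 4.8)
The plan is to present $W_S\Omega^{\bullet}_{R/R_0}$, for a fixed finite truncation set $S$, as a concrete quotient of the de Rham--Witt complex $W_S\Omega^{\bullet}_R$ which carries the structure of a differential graded $W_S(R_0)$-algebra with degree-zero part $W_S(R)$, and then to obtain $\alpha$, its uniqueness, and its surjectivity as formal consequences of the universal properties of K\"ahler differentials.

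The only step that requires unwinding definitions is to make $W_S\Omega^{\bullet}_{R/R_0}$ explicit for finite $S$. The poset of finite truncation subsets $S'\subseteq S$ has $S$ itself as greatest element, so the projective limit in the definition of $W_S\Omega^{q}_{R/R_0}$ collapses and $W_S\Omega^{q}_{R/R_0}=W_S\Omega^{q}_R/(W_S\Omega^{1}_{R_0}\cdot W_S\Omega^{q-1}_R)$. A routine computation --- the Leibniz rule, together with the fact that $W_S\Omega^{\bullet}_{R_0}$ is generated over $W_S(R_0)$ in degrees $\le 1$, so that $W_S\Omega^{2}_{R_0}=W_S\Omega^{1}_{R_0}\cdot W_S\Omega^{1}_{R_0}$ --- shows that the graded ideal $J^{\bullet}\subseteq W_S\Omega^{\bullet}_R$ with $J^{q}\defeq W_S\Omega^{1}_{R_0}\cdot W_S\Omega^{q-1}_R$ is stable under $d$. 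Hence $W_S\Omega^{\bullet}_{R/R_0}=W_S\Omega^{\bullet}_R/J^{\bullet}$ is a quotient dga and the projection $\pi\colon W_S\Omega^{\bullet}_R\twoheadrightarrow W_S\Omega^{\bullet}_{R/R_0}$ is a surjective morphism of $\Z$-dgas; and since $d\eta_S(\lambda)$, being the image of $d\lambda\in W_S\Omega^{1}_{R_0}$, lies in $J^{1}$ for every $\lambda\in W_S(R_0)$, the induced differential on the quotient is $W_S(R_0)$-linear. Thus $W_S\Omega^{\bullet}_{R/R_0}$ is a differential graded $W_S(R_0)$-algebra, and $\eta_S\colon W_S(R)\to W_S\Omega^{0}_{R/R_0}$ is an isomorphism of $W_S(R_0)$-algebras onto its degree-zero part.

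Next I would invoke the universal property of the relative de Rham complex: with its canonical identification $W_S(R)=\Omega^{0}_{W_S(R)/W_S(R_0)}$, the complex $\Omega^{\bullet}_{W_S(R)/W_S(R_0)}$ is the initial object in the category of differential graded $W_S(R_0)$-algebras equipped with a $W_S(R_0)$-algebra homomorphism from $W_S(R)$ into degree zero, where morphisms are required to respect these homomorphisms. The object of the previous paragraph, together with $\eta_S$, lies in this category, so there is a unique morphism $\alpha\colon\Omega^{\bullet}_{W_S(R)/W_S(R_0)}\to W_S\Omega^{\bullet}_{R/R_0}$ of $W_S(R_0)$-dgas inducing $\eta_S$ in degree zero; this is the map of the statement, and its uniqueness is the uniqueness clause of the universal property. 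Concretely, such a morphism is determined by its degree-zero component, since $\Omega^{\bullet}_{W_S(R)/W_S(R_0)}$ is generated as a graded ring by $\Omega^{0}$ and $d\,\Omega^{0}$ and the morphism commutes with $d$.

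Surjectivity is then formal. Recall that $W_S\Omega^{\bullet}_R$ is generated as a differential graded ring by its degree-zero part $W_S(R)$: it is a quotient of the absolute K\"ahler complex $\Omega^{\bullet}_{W_S(R)}$, which is generated as a graded ring by $\Omega^{0}$ and $d\,\Omega^{0}$. Consequently the quotient $W_S\Omega^{\bullet}_{R/R_0}$ is also generated as a dga by its degree-zero part $W_S(R)$. Since $\alpha$ is a morphism of dgas whose degree-zero part $\eta_S$ is surjective, the image of $\alpha$ is a sub-dga of $W_S\Omega^{\bullet}_{R/R_0}$ containing a set of generators, hence $\alpha$ is surjective. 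The only place demanding real care is the unwinding in the second paragraph --- verifying that, for finite $S$, $W_S\Omega^{\bullet}_{R/R_0}$ really is a differential graded $W_S(R_0)$-algebra obtained as a quotient of $W_S\Omega^{\bullet}_R$; after that, everything follows formally from the universal properties of the de Rham complexes involved and the surjectivity of $\Omega^{\bullet}_{W_S(R)}\to W_S\Omega^{\bullet}_R$ recorded in the excerpt, so I do not anticipate a genuine obstacle.
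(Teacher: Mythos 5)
Your proposal is correct and is essentially the argument the paper has in mind: the lemma is only cited from \cite[Lemma 1.2.2]{Cha13}, and later (in the proof of Theorem C) the paper itself glosses it exactly as ``the universal property of the K\"ahler differential forms'' with maps that are the identity in degree zero. Your steps --- the collapse of the limit for finite $S$, the verification that $W_S\Omega^{1}_{R_0}\cdot W_S\Omega^{\bullet-1}_R$ is a differential graded ideal so that $W_S\Omega^{\bullet}_{R/R_0}$ is a $W_S(R_0)$-dga, existence and uniqueness of $\alpha$ from the universal property, and surjectivity from generation in degree zero via the recorded surjection $\Omega^{\bullet}_{W_S(R)}\twoheadrightarrow W_S\Omega^{\bullet}_R$ --- match that intended route.
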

\vspace{-2mm}
The Witt complex structure on the absolute de Rham-Witt complex $W_S\Omega^{\bullet}_R$ induces the Witt complex structure on the relative version $W_S\Omega_{R/R_0}^{\bullet}$ (with $W(R_0)$-linear differential).
In particular, we again have Frobenius and Verschiebung operators for all $n\in\N$:
\begin{align*}
	F_n\colon &W_S\Omega_{R/R_0}^q\to W_{S/n}\Omega_{R/R_0}^q,\\
	V_n\colon &W_{S/n}\Omega_{R/R_0}^q\to W_S\Omega_{R/R_0}^q.
\end{align*}

\begin{remark}\thlabel{propertiesdRW}
	In a Witt complex over $R$ with $W(R_0)$-linear differential the following relations hold for all $m,n\in \N$ and $i,j\in\Z$ such that $im+jn=(m,n)$:
	\begin{gather*}
		dF_n=nF_nd,		~
		V_nd=ndV_n,\\
		F_mdV_n=idF_{m/(m,n)}V_{n/(m,n)}+jF_{m/(m,n)}V_{n/(m,n)}d,\\
		F_ndV_n=d,~
		V_nF_nd=dV_nF_n,~
		dV_nd=0.
	\end{gather*}
\end{remark}

From Theorem \ref{absetbasechange} we can deduce the following analogous behaviour of the relative de Rham-Witt complex with respect to \'etale base change.

\begin{cor}[{\cite[\nopp1.2.8 \& Lemma 1.2.9]{Cha13}}] \thlabel{etalechat}~
	\begin{enumerate}[(i)]
		\item  Given an \'etale ring homomorphism $f\colon R\to R'$ of $R_0$-algebras and a finite truncation set $S$, the induced map
		$$W_S\Omega^q_{R/R_0}\otimes_{W_S(R)}  W_S(R')\to W_S\Omega^q_{R'/R_0},~ \omega\otimes r'\mapsto r'\cdot W_S(f)(\omega)$$
		is an isomorphism.
		\item  Given an \'etale ring homomorphism $f\colon R_0\to R_0'$, an $R_0'$-algebra $R$ and an arbitrary truncation set $S$, the induced map
		$W_S\Omega^q_{R/R_0}\to W_S\Omega^q_{R/R_0'}$ is an isomorphism.
	\end{enumerate}
\end{cor}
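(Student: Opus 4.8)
The plan is to deduce both statements from Hesselholt's absolute \'etale base-change theorem (Theorem~\ref{absetbasechange}), combined with the explicit quotient presentation of the relative de Rham-Witt complex (Theorem/Definition~\ref{reldRW}) and right-exactness of the tensor product. In both parts one first reduces to a finite truncation set: since a finite $S$ is the maximal element of the directed poset of its own finite truncation subsets, the projective limit defining $W_S\Omega^q_{R/R_0}$ collapses to the honest quotient $W_S\Omega^q_R/J_q$, where $J_q\subseteq W_S\Omega^q_R$ denotes the submodule generated by all products $x\cdot\tau$ with $x$ in the image of the functorial map $W_S\Omega^1_{R_0}\to W_S\Omega^1_R$ and $\tau\in W_S\Omega^{q-1}_R$.

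For part~(i), fix the finite truncation set $S$ and write $J'_q\subseteq W_S\Omega^q_{R'}$ for the submodule analogous to $J_q$. First I would tensor the right-exact sequence $J_q\to W_S\Omega^q_R\to W_S\Omega^q_{R/R_0}\to 0$ with $W_S(R')$ over $W_S(R)$; by right-exactness together with Theorem~\ref{absetbasechange} in degree $q$, this identifies $W_S\Omega^q_{R/R_0}\otimes_{W_S(R)}W_S(R')$ with $W_S\Omega^q_{R'}$ modulo the image of $J_q\otimes_{W_S(R)}W_S(R')$. It then remains to check that this image equals $J'_q$. The inclusion into $J'_q$ is immediate from functoriality of the absolute de Rham-Witt complex along $R_0\to R\to R'$. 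For the reverse inclusion, take a generator $y\cdot\sigma$ of $J'_q$ with $y$ the image of an element of $W_S\Omega^1_{R_0}$ and $\sigma\in W_S\Omega^{q-1}_{R'}$, and apply Theorem~\ref{absetbasechange} \emph{in degree $q-1$} to write $\sigma$ as a $W_S(R')$-linear combination of images of elements of $W_S\Omega^{q-1}_R$; this exhibits $y\cdot\sigma$ in the image of $J_q\otimes_{W_S(R)}W_S(R')$. A short diagram chase confirms that the bijection so obtained is exactly the map in the statement, and compatibility with $d$, the $F_n$ and the $V_n$ is inherited from the absolute theory, where Theorem~\ref{absetbasechange} is already an isomorphism of Witt complexes.

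For part~(ii), I would first reduce to finite $S$: by Theorem/Definition~\ref{reldRW} both $W_S\Omega^q_{R/R_0}$ and $W_S\Omega^q_{R/R_0'}$ are projective limits over the finite truncation subsets $S'\subseteq S$ of their terms at $S'$, and the comparison maps commute with the restriction maps, so it suffices to treat finite $S$ --- where each side is a quotient $W_S\Omega^q_R/J_q(R_0)$, resp.\ $W_S\Omega^q_R/J_q(R_0')$, of the same module. Factoring $R_0\to R_0'\to R$ and using functoriality, the map $W_S\Omega^1_{R_0}\to W_S\Omega^1_R$ factors through $W_S\Omega^1_{R_0'}$, so $J_q(R_0)\subseteq J_q(R_0')$ and the comparison map is a surjection $W_S\Omega^q_{R/R_0}\twoheadrightarrow W_S\Omega^q_{R/R_0'}$. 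To see that it is injective I would apply Theorem~\ref{absetbasechange} to the \'etale homomorphism $R_0\to R_0'$ in degree $1$: it shows that $W_S\Omega^1_{R_0'}$ is generated over $W_S(R_0')$ by the image of $W_S\Omega^1_{R_0}$, hence the image of $W_S\Omega^1_{R_0'}$ in $W_S\Omega^1_R$ is contained in $W_S(R)\cdot(\text{image of }W_S\Omega^1_{R_0})$; multiplying by $W_S\Omega^{q-1}_R$ then yields $J_q(R_0')\subseteq J_q(R_0)$, hence $J_q(R_0)=J_q(R_0')$, and the reduction to finite $S$ concludes.

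The step I expect to be the main obstacle --- though it is still a modest one --- is the bookkeeping around the factor $W_S\Omega^{q-1}_R$ occurring in the relation submodule $J_q$: the degree-$q$ instance of Theorem~\ref{absetbasechange} on its own does not control how these relations transform under base change, and one genuinely needs its degree-$(q-1)$ instance as well (in part~(i) to produce preimages of the generators of $J'_q$, in part~(ii) the degree-$1$ instance to compare $W_S\Omega^1_{R_0}$ with $W_S\Omega^1_{R_0'}$). Once this is tracked carefully, everything else is formal: right-exactness of $-\otimes_{W_S(R)}W_S(R')$ and the limit description of the relative de Rham-Witt complex.
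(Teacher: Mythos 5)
Your argument is correct and follows exactly the route the paper indicates: the result is quoted from \cite{Cha13} with the remark that it is deduced from the absolute \'etale base change theorem (Theorem~\ref{absetbasechange}), and your proof fills in precisely that deduction, using the quotient presentation $W_S\Omega^q_{R/R_0}=W_S\Omega^q_R/(W_S\Omega^1_{R_0}\cdot W_S\Omega^{q-1}_R)$ for finite $S$, right-exactness of the tensor product, and the degree $q-1$ (resp.\ degree $1$) instances of the absolute theorem to match the relation submodules. No gaps; the reduction to finite truncation sets in part~(ii) and the passage to the limit are handled correctly.
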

Thus we obtain a sheafified relative de Rham-Witt complex as follows: 
\begin{prop}[{\cite[Prop. 1.2.11]{Cha13}}] 
	Let $R_0$ be a ring, $X$ an $ R_0$-scheme, $S$ a finite truncation set and $q\geq 0$. There is a unique quasi-coherent sheaf of $W_S(\OO)$-modules $W_S\Omega_{X/R_0}^q$ for the \'etale topology of $W_S(X)$ such that 
	$$W_S\Omega_{X/R_0}^q(\spec W_S(R))=W_S\Omega^q_{R/R_0}$$ 
	holds for every \'etale map $\spec R\to X$.
\end{prop}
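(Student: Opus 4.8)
The plan is to build the sheaf by gluing out of the \'etale base change isomorphism of \thref{etalechat}, and then to read off its values on arbitrary \'etale $\spec R\to X$. For an affine $\spec R$ equipped with an \'etale map $\spec R\to X$ the $W_S(R)$-module $W_S\Omega^q_{R/R_0}$ yields a quasi-coherent $\OO_{W_S(\spec R)}$-module $\FF_{\spec R}\defeq\widetilde{W_S\Omega^q_{R/R_0}}$ on $W_S(\spec R)=\spec W_S(R)$. First I would record functoriality: an \'etale $X$-morphism $g\colon \spec R'\to\spec R$ makes $R\to R'$ an \'etale morphism of $R_0$-algebras, so \thref{etalechat}(i) supplies a canonical isomorphism $W_S(g)^{\ast}\FF_{\spec R}\xrightarrow{\sim}\FF_{\spec R'}$ on $\spec W_S(R')$, and these isomorphisms are transitive because the base change maps of \thref{etalechat}(i) are.

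To glue, choose an affine open cover $X=\bigcup_i U_i$ with $U_i=\spec R_i$. By \thref{closedimmersion}(i),(iii) the $W_S(U_i)$ form an open cover of $W_S(X)$; covering the overlaps $U_i\times_X U_j$ by affine opens and using \thref{closedimmersion}(iv) to identify $W_S$ of fibre products along (\'etale, in particular open) immersions with the corresponding fibre products of Witt schemes, the sheaves $\FF_{U_i}$ and the isomorphisms above constitute descent data for the open cover $\{W_S(U_i)\to W_S(X)\}$, with the cocycle condition inherited from that of the canonical base change isomorphisms. Hence they glue to a unique quasi-coherent $W_S(\OO)$-module $W_S\Omega^q_{X/R_0}$ on $W_S(X)$; since quasi-coherent sheaves for the Zariski and the small \'etale topology of a scheme coincide, this is also a quasi-coherent sheaf for the \'etale topology.

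It then remains to verify the prescribed values and uniqueness. For an arbitrary \'etale $\spec R\to X$, pulling back the cover $(U_i)$ exhibits $\spec R$ as a union of basic opens $\spec R_f$ each mapping through some $U_i$; using $W_S(R_f)\cong W_S(R)[1/[f]]$ from \thref{Wittlocali}(i) together with \thref{etalechat}(i), one checks that $\FF_{\spec R}$ and $W_S\Omega^q_{X/R_0}|_{W_S(\spec R)}$ are quasi-coherent sheaves on $\spec W_S(R)$ that agree, compatibly, over the cover $\{W_S(\spec R_f)\}$, hence are equal; taking global sections gives $W_S\Omega^q_{X/R_0}(\spec W_S(R))=W_S\Omega^q_{R/R_0}$. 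For uniqueness, any quasi-coherent $W_S(\OO)$-module with this property restricts to $\FF_{U_i}$ on $W_S(U_i)$ with gluing data forced to be the canonical base change isomorphisms, hence is isomorphic to $W_S\Omega^q_{X/R_0}$.

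I expect the main obstacle to be bookkeeping rather than ideas: handling a possibly non-separated $X$, where the overlaps $U_i\times_X U_j$ are only covered by affines, and tracking precisely where \thref{closedimmersion}(iv) is needed so that $W_S$ is applied only to fibre products in which one leg is \'etale. Once \thref{etalechat} and \thref{closedimmersion} are available, every step is routine.
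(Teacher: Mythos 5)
Your proposal is correct and follows essentially the approach the paper relies on: the paper does not reprove this proposition but cites Chatzistamatiou, and the argument there is precisely your combination of the étale base change isomorphism (\thref{etalechat}), the compatibility of $W_S$ with open/étale coverings and localization (\thref{closedimmersion}, \thref{Wittlocali}), and descent for quasi-coherent modules identifying Zariski and small-étale quasi-coherent sheaves. The only caveat, inherited from the statement itself rather than introduced by you, is that ``uniqueness'' must be read as uniqueness up to canonical isomorphism once the prescribed values are required to be compatible with the canonical restriction/base change maps, which is exactly how your last paragraph treats it.
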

\begin{cor}[{\cite[Prop. 1.2.12]{Cha13}}] 
	Let $S$ be a finite truncation set and $q\geq 0$.\\
	If $X\to \spec R_0$ is a morphism of schemes such that the induced morphism of Witt schemes $W_S(X)\to \spec W_S(R_0)$ is of finite type and $W_S(X)$ is noetherian, then $W_S\Omega_{ X/R_0}^q$ is coherent.
\end{cor}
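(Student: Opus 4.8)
The plan is to reduce to an affine chart and there present $W_S\Omega^q_{R/R_0}$ as a finitely generated module over the noetherian ring $W_S(R)$, using that it is a quotient of the module of relative Kähler differentials $\Omega^q_{W_S(R)/W_S(R_0)}$. Coherence is local on $W_S(X)$, so I would fix an affine open cover $X=\bigcup_i\spec R_i$. By \thref{closedimmersion}(iii) each induced map $W_S(\spec R_i)=\spec W_S(R_i)\to W_S(X)$ is an open immersion, and the affine opens $\spec W_S(R_i)$ cover $W_S(X)$ by the gluing construction of the latter. Since $\spec R_i\to X$ is in particular étale, the defining property of the quasi-coherent sheaf $W_S\Omega^q_{X/R_0}$ from the preceding proposition identifies its restriction to $\spec W_S(R_i)$ with the sheaf associated to the $W_S(R_i)$-module $W_S\Omega^q_{R_i/R_0}$. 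Hence, writing $R=R_i$, it suffices to show that $W_S\Omega^q_{R/R_0}$ is a finitely generated $W_S(R)$-module.

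Here $W_S(R)$ is noetherian, as $\spec W_S(R)$ is an affine open of the noetherian scheme $W_S(X)$; and $W_S(R)$ is a finitely generated $W_S(R_0)$-algebra, since $W_S(X)\to\spec W_S(R_0)$ is of finite type with affine target, so that the coordinate ring of any affine open of $W_S(X)$ is finitely generated over $W_S(R_0)$. Therefore $\Omega^1_{W_S(R)/W_S(R_0)}$ is a finitely generated $W_S(R)$-module, and so is $\Omega^q_{W_S(R)/W_S(R_0)}=\bigwedge^q_{W_S(R)}\Omega^1_{W_S(R)/W_S(R_0)}$. By \thref{surjreldrw}, the degree-$q$ component of the $W_S(R_0)$-dga map $\alpha$ gives a surjection $\Omega^q_{W_S(R)/W_S(R_0)}\twoheadrightarrow W_S\Omega^q_{R/R_0}$. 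A quotient of a finitely generated module over a noetherian ring is again finitely generated, so $W_S\Omega^q_{R/R_0}$ is finitely generated over $W_S(R)$, hence the associated sheaf on $\spec W_S(R)$ is coherent; gluing over the cover yields coherence of $W_S\Omega^q_{X/R_0}$.

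The argument is essentially formal once the surjection from the naive de Rham complex is in hand; the only points needing mild care are that the affine charts of $W_S(X)$ really are of the form $\spec W_S(R)$ with $\spec R\to X$ étale — so that both the defining property of the sheaf and \thref{surjreldrw} apply — and that the finite type hypothesis transports finite generation over $W_S(R_0)$ to each chart. I do not expect a deeper obstacle.
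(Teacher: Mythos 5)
Your argument is correct: the paper itself gives no proof of this corollary (it is quoted from Chatzistamatiou, Prop.\ 1.2.12), and your route --- restricting to the Zariski/\'etale cover by the affine opens $\spec W_S(R_i)$, identifying the sections there with $W_S\Omega^q_{R_i/R_0}$, and using the surjection $\alpha\colon \Omega^q_{W_S(R)/W_S(R_0)}\twoheadrightarrow W_S\Omega^q_{R/R_0}$ of \thref{surjreldrw} together with noetherianity of $W_S(R)$ and finite generation of $W_S(R)$ over $W_S(R_0)$ --- is exactly the intended standard argument. No gaps; the points you flag (that the charts $\spec W_S(R_i)$ cover $W_S(X)$ and that finite type transports finite generation to every affine chart) are handled correctly.
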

Every morphism of schemes $f\colon X\to Y$ over $\spec R_0$ induces a morphism 
\begin{align*}
	W_S\Omega_{Y/R_0}^q\to W_S(f)_*W_S\Omega_{X/R_0}^q.
\end{align*}
Moreover, for every inclusion of truncation sets $S'\subseteq S$ we have a closed immersion $\iota_{S',S}\colon W_{S'}(X)\to W_S(X)$ functorial in $X$ which induces a morphism
\begin{align*}
	W_S\Omega_{X/R_0}^q\to \iota_{S',S*}W_{S'}\Omega_{X/R_0}^q.
\end{align*}
We obtain the differential, Frobenius and Verschiebung operators
\begin{align*}
	d\colon &W_S\Omega_{X/R_0}^q\to W_S\Omega_{X/R_0}^{q+1},\\
	F_n\colon &W_S\Omega_{X/R_0}^q\to \iota_{S/n,S*}W_{S/n}\Omega_{X/R_0}^q\\
	V_n\colon &\iota_{S/n,S*}W_{S/n}\Omega_{X/R_0}^q\to W_S\Omega_{X/R_0}^q
\end{align*}
via sheafification (see \cite[\nopp 1.2.13]{Cha13}). Furthermore, Langer and Zink's elaborated observation \cite[Cor. 2.18]{LZ04} that the relative de Rham-Witt complex of a $p$-torsion free polynomial ring is $p$-torsion free was generalized to smooth schemes by Chatzistamatiou:
\begin{prop}[{\cite[Prop. 1.2.17]{Cha13}}]\thlabel{chatsmooth}
	Let $S$ be a finite truncation set, $R_0$ a flat $\Z$-algebra and $X$ a scheme smooth over $R_0$-scheme. Then $W_S\Omega^q_{X/R_0}$ is $\Z$-torsion free, i.e. multiplication with a non-zero integer is injective. If moreover $X$ is smooth over $\spec R_0$ of relative dimension $d$, then $W_S\Omega^q_{X/R_0}=0$ for all $q>d$.
\end{prop}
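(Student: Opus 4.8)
The plan is to reduce both assertions, by étale base change, to the case where $X$ is a polynomial algebra over $R_0$; there the $\Z$-torsion-freeness is (essentially) the theorem of Langer and Zink, and the vanishing in degrees $q>d$ then follows from $\Z$-torsion-freeness by passing to the rational generic fibre. First, since $W_S\Omega^q_{X/R_0}$ is a quasi-coherent sheaf for the étale topology of $W_S(X)$ and an étale covering of $X$ induces one of $W_S(X)$ by \thref{closedimmersion}(i), and since $W_S\Omega^q$ commutes with filtered colimits of rings, we may assume $X=\spec R$ with $R$ étale over a polynomial algebra $R_0[t]\defeq R_0[t_1,\dots,t_n]$ — and, for the vanishing statement, with $n=d$, because a smooth morphism of relative dimension $d$ is Zariski-locally étale over $\A^d_{R_0}$. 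By \thref{BorKal} the induced homomorphism $W_S(R_0[t])\to W_S(R)$ is étale, hence flat, so by \thref{etalechat}(i)
$$W_S\Omega^q_{R/R_0}\;\cong\;W_S\Omega^q_{R_0[t]/R_0}\otimes_{W_S(R_0[t])}W_S(R).$$
Flat base change preserves injectivity of multiplication by an integer, and preserves vanishing, so it suffices to treat $R_0[t]$ itself.

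For the torsion-freeness I would use that a module is $\Z$-torsion free if and only if it is $p$-torsion free for every prime $p$, and that $M$ is $p$-torsion free if and only if $M\otimes_\Z\Z_{(p)}$ is (localisation at $p$ is exact and leaves the $p$-torsion untouched). Writing $\Z_{(p)}$ as the filtered colimit of the rings $\Z[1/n]$ with $p\nmid n$ — each of these étale over $\Z$ as an open subscheme of $\spec\Z$ — and combining \thref{Wittlocali}(ii) with \thref{etalechat}(i) and (ii), one gets $W_S\Omega^q_{R_0[t]/R_0}\otimes_\Z\Z_{(p)}\cong W_S\Omega^q_{R_{0,(p)}[t]/R_{0,(p)}}$. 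Thus we may assume $R_0$ is a flat $\Z_{(p)}$-algebra, so that $R_0[t]$ is $p$-torsion free; decomposing $W_S$ over the $\Z_{(p)}$-algebra $R_0[t]$ into its $p$-typical factors reduces a general finite truncation set to a $p$-typical one, and Langer and Zink's explicit description of the de Rham-Witt complex of a polynomial ring \cite[Cor. 2.18]{LZ04} then gives that $W_S\Omega^q_{R_0[t]/R_0}$ is $p$-torsion free. This proves the first assertion.

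For the vanishing in degrees $q>d$: knowing now that $W_S\Omega^q_{R/R_0}$ is $\Z$-torsion free, it embeds into $W_S\Omega^q_{R/R_0}\otimes_\Z\Q$, which by the same colimit and base-change argument (with $\Q=\colim_n\Z[1/n]$ this time) is $W_S\Omega^q_{R_\Q/R_{0,\Q}}$. Over the $\Q$-algebra $R_\Q$ the ghost map is an isomorphism (\thref{ghostisom}), and the relative de Rham-Witt complex of a $\Q$-algebra is, in each ghost component, the ordinary de Rham complex, so $W_S\Omega^q_{R_\Q/R_{0,\Q}}\cong\bigoplus_{m\in S}\Omega^q_{R_\Q/R_{0,\Q}}$. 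As $R_\Q$ is smooth over $R_{0,\Q}$ of relative dimension $d$, the module $\Omega^q_{R_\Q/R_{0,\Q}}$ is locally free of rank $\binom{d}{q}$, which vanishes for $q>d$; hence $W_S\Omega^q_{R/R_0}=0$ for $q>d$.

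The one genuinely hard step is the polynomial-ring case, that is, the input \cite[Cor. 2.18]{LZ04}: it rests on Langer and Zink's combinatorial analysis of $W_S\Omega^\bullet$ of affine space in terms of basic Witt differentials, and it is also where one must be careful about the passage between big and $p$-typical truncation sets. Everything else is formal base-change bookkeeping; the points that deserve a word of justification are that $W_S\Omega^q$ commutes with filtered colimits of rings, that \thref{etalechat} applies to the étale open immersions $\Z\hookrightarrow\Z[1/n]$ (and hence, after taking colimits, to localisations), and that the relative de Rham-Witt complex of a $\Q$-algebra is the de Rham complex in each ghost component.
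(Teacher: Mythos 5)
Your argument is correct and follows essentially the same route as the paper's source: the statement is quoted from \cite[Prop.~1.2.17]{Cha13} without an internal proof, and the strategy indicated there (and in the surrounding text of the paper) is exactly yours — reduce by étale base change (\thref{BorKal}, \thref{etalechat}) to a polynomial algebra, localize to the $p$-typical situation and invoke \cite[Cor.~2.18]{LZ04} for torsion-freeness, then deduce the vanishing in degrees $q>d$ from the embedding of the torsion free complex into its rationalization, where the ghost components are ordinary de Rham complexes. The points you flag (commutation with filtered colimits, the big-to-$p$-typical decomposition over a $\Z_{(p)}$-algebra) are indeed the only places needing routine justification, and they go through.
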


\section[Torsionless differential forms]{\scshape{Torsionless differential forms}}
Motivated by \thref{chatsmooth} our strategy was to find a suitable definition of ``torsion'', which produces a ``torsion free'' functor compatible with \'etale base change, such that the relative de Rham-Witt complex is obtained as a ``torsion free version'' of the naive differential forms of the associated Witt scheme. The following example shows that the relative de Rham-Witt complex is not reflexive. 
\begin{example}
	For $S=\{1,2\}$ we show that  $W_S\Omega^{\bullet}_{\Z[t]/\Z}$ is not reflexive.
	By \cite[Lem. 3.11]{CD15} we can identify $W_S\Omega^{\bullet}_{\Z[t]/\Z}$ with $\Omega^{\bullet}_{W_S(\Z[t])/W_S(\Z)}/\Z\text{-torsion}$.
	An explicit computation shows that the $W_S(\Z[t])$-dual module in degree one is given by
	\begin{align*}
		&(W_S\Omega_{\Z[t]/\Z}^{1})^{\vee}
		\cong (\Omega_{W_S(\Z[t])/W_S(\Z)}^{1})^{\vee}
		\cong\text{Der}_{W_S(\Z)}(W_S(\Z[t]),W_S(\Z[t]))\\
		=&\left\{\begin{array}{c}
			d\colon W_S(\Z[t])\to W_S(\Z[t])\\
			W_S(\Z)\text{-linear derivation}
		\end{array}\Big|
		\begin{array}{l}
			d[t]=(2-V_2[1])[f]+V_2[gt]\\dV_2[t]=V_2[g]
		\end{array},\ f,g\in \Z[t]\right\}
	\end{align*}
	Let $\varphi_{f,g}\in (\Omega_{W_S(\Z[t])/W_S(\Z)}^{1})^{\vee}$ denote the $W_S(\Z[t])$-linear map corresponding to $f,g\in \Z[t]$.
	We identify $(W_S\Omega_{\Z[t]/\Z}^1)^{\vee\vee}$ with 
	$(\Omega_{W_S(\Z[t])/W_S(\Z)}^1)^{\vee\vee}$
	and show that the $W_S(\Z[t])$-linear map $\psi\colon \varphi_{f,g}\mapsto  (2-V_2[1])[f]+V_2[gt],f,g\in\Z[t],$ does not lie in the image of the evaluation map. \\
	Let us assume the existence of $\omega\in \Omega_{W_S(\Z[t])/W_S(\Z)}^1$ such that $\psi=\ev_{\overline{\omega}}$ with $\overline{\omega}=\omega+\Z\text{-torsion}$. 
	There are
	$a,b\in W_S(\Z[t])$ such that $\omega=ad[t] +bdV_2[t]$.
	We set  $a_n\defeq \gh_n(a)$, $b_n\defeq \gh_n(b)$ for $ n\in S$ and compute $\gh_S(\varphi_{1,1}(\overline{\omega}))=2(a_1,a_2t+b_2)$.
	This is equal to $\gh_S(\psi(\varphi_{1,1}))=2(1,t)$, hence implies $a_1=1$ and $a_2t+b_2=1$. 
	By \hyperref[Dwork'sLemma]{Dwork's Lemma} both $1-a_2$ and $b_1(t^2)-b_2
	=b_1(t^2)+a_2t-1
	$ lie in $2\Z[t]$. But we have
	\begin{equation*}
		b_1(t^2)-b_2
		=b_1(t^2)+t-1-(1-a_2)t
		\equiv b_1(t^2)+t-1\mod 2\Z[t]
	\end{equation*}
	and the degree $1$ term of $b_1(t^2)+t-1$ is $t\notin 2\Z[t]$, which contradicts the assumption.
\end{example}

We introduce the weaker property of being torsionless. This property does not appear very often in literature, but deserves to be better known.
\begin{definition}[{\cite[II, sec. 4]{Bas60}}]
	Let $M$ be an $R$-module.
	The coimage $T_R(M)\defeq M/\ker\epsilon_M$ of the canonical map $\epsilon_M\colon M\to  M^{\vee\vee}=\Hom_R(\Hom_R(M,R),R)$ is called the \textit{torsionless quotient}.
	The module $M$ is called \textit{torsionless} if $\epsilon_M\colon M\to  M^{\vee\vee}$ is injective or equivalently $T_R(M)=M$.
\end{definition}
\begin{prop}[{\cite[Rem. 4.65]{Lam12}}] \thlabel{torsionlesssub}Let $M$ be an $R$-module.
	\begin{enumerate}[label=(\roman*),topsep=-8pt]
		\item Torsionless modules are torsion free.
		\item $M$ is torsionless if and only if $M$ can be embedded into a direct product $R^I$ for some (possibly infinite) index set $I$.
		\item Any submodule of a free module is torsionless.
		\item Any submodule of a torsionless module is torsionless.
		\item Direct sums and direct products of torsionless modules are torsionless.
		\item If $R$ is an integral domain and $M$ a finitely generated torsion free $R$-module, then $M$ is torsionless.
	\end{enumerate}\nointerlineskip
\end{prop}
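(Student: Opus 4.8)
The logical core of the proposition is the intrinsic characterization (ii): once a module is recognised as torsionless precisely when it admits an embedding into a power $R^I$, parts (iii)--(v) reduce to one-line manipulations and only (i) and (vi) require an extra remark. So the plan is to prove (ii) first. For the forward direction I would note that $M^{\vee\vee}=\Hom_R(M^\vee,R)$ maps $R$-linearly and injectively into $R^{M^\vee}$ via $F\mapsto (F(\varphi))_{\varphi\in M^\vee}$ (a homomorphism vanishing on every $\varphi\in M^\vee$ is the zero map), so that post-composing $\epsilon_M$ with this map exhibits an embedding $M\hookrightarrow R^{M^\vee}$ exactly when $\epsilon_M$ is injective, i.e. exactly when $M$ is torsionless. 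For the converse, given an embedding $j\colon M\hookrightarrow R^I$ and a nonzero $m\in M$, some coordinate of $j(m)$ is nonzero; the corresponding coordinate projection restricted to $M$ is then a $\varphi\in M^\vee$ with $\varphi(m)\neq 0$, whence $\epsilon_M(m)(\varphi)=\varphi(m)\neq 0$ and $\epsilon_M$ is injective.

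With (ii) available the remaining formal parts follow mechanically. A free module $R^{(I)}$ is contained in $R^I$, so any submodule of a free module embeds into some $R^I$, which gives (iii); for (iv) the identical argument applies with $R^I$ replaced by the power into which the ambient torsionless module embeds. For (v), if $M_k\hookrightarrow R^{I_k}$ for each $k$ then $\prod_k M_k\hookrightarrow \prod_k R^{I_k}=R^{\coprod_k I_k}$, which settles the product case, and a direct sum is a submodule of the corresponding product, hence torsionless by (iv). Part (i) is likewise immediate from (ii): embedding a torsionless $M$ into some $R^I$, if $rm=0$ for a non-zero-divisor $r\in R$, then each coordinate of the image of $m$ is annihilated by $r$ and therefore vanishes, so $m=0$.

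The one part calling for an idea beyond unwinding definitions is (vi), and even there the step is routine. With $R$ a domain and $M$ finitely generated and torsion free, I would pass to the field of fractions $K$: torsion freeness makes $M\hookrightarrow V\defeq M\otimes_R K$ an injection, and $V$ is a finite-dimensional $K$-vector space since $M$ is finitely generated; choosing a $K$-basis, identifying $V$ with $K^n$, and clearing a common denominator $d\in R\setminus\{0\}$ of the (finitely many) coordinate entries of a finite generating set of $M$ yields $dM\subseteq R^n$. As $d$ is a non-zero-divisor and $M$ is torsion free, multiplication by $d$ identifies $M$ with the submodule $dM$ of the free module $R^n$, so $M$ is torsionless by (iii). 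I do not anticipate a genuine obstacle anywhere: the argument is entirely formal, the single non-automatic point being the uniform clearing of denominators in (vi), which works precisely because $M$ is finitely generated. The one place deserving careful bookkeeping is making the identification of $M^{\vee\vee}$ with $\Hom_R(M^\vee,R)$ and its evaluation map into $R^{M^\vee}$ fully explicit in (ii), since every subsequent part is bootstrapped from it.
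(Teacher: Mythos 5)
Your proof is correct, and all six parts are argued soundly: the embedding characterization in (ii) is established in both directions, (i) and (iii)--(v) follow formally from it as you say, and the denominator-clearing in (vi) is legitimate because a common denominator for a finite generating set clears denominators for all of $M$, while multiplication by $d$ is injective on the torsion free module $M$, identifying $M$ with the submodule $dM\subseteq R^n$. The paper itself offers no proof of this proposition, citing \cite[Rem.\ 4.65]{Lam12} instead; your argument is the standard one underlying that reference, organized around (ii) as the pivot, so it is a perfectly adequate self-contained substitute for the citation.
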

Forming the torsionless quotient defines a covariant endofunctor on the category of $R$-modules compatible with arbitrary direct sums. Moreover, the projections $M\to  T_R(M)$ constitute natural transformations from the identity functors to $T_R$. The functor $T_R$ is neither right exact nor preserves injectivity.\\
For a sheaf $\FF$ of $\OO_X$-modules on an arbitrary scheme $X$ again there is a natural ``evaluation map''
$\epsilon_{\FF}\colon \FF\to \FF^{\vee\vee}=\sHom_{\OO_X}(\sHom_{\OO_X}(\FF,\OO_X),\OO_X)$ of sheaves of $\OO_X$-modules. We define the \textit{torsionless quotient} $\T(\FF)\defeq\FF/\ker\epsilon_{\FF}$. 
\begin{definition}
	We call a sheaf $\FF$ of $\OO_X$-modules \textit{torsionless} if the evaluation map \\${\epsilon_{\FF}\colon \FF \to \FF^{\vee\vee}}$ is injective or equivalently $\T(\FF)=\FF$.
\end{definition}
	If $\FF$ is an $\OO_X$-module of finite presentation and stalkwise torsionless, then $\FF$ is a torsionless $\OO_X$-module by the commutativity of the diagram \begin{equation*}
		\begin{tikzcd}
			\FF_x\arrow[r]\arrow[d,hook]& (\FF^{\vee\vee})_x\arrow[d]\\
			(\FF_x)^{\vee\vee}\arrow[r,"\sim"]&\Hom_{\OO_{X,x}}(\sHom_{\OO_X}(\FF,\OO_X)_x,\OO_{X,x})
		\end{tikzcd}
	\end{equation*} for all $x\in X$. To make sure that the reverse statement is true we assume $X$ to be locally noetherian from now on.
	\begin{prop}\thlabel{cohsurjflat}
		Let $X$ be a locally noetherian scheme and $\FF$ a coherent $\OO_X$-module.
		\begin{enumerate}[label=(\roman*),topsep=-8pt]
			\item The torsionless quotient $\T(\FF)$ is a coherent $\OO_X$-module.
			\item For any flat morphism $f\colon Y\to X$ of schemes
			the canonical isomorphism\\ ${f^*(\FF^{\vee\vee})\to(f^*\FF)^{\vee\vee}}$ induces an isomorphism
			$$f^*\T(\FF)\to \T(f^*\FF )$$
			of finitely presented sheaves of $\OO_Y$-modules. 
		\end{enumerate}
	
			We have $\T(\FF)_x\cong T_{\OO_{X,x}}(\FF_x)$ for all $x\in X$.
		Moreover, let ${U=\spec R\subseteq X}$ be an affine open subscheme and $M$ an $R$-module associated with $\FF|_U$. Then $\T(\FF)|_U$ is associated with $T_R(M)$.
		The following are equivalent:
		\begin{enumerate}[topsep=-8pt]
			\item[(iii)] $\FF$ is a torsionless $\OO_X$-module.
			\item[(iv)] $\FF_x$ is a torsionless $\OO_{X,x}$-module for every $x\in X$.
			\item[(v)] $\FF(U)$ is a torsionless $\OO_{X}(U)$-module for all affine open subschemes $U\subseteq X$.
		\end{enumerate}\nointerlineskip\vspace{-2mm}
	\begin{proof}
		In this setting there is the natural isomorphism $f^*(\FF^{\vee\vee})\overset{\sim}{\to}f^*(\FF^{\vee})^{\vee}\overset{\sim}{\to}(f^*\FF)^{\vee\vee}$ and $\T(\FF)$ is quotient of coherent modules. The second statement follows from the exactness of flat pullback. The remaining statements are obvious by quasi-coherence.
	\end{proof}
	\end{prop}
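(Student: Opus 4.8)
The plan is to reduce every assertion to one standard fact: since $X$ is locally noetherian and $\FF$ is coherent, $\FF$ is of finite presentation, so for every flat morphism $g$ the canonical map $g^{*}(\FF^{\vee})\to(g^{*}\FF)^{\vee}$ is an isomorphism and $g^{*}$ is exact. First I would dispatch (i): $\FF^{\vee}$, and hence $\FF^{\vee\vee}$, is again a coherent $\OO_X$-module, so $\epsilon_{\FF}$ is a morphism of coherent modules and $\T(\FF)=\FF/\ker\epsilon_{\FF}$ is a quotient of a coherent module, hence coherent.

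For (ii) I would apply $\sHom_{\OO_X}(-,\OO_X)$ to a local finite presentation $\OO_X^{m}\to\OO_X^{n}\to\FF\to 0$, exhibiting $\FF^{\vee}$ as $\ker(\OO_X^{n}\to\OO_X^{m})$; flatness of $f$ lets $f^{*}$ commute with this kernel, which gives $f^{*}(\FF^{\vee})\xrightarrow{\sim}(f^{*}\FF)^{\vee}$ and, applying it twice, the stated isomorphism $f^{*}(\FF^{\vee\vee})\xrightarrow{\sim}(f^{*}\FF)^{\vee\vee}$. By naturality of the evaluation map this isomorphism carries $f^{*}\epsilon_{\FF}$ to $\epsilon_{f^{*}\FF}$, so applying the exact functor $f^{*}$ to $0\to\ker\epsilon_{\FF}\to\FF\to\T(\FF)\to 0$ yields $f^{*}\ker\epsilon_{\FF}=\ker(f^{*}\epsilon_{\FF})=\ker\epsilon_{f^{*}\FF}$ and therefore $f^{*}\T(\FF)=f^{*}\FF/\ker\epsilon_{f^{*}\FF}=\T(f^{*}\FF)$; this sheaf is of finite presentation because $\T(\FF)$ is coherent by (i) and pullback preserves finite presentation.

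The stalk formula $\T(\FF)_x\cong T_{\OO_{X,x}}(\FF_x)$ and the affine-local description are then the special cases of the same argument for the flat morphisms $\spec\OO_{X,x}\to X$ and for an open immersion $U=\spec R\hookrightarrow X$, using that an affine open of a locally noetherian scheme has noetherian coordinate ring, so that $M=\FF(U)$ is a finitely presented $R$-module and $T_R(M)$ is indeed associated with $\T(\FF)|_U$. For the equivalences, torsionlessness of $\FF$ means exactly $\ker\epsilon_{\FF}=0$; a quasi-coherent sheaf vanishes iff all its stalks vanish, and also iff its sections over every affine open vanish, while by the previous step $(\ker\epsilon_{\FF})_x=\ker\epsilon_{\FF_x}$ and $(\ker\epsilon_{\FF})(U)\cong\ker\epsilon_{M}$ with $M=\FF(U)$. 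Combining these gives (iii)$\Leftrightarrow$(iv)$\Leftrightarrow$(v).

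The one step that is not pure formalism is the isomorphism $f^{*}\sHom_{\OO_X}(\FF,\OO_X)\xrightarrow{\sim}\sHom_{\OO_Y}(f^{*}\FF,\OO_Y)$: establishing it genuinely uses both that $\FF$ is of finite presentation (to reduce via a presentation to free modules, where the claim is obvious) and that $f$ is flat (so that $f^{*}$ preserves the left-exactness encoding $\sHom$). I expect this lemma --- standard, e.g.\ in the Stacks Project --- to be the main technical point; once it is granted, the rest is bookkeeping with the naturality of $\epsilon$ and the exactness of flat pullback.
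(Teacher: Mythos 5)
Your proposal is correct and follows the same route as the paper's (much terser) proof: the key point in both is the canonical isomorphism $f^*(\FF^{\vee})\to(f^*\FF)^{\vee}$ for coherent $\FF$ and flat $f$, applied twice and combined with the naturality of $\epsilon_{\FF}$ and the exactness of flat pullback, with the stalkwise and affine-local statements obtained as special cases. You have merely spelled out the steps the paper leaves as "obvious by quasi-coherence."
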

\begin{cor}\thlabel{flatpullbacktorsionless}
	Let  $\FF$ be a coherent $\OO_X$-module on a locally noetherian scheme $X$. 
	\begin{enumerate}[label=(\roman*),topsep=-8pt]
		\item The torsionless quotient $\T(\FF)$ is torsionless.
		\item We have $\T(\T(\FF))\cong \T(\FF)$.
		\item If $\FF$ is a torsionless $\OO_Y$-module and $f\colon Y\to X$ a flat morphism, then $f^*\FF$ is a torsionless $\OO_Y$-module.
	\end{enumerate}\nointerlineskip\vspace{-2mm}
\begin{proof}
	Statement (i) and (ii) easily be checked locally. For part (iii) note that $f^*\FF$ is finitely presented. Therefore torsionless stalks guarantee a torsionless sheaf. Moreover, flat base change implies $T_{\OO_{Y,y}}((f^*\FF)_y)\cong T_{\OO_{X,f(y)}}(\FF_{f(y)})\otimes_{\OO_{X,f(y)}}\OO_{Y,y}\cong (f^*\FF)_y$ for all $y\in Y$.
\end{proof}
\end{cor}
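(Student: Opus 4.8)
The plan is to push everything down to the level of modules over the local rings of $X$, or equivalently over the rings of sections on an affine open cover, where \thref{cohsurjflat} already does most of the work: it records that $\T(\FF)$ is coherent, that $\T(\FF)_x\cong T_{\OO_{X,x}}(\FF_x)$, that on an affine open $U=\spec R$ the sheaf $\T(\FF)|_U$ is the one associated with $T_R(M)$, and that torsionlessness of a coherent sheaf may be tested either stalkwise or on sections over affine opens.

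For (i) it therefore suffices to prove the purely algebraic statement that for any ring $R$ and any $R$-module $M$ the torsionless quotient $T_R(M)$ is itself torsionless. This is immediate from the canonical factorization $\epsilon_M\colon M\twoheadrightarrow T_R(M)\hookrightarrow M^{\vee\vee}$ together with the observation that $M^{\vee\vee}=\Hom_R(\Hom_R(M,R),R)$ embeds into the product $\prod_{\varphi\in\Hom_R(M,R)}R$ (an $R$-linear map is determined by its underlying function): by \thref{torsionlesssub} a submodule of a product of copies of $R$ is torsionless, hence so is $M^{\vee\vee}$, and hence so is its submodule $T_R(M)$. Feeding this into the equivalence of (iii) and (v) in \thref{cohsurjflat} (applied to the affine opens, using that $\T(\FF)|_U$ corresponds to $T_R(M)$) yields (i). Statement (ii) is then formal: by (i) the coherent sheaf $\GG\defeq\T(\FF)$ is torsionless, so $\epsilon_\GG$ is injective, $\ker\epsilon_\GG=0$, and the canonical surjection $\GG\to\T(\GG)=\GG/\ker\epsilon_\GG$ is the asserted isomorphism.

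For (iii), let $f\colon Y\to X$ be flat and assume $\FF$ is torsionless, i.e. $\T(\FF)=\FF$. The pullback $f^*\FF$ is of finite presentation, so by the remark preceding \thref{cohsurjflat} it is torsionless as soon as each stalk $(f^*\FF)_y$ is torsionless over $\OO_{Y,y}$ -- here one genuinely works with "finite presentation" and stalks rather than with coherence, since $Y$ need not be locally noetherian. Fixing $y\in Y$ and $x=f(y)$, one has $(f^*\FF)_y\cong\FF_x\otimes_{\OO_{X,x}}\OO_{Y,y}$ with $\OO_{X,x}\to\OO_{Y,y}$ flat, and the flat base change isomorphism for the torsionless quotient -- the module-level content of \thref{cohsurjflat}(ii) -- gives
\[
T_{\OO_{Y,y}}\bigl((f^*\FF)_y\bigr)\cong T_{\OO_{X,x}}(\FF_x)\otimes_{\OO_{X,x}}\OO_{Y,y}\cong\FF_x\otimes_{\OO_{X,x}}\OO_{Y,y}\cong(f^*\FF)_y,
\]
where the middle isomorphism holds because $\FF$ torsionless forces $T_{\OO_{X,x}}(\FF_x)=\T(\FF)_x=\FF_x$. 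Thus every stalk of $f^*\FF$ is torsionless, and $f^*\FF$ is torsionless.

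The corollary is in the end bookkeeping around \thref{cohsurjflat}; the only substantive ingredient is the flat base change property of the double dual, $\Hom_R(M,R)\otimes_RR'\xrightarrow{\sim}\Hom_{R'}(M\otimes_RR',R')$ for $R\to R'$ flat and $M$ finitely presented, which \thref{cohsurjflat}(ii) already supplies. The one mild point requiring attention is the one flagged above: in (iii) the source scheme $Y$ is not assumed locally noetherian, so torsionlessness of $f^*\FF$ has to be deduced from finite presentation together with torsionless stalks rather than from the coherent-case criterion used in (i) and (ii).
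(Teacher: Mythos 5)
Your proof is correct and follows essentially the same route as the paper: parts (i) and (ii) are verified locally (you simply make explicit the algebraic fact that $T_R(M)$ embeds in $M^{\vee\vee}\subseteq\prod_{\varphi\in M^{\vee}}R$ and is therefore torsionless by \thref{torsionlesssub}), and part (iii) uses exactly the paper's argument -- finite presentation of $f^*\FF$ plus the stalkwise criterion, together with the flat base change isomorphism $T_{\OO_{Y,y}}((f^*\FF)_y)\cong T_{\OO_{X,f(y)}}(\FF_{f(y)})\otimes_{\OO_{X,f(y)}}\OO_{Y,y}$ from \thref{cohsurjflat}(ii). Your remark that on $Y$ one must argue via finite presentation and stalks rather than coherence is a correct and worthwhile precision of the paper's terser wording.
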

These properties of the torsionless quotient on locally noetherian schemes enable us to prove
\begin{lemma}\thlabel{etalesheaf} 
	Let $f\colon X\to Y$ be a morphism of schemes and $q\geq 0$. If $X$ is locally noetherian and $f$ is locally of finite type, then the functor
	$$(U\to X)\mapsto \T(\Omega^q_{U/Y})(U)$$
	defines a coherent $\OO$-module for the \'etale topology of $X$.
\end{lemma}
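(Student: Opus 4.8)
The plan is to verify the étale-sheaf axioms on the presheaf $U\mapsto \T(\Omega^q_{U/Y})(U)$ by reducing everything to the already established properties of the torsionless quotient under flat base change (\thref{cohsurjflat}, \thref{flatpullbacktorsionless}) together with the fact that the usual Kähler differentials $\Omega^q_{-/Y}$ already form a quasi-coherent sheaf on the big étale site of $X$. First I would observe that since $f$ is locally of finite type and $X$ is locally noetherian, every étale $U\to X$ is itself locally noetherian and the $\OO_U$-module $\Omega^q_{U/Y}$ is coherent; hence by \thref{cohsurjflat}(i) the torsionless quotient $\T(\Omega^q_{U/Y})$ is a coherent $\OO_U$-module, so the candidate presheaf takes values in finitely generated modules on each affine piece and its sheafification (if it is a sheaf) will be coherent by the local description in \thref{cohsurjflat}.

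The main point is restriction compatibility and gluing. For an étale morphism $g\colon V\to U$ over $X$ (in particular for an open immersion), $g$ is flat, so \thref{cohsurjflat}(ii) gives a canonical isomorphism $g^*\T(\Omega^q_{U/Y})\xrightarrow{\sim}\T(g^*\Omega^q_{U/Y})$, and since $\Omega^q_{-/Y}$ is compatible with étale (indeed any) base change, $g^*\Omega^q_{U/Y}\cong\Omega^q_{V/Y}$, whence $g^*\T(\Omega^q_{U/Y})\cong\T(\Omega^q_{V/Y})$ as $\OO_V$-modules. Taking global sections of this isomorphism supplies the restriction maps of the presheaf and shows they are precisely those of a quasi-coherent sheaf pulled back along $g$; transitivity of these isomorphisms (which follows from functoriality of $g^*$ and of the evaluation map) gives the presheaf functoriality. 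To check the sheaf condition for an étale covering $\{U_i\to U\}$, I would use that $\Omega^q_{U/Y}$ restricted to the small étale site of $U$ is already a sheaf, that $\T(\Omega^q_{U/Y})$ is a quotient of it by $\ker\epsilon$, and that—because $\ker\epsilon$ is again quasi-coherent and forming it commutes with the flat (here étale) restrictions just invoked—the association $U\mapsto\T(\Omega^q_{U/Y})(U)$ is the sheaf on $X_{\et}$ represented by the quasi-coherent $\OO_X$-module $\T(\Omega^q_{X/Y})$; equivalently, one glues the locally defined coherent sheaves $\T(\Omega^q_{U_i/Y})$ along the isomorphisms just produced, using \thref{cohsurjflat}(ii) over the overlaps $U_i\times_U U_j$, and checks the cocycle condition, which is automatic from naturality of the canonical isomorphism $f^*(\FF^{\vee\vee})\to(f^*\FF)^{\vee\vee}$.

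The step I expect to be the main obstacle is ensuring that \emph{forming $\T$ commutes with étale restriction compatibly enough to glue}, i.e. that the isomorphisms $g^*\T(\Omega^q_{U/Y})\cong\T(\Omega^q_{V/Y})$ are genuinely natural and satisfy the cocycle identity, so that the locally given coherent sheaves $\T(\Omega^q_{U_i/Y})$ descend. This is exactly where local noetherianity is used: \thref{cohsurjflat}(ii) is stated only for coherent sheaves on locally noetherian schemes, and without it $\T$ need not commute with flat pullback. Once that compatibility is in hand, the sheaf axioms follow formally from the corresponding axioms for $\Omega^q_{-/Y}$ on $X_{\et}$, and coherence of the resulting sheaf is immediate from \thref{cohsurjflat}(i) applied on an affine étale cover.
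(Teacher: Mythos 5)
Your proposal is correct and follows essentially the same route as the paper: coherence of each $\T(\Omega^q_{U/Y})$ comes from \thref{cohsurjflat} (i), the identification $g^*\T(\Omega^q_{X/Y})\cong\T(g^*\Omega^q_{X/Y})\cong\T(\Omega^q_{U/Y})$ for \'etale $g\colon U\to X$ comes from \thref{cohsurjflat} (ii) together with the canonical exact sequence for relative differentials, and the sheaf property is exactly the standard fpqc/\'etale descent statement for quasi-coherent modules that the paper cites, so no separate gluing or cocycle verification is needed. The only slip is your parenthetical claim that $\Omega^q_{-/Y}$ is compatible with \emph{any} base change over $Y$: the comparison map $g^*\Omega^q_{U/Y}\to\Omega^q_{V/Y}$ is an isomorphism because $g$ is \'etale (flatness alone would not suffice), which is all your argument actually uses.
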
\vspace{-5mm}

Here $\OO$ denotes the sheaf of rings $(U\to X)\mapsto \OO_U(U)$ on the \'etale site of $X$.
\begin{proof}
	Given a morphism $f\colon X\to Y$ locally of finite type, the sheaf of relative differential forms $\Omega^q_{X/Y}$  is a quasi-coherent $\OO_X$-module of finite type (see \cite[\href{https://stacks.math.columbia.edu/tag/01V2}{01V2}, \href{https://stacks.math.columbia.edu/tag/01CK}{01CK}]{StacksProject}).
	As $X$ is locally noetherian, it is coherent and \thref{cohsurjflat} (i) applies.
	Thus $\T(\Omega_{ X/Y}^q)$ is a coherent $\OO_X$-module, too.\\
	Any descent datum given by an fpqc covering on quasi-coherent sheaves is effective (see \cite[\href{https://stacks.math.columbia.edu/tag/03NV}{03NV},   \href{https://stacks.math.columbia.edu/tag/023T}{023T}]{StacksProject}).
	Hence the functor 
	$(U\overset{g}{\to} X)\mapsto g^*\T(\Omega^q_{X/Y})(U)$
	defines a quasi-coherent sheaf on the fpqc site of $X$.
	By \thref{cohsurjflat} (ii) and the canonical exact sequence for relative differential forms we obtain
	$g^*\T(\Omega^q_{X/Y})
	\cong\T(g^*\Omega^q_{X/Y})
	\cong \T(\Omega^q_{U/Y}) 
	$
	if $g$ is \'etale.
\end{proof}
Beyond the structure of an $\OO$-module on the de Rham complex we also want to extend the structure of an exterior algebra. Later we will be able to obtain a differential graded algebra on the torsionless quotient by combining both. 
\begin{lemma}\thlabel{embext}
	Let $X$ be a locally noetherian scheme and $\FF$ a coherent $\OO_X$-module. 
	Assume that $f\colon X\to \spec \Z$ is flat and denote by $$\iota\colon X_{\Q}\defeq X\times_{\spec \Z}\spec \Q\cong f^{-1}(\{\eta\})\hookrightarrow X$$ the inclusion of the generic fibre.
	\begin{enumerate}[label=(\roman*),topsep=-8pt]
		\item The torsionless quotient $\T(\FF)$ can be interpreted as an $\OO_X$-submodule of $\iota_*\T(\FF|_{X_{\Q}})$ via the canonical morphism
		$\T(\FF)\to \iota_*\iota^*\T(\FF)$.
		\item If $\iota^*\FF$ is a finite locally free $\OO_{X_{\Q}}$-module, then $\bigoplus_{q\geq 0} \T(\Lambda^{q} \FF)$ inherits the wedge pro\-duct of the exterior algebra $\Lambda^{\bullet}\FF$, i.e. for all $q,r\geq 0$ the diagram 
		\begin{mycenter}[0mm]
			\begin{tikzcd}
				\Lambda^q\FF\times\Lambda^r\FF\arrow[r,"\wedge"]\arrow[d] 
				& \Lambda^{q+r}\FF\arrow[d]\\
				\T(\Lambda^q\FF\times \Lambda^r\FF)\arrow[r,"\T(\wedge)"]
				&\T(\Lambda^{q+r}\FF)
			\end{tikzcd}
		\end{mycenter}
		commutes and we obtain a map $\wedge\colon \T(\Lambda^q\FF)\times \T (\Lambda^r\FF)\cong\T(\Lambda^q\FF\times \Lambda^r\FF)\hspace{-1mm} \to\hspace{-1mm} \T(\Lambda^{q+r}\FF)$.
		Moreover, the canonical morphism $\bigoplus_{q\geq 0} \T(\Lambda^{q}\FF)\hspace{-1mm}\to\hspace{-1mm} \T(\Lambda^{\bullet}\FF)$ is an isomorphism of graded $\OO_X$-algebras.
	\end{enumerate}\nointerlineskip
	\begin{proof}
		First we note that $\iota\colon X_{\Q}\to X$ is an affine morphism, hence quasi-compact and separated (see \cite[ \href{https://stacks.math.columbia.edu/tag/01S7}{01S7}]{StacksProject}). Therefore $\iota_*\iota^*\T(\FF)$ is quasi-coherent (see \cite[ \href{https://stacks.math.columbia.edu/tag/03M9}{03M9}]{StacksProject}). 
		On an affine open subscheme $U=\spec R\subseteq X$, where $\FF|_U$ is associated with an $R$-module $M$, the injectivity of the morphism restricted to $U$ is equivalent to the injectivity of the homomorphism
		$T(M)\to T(M)\otimes_R R_{\Q}\cong T(M)_{\Q}$ of $R$-modules. This is clear because $T(M)$ is a flat $\Z$-module. Moreover, by \thref{cohsurjflat} (ii) we globally have an isomorphism $\iota^*\T(\FF)\cong \T(\iota^*\FF)=\T(\FF|_{X_{\Q}})$.\\
		For (ii) note that $\iota^*\FF$ is finitely presented and locally free, hence finite locally free (see \cite[ \href{https://stacks.math.columbia.edu/tag/05P2}{05P2}]{StacksProject}). 
		Recall that the exterior powers of a finite locally free module are itself finite locally free (see \cite[ \href{https://stacks.math.columbia.edu/tag/01CK}{01CK (6)}]{StacksProject}). Indeed, also the exterior algebra $\Lambda^{\bullet} (\iota^*\FF)\cong \iota^*(\Lambda^{\bullet} \FF)$ is a locally free $\OO_{X_{\Q}}$-algebra on the locally noetherian scheme ${X_{\Q}}$ (see \cite[ \href{https://stacks.math.columbia.edu/tag/01CI}{01CI}, \href{https://stacks.math.columbia.edu/tag/01CL}{01CL}]{StacksProject}). 
		Therefore part (i) gives us an embedding of $\OO_X$-modules
		$$\T(\Lambda^q \FF)
		\hookrightarrow  \iota_*\T((\Lambda^q \FF)|_{X_{\Q}}))
		\overset{\sim}{\to}\iota_*((\Lambda^q \FF)|_{X_{\Q}})
		\overset{\sim}{\to}\iota_*(\Lambda^q (\iota^*\FF)).$$
		The fact that we are working on a locally noetherian scheme and the compatibility of the exterior algebra with pull back allow us to reduce to the affine setting. We assume $X=\spec R$ and choose an $R$-module $M$ associated with the coherent sheaf $\FF=\FF|_{\spec R}$. By assumption the base change $M_{\Q}$ is a finite locally free $R_{\Q}$-module and so is $\Lambda_{R_{\Q}}^{q} M_{\Q}$ for every $q\geq 0$. Hence these modules are reflexive, in particular torsionless.
		We consider $\omega\in \Lambda^q M$ and $\eta\in \Lambda^r M$ for some $q,r\geq 0$. 
		The image of the pair ${(\omega + \ker\epsilon_{q},\eta + \ker\epsilon_{r})}$ under the composition
		$$T(\Lambda^q M)\times T(\Lambda^r M)\hookrightarrow
		\Lambda_{R_{\Q}}^q M_{\Q}\times \Lambda_{R_{\Q}}^r M_{\Q}\overset{\wedge}{\to}
			\Lambda_{R_{\Q}}^{q+r} M_{\Q}\overset{\sim}{\to}
				(\Lambda^{q+r} M)_{\Q}$$ 
		is $(\omega\wedge\eta)\otimes 1$, which
		has a unique preimage in $T(\Lambda^{q+r} M)\hookrightarrow	(\Lambda^{q+r} M)_{\Q}$. Thus the map
		\begin{align*}
			\wedge \colon T(\Lambda^q M)\times T(\Lambda^r M)~~~~~
			&\to  T(\Lambda^{q+r} M)\\
			(\omega+ \ker\epsilon_{q},\eta+ \ker\epsilon_{r})
			&\mapsto  \omega\wedge\eta + \ker\epsilon_{q+r}
		\end{align*}
		is well-defined. \\
		For the last statement note that for every $q\geq0 $ we have a morphism $\Lambda^{q}\FF\to \Lambda^{\bullet}\FF$, which induces a morphism $\T(\Lambda^{q}\FF)\to\T( \Lambda^{\bullet}\FF)$ on the torsionless quotients. By taking the direct sum over all $q\geq 0$ we obtain the canonical morphism
		$\bigoplus_{q\geq 0}\T(\Lambda^{q}\FF)\to\T( \Lambda^{\bullet}\FF)$ of $\OO_X$-modules.
		Since $X$ is locally noetherian, $\Lambda^q\FF$ is a coherent $\OO_X$-module for every $q\geq 0$ (see \cite[ \href{https://stacks.math.columbia.edu/tag/01CK}{01CK (4)}]{StacksProject}). The direct sum commutes with the torsionless quotient in this setting and we obtain an isomorphism.
	\end{proof}
\end{lemma}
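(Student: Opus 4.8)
The plan is to reduce both parts to the affine setting and exploit that $\T(\FF)$, being torsionless and living over a $\Z$-flat base, is $\Z$-torsion free, hence embeds into its restriction to the generic fibre; over $X_{\Q}$ the hypothesis in (ii) makes all the modules in sight finite locally free and therefore torsionless, so the wedge product can simply be transported through this embedding.

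For (i), I would first recall from \thref{cohsurjflat} that $\T(\FF)$ is coherent, that its formation commutes with the flat pullback $\iota$ (so $\iota^*\T(\FF)\cong\T(\iota^*\FF)=\T(\FF|_{X_{\Q}})$), and that on an affine open $U=\spec R$ with $\FF|_U$ associated to a module $M$, the sheaf $\T(\FF)|_U$ is associated to $T_R(M)$. The morphism $\spec\Q\to\spec\Z$ is affine, hence so is $\iota$; thus $\iota_*\iota^*\T(\FF)$ is quasi-coherent, and over $U$ the canonical map $\T(\FF)\to\iota_*\iota^*\T(\FF)$ becomes the localization $T_R(M)\to T_R(M)\otimes_R R_{\Q}\cong T_R(M)\otimes_{\Z}\Q$. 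Its injectivity amounts to $T_R(M)$ being $\Z$-torsion free; but $X$ is flat over $\Z$, so $R$ is $\Z$-flat, every product $R^I$ is $\Z$-torsion free, and $T_R(M)$ embeds into such a product by \thref{torsionlesssub}. This gives (i).

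For (ii), note that exterior powers commute with pullback, so $(\Lambda^q\FF)|_{X_{\Q}}\cong\Lambda^q(\iota^*\FF)$; as $\iota^*\FF$ is finite locally free, each $\Lambda^q(\iota^*\FF)$ is again finite locally free, hence reflexive, hence torsionless. Part (i) then provides embeddings $\T(\Lambda^q\FF)\hookrightarrow\iota_*\Lambda^q(\iota^*\FF)$ for all $q$. I would define the wedge by restricting the exterior multiplication of $\Lambda^{\bullet}(\iota^*\FF)$ along these embeddings and checking affine-locally that it lands where it should: on $U=\spec R$ with $M$ associated to $\FF|_U$, every element of $T_R(\Lambda^q M)$ is the image of some $\omega\in\Lambda^q M$, so the wedge of two such images is the image of $\omega\wedge\eta\in\Lambda^{q+r}M$, which lies in $T_R(\Lambda^{q+r}M)$; the injectivity from part (i) of $T_R(\Lambda^{q+r}M)\to(\Lambda^{q+r}M)_{\Q}$ makes the resulting pairing well defined and the displayed square commute. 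Combining this with the identification $\T(\Lambda^q\FF)\times\T(\Lambda^r\FF)\cong\T(\Lambda^q\FF\times\Lambda^r\FF)$ (compatibility of $\T$ with direct sums) yields the wedge map. Finally, the inclusions $\Lambda^q\FF\hookrightarrow\Lambda^{\bullet}\FF$ induce $\T(\Lambda^q\FF)\to\T(\Lambda^{\bullet}\FF)$, and their direct sum is the canonical map $\bigoplus_{q}\T(\Lambda^q\FF)\to\T(\Lambda^{\bullet}\FF)$; it is an isomorphism because $\T$ commutes with the direct sum $\Lambda^{\bullet}\FF=\bigoplus_q\Lambda^q\FF$ in the locally noetherian setting, and it is a morphism of graded $\OO_X$-algebras by construction.

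I expect the main obstacle to be the careful bookkeeping in part (i): one must upgrade ``torsionless over $R$'' to ``$\Z$-torsion free'' — this is precisely where the flatness of $f\colon X\to\spec\Z$ enters — and must keep the generic-fibre embedding compatible with the identification $\iota^*\T(\FF)\cong\T(\iota^*\FF)$ of \thref{cohsurjflat}. Once (i) is secured, (ii) is a transport-of-structure argument whose only genuine input is that the generic fibre is finite locally free, so that its exterior powers are torsionless and the embedding of part (i) is available; well-definedness of the wedge is then immediate from the description of $T_R(\Lambda^q M)$ as the image of $\Lambda^q M$.
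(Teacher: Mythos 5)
Your proposal is correct and follows essentially the same route as the paper: part (i) via affine-local reduction to the injectivity of $T_R(M)\to T_R(M)_{\Q}$ using $\Z$-flatness of $R$ (you make explicit the embedding into $R^I$ that underlies the paper's "flat $\Z$-module" remark) together with the flat-base-change identification $\iota^*\T(\FF)\cong\T(\iota^*\FF)$, and part (ii) by transporting the wedge through the generic-fibre embedding, where finite local freeness makes $\Lambda^q(\iota^*\FF)$ torsionless, exactly as in the paper's well-definedness argument and its treatment of $\bigoplus_q\T(\Lambda^q\FF)\to\T(\Lambda^{\bullet}\FF)$.
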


\begin{definition}\thlabel{gensmoothdef}
	Let $X\to Y$ be a morphism locally of finite type of flat $\Z$-schemes, where $X$ is locally noetherian.
	We call $X\to Y$ \textit{generically smooth} if the base change $ X_{\Q}\to  Y_{\Q}$ is smooth.
\end{definition}
We call a homomorphism of rings generically smooth if this is the case for the associated morphism of affine schemes.
Whenever we mention a generically smooth morphism, we silently put the assumptions of the definition.
\phantomsection\label{dgaIntro}
\begin{lemmaA}\thlabel{dga}
	Given a generically smooth morphism $f\colon X\to Y$ of schemes, the torsionless differential forms $\T(\Omega^{\bullet}_{X/Y})$ form an $f^{-1}\OO_Y$-dga.
\end{lemmaA}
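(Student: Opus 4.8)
The plan is to transport the existing dga structure on $\Omega^{\bullet}_{X/Y}$ across the surjection $\Omega^{\bullet}_{X/Y}\twoheadrightarrow\T(\Omega^{\bullet}_{X/Y})$, using \thref{embext} for the multiplicative part and a comparison with the generic fibre for the differential. First I would record the standing hypotheses. Since $f$ is locally of finite type and $X$ is locally noetherian, each $\Omega^{q}_{X/Y}=\Lambda^{q}\Omega^{1}_{X/Y}$ is a coherent $\OO_X$-module; since $f$ is generically smooth, base change of Kähler differentials identifies the restriction $\iota^{*}\Omega^{1}_{X/Y}$ to the generic fibre $\iota\colon X_{\Q}\hookrightarrow X$ with $\Omega^{1}_{X_{\Q}/Y_{\Q}}$, which is finite locally free because $X_{\Q}\to Y_{\Q}$ is smooth. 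Hence $\FF\defeq\Omega^{1}_{X/Y}$ satisfies all the hypotheses of \thref{embext}.

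For the graded-algebra structure I would simply invoke \thref{embext}(ii): it yields a wedge product $\wedge\colon\T(\Omega^{q}_{X/Y})\times\T(\Omega^{r}_{X/Y})\to\T(\Omega^{q+r}_{X/Y})$ compatible with the quotient maps out of $\Omega^{\bullet}_{X/Y}$, together with an isomorphism $\bigoplus_{q\geq 0}\T(\Omega^{q}_{X/Y})\xrightarrow{\sim}\T(\Omega^{\bullet}_{X/Y})$ of graded $\OO_X$-algebras. Associativity, graded-commutativity and the $f^{-1}\OO_Y$-algebra structure are then inherited from $\Omega^{\bullet}_{X/Y}$, because the quotient $\Omega^{\bullet}_{X/Y}\to\T(\Omega^{\bullet}_{X/Y})$ is a surjective morphism of graded $f^{-1}\OO_Y$-algebras.

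The real content is that the de Rham differential descends, i.e. that $d$ preserves $\ker\epsilon_{\Omega^{q}_{X/Y}}=\ker\bigl(\Omega^{q}_{X/Y}\to\T(\Omega^{q}_{X/Y})\bigr)$. By \thref{embext}(i) the torsionless quotient embeds into $\iota_{*}\T(\Omega^{q}_{X_{\Q}/Y_{\Q}})$, and since $\Omega^{q}_{X_{\Q}/Y_{\Q}}$ is finite locally free it is reflexive, hence equals its own torsionless quotient; unwinding the definitions, the composite $\Omega^{q}_{X/Y}\twoheadrightarrow\T(\Omega^{q}_{X/Y})\hookrightarrow\iota_{*}\Omega^{q}_{X_{\Q}/Y_{\Q}}$ is nothing but the canonical restriction map to the generic fibre. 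Therefore $\ker\epsilon_{\Omega^{q}_{X/Y}}=\ker\bigl(\Omega^{q}_{X/Y}\to\iota_{*}\Omega^{q}_{X_{\Q}/Y_{\Q}}\bigr)$. Now the de Rham differential is compatible with base change, so $d$ sits in a commutative square with $\iota_{*}d_{X_{\Q}/Y_{\Q}}$ and the restriction maps; consequently $d$ carries $\ker(\Omega^{q}_{X/Y}\to\iota_{*}\Omega^{q}_{X_{\Q}/Y_{\Q}})$ into $\ker(\Omega^{q+1}_{X/Y}\to\iota_{*}\Omega^{q+1}_{X_{\Q}/Y_{\Q}})$ and thus descends to $d\colon\T(\Omega^{q}_{X/Y})\to\T(\Omega^{q+1}_{X/Y})$. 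The relations $d^{2}=0$, the graded Leibniz rule for the wedge product constructed above, and $f^{-1}\OO_Y$-linearity of $d$ then follow at once from the corresponding facts in $\Omega^{\bullet}_{X/Y}$ by surjectivity of $\Omega^{\bullet}_{X/Y}\to\T(\Omega^{\bullet}_{X/Y})$.

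I expect the one genuine obstacle to be exactly this descent of $d$: a priori $\ker\epsilon_{\Omega^{q}_{X/Y}}$ is larger and more delicate than the $\Z$-torsion subsheaf, so it is not evident that a first-order operator should respect it. The device that resolves this is \thref{embext}(i) together with flatness over $\Z$ and generic smoothness, which identify $\ker\epsilon$ with the kernel of restriction to the torsion-free generic fibre $X_{\Q}$ — a subsheaf that $d$ manifestly preserves. The remainder is routine transport of structure along a surjection of graded rings.
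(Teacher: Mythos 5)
Your proposal is correct and follows essentially the same route as the paper: embed each $\T(\Omega^{q}_{X/Y})$ into $\iota_{*}\Omega^{q}_{X_{\Q}/Y_{\Q}}$ via \thref{embext}, note that by generic smoothness these sheaves on the generic fibre are finite locally free (hence torsionless), identify $\ker\epsilon$ with the kernel of restriction to $X_{\Q}$, and descend the differential through its compatibility with pullback to the generic fibre, while the multiplicative structure comes from \thref{embext}(ii). The only difference is cosmetic: you make the identification of $\ker\epsilon$ with the kernel of the restriction map fully explicit, which the paper leaves as an implicit consequence of its embedding.
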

\begin{proof}
	Since for each $q\geq 0$ the $\OO_X$-module $\Omega^{q}_{X/Y}$ is coherent by \cite[ \href{https://stacks.math.columbia.edu/tag/01V2}{01V2}, \href{https://stacks.math.columbia.edu/tag/01XZ}{01XZ}]{StacksProject}, we can apply part (i) of the previous lemma to embed the $\OO_X$-module $\T(\Omega^{q}_{X/Y})$ into $\iota_*\T(\Omega^{q}_{X_{\Q}/Y_{\Q}})$, where $\iota\colon X_{\Q}\to X$ is the inclusion of the generic fibre.
	But the $\OO_{X_{\Q}}$-module ${\iota^*\Omega^{q}_{X/Y}\cong \Omega^{q}_{X_{\Q}/Y_{\Q}}}$
	is finite locally free because the homomorphism $X_{\Q}\to  Y_{\Q}$ is smooth (see \cite[ \href{https://stacks.math.columbia.edu/tag/02G1}{02G1}]{StacksProject}). Therefore the second part of the lemma applies and we obtain $\T(\iota^*\Omega^{q}_{X/Y})=\iota^*\Omega^{q}_{X/Y}$.\\
	Though the differential $d\colon \Omega_{X/Y}^q\to \Omega_{X/Y}^{q+1}$ is not a map of $\OO_X$-modules but of $f^{-1}\OO_Y$-modules (see \cite[ \href{https://stacks.math.columbia.edu/tag/07HX}{07HX}]{StacksProject}), it still induces a map on the torsionless quotient:
	The derivation
	$d_{\Q}\colon \iota^*\Omega^{q}_{X/Y}\to  \iota^*\Omega^{q+1}_{X/Y}$ 
	maps $\iota^*\omega$ to $\iota^*(d\omega) $ on local sections. 
	Hence the graded $\OO_X$-algebra $\bigoplus_{q\geq 0}\T(\Omega^q_{X/Y})\cong\T(\Omega^{\bullet}_{X/Y})$ inherits all properties of a differential graded $f^{-1}\OO_Y$-algebra from $\Omega^{\bullet}_{X/Y}$ by similar arguments as in the proof of \thref{embext} part (ii). 
\end{proof}
\begin{remark}\thlabel{etalebottom}
	Let $X\overset{f}{\to} Y\overset{g}{\to}  Z$ be a pair of morphisms of schemes where $g$ is \'etale. 
	The induced map
	$\T(\Omega_{ X/Z}^q)\to \T(\Omega_{X/Y }^q)$
	is an isomorphism of $\OO_X$-modules.\\
	This follows from the canonical exact sequence
		$$0\to f^*\Omega_{ Y/Z}^q
		\to \Omega_{ X/Z}^q
		\to \Omega_{X/Y }^q
		\to 0$$
		with $f^*\Omega_{ Y/Z}^q=0$ and the application of the functor $\T$ to the isomorphism $\Omega_{ X/Z}^q
		\to \Omega_{X/Y }^q$.
\end{remark}

\section[Witt complexes as torsionless quotients]{\scshape{Witt complexes as torsionless quotients}}\label{sectionwittcomplexes}
In this section we establish the structure of a Witt complex on the torsionless quotient of the naive de Rham complex of Witt schemes. We assume all truncations sets to be finite if not specified otherwise. 
For the following considerations we need a certain finiteness condition on the underlying morphism of schemes:

\begin{definition} \thlabel{Wfinite}
A morphism of schemes $X\to Y$ is called $W_S\hspace{0.2mm}$\textit{-finite} (resp. $W$\textit{-finite}) if $W_S(X)$ is locally noetherian and the induced morphism $W_S(X)\to W_S(Y)$ of Witt schemes is locally of finite type for a fixed (resp. for every) finite truncation set $S$.
\end{definition}
\vspace{-2mm}
We call a homomorphism $R_0\to R$ of rings $W_S\hspace{0.2mm}$-finite (or $W$-finite) if the associated morphism of schemes is $W_{S}\hspace{0.2mm}$-finite (or $W$-finite).
This finiteness condition is satisfied in the commonly studied situations (see \thref{Zcircfinite}).\\\vspace{-2mm}

In this section we define restriction, Frobenius and Verschiebung operators on the functor 
\begin{align*}
	\begin{aligned}
		(\text{finite truncation sets})&\to (W(R_0)\text{-dga's})\\
		S&\mapsto T_{W_S}(\Omega_{R/R_0}^{\bullet})\defeq T_{W_{S}(R)}(\Omega_{W_{S}(R)/W_{S}(R_0)}^{\bullet})
	\end{aligned}
\end{align*}
with respect to a generically smooth, $W$-finite homomorphism $R_0\to R$ of rings. We denote by\vspace{-2mm}
$$\epsilon_{\Omega W_S}\colon \Omega_{W_{S}(R)/W_{S}(R_0)}^{q}\to (\Omega_{W_{S}(R)/W_{S}(R_0)}^{q})^{\vee\vee}$$
the evaluation map of the $W_S(R)$-module $\Omega_{W_{S}(R)/W_{S}(R_0)}^{q}$.
During our investigation we will observe that the structural maps like the Frobenius and Verschiebung maps on $T_{W_S}(\Omega_{R/R_0}^{\bullet})$ are directly inherited from the underlying ring of Witt vectors $W_S(R)$ (up to multiplication with constants).
At the second step we slightly adapt the setting to obtain the de Rham-Witt complex, i.e. we are going to prove the \hyperref[MAINTHEOREM]{main theorem} of this paper in section \ref{sectionreldrwcomplexes}.\\

Every Witt complex is uniquely characterized by its values on finite truncation sets. For an infinite truncation set we take the projective limit of dga's on the right hand side with respect to the poset of finite truncation subsets.

\phantomsection\label{wittcomplexintro}
\begin{theoremB}\thlabel{wittcomplex}
	For a generically smooth, $W$-finite ring homomorphism $R_0\to  R$ the functor
	\begin{equation*}
		(\text{finite truncation sets})\to (W(R_0)\text{-dga's}),~
		S\mapsto T_{W_{S}}(\Omega_{R/R_0}^{\bullet})
	\end{equation*}
	defines a Witt complex over $R$ with $W(R_0)$-linear differential.\\
	For an  \'etale homomorphism $R\to R'$ the induced map
	\begin{align*}
		T_{W_S}(\Omega^q_{R/R_0})\otimes_{W_S(R)}W_S(R') &\to T_{W_S}(\Omega^q_{R'/R_0})\\ (\omega+\ker\epsilon_{\Omega W_{S}})\otimes r'&\mapsto r'\cdot W_S(f)(\omega)+\ker\epsilon'_{\Omega W_{S}}
	\end{align*}
	is an isomorphism.
\end{theoremB}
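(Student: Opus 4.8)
The plan is to realize $T_{W_S}(\Omega^{\bullet}_{R/R_0})$ as a sub-differential-graded-algebra of the ``split'' model $(\Omega^{\bullet}_{R_{\Q}/R_{0,\Q}})^{S}$ coming from the ghost maps, to transport all the Witt-complex operations onto that model (where the compatibility relations are an elementary index computation), and to check that the sub-dga is stable under them; the étale statement is then formal. First I would verify that \thref{dga} applies to $W_S(R_0)\to W_S(R)$. Flatness of $R_0,R$ over $\Z$ makes them $S$-torsion free, so by \thref{tensorqsghost}(i) the ghost maps are injective and $W_S(R_0),W_S(R)$ are flat over $\Z$; $W$-finiteness makes $W_S(R)$ locally noetherian and $W_S(R_0)\to W_S(R)$ locally of finite type; and smoothness of $R_{0,\Q}\to R_{\Q}$ forces $W_S(R_0)\to W_S(R)$ generically smooth, because by \thref{Wittlocali}(ii) and \thref{tensorqsghost}(ii) the ghost map identifies $\spec W_S(R)_{\Q}\to\spec W_S(R_0)_{\Q}$ with the finite coproduct $\coprod_{n\in S}(\spec R_{\Q}\to\spec R_{0,\Q})$ of smooth morphisms. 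Thus \thref{dga} provides the $W_S(R_0)$-dga $T_{W_S}(\Omega^{\bullet}_{R/R_0})$; the same identifications together with finite local freeness of $\Omega^{q}_{R_{\Q}/R_{0,\Q}}$ (smoothness) show $\iota^{*}\Omega^{q}_{W_S(R)/W_S(R_0)}\cong(\Omega^{q}_{R_{\Q}/R_{0,\Q}})^{S}$ is finite locally free, so \thref{embext} yields a natural embedding of $W_S(R_0)$-dga's
$$\rho_S\colon T_{W_S}(\Omega^{\bullet}_{R/R_0})\hookrightarrow(\Omega^{\bullet}_{R_{\Q}/R_{0,\Q}})^{S},$$
with $W_S(R_0)$ acting on the target through $\gh_S$, and with image the $W_S(R)$-submodule spanned by the ghost images of the monomials $dx_1\wedge\cdots\wedge dx_q$, $x_i\in W_S(R)$ (the $k$-th ghost coordinate of such a monomial is $d(\gh_k x_1)\wedge\cdots\wedge d(\gh_k x_q)$). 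In degree $0$ this is the identity, $T_{W_S}(\Omega^{0}_{R/R_0})=W_S(R)$.

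Next I would equip $(\Omega^{\bullet}_{R_{\Q}/R_{0,\Q}})^{S}$ with the evident operations: $\eta_S=\gh_S$; the componentwise de Rham differential (which is $R_{0,\Q}^{S}=W_S(R_0)_{\Q}$-linear, hence $W(R_0)$-linear); $\res^{S}_{S'}$ given by projection onto the $S'$-factors; the Frobenius $F_n$ sending, in degree $q$, $(\omega_k)_{k\in S}\mapsto(n^{-q}\omega_{nk})_{k\in S/n}$; and the Verschiebung $V_n$ sending, in degree $q$, $(\omega_k)_{k\in S/n}$ to the family supported on the multiples of $n$ whose $j$-th entry is $n^{q+1}\omega_{j/n}$. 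A direct computation in the de Rham complex of the $\Q$-algebra $R_{\Q}$ over $R_{0,\Q}$, keeping track only of powers of the indices, verifies $d^2=0$, the Leibniz rule, $dF_n=nF_nd$, $V_nd=ndV_n$, $F_nV_n=n$, $F_ndV_n=d$, the projection formula and the remaining identities of \thref{propertiesdRW}; the functor visibly sends colimits of truncation sets to limits. So the split model is a Witt complex over $R$ with $W(R_0)$-linear differential.

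It then remains to show the image of $\rho_{\bullet}$ is preserved by $d$, $\res$, $F_n$ and $V_n$, for then it is a sub-Witt-complex and transporting the structure back along $\rho_{\bullet}$ proves the first assertion. Stability under $d$ is \thref{dga}. Since the image of $\rho_S$ is generated as a $W_S(R)$-module by ghost images of monomials $dx_1\wedge\cdots\wedge dx_q$, and $\res$, $F_n$ act on the $W_{\bullet}(R)$-module structure of the split model through the ring homomorphisms $\res^{S}_{S'},F_n\colon W_S(R)\to W_{S'}(R),W_{S/n}(R)$ (compatibly with those on $W_{\bullet}(R_0)$), it suffices to see their effect on such generators: using $\gh_k\circ\res^{S}_{S'}=\gh_k$ ($k\in S'$) and $\gh_k\circ F_n=\gh_{nk}$, the map $\res^{S}_{S'}$ sends the ghost image of $dx_1\wedge\cdots\wedge dx_q$ to that of $d(\res x_1)\wedge\cdots\wedge d(\res x_q)$, while $F_n$ sends it to $n^{-q}$ times the ghost image of $d(F_n x_1)\wedge\cdots\wedge d(F_n x_q)$. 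The genuinely non-formal point — which I expect to be \textbf{the main obstacle} — is that this last element is again integral, i.e. the ghost image of $d(F_n x_1)\wedge\cdots\wedge d(F_n x_q)$ is divisible by $n^{q}$ inside $T_{W_{S/n}}(\Omega^{q}_{R/R_0})$, and likewise that the rescaled Verschiebung preserves $T_{W_S}(\Omega^{q}_{R/R_0})$. I would verify these on module generators by writing $x_i=\sum_m V_m[r_{i,m}]$ and invoking $F_n[r]=[r^{n}]$, $d([r]^{n})=n[r]^{n-1}d[r]$ and the explicit form of $F_nV_m$, observing that the needed divisibility is partly already carried by the ghost components of $W_{S/n}(R)$; for $V_n$ the relation $V_nd=n\,dV_n$ of \thref{propertiesdRW} together with the projection formula rewrites $V_n$ of a generator as a combination of differentials of Witt vectors, which lie in the lattice (this is where the ghost-coordinate bookkeeping is heaviest).

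Finally, for an étale homomorphism $R\to R'$ of $R_0$-algebras, the map $W_S(R)\to W_S(R')$ is étale by \thref{BorKal}, in particular flat, so \thref{cohsurjflat}(ii) gives $T_{W_S}(\Omega^{q}_{R/R_0})\otimes_{W_S(R)}W_S(R')\cong T\bigl(\Omega^{q}_{W_S(R)/W_S(R_0)}\otimes_{W_S(R)}W_S(R')\bigr)$, and étaleness of $W_S(R)\to W_S(R')$ identifies the argument with $\Omega^{q}_{W_S(R')/W_S(R_0)}$; composing yields the asserted isomorphism (note $R'$ is again generically smooth and $W$-finite over $R_0$, so the right-hand side is defined). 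Its compatibility with $\eta$, $d$, $F_n$ and $V_n$ is immediate from naturality of all the constructions and from the compatibility of $W_S$ with the Frobenius in \thref{BorKal}.
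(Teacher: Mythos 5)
Your proposal is correct and follows essentially the same route as the paper: the embedding $T_{W_S}(\Omega^{\bullet}_{R/R_0})\hookrightarrow(\Omega^{\bullet}_{R_{\Q}/R_{0,\Q}})^S$ via the ghost maps (the paper's \thref{embedding}), the same normalizations $F_n^q=n^{-q}F_n$ and $V_n^q=n^{q+1}\cdot\tfrac{1}{n}V_n$ with the Witt-complex relations checked componentwise on the ghost side, and the \'etale statement deduced from \thref{BorKal} together with \thref{cohsurjflat} (ii). The one substantive step you leave as a sketch is the integrality of $n^{-q}F_n$, which the paper proves on generators $dF_pV_k[r]$ using $(V_k[r])^p=k^{p-1}V_k[r^p]$ and Fermat's little theorem (after reducing to prime $n=p$); by contrast the Verschiebung needs none of the bookkeeping you anticipate, since on the ghost side $V_n^q$ of the class of $r_0dr_1\cdots dr_q$ is literally the class of the elementary form $V_n(r_0)dV_n(r_1)\cdots dV_n(r_q)$, hence visibly stays in the lattice.
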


We prove \thref{wittcomplex} by introducing the multiple structure maps stepwise for finite truncation sets. 
Afterwards we extend the operators to infinite truncation sets by showing the compatibility with the projective limit. The main tool of the proof is the following lemma.

\begin{lemma}
	\thlabel{embedding}
	Let $R_0\to R$ be a generically smooth, $W_S\hspace{0.2mm}$-finite ring homomorphism. There is an embedding $ T_{W_{S}}(\Omega_{R/R_0}^{\bullet})\hookrightarrow  (\Omega_{R_{\Q}/R_{0,\Q}}^{\bullet})^S$ of $W_S(R_0)$-dga's induced by the injective ghost maps on $R_0$ and $R$.
\end{lemma}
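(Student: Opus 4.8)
The plan is to identify the generic fibre of the Witt scheme $\spec W_S(R)$ over $\spec\Z$, via the ghost map, with $\coprod_{n\in S}\spec R_{\Q}$, and then to read the embedding off from the general picture established in \thref{embext} and \thref{dga}.

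\emph{Step 1: properties of $W_S(R_0)\to W_S(R)$.} Since $R_0$ and $R$ are flat over $\Z$, they are $S$-torsion free, so by \thref{tensorqsghost}(i) the ghost maps $\gh_S\colon W_S(R_0)\hookrightarrow R_0^S$ and $\gh_S\colon W_S(R)\hookrightarrow R^S$ are injective, and (combining \thref{Wittlocali} and \thref{tensorqsghost}) they become ring isomorphisms $W_S(R_0)_{\Q}\cong R_{0,\Q}^S$ and $W_S(R)_{\Q}\cong R_{\Q}^S$ compatibly with all structure maps. By \thref{wittschemenl}(i) the schemes $\spec W_S(R_0)$ and $\spec W_S(R)$ are flat over $\spec\Z$; by $W_S$-finiteness $\spec W_S(R)$ is locally noetherian and $W_S(R_0)\to W_S(R)$ is locally of finite type; and the induced map on generic fibres is $R_{0,\Q}^S\to R_{\Q}^S$, a finite product of copies of the smooth homomorphism $R_{0,\Q}\to R_{\Q}$, hence smooth. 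So $W_S(R_0)\to W_S(R)$ is generically smooth in the sense of \thref{gensmoothdef}, and \thref{dga}, applied to $\spec W_S(R)\to\spec W_S(R_0)$ and evaluated on global sections, already gives that $T_{W_S}(\Omega^{\bullet}_{R/R_0})=\T(\Omega^{\bullet}_{W_S(R)/W_S(R_0)})(\spec W_S(R))$ is a $W_S(R_0)$-dga.

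\emph{Step 2: the comparison map and its kernel.} The commutative square formed by the ghost maps induces a morphism of de Rham complexes $\Omega^{\bullet}_{W_S(R)/W_S(R_0)}\to\Omega^{\bullet}_{R_{\Q}^S/R_{0,\Q}^S}$; since Kähler differentials and their exterior powers commute with finite products, the target equals $(\Omega^{\bullet}_{R_{\Q}/R_{0,\Q}})^S$, which is a $W_S(R_0)$-dga (an $R_{0,\Q}^S$-dga whose differential is $R_{0,\Q}^S$-linear, a fortiori $W_S(R_0)$-linear via $\gh_S$). On global sections this is precisely the base-change map $\FF\to\iota_*\iota^*\FF$ for $\FF=\Omega^q_{W_S(R)/W_S(R_0)}$ and $\iota$ the inclusion of the generic fibre, using that $\iota^*\FF\cong\Omega^q_{R_{\Q}^S/R_{0,\Q}^S}$ (differentials commute with base change). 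Because the generic fibre is smooth over its base, $\Omega^q_{R_{\Q}^S/R_{0,\Q}^S}$ is finite locally free over $R_{\Q}^S$, hence torsionless, so $\T(\iota^*\FF)=\iota^*\FF$; by \thref{cohsurjflat}(ii) then $\iota^*\T(\FF)\cong\T(\iota^*\FF)=\iota^*\FF$, and \thref{embext}(i) exhibits $\T(\FF)$ as an $\OO_{\spec W_S(R)}$-submodule of $\iota_*\iota^*\FF$. Consequently the kernel of the comparison map $\Omega^q_{W_S(R)/W_S(R_0)}\to(\Omega^q_{R_{\Q}/R_{0,\Q}})^S$ is exactly $\ker\epsilon_{\Omega W_S}$, so it descends to an injection $T_{W_S}(\Omega^q_{R/R_0})\hookrightarrow(\Omega^q_{R_{\Q}/R_{0,\Q}})^S$ in each degree $q\geq 0$.

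\emph{Step 3: assembly.} Since the torsionless quotient commutes with finite direct sums, $T_{W_S}(\Omega^{\bullet}_{R/R_0})=\bigoplus_{q\geq 0}T_{W_S}(\Omega^q_{R/R_0})$; functoriality of $\T$ together with the wedge-product and differential constructions of \thref{embext}(ii) and \thref{dga} show that the degreewise injections are compatible with the products and with the differential, so the total map $T_{W_S}(\Omega^{\bullet}_{R/R_0})\hookrightarrow(\Omega^{\bullet}_{R_{\Q}/R_{0,\Q}})^S$ is a morphism of $W_S(R_0)$-dgas. I expect the only delicate point to be the kernel identification in Step 2 --- namely that it is genuinely the torsionless quotient, and not merely the torsion-free quotient, that embeds into the differentials of the generic fibre --- but this is precisely the content of \thref{embext}(i) once the generic fibre has been identified with $\coprod_{n\in S}\spec R_{\Q}$ and seen to be smooth over its base; the compatibility with multiplication and the $W_S(R_0)$-linearity of the differential are then a routine diagram chase.
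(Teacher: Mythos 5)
Your proposal is correct and follows essentially the same route as the paper: first verify that $W_S(R_0)\to W_S(R)$ is generically smooth (flatness over $\Z$, finite type, noetherianness, and the ghost-map identification of the generic fibre with $R_{0,\Q}^S\to R_{\Q}^S$), then apply \thref{embext}~(i) together with \thref{cohsurjflat}~(ii) and the fact that $\Omega^q_{R_{\Q}^S/R_{0,\Q}^S}$ is finite locally free, hence torsionless, to get the degreewise embedding, and finally invoke \thref{dga} and \thref{embext}~(ii) for compatibility with products and the differential. The only differences are cosmetic (you cite \thref{wittschemenl}~(i) for flatness where the paper deduces it from \thref{tensorqsghost}~(i), and you phrase the embedding via the kernel of the comparison map rather than via the explicit formula on elementary forms), so no further comment is needed.
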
\vspace{-3mm}
Note that we have an isomorphism $(R_{\Q})^S\cong (R^S)_{\Q}$ as $S$ is a finite set.\vspace{-3mm}
\begin{proof}
	Tensoring the respective ghost maps with the identity on $\Q$ induces the isomorphisms $W_S(R_0)_{\Q}\cong R^S_{0,\Q}$ and 
	$W_S(R)_{\Q}\cong R^S_{\Q}$
	by \thref{tensorqsghost} (i).
	By the definition of generical smoothness both $R_0$ and $R$ are flat $\Z$-algebras, hence also $W_S(R_0)$ and $W_S(R)$ are flat $\Z$-algebras again by \thref{tensorqsghost} (i). 
	Since the map $W_S(R_0)\to W_S(R)$ is of finite type, the module of K\"ahler differential forms $\Omega_{W_S(R)/W_S(R_0)}^{q}$ is finitely generated (see \cite[ \href{https://stacks.math.columbia.edu/tag/00RZ}{00RZ}, \href{https://stacks.math.columbia.edu/tag/01CK}{01CK (2)}]{StacksProject}). Moreover, the Witt vector ring $W_S(R)$ is noetherian by assumption. Hence $\Omega_{W_S(R)/W_S(R_0)}^{q}$ is coherent and the homomorphism $W_S(R_0)\to W_S(R)$ is generically smooth.
	By \thref{embext} (i) and \thref{Wittlocali} we obtain an embedding
	$$ T_{W_{S}}(\Omega_{R/R_0}^{q})
	\hookrightarrow T_{W_{S}}(\Omega_{R_{\Q}/R_{0,\Q}}^{q})
	\overset{\sim}{\to}T_{R^S_{\Q}}\left((\Omega_{R_{\Q}/R_{0,\Q}}^{q})^S\right)$$
	of $W_S(R)$-modules for each $q\geq 0$.
	The smoothness of the homomorphism $R_{0,\Q}^S\to  R_{\Q}^S$ implies that the associated module of K\"ahler differential forms is a finite locally free $ R_{\Q}^S$-module. 
	Hence it is reflexive and, in particular, it is torsionless. 
	Finally, we obtain the embedding
	$ T_{W_{S}}(\Omega_{R/R_0}^{q})\hookrightarrow  (\Omega_{R_{\Q}/R_{0,\Q}}^{q})^S$
	of $W_S(R)$-modules given by
	$$r_0 dr_1\cdots dr_q + \ker \epsilon_{\Omega W_S}\mapsto  (\gh_S(r_0)\otimes 1)d(\gh_S(r_1)\otimes 1)\cdots  d(\gh_S(r_q)\otimes 1)$$
	on equivalence classes of elementary differential forms $r_0 dr_1\cdots  dr_q\in \Omega_{W_S(R)/W_S(R_0)}^{q}$. By \thref{dga} this embedding is compatible with the differential, hence an embedding of $W_S(R_0)$-dga's.
\end{proof}

\begin{proof}[Proof of \thref{wittcomplex}]
	In the previous proof we have seen that the induced homomorphism $W_S(R_0)\to W_S(R)$ is generically smooth.
	Since $W_S(R)$ is noetherian by assumption, the torsionless quotient $T_{W_{S}}(\Omega_{R/R_0}^{\bullet})$ is a  $W_S(R_0)$-dga by \thref{dga}.
	In degree zero we trivially have the natural isomorphism
	$\eta_S\colon T_{W_S(R)}(W_S(R))\to W_S(R)$
	of $W_S(R_0)$-algebras.\\
	
	\textit{The Frobenius:}\phantomsection\label{constrfrobintro}
	For every $n\in S$ the Frobenius homomorphism $F_n$ is a natural transformation, hence induces a $W_{S}(R)$-linear map 
	$$F_n\colon \Omega_{W_{S}(R)/W_{S}(R_0)}^{\bullet}\to \Omega_{W_{S/n}(R)/W_{S/n}(R_0)}^{\bullet}, r_0dr_1\cdots dr_q\mapsto F_n(r_0)dF_n(r_1)\cdots dF_n(r_q), q\geq 0.\footnote{There is a Frobenius map on $\Omega_{ W_{S}(R)/W_S(R_0)}^q$ already, which induces the correct Frobenius map on $W_S\Omega_{ R/R_0}^q$. Its definition requires a certain natural ring homomorphism $\Delta_R\colon W(R)\to W(W(R))$, which we do not establish here (see \cite[Prop. 1.19, Thm. 2.15]{Hes14}).} $$
	In degree zero it is the usual Witt vector Frobenius, which turns $W_{S/n}(R)$ into a $W_S(R)$-algebra.
	To see that it induces a homomorphism on the torsionless quotient observe that any $\varphi\in \Hom_{W_{S/n}(R)} (\Omega_{W_{S/n}(R)/W_{S/n}(R_0)}^{q},W_{S/n}(R))$ induces a morphism $V_n\circ \varphi \circ F_n\in \Hom_{W_S(R)}(\Omega_{W_{S}(R)/W_{S}(R_0)}^{q}, W_{S}(R))$ by the projection formula satisfying $(V_n\circ \varphi\circ F_n)(\ker \epsilon_{\Omega W_{S}})$. Since $V_n\colon W_S(R)\to W_{S/n}(R)$ is injective, we obtain $(\varphi\circ F_n)(\ker \epsilon_{\Omega W_{S}})=0$. Unfortunately, the naive map
	\begin{align*}
		F_n\colon T_{W_{S}}(\Omega_{R/R_0}^{q})\quad \quad ~~~~~~~~~
		&\to T_{W_{S/n}}(\Omega_{R/R_0}^{q})\\
		r_0dr_1\cdots dr_q+\ker\epsilon_{\Omega W_S}
		&\mapsto F_n(r_0)dF_n(r_1)\cdots dF_n(r_q)+\ker\epsilon_{\Omega W_{S/n}}
	\end{align*}
	commutes with the differential instead of satisfying $dF_n=nF_nd$. We need to modify $F_n$. Namely, we want to set $F_n^q\defeq n^{-q}F_n$ in degree $q\geq0 $.
	We need to show that every element in the image of $F_n\colon T_{W_{S}}(\Omega_{R/R_0}^{1})\to T_{W_{S/n}}(\Omega_{R/R_0}^{1})$ is divisible by $n$. 
Since any Witt vector is a convergent sum of elements $V_k[r], r\in R, k\in S$, we can restrict to classes represented by elements $dF_nV_k[r],r\in R,k\in S$ (see \cite[Lemma 1.5]{Hes14}). An easy induction allows us only to consider prime number $n=p\in S$. We compute $	dF_pV_k[r]=	pdV_{k/p}[r]$ for $p|k$. Also for $p\nmid k$ the term
	\begin{equation*}
		dF_pV_k[r]
		=	dV_k[r^p]
		=dV_k[r^p]-d(V_k[r])^p+d(V_k[r])^p
		=(1-k^{p-1}) dV_k[r^p]+p(V_k[r])^{p-1}dV_k[r]
	\end{equation*}
	is a multiple of $p$. 
	We obtain the well-defined map
	\begin{align*}
		F_n^{q}\colon T_{W_{S}}(\Omega_{R/R_0}^{q})\to T_{W_{S/n}}(\Omega_{R/R_0}^{q}),~
		\omega+\ker\epsilon_{\Omega W_S}
		\mapsto n^{-q}F_n(\omega)+\ker\epsilon_{\Omega W_{S/n}}
	\end{align*}
	of graded abelian groups, which satisfies $dF_n^{q}=nF_n^{q+1}d$.\\
	
	\textit{The restriction:}
	For a  truncation subset $S'\subseteq S$ the natural restriction maps of the Witt vector rings induce the homomorphism 
	$$\res^S_{S'}\colon \Omega_{W_S(R)/W_S(R_0)}^{\bullet}\to  \Omega_{W_{S'}(R)/W_{S'}(R_0)}^{\bullet}$$
	of $W_S(R_0)$-dga's. 
	We have to show that this restriction respects the kernel of the canonical map into the respective biduals.
	The restriction of coefficients $\res^S_{S'}\colon R^S\to R^{S'}$ on $R^S$ (and analogously on $R_0^S$) induces the homomorphism
	$$\res^S_{S'}\colon (\Omega_{R_{\Q}/R_{0,\Q}}^{q})^S\to (\Omega_{R_{\Q}/R_{0,\Q}}^{q})^{S'}$$
	of $R_0^S$-modules.
	By \thref{embedding} we can identify the equivalence class of an elementary differential form $r_0 dr_1\cdots dr_q$ in $ T_{W_{S}}(\Omega_{R/R_0}^{q})$ with the element
	 $$(\gh_S(r_0)\otimes 1)d(\gh_S(r_1)\otimes 1)\cdots d(\gh_S(r_q)\otimes 1)\in(\Omega_{R_{\Q}/R_{0,\Q}}^{q})^S.$$ 
	By the naturality of the ghost map, applying the restriction yields
	 \begin{align*}
		&(\res^S_{S'}(\gh_S(r_0))\otimes 1)d(\res^S_{S'}(\gh_S(r_1))\otimes 1)\cdots d(\res^S_{S'}(\gh_S(r_q))\otimes 1)\\
		=&(\gh_{S'}(\res^S_{S'}(r_0))\otimes 1)d(\gh_{S'}(\res^S_{S'}(r_1))\otimes 1)\cdots d(\gh_{S'}(\res^S_{S'}(r_q))\otimes 1)
	\end{align*}
	in $(\Omega_{R_{\Q}/R_{0,\Q}}^{q})^{S'}$ which has the unique preimage
	 $\res^S_{S'}(r_0)d\res^S_{S'}(r_1)\cdots d\res^S_{S'}(r_q)+\ker\epsilon_{\Omega W_{S'}}$
	in $T_{W_{S'}}(\Omega_{R/R_0}^{q})$. 
	We thus have shown the well-definedness of the restriction on the torsionless quotient.\\
	
	\textit{The Verschiebung:}
	For the construction of the Verschiebung on the torsionless quotient note that the Verschiebung on the Witt vector rings  $V_n\colon W_{S/n}(R)\to W_S(R)$ for $n\in S$ is not only additive. 
	By tensoring with $\Q$ we get a ring homomorphism 
	$\frac{1}{n}V_n\colon W_{S/n}(R)_{\Q}\to W_S(R)_{\Q}$.
	After applying the ghost map both on the source and on the target we obtain the induced homomorphism of graded abelian groups
	\begin{equation*}
		V_n^{q}\defeq n^{q+1}\cdot \frac{1}{n}V_n\colon (\Omega_{R_{\Q}/R_{0,\Q}}^{q})^{S/n}
		\to (\Omega_{R_{\Q}/R_{0,\Q}}^{q})^{S/n}
	\end{equation*}
	given by 
	\begin{align*}
		r_0dr_1\cdots dr_q
		\mapsto ~~&n^{q+1}\cdot \frac{1}{n}V_n(r_0)d\frac{1}{n}V_n(r_1)\cdots d\frac{1}{n}V_n(r_q)
		=V_n(r_0)dV_n(r_1)\cdots dV_n(r_q)
	\end{align*}
	for $r_0,\dots,r_q\in R^{S/n}_{\Q}$. We now follow the same strategy as for the definition of the restriction maps, i.e. we show that the right perpendicular map in the diagram
	\begin{mycenter}[-2mm]
		\begin{tikzcd}
			T_{W_{S/n}}(\Omega_{R/R_0}^{q})\arrow[r,hook]\arrow[d,dashed, "V_n^q"']
			&(\Omega_{R_{\Q}/R_{0,\Q}}^{q})^{S/n}\arrow[d, "V_n^q"]\\
			T_{W_{S}}(\Omega_{R/R_0}^{q})\arrow[r,hook]
			&(\Omega_{R_{\Q}/R_{0,\Q}}^{q})^{S}
		\end{tikzcd}
	\end{mycenter} 
	restricts to the left perpendicular map.
	Using the upper horizontal inclusion $V_n^q$ maps the equivalence class of
	$r_0 dr_1\cdots dr_q$ in $ T_{W_{S/n}}(\Omega_{R/R_0}^{q})$ to
	$$(V_n(r_0)\otimes 1) d(V_n(r_1)\otimes 1)\cdots d(V_n(r_q)\otimes 1),$$
	which -- under the lower horizontal inclusion -- has the unique preimage
	$$V_n(r_0) dV_n(r_1)\cdots dV_n(r_q)+\ker\epsilon_{\Omega W_S}\in T_{W_{S}}(\Omega_{R/R_0}^{q}).$$
	By \thref{dga} the map
	$d\colon 	T_{W_{S}}(\Omega_{R/R_0}^{q})\to 	T_{W_{S}}(\Omega_{R/R_0}^{q+1})$
	is a $W_S(R_0)$-linear derivation satisfying $d\circ d =0$. The remaining properties of a Witt complex (in the relative setting) can be checked by straightforward computations (see \cite[Definition 4.1]{Hes14}).\\
	
	\textit{The projective limit:} An arbitrary (possibly infinite) truncation set $\overline{S}\subseteq \N$  is the direct limit of the poset of its finite truncation subsets $S\subseteq \overline{S}$. 
	As the functor $S\mapsto T_{W_{S}}(\Omega_{R/R_0}^{\bullet})$ is contravariant, the only thing that is left to show is the compatibility of all maps constructed above with the projective limit. We omit the details as the proof is straightforward.\\ 
	For $n\in \N$ as well as arbitrary truncation sets $\overline{S}$ and $\overline{S}_2\subseteq \overline{S}_1$ the maps are given as follows:
	\begin{align*}
		\res^{\overline{S}_1}_{\overline{S}_2}\colon
		& \varprojlim_{S\subseteq \overline{S}_1}T_{W_{S}}(\Omega_{R/R_0}^{\bullet})
		\to \varprojlim_{\substack{S\subseteq \overline{S}_1} }T_{W_{S\cap \overline{S}_2}}(\Omega_{R/R_0}^{\bullet})
		\overset{\sim}{\to} \varprojlim_{S\subseteq \overline{S}_2}T_{W_{S}}(\Omega_{R/R_0}^{\bullet})\\
		&(\omega_S)
		\mapsto(\res^S_{S\cap \overline{S}_2}(\omega_S))
		\mapsto(\omega_S)\\
		F_n^{\bullet}\colon
		& \varprojlim_{S\subseteq \overline{S}}T_{W_{S}}(\Omega_{R/R_0}^{\bullet})
		\to \varprojlim_{n\in S\subseteq \overline{S}}T_{W_{S/n}}(\Omega_{R/R_0}^{\bullet})
		\overset{\sim}{\to }\varprojlim_{S\subseteq \overline{S}/n}T_{W_{S}}(\Omega_{R/R_0}^{\bullet})\\
		&(\omega_S)
		\mapsto(F_n^{\bullet}(\omega_S))
		\mapsto(F_n^{\bullet}(\omega_{\la n\cdot S\ra}))\\
		V_n^{\bullet}\colon&	\varprojlim_{S\subseteq \overline{S}/n}T_{W_{S}}(\Omega_{R/R_0}^{{\bullet}})
		\to
		\varprojlim_{n\in S\subseteq \overline{S}}T_{W_{S}}(\Omega_{R/R_0}^{{\bullet}})
		\overset{\sim}{\to} 
		\varprojlim_{S\subseteq \overline{S}}T_{W_{S}}(\Omega_{R/R_0}^{{\bullet}})\\
		&(\omega_S)\mapsto (V_n^{\bullet}(\omega_{S/n}))
		\mapsto (\delta_{n\in S}V_n^{\bullet}(\omega_{S/n})).
	\end{align*}
	Note that any finite truncation subset $S\subseteq \overline{S}/n$ satisfies $S=\la n\cdot S\ra/n$ and $\la n\cdot S\ra $ is a finite truncation subset of $\overline{S}$, where $\la n\cdot S\ra$ denotes the set $\{m\mid m|nk\text{ for some }k\in S\}$. The differential is applied componentwise. \\
	
	\textit{\'Etale base change:} For an \'etale ring homomorphism $R\to R'$ and the induced homomorphism $W_S(R)\to W_S(R')$ is \'etale by \thref{BorKal}. By \thref{cohsurjflat} part (ii) the base change morphism on the torsionless quotient is an isomorphism as $W_S(R)$ is assumed to be noetherian. We have an isomorphism of $W_S(R_0)$-dga's 
	$$T_{W_S}(\Omega^{\bullet}_{R/R_0})\otimes_{W_S(R)} W_S(R') \overset{\sim}{\to} T_{W_S}(\Omega^{\bullet}_{R'/R_0}),$$ 
	where the left hand side is equipped with the differential
	\begin{align*}
		d\colon T_{W_S}(\Omega^{q}_{R/R_0})\otimes_{W_S(R)} W_S(R')& \to T_{W_S}(\Omega^{q+1}_{R/R_0})\otimes_{W_S(R)} W_S(R')\\
		\omega \otimes r'&\mapsto d\omega \otimes r'+(-1)^q\omega\otimes dr'.
	\end{align*} 
	Note that the structure of a Witt complex over $R'$ with $W(R_0)$-linear diffe\-rential on $T_{W_{S}}(\Omega_{R/R_0}^{\bullet})\otimes_{W_S(R)}W_S(R')$ can be described explicitly by transferring Hesselholt's description in the proof of \cite[Thm. C]{Hes14}).
\end{proof}

\begin{cor}
	For a generically smooth, $W_S\hspace{0.2mm}$-finite morphism $X\to \spec R_0$ the torsionless quotient $\T_{W_S}(\Omega_{X/R_0}^q)$ is a coherent $W_S(\OO)$-module for the \'etale topology of $W_S(X)$.\\
	It is equipped with the following structure maps:
	Every morphism $f\colon X\to Y$ of generically smooth, $W_S\hspace{0.2mm}$-finite $R_0$-schemes induces a morphism
	$$\T_{W_S}(\Omega^{q}_{Y/R_0}) \to W_S(f)_*\T_{W_S}(\Omega^{q}_{X/R_0}).$$
	For any truncation subset $S'\subseteq S$ we obtain the morphism
	$$\T_{W_S}(\Omega^{q}_{X/R_0}) \to \iota_{S',S*}\T_{W_{S'}}(\Omega^{q}_{X/R_0}),$$
	which is compatible with morphisms of schemes as above.\\
	Also the differential, Frobenius and Verschiebung maps extend to the level of sheaves
	\begin{align*}
		d\colon &\T_{W_S}(\Omega^{q}_{X/R_0})\to \T_{W_S}(\Omega^{q+1}_{X/R_0})\\
		F_n\colon& \T_{W_S}(\Omega^{q}_{X/R_0})\to\iota_{S/n,S*}\T_{W_{S/n}}(\Omega^{q}_{X/R_0})\\
		V_n\colon&\iota_{S/n,S*}\T_{W_{S/n}}(\Omega^{q}_{X/R_0})\to\T_{W_S}(\Omega^{q}_{X/R_0}).
	\end{align*}
\end{cor}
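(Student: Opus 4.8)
Write $f\colon X\to\spec R_0$ for the structure morphism. The plan is to sheafify \thref{wittcomplex} over the small \'etale site of $W_S(X)$. By $W_S$-finiteness the induced morphism $W_S(f)\colon W_S(X)\to\spec W_S(R_0)$ has $W_S(X)$ locally noetherian and is locally of finite type, so \thref{etalesheaf} applied to $W_S(f)$ produces a coherent $\OO=W_S(\OO)$-module for the \'etale topology of $W_S(X)$, namely $(V\to W_S(X))\mapsto\T(\Omega^q_{V/\spec W_S(R_0)})(V)$; we denote it $\T_{W_S}(\Omega^q_{X/R_0})$. On $W_S(U)=\spec W_S(R)$ for an \'etale map $U=\spec R\to X$ its value is $T_{W_S(R)}(\Omega^q_{W_S(R)/W_S(R_0)})=T_{W_S}(\Omega^q_{R/R_0})$ (\'etale base change for K\"ahler differentials together with \thref{cohsurjflat} (ii)), matching the functor of \thref{wittcomplex}; since $W_S(X)$ is covered by such affine opens and a quasi-coherent sheaf for the \'etale topology is determined by its underlying quasi-coherent $\OO_{W_S(X)}$-module, this pins the sheaf down. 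Next, $W_S(f)$ is in fact generically smooth in the sense of \thref{gensmoothdef}: $W_S(X)$ and $\spec W_S(R_0)$ are flat over $\Z$ by \thref{wittschemenl} (i), and, identifying $W_S(-)_{\Q}$ with $(-)^S$ via the ghost map (\thref{ghostisom}) and using its naturality, $W_S(X)_{\Q}\to\spec W_S(R_0)_{\Q}$ becomes $\coprod_{n\in S}\bigl(X_{\Q}\xrightarrow{f_{\Q}}\spec R_{0,\Q}\bigr)$, which is smooth. Hence \thref{dga} applies to $W_S(f)$, making $\T_{W_S}(\Omega^{\bullet}_{X/R_0})$ a $W_S(f)^{-1}\OO_{\spec W_S(R_0)}$-dga; this supplies the differential $d$, which, being natural and $W_S(f)^{-1}\OO$-linear, extends from the Zariski to the \'etale site.

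It remains to globalise the remaining operators, where nothing new happens. For a truncation subset $S'\subseteq S$ the morphism $W_{S'}(X)\to\spec W_{S'}(R_0)$ is again generically smooth and $W_{S'}$-finite (same arguments, plus \thref{subschemenoetherian}), so $\T_{W_{S'}}(\Omega^q_{X/R_0})$ is defined; the map $T_{W_S}(\Omega^q_{R/R_0})\to T_{W_{S'}}(\Omega^q_{R/R_0})$ induced by $\res^S_{S'}$ was already shown well defined in the proof of \thref{wittcomplex}, and, since $\T$ commutes with flat base change (\thref{cohsurjflat} (ii)) and $W_S(\spec R)\times_{W_S(X)}W_{S'}(X)=W_{S'}(\spec R)$ (\'etale base change for Witt vector rings, cf. \thref{BorKal}), it sheafifies along $\iota_{S',S}$ to $\T_{W_S}(\Omega^q_{X/R_0})\to\iota_{S',S*}\T_{W_{S'}}(\Omega^q_{X/R_0})$, just as for $W_S\Omega^q_{X/R_0}$. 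By the same recipe, replacing $\iota_{S',S}$ by $\iota_{S/n,S}$ (note $S/n\subseteq S$) and $\res^S_{S'}$ by the maps $F^q_n$, resp.\ $V^q_n$, from the proof of \thref{wittcomplex}, one obtains $F_n$ and $V_n$ on sheaves. Finally, a morphism $g\colon X\to Y$ of generically smooth, $W_S$-finite $R_0$-schemes gives, for every \'etale $V\to W_S(Y)$ with base change $g'\colon V_X\to V$ over $W_S(X)$, the pullback $\Omega^q_{V/W_S(R_0)}\to g'_*\Omega^q_{V_X/W_S(R_0)}$; since $g'$ is \'etale we have $\Omega^q_{V_X/W_S(R_0)}\cong (g')^*\Omega^q_{V/W_S(R_0)}$, so \thref{cohsurjflat} (ii) lets this descend to torsionless quotients, and the pieces assemble into $\T_{W_S}(\Omega^q_{Y/R_0})\to W_S(g)_*\T_{W_S}(\Omega^q_{X/R_0})$; compatibility with the restriction and with $F_n$, $V_n$ is naturality.

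The main obstacle is thus not depth but bookkeeping: one must keep track that $W_S(f)$ and the auxiliary morphisms $W_{S'}(f)$, $W_{S/n}(f)$ are generically smooth and $W$-finite so that \thref{dga}, \thref{etalesheaf} and the definition of $\T_{W_S}(\Omega^q_{-/R_0})$ all apply; the one non-formal step inside this is the ghost-map description $W_S(-)_{\Q}\cong(-)^S$ used to verify generic smoothness of the Witt-scheme morphisms. Once that is secured, each structure map is obtained from the section-level constructions of \thref{wittcomplex} and \thref{dga} via the principle ``natural, coherent, and compatible with \'etale base change $\Rightarrow$ descends to the \'etale site'', the last property being precisely \thref{cohsurjflat} (ii).
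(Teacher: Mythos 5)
Most of your proposal is sound and is essentially the route the paper takes (its own proof is a single remark that the maps are "constructed analogously to the affine case", using left exactness of pushforward): you sheafify the section-level constructions of \thref{wittcomplex} over the \'etale site of $W_S(X)$ via \thref{etalesheaf}, check generic smoothness of $W_S(f)$ through the ghost-map identification of the generic fibre, and transport $d$, $\res$, $F_n$, $V_n$ using \'etale base change of Witt vectors and of torsionless quotients (\thref{cohsurjflat} (ii)). Up to that point your argument is correct and only more detailed than the paper.

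There is, however, a genuine flaw in your treatment of functoriality in the scheme, i.e. of the map $\T_{W_S}(\Omega^{q}_{Y/R_0}) \to W_S(g)_*\T_{W_S}(\Omega^{q}_{X/R_0})$ for an arbitrary morphism $g\colon X\to Y$ of generically smooth, $W_S$-finite $R_0$-schemes. You base-change an \'etale $V\to W_S(Y)$ to $g'\colon V_X\to V$ and assert that $g'$ is \'etale; but $g'$ is a base change of $W_S(g)$, not of the \'etale map $V\to W_S(Y)$, so it is \'etale only when $g$ itself is (think of a closed immersion or a relative curve). Consequently neither the identification $\Omega^q_{V_X/W_S(R_0)}\cong (g')^*\Omega^q_{V/W_S(R_0)}$ nor the appeal to \thref{cohsurjflat} (ii) (which requires flatness of $g'$) is available, and this matters: the torsionless quotient is \emph{not} functorial for arbitrary ring maps, since an element killed by all $W_S(B)$-linear functionals need not map to an element killed by all $W_S(A)$-linear functionals. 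The correct argument is the one the hypotheses are designed for: on affine charts $\spec A\to X$, $\spec B\to Y$ use the embedding of \thref{embedding}, which identifies $\ker\epsilon_{\Omega W_S}$ with the kernel of the map $\Omega^q_{W_S(B)/W_S(R_0)}\to (\Omega^q_{B_{\Q}/R_{0,\Q}})^S$ (and likewise for $A$); since this kernel description is functorial in the ring map $B\to A$ by naturality of the ghost maps, the map on torsionless quotients is well defined, and it then glues exactly as in your treatment of the other operators. So the gap is repairable, but as written the step fails and the repair is precisely where generic smoothness and $W_S$-finiteness of \emph{both} $X$ and $Y$ enter.
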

The maps above are constructed analogously to the affine case by using that the push forward is left exact (see \cite[ \href{https://stacks.math.columbia.edu/tag/01AJ}{01AJ}]{StacksProject}).
\begin{cor}\thlabel{etalebottomsmooth} Let $ X \to \spec R_0$ be a generically smooth, $W_S\hspace{0.2mm}$-finite morphism of schemes.
	\begin{enumerate}[label=(\roman*),topsep=-8pt]
		\item   For an \'etale ring homomorphism $R_0'\hspace{-1mm}\to\hspace{-1mm} R_0$  the induced map
		$\T_{W_S}(\Omega^q_{X/R_0'})\hspace{-1mm}\to \hspace{-1mm}\T_{W_S}(\Omega^q_{X/R_0})$ is an isomorphism.
		\item  If $X_{\Q}$ is smooth over $\spec (R_0)_{\Q}$ of relative dimension $d$, then $\T_{W_S}(\Omega^q_{X/R_0})=0$ for all $q>d$.
	\end{enumerate}\nointerlineskip
	\begin{proof}
		By \thref{BorKal} the morphism $W_S(R_0)\to W_S(R_0')$ is \'etale. Thus the first statement follows from \thref{etalebottom}.
		The second assertion follows from \thref{embext}. We consider the functorial commutative diagram
		\begin{mycenter}[-2mm]
			\begin{tikzcd}
				(\coprod_{n\in S} X)_{\Q}\arrow[r, "\gh_S\times \id_{\Q}","\sim"']
				&W_S(X)_{\Q}\arrow[r, "\iota_{W_S}",hook]
				& W_S(X)\\
				\coprod_{n\in S}X_{\Q}\arrow[rr, "\gh_S","\sim"']\arrow[u,"\rsim"',"\can"]
				&&W_S(X_{\Q}).\arrow[u, "W_S(\iota)"',hook]
			\end{tikzcd}
		\end{mycenter}
		 and embed $\T_{W_S}(\Omega^q_{X/R_0})$ into $(\iota_{W_S}\circ(\gh_{S}\times\id_{\Q})\circ \can)_*(\Omega^q_{X_{\Q}/R_{0,\Q}})^S=(W_S(\iota)\circ\gh_{S})_*(\Omega^q_{X_{\Q}/R_{0,\Q}})^S$,
		which vanishes for $q>d$.
	\end{proof}
\end{cor}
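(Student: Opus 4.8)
The plan is to prove the two parts separately, each by reduction to a result already established above; I expect the only real work to be some bookkeeping in part (ii). For part (i), note first that by \thref{BorKal} the \'etale ring homomorphism $R_0'\to R_0$ induces an \'etale ring homomorphism $W_S(R_0')\to W_S(R_0)$, hence an \'etale morphism $\spec W_S(R_0)\to\spec W_S(R_0')$ of schemes. I would then apply \thref{etalebottom} to the composite $W_S(X)\to\spec W_S(R_0)\to\spec W_S(R_0')$; the conditions making $\T$ well behaved -- that $W_S(X)$ is locally noetherian and that the two structure morphisms are locally of finite type -- hold by the $W_S$-finiteness assumption together with \thref{BorKal}. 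This yields that the canonical morphism $\T(\Omega^q_{W_S(X)/W_S(R_0')})\to\T(\Omega^q_{W_S(X)/W_S(R_0)})$ is an isomorphism of $\OO_{W_S(X)}$-modules, which, after unwinding the abbreviation $\T_{W_S}(\Omega^q_{X/R_0})=\T(\Omega^q_{W_S(X)/W_S(R_0)})$, is precisely the first assertion. I foresee no difficulty here.

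For part (ii), the idea is to embed $\T_{W_S}(\Omega^q_{X/R_0})$ into a pushforward from the generic fibre of $W_S(X)$ whose value visibly vanishes for $q>d$. Writing $\FF$ for the coherent sheaf $\Omega^q_{W_S(X)/W_S(R_0)}$, I would first record, exactly as in the proof of \thref{embedding}, that the hypotheses of \thref{embext} hold for $\FF$ on $W_S(X)$: by generic smoothness $X$ is a flat $\Z$-scheme, so $W_S(X)$ is flat over $\spec\Z$ by \thref{tensorqsghost} (i); it is locally noetherian; and $\FF$ is coherent -- the last two by $W_S$-finiteness. Then \thref{embext} (i) exhibits $\T_{W_S}(\Omega^q_{X/R_0})$ as an $\OO_{W_S(X)}$-submodule of $(\iota_{W_S})_*\T(\FF|_{W_S(X)_\Q})$, where $\iota_{W_S}\colon W_S(X)_\Q\hookrightarrow W_S(X)$ is the inclusion of the generic fibre, an affine morphism being pulled back from $\spec\Q\to\spec\Z$.

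The one step that needs genuine care is identifying $W_S(X)_\Q$ together with $\FF$ on it. By \thref{Wittlocali} (ii) there is an isomorphism $W_S(X)_\Q\cong W_S(X_\Q)$, and since $X_\Q$ is a $\Q$-scheme, \thref{tensorqsghost} (ii) -- applied affine-locally, and compatibly on $X$ and on $\spec R_0$ -- identifies $W_S(X_\Q)$ with $\coprod_{n\in S}X_\Q$ over $\coprod_{n\in S}\spec(R_0)_\Q$; since the K\"ahler differentials of a finite product of rings decompose as the corresponding product, $\FF|_{W_S(X)_\Q}$ is thereby identified with the sheaf $(\Omega^q_{X_\Q/R_{0,\Q}})^S$. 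I would make this precise by means of the functorial commutative square
\begin{center}
	\begin{tikzcd}
		(\coprod_{n\in S}X)_{\Q}\arrow[r,"\gh_S\times\id_{\Q}","\sim"']\arrow[d,"\can"',"\rsim"]&W_S(X)_{\Q}\arrow[r,"\iota_{W_S}",hook]&W_S(X)\\
		\coprod_{n\in S}X_{\Q}\arrow[rr,"\gh_S","\sim"']&&W_S(X_{\Q})\arrow[u,"W_S(\iota)"',hook]
	\end{tikzcd}
\end{center}
whose two horizontal ghost maps are isomorphisms by \thref{ghostscheme} and \thref{tensorqsghost}, so that $(\Omega^q_{X_\Q/R_{0,\Q}})^S$, pushed forward along either route, gives the same $\OO_{W_S(X)}$-module. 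Finally, since $X_\Q\to\spec(R_0)_\Q$ is smooth of relative dimension $d$, the sheaf $\Omega^q_{X_\Q/R_{0,\Q}}$ is finite locally free of rank $\binom{d}{q}$ -- in particular reflexive, so $\T$ leaves it unchanged by \thref{embext} -- and it vanishes for $q>d$. Hence the target of the embedding is zero for $q>d$, which forces $\T_{W_S}(\Omega^q_{X/R_0})=0$. Everything apart from this generic-fibre identification reduces to the affine case by quasi-coherence, just as in the proof of \thref{embext} (ii).
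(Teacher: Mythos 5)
Your proposal is correct and follows essentially the same route as the paper: part (i) via \thref{BorKal} together with \thref{etalebottom} applied to $W_S(X)\to\spec W_S(R_0)\to\spec W_S(R_0')$, and part (ii) via the embedding of \thref{embext} (i) into the pushforward from the generic fibre, identified through the ghost-map diagram with $(\Omega^q_{X_\Q/R_{0,\Q}})^S$, which vanishes for $q>d$ by smoothness of relative dimension $d$. The only difference is that you spell out the hypotheses of \thref{embext} and the identification $W_S(X)_\Q\cong W_S(X_\Q)\cong\coprod_{n\in S}X_\Q$ in more detail than the paper does, which is harmless.
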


\section[The relative de Rham-Witt complex as a torsionless quotient]{\scshape{The de Rham-Witt complex as a torsionless quotient}}\label{sectionreldrwcomplexes}
In the previous section we have constructed a family of Witt complexes with respect to so-called generically smooth morphisms. 
If we want to find de Rham-Witt complexes, i.e. initial objects, this is not sufficient any more. 
We will need to require the morphism $X\to \spec R_0$ to be smooth (everywhere).
In this section we are going to prove
\phantomsection\label{maintheoremintro}
\begin{theoremC}\thlabel{maintheorem}\label{MAINTHEOREM}
	Let $R_0$ be a flat $\Z$-algebra, $f\colon X\to \spec R_0$ a smooth morphism of schemes. 
	If $f$ is $W_S\hspace{0.2mm}$-finite, then $\T_{W_S}(\Omega^{\bullet}_{X/R_0})$ is a $W_S(f)^{-1}\OO_{\spec W_S( R_0)}$-dga. In degree $q\geq 0$ it is the unique coherent $W_S(\OO)$-module $\T_{W_S}(\Omega^{q}_{X/R_0})$ for the \'etale topology of $W_S(X)$ such that $$\T_{W_S}(\Omega^{q}_{X/R_0})(W_S(U))=W_S\Omega^q_{R/R_0}$$
	holds for every \'etale map $U=\spec R\to  X$.
	If moreover $f$ is $W$-finite, then the functor
	\begin{equation*}
		(\text{finite truncation sets})\to (W(R_0)\text{-dga's}),~
		S\mapsto  T_{W_{S}}(\Omega_{R/R_0}^{\bullet})
	\end{equation*}
	defines the relative de Rham-Witt complex over $R$ with $W(R_0)$-linear differential.
\end{theoremC}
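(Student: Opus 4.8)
The plan is to identify the canonical surjection $\alpha\colon \Omega^{\bullet}_{W_S(R)/W_S(R_0)}\to W_S\Omega^{\bullet}_{R/R_0}$ of \thref{surjreldrw} with the projection onto the torsionless quotient $T_{W_S}(\Omega^{\bullet}_{R/R_0})$ constructed in \thref{wittcomplex}: I will show that $\ker\alpha=\ker\epsilon_{\Omega W_S}$, so that $\alpha$ descends to an isomorphism of Witt complexes $\bar\alpha\colon T_{W_S}(\Omega^{\bullet}_{R/R_0})\overset{\sim}{\to}W_S\Omega^{\bullet}_{R/R_0}$; the sheaf-theoretic and global assertions then follow by sheafifying.

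The essential input is that $W_S\Omega^q_{R/R_0}$ is a torsionless $W_S(R)$-module. First I would reduce to the case $R=R_0[t_1,\dots,t_m]$ of a polynomial algebra: since $f$ is smooth, $\spec R$ admits an \'etale covering by schemes which are \'etale over affine spaces $\A^{m}_{R_0}$, and by \thref{closedimmersion}(i) this yields an \'etale covering of $W_S(\spec R)$; torsionlessness of the coherent $W_S(R)$-module $W_S\Omega^q_{R/R_0}$ over the noetherian ring $W_S(R)$ may be tested on this covering (faithfully flat descent of injectivity together with \thref{cohsurjflat}), and along each member it becomes, by \thref{etalechat}(i), a flat base change of $W_S\Omega^q_{R_0[\underline t]/R_0}$, hence torsionless as soon as the latter is (\thref{flatpullbacktorsionless}(iii)). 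For the polynomial case I would combine two facts: $W_S\Omega^q_{R_0[\underline t]/R_0}$ is $\Z$-torsion free by \thref{chatsmooth}, hence embeds into its rationalization $W_S\Omega^q_{R_0[\underline t]/R_0}\otimes_{\Z}\Q$; and by the explicit description of Langer--Zink \cite{LZ04} (in the reformulation of Cuntz--Deninger \cite{CD15}) this rationalization is, via the ghost map and \thref{Wittlocali}, the free $W_S(R_{0,\Q}[\underline t])\cong R_{0,\Q}[\underline t]^{S}$-module $(\Omega^q_{R_{0,\Q}[\underline t]/R_{0,\Q}})^{S}$. Since a submodule of a free module is torsionless (\thref{torsionlesssub}(iii)), this proves the claim.

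For the isomorphism itself, \thref{wittcomplex} tells us that $S\mapsto T_{W_S}(\Omega^{\bullet}_{R/R_0})$ is a Witt complex over $R$ with $W(R_0)$-linear differential, so by the universal property of the relative de Rham--Witt complex (\cite[Def.~1.2.1]{Cha13}) there is a unique morphism of such Witt complexes $\beta\colon W_{\bullet}\Omega^{\bullet}_{R/R_0}\to T_{W_{\bullet}}(\Omega^{\bullet}_{R/R_0})$. I claim that $\beta\circ\alpha$ equals the projection $\pi\colon\Omega^{\bullet}_{W_S(R)/W_S(R_0)}\to T_{W_S}(\Omega^{\bullet}_{R/R_0})$: both are morphisms of $W_S(R_0)$-dga's out of a module of K\"ahler differentials and are therefore determined by their degree-$0$ parts, which agree (each is the identity of $W_S(R)$, up to the canonical $\eta$-isomorphisms). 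Hence $\ker\alpha\subseteq\ker(\beta\circ\alpha)=\ker\pi=\ker\epsilon_{\Omega W_S}$. For the reverse inclusion, naturality of the double-dual evaluation map gives $\epsilon_{W_S\Omega}\circ\alpha=\alpha^{\vee\vee}\circ\epsilon_{\Omega W_S}$, so any $\omega\in\ker\epsilon_{\Omega W_S}$ satisfies $\alpha(\omega)\in\ker\epsilon_{W_S\Omega}=0$, the last equality because $W_S\Omega^q_{R/R_0}$ is torsionless. Thus $\ker\alpha=\ker\epsilon_{\Omega W_S}$, and $\alpha$ descends to an isomorphism $\bar\alpha$ which is the inverse of $\beta$, hence in particular a morphism of Witt complexes, compatible with $d$, the $F_n$, the $V_n$, $\eta$ and the restriction maps.

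Finally I would globalize. As established after the proof of \thref{wittcomplex}, $\T_{W_S}(\Omega^{q}_{X/R_0})$ is a coherent $W_S(\OO)$-module for the \'etale topology of $W_S(X)$ whose value on $W_S(\spec R)$, for an \'etale map $\spec R\to X$, is $T_{W_S}(\Omega^q_{R/R_0})\cong W_S\Omega^q_{R/R_0}$; by the uniqueness in \cite[Prop.~1.2.11]{Cha13} it therefore coincides with the sheafified relative de Rham--Witt complex $W_S\Omega^q_{X/R_0}$, which settles the ``unique coherent sheaf'' statement. That $\T_{W_S}(\Omega^{\bullet}_{X/R_0})$ is a $W_S(f)^{-1}\OO_{\spec W_S(R_0)}$-dga follows from \thref{dga} applied to $W_S(X)\to\spec W_S(R_0)$, which is generically smooth and $W_S$-finite exactly as observed in the proof of \thref{wittcomplex}. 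If $f$ is $W$-finite, the isomorphisms $\bar\alpha$ are compatible with restriction and hence assemble, after passing to projective limits over finite truncation subsets, into an isomorphism of the functor $S\mapsto T_{W_S}(\Omega^{\bullet}_{R/R_0})$ with the relative de Rham--Witt complex. I expect the torsionlessness statement to be the main obstacle: upgrading $\Z$-torsion freeness to torsionlessness genuinely uses the concrete Langer--Zink/Cuntz--Deninger presentation to exhibit $W_S\Omega^q_{R/R_0}$ inside a free module, since the bidual of $\Omega^{q}_{W_S(R)/W_S(R_0)}$ over the non-domain $W_S(R)$ is otherwise difficult to control.
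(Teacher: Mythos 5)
Your overall architecture coincides with the paper's (étale-local reduction to polynomial algebras, torsionlessness of $W_S\Omega^q$, comparison via the universal property and functoriality of the torsionless quotient, then sheafification), and your identification $\ker\alpha=\ker\epsilon_{\Omega W_S}$ is a legitimate repackaging of the paper's pair of mutually inverse maps $T(\alpha)$ and $f$. The genuine gap is exactly at the step you flag as the main obstacle: the proof that $W_S\Omega^q_{R_0[\underline{t}]/R_0}$ is torsionless \emph{as a $W_S(R)$-module}, $R=R_0[\underline{t}]$, i.e.\ that the evaluation map into $\Hom_{W_S(R)}(\Hom_{W_S(R)}(-,W_S(R)),W_S(R))$ is injective. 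You embed $W_S\Omega^q_{R/R_0}$ (using \thref{chatsmooth}) into its rationalization $(\Omega^q_{R_{\Q}/R_{0,\Q}})^S$ and invoke \thref{torsionlesssub}(iii); but that module is free over $W_S(R)_{\Q}\cong R_{\Q}^S$, not over $W_S(R)$, and as a $W_S(R)$-module a divisible module is in general far from torsionless: already $\Q$ embeds into the free $\Q$-module $\Q$ yet $\Hom_{\Z}(\Q,\Z)=0$, and likewise for $R_0=\Z$ one has $\Hom_{\Z[\underline{t}]}(\Omega^1_{\Q[\underline{t}]/\Q},\Z[\underline{t}])=0$ since $\bigcap_n n\Z[\underline{t}]=0$. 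So ``submodule of a free module'' is applied over the wrong ring and the inference fails; nor can you fall back on \thref{torsionlesssub}(vi), since $W_S(R)$ is not a domain. The same objection applies to your closing remark: the integral Cuntz--Deninger/Langer--Zink embedding places $W_S\Omega^q_{R/R_0}\cong X_S^q(R)$ inside $(\Omega^q_{R/R_0})^S$, which is free over $R^S$ but again not over $X_S(R)\cong W_S(R)$.

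What the paper actually does at this point (\thref{dRWtorsionless}) is to prove by hand that $(\Omega^q_{R/R_0})^S$ is a torsionless $X_S(R)$-module: writing each component $\omega_m$ in the monomial basis $dx_{j_1}\cdots dx_{j_q}$ with coefficients $f_{m,(j_1,\dots,j_q)}$, it exhibits for every $m\in S$ and every index tuple an $X_S(R)$-linear functional $\omega\mapsto \sigma^{q+1}\left(\delta_{k=m}f_{m,(i_1,\dots,i_q)}\right)_{k\in S}$ with $\sigma=\lcm(S)$, and verifies via Möbius inversion applied to sums of the elements $\frac{\sigma^{q+1}}{k}V_k\la\,\cdot\,\ra$ that these functionals really take values in the subring $X_S(R)\subseteq R^S$; since $X_S(R)$ is $\Z$-torsion free one may cancel $\sigma^{q+1}$ at the end, so the functionals separate elements and $X_S^q(R)$ is torsionless as a submodule of a torsionless module. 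This explicit construction (or some substitute proving that $R^S$-free modules are torsionless over $W_S(R)$) is the missing ingredient in your proposal; the remaining steps — the reduction via \thref{etalechat} and \thref{flatpullbacktorsionless}, the degree-zero determination of dga morphisms out of $\Omega^{\bullet}_{W_S(R)/W_S(R_0)}$, the naturality argument for $\ker\epsilon_{\Omega W_S}\subseteq\ker\alpha$, and the identification with the sheafified complex — are sound and essentially parallel to the paper's proof.
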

We make use of the fact that smooth morphisms locally factor through an \'etale morphisms followed by the structure map 
$\A^t_{R_0}=\spec R_0[x_1,\dots,x_t]\to \spec R_0$ of an affine $t$-space over the base for some $t\geq 0$. Thus we focus on polynomial algebras first.

\begin{lemma}\thlabel{dRWtorsionless}
	Given a finite truncation set $S$, let $R_0$ be an $S$-torsion free ring and \\$R= R_0[x_1,\dots, x_t]$ a polynomial algebra for some $t\geq 0$. The
	$W_S(R)$-module $W_S\Omega_{R/R_0}^{\bullet}$ is torsionless, i.e. we have $T_{W_S(R)}(W_S\Omega_{R/R_0}^{\bullet})=W_S\Omega_{R/R_0}^{\bullet}$.
	\begin{proof}
		Since the torsionless quotient commutes with direct sums, we can check the statement degreewise for every $q\geq 0$. To simplify computations we transfer the problem to the language of \citeauthor{CD15}.\footnote{see appendix \ref{AppendixB} for a brief recall of Cuntz and Deninger's $\Z$-subalgebra $X_S(R)\subseteq R^S$ generated by elements $V_n\la r \ra\defeq (n\delta_{k|n}r^{n/k})_{k\in S}, r\in R$, the associated dga $X_S^{\bullet}(R)$ and the relative de Rham-Witt complex $E_S\Omega_{R/R_0}^{\bullet}$}
		In this setting we have $X_S(R_0)=\gh_S(W_S(R_0))$ and $X_S(R)=\gh_S(W_S(R))$. 
		By \cite[Lemma 3.11]{CD15}, which uses \cite[Cor. 2.18]{LZ04} as a main tool, the natural map
		$$\Omega_{X_S(R)/X_S(R_0)}^{q}\overset{\GG_S}{\to}(\Omega_{R/R_0}^{q})^S, r_0dr_1\cdots dr_q\mapsto \GG_S(r_0)d\GG_S(r_1)\cdots \GG_S(r_q)$$
		induced by the ghost maps on $X_S(R_0)$ and $X_S(R)$, gives us an injection 	$$X_S^{q}(R)\cong E_S\Omega^{q}_{R/R_0}\hookrightarrow(\Omega_{R/R_0}^{q})^S$$
		of $X_S(R_0)$-dga's.
		By the functoriality of the torsionless quotient it is enough to show that $(\Omega_{R/R_0}^{q})^S$ is torsionless. Then $X_S^q(R)$ is a torsionless $X_S(R)$-module as a submodule of a torsionless module (cf. \thref{torsionlesssub}).\\
		Let $\omega=(\omega_k)_{k\in S}\in (\Omega_{R/R_0}^{q})^S$ be homogeneous of degree $q$ such that $\ev_{\omega}=0$. Then er have $\varphi(\omega)=0$ for all $X_S(R)$-linear maps $\varphi\colon (\Omega_{R/R_0}^{q})^{S}\to X_S(R)$. 
		Since $R$ is a polynomial algebra over $R_0$ and $d$ is an $R_0$-linear derivation, we can write 
		 $$\omega_m=\sum_{1\leq j_1<\dots<j_q\leq t}f_{m,(j_1,\dots,j_q)}dx_{j_1}\cdots dx_{j_q}$$ 
		with uniquely determined polynomials $f_{m,(j_1,\dots ,j_q)}\in R=R_0[x_1,\dots,x_r]$. 
		We claim that the maps 
		$$\varphi_{m,(i_1,\dots,i_q)}\colon \omega\mapsto \sigma^{q+1}\left(\delta_{k=m} f_{m,(i_1,\dots,i_q)}\right)_{k\in S}$$
		with  $\sigma\defeq \lcm(S)$ lie in $\Hom_{X_S(R)}((\Omega_{R/R_0}^{q})^S,X_S(R))$ for all $m\in S, 1\leq i_1<\dots<i_q\leq t$. 
		First of all we prove that the image is contained in $X_S(R)$ starting with $m=1$. Consider the element
		 \begin{equation*}
			a^{(1)}\defeq \sum_{k\in S}\mu(k)\frac{\sigma^{q+1}}{k}V_k\la f_{1,(i_1,\dots,i_q)}^{k}\ra
			=\sigma^{q+1}\sum_{k\in S}\mu(k)\left(\delta_{k|l}f_{1,(i_1,\dots,i_q)}^{l} \right)_{l\in S}\in X_S(R),
		\end{equation*}
		where $\mu\colon \N\to \{-1,0,1\}$ is the M\"obius function.
		Its first coefficient is of the form\\
		$
		a_1^{(1)}=\sigma^{q+1}(-1)^0f_{1,(i_1,\dots,i_q)}=\varphi_{1,(i_1,\dots,i_q)}(\omega)_1$. For $l>1$ the M\"obius function satisfies $\sum_{k|l}\mu(k)=0$. We conclude
		 \begin{equation*}
			a_l^{(1)}
			=\sigma^{q+1}\sum_{\substack{k|l}}\mu(k)f_{1,(i_1,\dots,i_q)}^{l}=0=\varphi_{1,(i_1,\dots,i_q)}(\omega)_l
		\end{equation*}
		for $l> 1$.
		Now let $1< m\in S$. Then the element
		 \begin{equation*}
			a^{(m)}\defeq
			\sum_{k\in S,m|k}\mu\Big( \frac{k}{m}\Big)\frac{\sigma^{q+1}}{k}V_k\la f_{m,(i_1,\dots,i_q)}^{\frac{k}{m}}\ra
			=\sum_{k\in S,m|k}\mu\Big( \frac{k}{m}\Big)\sigma^{q+1}\left(\delta_{k|l}f_{m,(i_1,\dots,i_q)}^{\frac{l}{m}} \right)_{l\in S}\in X_S(R)
		\end{equation*}
		has coefficient
		 $a_l^{(m)}=\delta_{l=m}\sigma^{q+1}f_{m,(i_1,\dots,i_q)}$ at $l\in S$, which is equal to $\varphi_m(\omega)_l$.
		Hence we obtain
		 $$\varphi_{m,(i_1,\dots,i_q)}(\omega)=\sum_{k\in S,m|k}\mu\Big(\frac{k}{m}\Big)\frac{\sigma^{q+1}}{k}V_k\la f_{m,(i_1\dots i_q)}^{\frac{k}{m}}\ra\in X_S(R).$$
		The $X_S(R)$-linearity is obvious.\\
		Now let $\varphi_{m,(i_1,\dots,i_q)}(\omega)=0$ for all $m\in S$ and $1\leq i_1<\dots<i_q\leq t$. Since $X_S(R)$ has no $\Z$-torsion, we can divide by the power of $\sigma$ and conclude $f_{m,(i_1,\dots,i_q)}=0$ for all $i_1<\dots<i_q$ in each coefficient $\omega_m$. Hence $\omega=0$ and $(\Omega_{R/R_0}^q)^S$ is torsionless as an $X_S(R)$-module.
	\end{proof}
\end{lemma}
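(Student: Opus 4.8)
The plan is to work one cohomological degree at a time and to pass to Cuntz and Deninger's concrete model of the relative de Rham--Witt complex, inside which one can exhibit enough $W_S(R)$-linear functionals by hand. Since forming the torsionless quotient commutes with direct sums, it suffices to prove that $W_S\Omega^q_{R/R_0}$ is a torsionless $W_S(R)$-module for each fixed $q\ge 0$. As $R_0$ is $S$-torsion free, so is $R=R_0[x_1,\dots,x_t]$; hence the ghost maps on $R_0$ and on $R$ are injective (\thref{tensorqsghost}), and in the notation of \cite[sec.~3]{CD15} one has $X_S(R_0)=\gh_S(W_S(R_0))$ and $X_S(R)=\gh_S(W_S(R))\cong W_S(R)$. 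By \cite[Lemma~3.11]{CD15} -- which rests on \cite[Cor.~2.18]{LZ04} -- the ghost-induced map identifies $W_S\Omega^q_{R/R_0}\cong E_S\Omega^q_{R/R_0}\cong X_S^q(R)$ and realises it as an $X_S(R)$-submodule of $(\Omega^q_{R/R_0})^S$. Since a submodule of a torsionless module is torsionless (\thref{torsionlesssub}), it is therefore enough to show that $(\Omega^q_{R/R_0})^S$ is a torsionless $X_S(R)$-module.

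To this end I would write, for $\omega=(\omega_m)_{m\in S}\in(\Omega^q_{R/R_0})^S$, the unique expansion $\omega_m=\sum_{1\le j_1<\dots<j_q\le t}f_{m,(j_1,\dots,j_q)}\,dx_{j_1}\cdots dx_{j_q}$ with uniquely determined coefficients in $R$, and, putting $\sigma=\lcm(S)$, define for every $m\in S$ and every $q$-index $I=(i_1<\dots<i_q)$ the map
\[
\varphi_{m,I}\colon\ \omega\ \longmapsto\ \sigma^{q+1}\bigl(\delta_{l=m}\,f_{m,I}\bigr)_{l\in S}\ \in\ R^S .
\]
This is $X_S(R)$-linear in $\omega$, being the extraction of a single coefficient rescaled by a fixed integer. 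The point that must be checked -- and this is the crux of the argument -- is that $\varphi_{m,I}(\omega)$ in fact lands in the comparatively small subring $X_S(R)$ of $R^S$, i.e.\ satisfies the congruences cutting out $X_S(R)$. I would verify this by Möbius inversion: with $\mu$ the Möbius function, one has the identity
\[
\varphi_{m,I}(\omega)\ =\ \sum_{k\in S,\ m\mid k}\mu\!\Bigl(\tfrac{k}{m}\Bigr)\,\tfrac{\sigma^{q+1}}{k}\,V_k\la f_{m,I}^{\,k/m}\ra ,
\]
whose right-hand side is a $\Z$-linear combination of the Verschiebung generators $V_k\la\cdot\ra$ of $X_S(R)$ -- the coefficients $\mu(k/m)\sigma^{q+1}/k$ being integers because $k\mid\sigma$ for every $k\in S$ -- and hence lies in $X_S(R)$; that it equals $\sigma^{q+1}(\delta_{l=m}f_{m,I})_{l\in S}$ is a coordinatewise check using $\sum_{d\mid n}\mu(d)=\delta_{n,1}$. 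Thus $\varphi_{m,I}\in\Hom_{X_S(R)}\bigl((\Omega^q_{R/R_0})^S,X_S(R)\bigr)$.

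Finally, suppose $\ev_\omega=0$. Then $\varphi_{m,I}(\omega)=0$ for all $m\in S$ and all $I$, i.e.\ $\sigma^{q+1}f_{m,I}=0$ in $R$. Every prime divisor of $\sigma=\lcm(S)$ lies in $S$ (a truncation set is stable under division), so multiplication by $\sigma$ is injective on $R_0$, and a fortiori on $R=R_0[x_1,\dots,x_t]$; therefore $f_{m,I}=0$ for all $m,I$, so $\omega=0$. This shows the evaluation map on $(\Omega^q_{R/R_0})^S$ is injective, so $(\Omega^q_{R/R_0})^S$ -- and with it its submodule $X_S^q(R)\cong W_S\Omega^q_{R/R_0}$ -- is torsionless over $X_S(R)\cong W_S(R)$. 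Taking the direct sum over $q\ge 0$ and again using that the torsionless quotient commutes with direct sums yields $T_{W_S(R)}(W_S\Omega^{\bullet}_{R/R_0})=W_S\Omega^{\bullet}_{R/R_0}$. The only real obstacle is the middle step: producing the coefficient functionals as integral combinations of Verschiebung generators so that they take values in $X_S(R)$ rather than merely in $R^S$; the reduction to Cuntz--Deninger's model and the concluding injectivity are routine.
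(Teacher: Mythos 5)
Your proposal is correct and follows essentially the same route as the paper: reduce degreewise, pass to the Cuntz--Deninger model $X_S^q(R)\subseteq(\Omega^q_{R/R_0})^S$ via \cite[Lemma 3.11]{CD15}, and exhibit the coefficient functionals $\varphi_{m,I}$ as integral combinations of Verschiebung generators by M\"obius inversion, so that torsionlessness of $(\Omega^q_{R/R_0})^S$ over $X_S(R)$ yields the claim. The only (harmless) variation is the final step, where you invoke $S$-torsion freeness of $R$ to cancel $\sigma^{q+1}$, while the paper phrases this as absence of $\Z$-torsion; both justifications are valid in this setting.
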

\begin{proof}[Proof of \thref{maintheorem}.]
	By assumption, the scheme $W_S(X)$ is locally noetherian and the induced morphism on Witt schemes $W_S(f)\colon W_S(X)\to \spec  W_S(R_0)$ is locally of finite type. Moreover, the morphism
	$W_S(X)_{\Q}\to\spec W_S(R_0)_{\Q} $ is smooth because it corresponds to the disjoint union of $|S|$ copies of the smooth homomorphism $X_{\Q}\to \spec R_{0,\Q}$ via the ghost maps on $X$ and $R_0$. By \thref{subschemenoetherian} the scheme $X$ and therefore also $X_{\Q}$ is locally noetherian. Thus the map $W_S(X)\to \spec W_S(R_0)$ is generically smooth. Besides, the $W_S(\OO_X)$-modules $\Omega^q_{W_S(X)/W_S(R_0)}$, its torsionless quotient and $W_S\Omega_{ X/R_0}^q$ are coherent by \cite[ \href{https://stacks.math.columbia.edu/tag/01V2}{01V2}, \href{https://stacks.math.columbia.edu/tag/01XZ}{01XZ (2)}, \href{https://stacks.math.columbia.edu/tag/01Y1}{01Y1}]{StacksProject}. 
	Therefore the first part of the statement follows from \thref{dga} and \thref{etalesheaf}.\\
	For every $x\in X$ there exists an affine open neighbourhood $U=\spec R\subseteq X$ and a factorization $f|_U\colon U\overset{g}{\to} \Af_{R_0}^t\to \spec R_0$,
	where $g$ is \'etale and $t$ is the relative dimension of $f$ at $x$ by \cite[ \href{https://stacks.math.columbia.edu/tag/039Q}{039Q}]{StacksProject}.
	Applying the Witt vector functor $W_S$ yields the factorization $W_S(f|_U)\colon W_S(U)\overset{W_S(g)}{\to} W_S(\Af_{R_0}^t)\to \spec W_S(R_0)$. Here $W_S(g)$ is \'etale by \thref{BorKal}. 
	We denote the polynomial ring in $t\geq 0$ variables over $R_0$ by $R'$ and use \thref{etalechat} to compute
	$W_S(g)^*W_S\Omega_{\Af_{R_0}^t/R_0}^q
		\cong(W_S\Omega_{R'/R_0}^q\otimes_{W_S(R')}W_S(R))\widetilde{~~~}
		\cong W_S\Omega_{U/R_0}^q$.
	By \thref{flatpullbacktorsionless} (iv) and \thref{dRWtorsionless} the sheaf $W_S\Omega_{U/R_0}^q$ is torsionless. As we can cover $W_S(X)$ by the open subschemes $W_S(U)$ (see \cite[Theorem A]{Bor15}), we conclude that the sheaf $W_S\Omega_{ X/R_0}^q$ is torsionless by \thref{cohsurjflat}.
	The affine scheme $W_S(U)$ is noetherian as an open subscheme of the locally noetherian scheme $W_S(X)$  and $\Omega_{W_S(U)/W_S(R)}^q$ is a coherent $\OO_{W_S(U)}$-module (see \cites[Prop. 16.5]{Bor15}[ \href{https://stacks.math.columbia.edu/tag/01OW}{01OW}]{StacksProject}).
	By \thref{cohsurjflat} the global sections of
	$\T_{W_S}(\Omega_{U/R_0}^q)$ are given by the $W_S(R)$-module $T_{W_{S}}(\Omega_{R/R_0}^{q})$.\\
	By \thref{surjreldrw}, i.e. the universal $\text{property of the}$ K\"ah\-ler differential forms, there are unique ho\-mo\-mor\-phisms of $W_S(R_0)$-dga's 
	\begin{equation*}
		\alpha\colon \Omega_{W_S(R)/W_S(R_0)}^{\bullet}\to W_S\Omega_{R/R_0}^{\bullet}\quad\text{and} \quad
		\beta\colon \Omega_{W_S(R)/W_S(R_0)}^{\bullet}\to T_{W_S}(\Omega_{R/R_0}^{\bullet})
	\end{equation*}
	functorial in $S$, which are the identity in degree zero.\\
	By \thref{wittcomplex} the torsionless quotient of the de Rham complex is a Witt complex and $W_S\Omega^{\bullet}_{R/R_0}$ is torsionless as we have already explained.
	We use the functoriality of the torsionless quotient to obtain the morphism
	\begin{equation*}
		T(\alpha)\colon T_{W_{S}}(\Omega_{R/R_0}^{\bullet})\to T_{W_{S}}(W_S\Omega_{R/R_0}^{\bullet})\cong  W_S\Omega_{R/R_0}^{\bullet}
	\end{equation*}
	of Witt complexes.
	By the universal property of the de Rham-Witt complex we have a unique morphism of Witt complexes
	$$f\colon W_S\Omega_{R/R_0}^{\bullet}\to T_{W_{S}}(\Omega_{R/R_0}^{\bullet}),$$
	which fits into the commutative diagram
	 \begin{equation*}
		\begin{tikzcd}
			&\Omega_{W_S(R)/W_S(R_0)}^{\bullet}\arrow[ld, two heads,"\beta"']\arrow[d, two heads,"\alpha"]\arrow[rd, two heads,"\beta"]&\\
			T_{W_{S}}(\Omega_{R/R_0}^{\bullet})\arrow[r,"T(\alpha)"]
			&W_S\Omega_{R/R_0}^{\bullet}\arrow[r,"f"]&T_{W_{S}}(\Omega_{R/R_0}^{\bullet})
		\end{tikzcd}
	\end{equation*}
	of $W_S(R_0)$-dga's.
	All of these maps are the identity on $W_S(R)$ in degree zero and all objects are generated by elements of degree zero.\\
	Again by the universal property of $W_S\Omega_{R/R_0}^{\bullet}$ the composition $T(\alpha)\circ f$ is the identity on $W_S\Omega_{R/R_0}^{\bullet}$. We deduce the opposite identity $f\circ T(\alpha)=\id $ from the surjectivity of $\beta$ and the relation
	$\beta=f\circ \alpha=f\circ T(\alpha)\circ\beta$.
\end{proof}

\begin{remark}
	Let $ X$ be a locally noetherian, smooth $R_0$-scheme. For $S=\{1\}$ we have $$\T_{W_{\{1\}}}(\Omega_{ X/R_0}^q)=\T(\Omega_{ X/R_0}^q)=\Omega_{ X/R_0}^q$$ because $\Omega_{ X/R_0}^q$ is a locally free $\OO_X$-module.
\end{remark}

\begin{cor} Let $S$ be a finite truncation set, $R_0$ a flat $\Z$-algebra and $ X \to \spec R_0$ a $W_S\hspace{0.2mm}$-finite morphism of schemes. If $X\to \spec R_0$ is smooth of dimension $d$, then we have $\T_{W_S}(\Omega^q_{X/R_0})=0$ for all $q>d$.
	\begin{proof}
		The base change $X_{\Q}\to \spec (R_0)_{\Q}$ is also smooth of relative dimension $ d$ by \cite[ \href{https://stacks.math.columbia.edu/tag/02NK}{02NK}]{StacksProject}. Hence the assertion follows from \thref{etalebottomsmooth}.
	\end{proof}
\end{cor}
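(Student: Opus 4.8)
The plan is to reduce the statement to the generically smooth case already handled in \thref{etalebottomsmooth}, so the only real work is checking that its hypotheses are in force and identifying the relative dimension of the generic fibre.

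First I would verify that $X\to\spec R_0$ is generically smooth in the sense of \thref{gensmoothdef}. A smooth morphism is locally of finite type and flat; since $R_0$ is a flat $\Z$-algebra, $\spec R_0$ is flat over $\spec\Z$, and composing with the flat structure morphism $X\to\spec R_0$ shows that $X$ is a flat $\Z$-scheme as well. The $W_S$-finiteness hypothesis forces $W_S(X)$ to be locally noetherian, and by the remark following \thref{subschemenoetherian} this makes $X$ locally noetherian. Finally, base change of a smooth morphism along $\spec\Q\to\spec\Z$ is again smooth, so $X_{\Q}\to\spec (R_0)_{\Q}$ is smooth; hence $X\to\spec R_0$ is generically smooth, and it is $W_S$-finite by assumption.

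Next I would pin down the relative dimension: since smoothness together with the relative dimension is preserved by base change (see \cite[\href{https://stacks.math.columbia.edu/tag/02NK}{02NK}]{StacksProject}), the morphism $X_{\Q}\to\spec (R_0)_{\Q}$ is smooth of relative dimension $d$. Now \thref{etalebottomsmooth}(ii) applies verbatim and yields $\T_{W_S}(\Omega^q_{X/R_0})=0$ for all $q>d$. Concretely this is because that corollary embeds $\T_{W_S}(\Omega^q_{X/R_0})$ into a pushforward of $(\Omega^q_{X_{\Q}/(R_0)_{\Q}})^S$, and the latter vanishes in degrees $q>d$ since $\Omega^1_{X_{\Q}/(R_0)_{\Q}}$ is locally free of rank $d$. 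I expect no serious obstacle here: the argument is a direct specialisation of the generically smooth case, and the only thing to watch is the bookkeeping that all the hypotheses of \thref{etalebottomsmooth} are genuinely satisfied.
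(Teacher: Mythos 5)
Your proposal is correct and follows the same route as the paper: base change to $\Q$ preserves smoothness and relative dimension (the paper cites the same Stacks Project tag 02NK), and then \thref{etalebottomsmooth}(ii) gives the vanishing. The extra verifications you carry out (flatness of $X$ over $\Z$, local noetherianity of $X$ via \thref{subschemenoetherian}, hence generic smoothness) are exactly the hypotheses the paper leaves implicit, so there is no divergence in substance.
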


	In the setting of \thref{maintheorem} the torsionless quotient  $T_{W_S}(\Omega_{R/R_0}^{\bullet})$ is identified with its image inside $(\Omega_{ R/R_0}^{\bullet})^S$ under the naive extension of the ghost map \\$\gh_S\colon W_S(R)\to R^S$ to $\Omega_{W_S(R)/W_S(R_0)}^{\bullet}$.
	Note that for every $r\in W_S(R)$ the differential form $d\gh_n(r)=n \sum_{e|n} dr_e^{n/e-1}$ is a multiple of $n\in S$. 
	The setting of the theorem allows us to uniquely divide by $(n^q)_{n\in S}$ in degree $q\geq 0$. 
	Hence the modified ghost map 
	\begin{align*}
		\gh_S^q\colon T_{W_S}(\Omega_{R/R_0}^{q}) \quad \quad \quad \quad \quad  \to &~(\Omega_{ R/R_0}^{q})^S\\
		r_0dr_1\cdots dr_q+\ker \epsilon_{W\Omega_{S}}\mapsto &~(n^{-q})_{n\in S}\gh_S(r_0)d\gh_S(r_1)\cdots d\gh_S(r_q)\\
		&~=\gh_S(r_0)\dd \gh_S(r_1)\cdots \dd\gh_S(r_q)
	\end{align*}
	identifies the torsionless quotient with $X_S^q(R)$ in \cite[sec. 3]{CD15}.\footnote{see appendix \ref{AppendixB} for a brief recall of Cuntz and Deninger's relative de Rham-Witt complex}
\subsection[Reduction modulo an ideal]{\textbf{Reduction modulo an ideal.}} \label{Reduction modulo an ideal} If a ring homomorphism $R_0\to R$ admits a smooth, $W$-finite lifting $A_0\to A$ of flat $\Z$-algebras, then the de Rham-Witt complex can be computed via torsionless differential forms as follows:
\begin{lemma}\thlabel{reduction}
	Let $f\colon A_0 \to A$ be a smooth, $W$-finite homomorphism of flat $\Z$-algebras. Given ideals $I_0\subseteq A_0,I\subseteq A$ with $f(I_0)\subseteq I$, we set $R_0= A_0/I_0$ and $R=A/I$.\\ The functor
	\begin{align*}
		(	\text{finite truncation sets})&\to (W(R_0)\text{-dga's})\\
		S&\mapsto  T_{W_{S}}(\Omega_{A/A_0}^{\bullet})/
		T_{W_{S}(A)}(W_S(I)^{\bullet})
	\end{align*}
	defines the relative de Rham-Witt complex over $R$ with $W(R_0)$-linear differential.\\
	Here $W_S(I)^{\bullet}\subseteq \Omega_{W_{S}(A)/W_{S}(A_0)}^{\bullet}$ is the differential graded ideal generated by $W_S(I)$.
\end{lemma}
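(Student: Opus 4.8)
The plan is to combine \thref{maintheorem}, applied to the smooth, $W$-finite homomorphism $A_0\to A$ of flat $\Z$-algebras, with the standard behaviour of (relative) de Rham-Witt complexes under passage to a quotient ring. By \thref{maintheorem} the functor $S\mapsto T_{W_S}(\Omega^{\bullet}_{A/A_0})$ is the relative de Rham-Witt complex over $A$ with $W(A_0)$-linear differential; write $\beta\colon \Omega^{\bullet}_{W_S(A)/W_S(A_0)}\twoheadrightarrow T_{W_S}(\Omega^{\bullet}_{A/A_0})$ for the canonical surjective homomorphism of $W_S(A_0)$-dga's. Since $\beta$ is the identity in degree zero, the submodule $T_{W_S(A)}(W_S(I)^{\bullet})$ of $T_{W_S}(\Omega^{\bullet}_{A/A_0})$ (i.e. the image of $W_S(I)^{\bullet}$ under $\beta$) is exactly the differential graded ideal $J_S\subseteq T_{W_S}(\Omega^{\bullet}_{A/A_0})$ generated by $W_S(I)\subseteq W_S(A)=T_{W_S}(\Omega^0_{A/A_0})$. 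So the claim is that the functor $S\mapsto E^{\bullet}_S:=T_{W_S}(\Omega^{\bullet}_{A/A_0})/J_S$ is the \emph{initial} object in the category of Witt complexes over $R$ with $W(R_0)$-linear differential.

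The first task is to show that $(J_S)_S$ is stable under all structure maps, so that $E^{\bullet}$ inherits a Witt complex structure. The standard description of relative Kähler differentials of a quotient identifies $W_S(I)^{\bullet}$ with the kernel of the canonical surjection $\Omega^{\bullet}_{W_S(A)/W_S(A_0)}\twoheadrightarrow \Omega^{\bullet}_{W_S(R)/W_S(R_0)}$ (here we use $f(I_0)\subseteq I$, so that $W_S(A_0)\to W_S(A)$ carries $W_S(I_0)$ into $W_S(I)$, together with $W_S(A)/W_S(I)\cong W_S(R)$ and $W_S(A_0)/W_S(I_0)\cong W_S(R_0)$). The differential, the restriction maps $\res^S_{S'}$, and the Frobenius and Verschiebung operators on the Kähler complex $\Omega^{\bullet}_{W_S(-)/W_S(-)}$ are all natural in the pair $(R_0\to R)$ — for the Frobenius this uses Hesselholt's construction via the natural ring homomorphism $\Delta$, as recalled in the footnote to the construction of $F_n$ in the proof of \thref{wittcomplex} — hence compatible with the surjection induced by $A\to R$, and therefore preserve its kernel $W_S(I)^{\bullet}$. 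Passing to the torsionless quotient via $\beta$ (through which these operators induce the Witt complex structure on $W_S\Omega^{\bullet}_{A/A_0}$), they preserve $J_S=\beta(W_S(I)^{\bullet})$. Since $\beta$ is an isomorphism in degree zero, $J_S$ meets degree zero exactly in $W_S(I)$; hence $E^0_S=W_S(A)/W_S(I)=W_S(R)$, the structure map factors through $W_S(R)$, and the differential is $W(R_0)$-linear because the composite $W(A_0)\to W(A)\to E^0_S=W_S(R)$ factors through $W_S(R_0)$. Thus $E^{\bullet}$ is a Witt complex over $R$ with $W(R_0)$-linear differential.

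It remains to verify the universal property. Let $E'^{\bullet}$ be any Witt complex over $R$ with $W(R_0)$-linear differential. Via $A_0\to R_0$ and $A\to R$ it is, in particular, a Witt complex over $A$ with $W(A_0)$-linear differential, so by \thref{maintheorem} there is a unique morphism $\gamma\colon T_{W_S}(\Omega^{\bullet}_{A/A_0})=W_S\Omega^{\bullet}_{A/A_0}\to E'^{\bullet}_S$ of such Witt complexes. In degree zero $\gamma$ is the composite $W_S(A)\to W_S(R)\to E'^0_S$, which annihilates $W_S(I)$; since a morphism of Witt complexes commutes with the differential and with products, $\gamma$ carries the differential graded ideal $J_S$ generated by $W_S(I)$ into the differential graded ideal of $E'^{\bullet}_S$ generated by $0$, i.e. into $0$. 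Hence $\gamma$ factors uniquely through $E^{\bullet}_S$, and the factorisation is a morphism of Witt complexes over $R$ with $W(R_0)$-linear differential; uniqueness is clear. This identifies $E^{\bullet}$ with the relative de Rham-Witt complex over $R$. Finally, for an infinite truncation set both sides are, by definition and by \thref{maintheorem} respectively, the projective limit over the poset of finite truncation subsets, so the statement extends.

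The main obstacle is the stability of the differential graded ideal $(J_S)_S$ under the Frobenius (and Verschiebung) operators — equivalently, identifying $(J_S)_S$ with the kernel of the (already $n^{-q}$-normalised) Frobenius-equivariant surjection $T_{W_S}(\Omega^{\bullet}_{A/A_0})\to T_{W_S}(\Omega^{\bullet}_{R/R_0})$ one is constructing. A direct verification on generators $V_k[x]$, $x\in I$, using the relations of \thref{propertiesdRW} runs into awkward divisibility issues, which is why I would instead invoke the naturality of the Frobenius on the Kähler complex; alternatively one can transfer the whole computation into Cuntz and Deninger's framework (appendix \ref{AppendixB}) via the modified ghost map and argue there. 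Everything else is formal manipulation with universal properties.
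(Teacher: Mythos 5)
Your route to the universal property is genuinely different from the paper's and is essentially correct. The paper proceeds by constructing two morphisms: $f'\colon W_S\Omega^{\bullet}_{R/R_0}\to T_{W_S}(\Omega^{\bullet}_{A/A_0})/T_{W_S(A)}(W_S(I)^{\bullet})$ from the initiality of $W_S\Omega^{\bullet}_{R/R_0}$, and a map back obtained from the identification $T_{W_S}(\Omega^{\bullet}_{A/A_0})\cong W_S\Omega^{\bullet}_{A/A_0}$ of \thref{maintheorem} together with the natural map $W_S\Omega^{\bullet}_{A/A_0}\to W_S\Omega^{\bullet}_{R/R_0}$ (which kills the image of the ideal); it then checks that both composites are the identity. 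You instead verify initiality directly, restricting an arbitrary Witt complex over $R$ with $W(R_0)$-linear differential along $A\to R$, $A_0\to R_0$, and factoring the unique morphism out of $T_{W_S}(\Omega^{\bullet}_{A/A_0})=W_S\Omega^{\bullet}_{A/A_0}$ through the quotient; uniqueness follows from surjectivity of the projection. This avoids comparing two explicit morphisms, at the cost of not producing the paper's byproduct presentation of the quotient as $\Omega^{\bullet}_{W_S(R)/W_S(R_0)}/\overline{\ker\epsilon_{\Omega W_{S}}}$.

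Two points in your write-up need patching. First, you identify $T_{W_S(A)}(W_S(I)^{\bullet})$ with the image $\beta(W_S(I)^{\bullet})$ as if by definition. Since the torsionless-quotient functor does not preserve injections, the injectivity of the natural map $T_{W_S(A)}(W_S(I)^{q})\to T_{W_S}(\Omega^{q}_{A/A_0})$, equivalently $\ker\epsilon_{W_S(I)^{q}}=W_S(I)^{q}\cap\ker\epsilon_{\Omega W_{S}}$, is not formal; the paper proves it first, via the commutative square embedding both modules into $(I^{q})^{S}_{\Q}$ resp. $(\Omega^{q}_{A_{\Q}/A_{0,\Q}})^{S}$, and without it the quotient in the statement is not literally a quotient by a dg ideal. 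Second, concerning stability of $J_S$ under the structure maps: naturality on the K\"ahler complex does handle $d$, the restrictions and the unnormalized Frobenius, but the Witt-complex Frobenius of \thref{wittcomplex} is $n^{-q}F_n$, and dividing by $n^{q}$ need not stay inside $J_S$ abstractly (the quotient is no longer $\Z$-torsion free); moreover there is no Verschiebung on the K\"ahler complex at all, since $V_n$ is not a ring homomorphism, so naturality is simply unavailable there. Both cases do close: for $V_n$ the explicit action on classes of elementary forms, $r_0dr_1\cdots dr_q\mapsto V_n(r_0)dV_n(r_1)\cdots dV_n(r_q)$, preserves the ideal because $V_n(W_{S/n}(I))\subseteq W_S(I)$; for the normalized Frobenius one either uses Hesselholt's $\Delta$-based Frobenius on $\Omega^{\bullet}_{W_S(A)/W_S(A_0)}$ together with the fact that $\beta=f\circ\alpha$ intertwines it with $F_n^{q}$ (via \thref{maintheorem} for $A_0\to A$), or redoes the divisibility computation from the proof of \thref{wittcomplex} on generators $V_k[x]$ with $x\in I$, where Fermat's little theorem makes $(1-k^{p-1})/p$ integral and every term stays in the ideal. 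The paper itself only asserts this descent, so your level of detail is comparable, but as written your primary mechanism covers neither $V_n$ nor the $n^{-q}$-normalization; the fallback you mention (Cuntz--Deninger's model, appendix \ref{AppendixB}) is a legitimate way to do it.
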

\begin{proof}
	From the commutativity of the diagram
	\begin{equation*}
		\begin{tikzcd}
			T_{W_{S}(A)}(W_S(I)^{q})\arrow[r]\arrow[d,hook]
			&T_{W_{S}}(\Omega_{A/A_0}^{q})\arrow[d,hook]
			\\
			(I^q)^{S}_{\Q}\arrow[r,hook]
			&(\Omega_{A_{\Q}/A_{0,\Q}}^{q})^{S}
		\end{tikzcd}
	\end{equation*}
	we deduce that the top horizontal map is an inclusion. Let $T_{W_S}\Omega^q_{(A,I)}$ denote the $W_S(R)$-module 
	$T_{W_{S}}(\Omega_{A/A_0}^{q})/T_{W_{S}(A)}(W_S(I)^{q})$.
	The differential on $T_{W_S}\Omega^{\bullet}_{(A,I)}$ inherited from  $T_{W_{S}}(\Omega_{A/A_0}^{\bullet})$ is a derivation which maps $W_S(R_0)$ to zero. 
	Hence there is a unique homomorphism
	$ \Omega_{W_{S}(R)/W_{S}(R_0)}^{q}\to T_{W_S}\Omega^q_{(A,I)}$
	of $W_S(R)$-modules fitting into the commutative diagram
	\begin{equation*}
		\begin{tikzcd}
			0\arrow[r]&W_S(I)^{q}\arrow[r]\arrow[d,two heads]	&\Omega_{W_{S}(A)/W_{S}(A_0)}^{q}\arrow[r]\arrow[d,two heads]&\Omega_{W_{S}(R)/W_{S}(R_0)}^{q}\arrow[d,two heads]\arrow[r]&0\\
			0\arrow[r]&T_{W_{S}(A)}(W_S(I)^{q})\arrow[r]&
			T_{W_{S}}(\Omega_{A/A_0}^{q})\arrow[r]
			&T_{W_S}\Omega^q_{(A,I)}\arrow[r]&0
		\end{tikzcd}
	\end{equation*}
	of short exact sequences. In particular, we have the equality  $\ker\epsilon_{W_S(I)^{q}}=\ker\epsilon_{\Omega W_{S}}\cap W_S(I)^q,$
	 hence the isomorphism
	\begin{align*}
		T_{W_S}\Omega^q_{(A,I)}&=T_{W_{S}}(\Omega_{A/A_0}^{q})/(W_S(I)^q/\ker\epsilon_{W_S(I)^{q}})\cong \Omega_{W_{S}(A)/W_{S}(A_0)}^{q}/(W_S(I)^q+\ker\epsilon_{\Omega W_{S}})\\
		&\cong \Omega_{W_{S}(R)/W_{S}(R_0)}^{q}/\overline{\ker\epsilon_{\Omega W_{S}}},
	\end{align*}
	where $\overline{\ker\epsilon_{\Omega W_{S}}}$ denotes the image of the submodule ${\ker\epsilon_{\Omega W_{S}}}\subseteq 
	\Omega_{ W_{S}(A)/W_S(A_0)}^q$ under the projection $	\Omega_{ W_{S}(A)/W_S(A_0)}^q\to 	\Omega_{ W_{S}(R)/W_S(R_0)}^q$.\\
	The restriction, Frobenius and Verschiebung maps from the Witt complex $T_{W_{S}}(\Omega_{A/A_0}^{q})$ all descend to $ T_{W_S}\Omega^q_{(A,I)}$ in a compatible way. Hence $S\mapsto\varprojlim_{S'\subseteq S}T_{W_{S'}}\Omega^q_{(A,I)}$ is a Witt complex over $R$ with $W(R_0)$-linear differential, where the limit runs over the directed poset of finite truncation subsets $S'\subseteq S$.
	By the universal property of the de Rham-Witt complex we obtain  unique maps of Witt complexes
	$$f'\colon W_S\Omega^{\bullet}_{R/R_0}
	\to T_{W_S}\Omega^{\bullet}_{(A,I)}\quad\text{and}\quad
	W_S\Omega^{\bullet}_{A/A_0}\to W_S\Omega^{\bullet}_{R/R_0}$$
	over $R$ and $A$, respectively.
	Since $\overline{W_S(I)^{\bullet}}\defeq T(\alpha)(T_{W_S(A)}(W_S(I)^{\bullet}))$ lies in the kernel of the second map, we obtain an induced morphism
	$$T(\alpha)'\colon T_{W_S}\Omega^{\bullet}_{(A,I)}
	\underset{\sim}{\overset{T(\alpha)}{\to}} W_S\Omega^{\bullet}_{A/A_0}/\overline{W_S(I)^{\bullet}}\to W_S\Omega^{\bullet}_{R/R_0}$$ 
	of Witt complexes over $R$ with $W(R_0)$-linear differential, which is the identity in degree zero, by \thref{maintheorem}.\footnote{For the application of \thref{maintheorem} we need the smoothness assumption on $A_0\to A$.} One can show that both compositions $f'\circ T(\alpha)'$ and $T(\alpha)'\circ f'$ are the identity by similar arguments as in the proof of \thref{maintheorem}.
\end{proof}

The proof allows us to rewrite the de Rham-Witt complex as follows:
For the image $\overline{\ker\epsilon_{\Omega W_{S}}}$ of ${\ker\epsilon_{\Omega W_{S}}}\subseteq 
\Omega_{ W_{S}(A)/W_S(A_0)}^{\bullet}$ under the projection $	\Omega_{ W_{S}(A)/W_S(A_0)}^{\bullet}\to 	\Omega_{ W_{S}(R)/W_S(R_0)}^{\bullet}$ we have the following isomorphisms of $W_S(R_0)$-dga's:
\begin{equation*}
	\Omega_{W_{S}(R)/W_{S}(R_0)}^{{\bullet}}/\overline{\ker\epsilon_{\Omega W_{S}}}\overset{\sim}{\to} 
	T_{W_{S}}(\Omega_{A/A_0}^{\bullet})/
	T_{W_{S}(A)}(W_S(I)^{\bullet})\overset{\sim}{\to} 
	W_S\Omega_{ R/R_0}^{\bullet}.
\end{equation*}

\begin{remark}\thlabel{Zcircfinite}
	In this section, the $W$-finiteness condition in the sense of \thref{Wfinite} has played an important role. From our discussion of Witt schemes in section \ref{section Witt vector rings and Witt schemes} we deduce that any morphism $ X\to \spec R_0$ locally of finite type, where $R_0$ is a finitely generated $\Z_{\circ}$-algebra, is $W$-finite.\footnote{Recall that $\Z_{\circ}$ is either equal to $\Z$, to $\Z_{(p)}$ or to $\Z_p$ for a prime number $p$ (see page \pageref{Zcirc}).} This follows from \cite[ \href{https://stacks.math.columbia.edu/tag/01T3}{01T3}, \href{https://stacks.math.columbia.edu/tag/01T6}{01T6}]{StacksProject}) and \thref{wittschemenl} (v).
	Moreover, every finitely gene\-rated $\Z_{\circ}$-algebra $R$ is noetherian and -- in this case -- smooth $R$-algebras (or $R$-schemes) are (locally) noetherian.
\end{remark}
We obtain the following specifications of \thref{maintheorem} and \thref{wittcomplex} (and of
\thref{reduction} analogously).
\begin{theorem}
	Let $R_0$ be a finitely generated, flat $\Z_{\circ}$-algebra.
	\begin{enumerate}[(i)]
		\item For a generically smooth homomorphism $R_0\to R$ the functor
		\begin{equation*}
			(\text{finite truncation sets})\to (W(R_0)\text{-dga's}),~
			S\mapsto T_{W_{S}}(\Omega_{R/R_0}^{\bullet})
		\end{equation*}
		defines a Witt complex over $R$ with $W(R_0)$-linear differential such that  the induced map
		\begin{equation*}
			T_{W_S}(\Omega^q_{R/R_0})\otimes_{W_S(R)}W_S(R') \to T_{W_S}(\Omega^q_{R'/R_0})
		\end{equation*}
		 is an isomorphism for any \'etale homomorphism $R\to R'$.
		\item \thlabel{maintheoremcirc}
		For a smooth morphism $f\colon X\to \spec R_0$ the sheaf $\T_{W_S}(\Omega^{\bullet}_{X/R_0})$ is a $W_S(f)^{-1}\OO_{\spec W_S(R_0)}$-dga. In degree $q\geq 0$ it is the unique coherent $W_S(\OO)$-module $\T_{W_S}(\Omega^{q}_{X/R_0})$ for the \'etale topology of $W_S(X)$ such that $\T_{W_S}(\Omega^{q}_{X/R_0})(W_S(U))=W_S\Omega^q_{R/R_0}$
		holds for every \'etale map $U=\spec R\to  X$.
		In particular, the functor
		\begin{equation*}
			(\text{finite truncation sets})\to (W(R_0)\text{-dga's}),~
			S\mapsto  T_{W_{S}}(\Omega_{R/R_0}^{\bullet})
		\end{equation*}
		defines the relative de Rham-Witt complex over $R$ with $W(R_0)$-linear differential.
	\end{enumerate} 
\end{theorem}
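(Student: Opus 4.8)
The plan is to observe that this theorem is merely a specialization of \thref{wittcomplex} and \thref{maintheorem} (and, in the same way, of \thref{reduction}) obtained by checking that the finiteness hypotheses needed there are automatically satisfied once $R_0$ is a finitely generated, flat $\Z_{\circ}$-algebra. So the proof is essentially a bookkeeping argument invoking \thref{Zcircfinite}.

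First I would treat part (i). A generically smooth homomorphism $R_0\to R$ is in particular of finite type, hence locally of finite type, so by \thref{Zcircfinite} it is $W$-finite; since $R$ is of finite type over the noetherian ring $R_0$ it is itself noetherian, so the standing assumptions of \thref{gensmoothdef} hold. Moreover $\Z_{\circ}$ is flat over $\Z$ in each of the three cases (trivially for $\Z$, as a localization for $\Z_{(p)}$, and because $\Z_p$ is torsion-free hence flat over the PID $\Z$), so the flat $\Z_{\circ}$-algebra $R_0$ is a flat $\Z$-algebra. Therefore \thref{wittcomplex} applies verbatim and yields both the Witt complex structure of $S\mapsto T_{W_S}(\Omega^{\bullet}_{R/R_0})$ over $R$ with $W(R_0)$-linear differential and the étale base change isomorphism $T_{W_S}(\Omega^q_{R/R_0})\otimes_{W_S(R)}W_S(R')\xrightarrow{\sim}T_{W_S}(\Omega^q_{R'/R_0})$ for any étale $R\to R'$.

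For part (ii), a smooth morphism $f\colon X\to\spec R_0$ is locally of finite type, hence again $W$-finite by \thref{Zcircfinite}, and $X$ is locally noetherian, being smooth over the noetherian ring $R_0$. Consequently \thref{maintheorem} applies and gives that $\T_{W_S}(\Omega^{\bullet}_{X/R_0})$ is a $W_S(f)^{-1}\OO_{\spec W_S(R_0)}$-dga which in degree $q$ is the unique coherent $W_S(\OO)$-module for the étale topology of $W_S(X)$ with $\T_{W_S}(\Omega^q_{X/R_0})(W_S(U))=W_S\Omega^q_{R/R_0}$ for every étale $U=\spec R\to X$. Since $W$-finiteness (not merely $W_S$-finiteness for a fixed $S$) also holds in this setting, the concluding clause of \thref{maintheorem} shows that $S\mapsto T_{W_S}(\Omega^{\bullet}_{R/R_0})$ is the relative de Rham-Witt complex over $R$ with $W(R_0)$-linear differential. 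The analogous specialization of \thref{reduction} is obtained the same way, applying it to a smooth $W$-finite lifting of flat $\Z_{\circ}$-algebras.

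I do not expect a genuine obstacle here; the only point requiring care is the verification that no finiteness or noetherianity assumption is quietly needed beyond what \thref{Zcircfinite} supplies. Concretely, one must confirm that \thref{Zcircfinite} really upgrades ``locally of finite type over $\spec R_0$'' to $W$-finiteness simultaneously for all finite truncation sets (it does, as it is phrased for $W$-finiteness, which by \thref{Wfinite} implies $W_S$-finiteness for every $S$), and that flatness over $\Z_{\circ}$ descends to flatness over $\Z$. Beyond this the entire substance is already contained in the earlier theorems.
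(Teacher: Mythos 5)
Your proposal is correct and follows the paper's own route: the paper states this theorem as a direct specialization of \thref{wittcomplex} and \thref{maintheorem}, with \thref{Zcircfinite} supplying the $W$-finiteness (and noetherianity) automatically once $R_0$ is a finitely generated $\Z_{\circ}$-algebra, exactly as you argue. Your additional check that flatness over $\Z_{\circ}$ gives flatness over $\Z$ is a small but legitimate point the paper leaves implicit.
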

Instead of associating a dga $E_S^{\bullet}$ with every truncation set ${S\subseteq \N}$, we can fix a truncation set $S$ and associate dga's $E_{S'}^{\bullet}$ only with the truncation subsets $S'\subseteq S$.
Indeed, \thref{dga} and all statements of sections \ref{sectionwittcomplexes} and \ref{sectionreldrwcomplexes} hold true for so-called $S$-generically smooth morphisms, i.e. morphisms $X\to Y$ locally of finite type of $S$-torsion free schemes such that $X$ is locally noetherian and the base change $X_{\Q_S}\to Y_{\Q_S}$ is smooth (cf. \thref{gensmoothdef}).
The $p$-typical de Rham-Witt complex is the prototypical example. In this case it is enough to consider $p$-torsion free schemes, where multiplication by $p$ is injective on the structure sheaf.

\appendix
\addcontentsline{toc}{section}{Appendices}
\section*{\scshape{Appendices}}
\markboth{Appendices}{}
Independently of the previous analysis we give an alternative interpretation of the ghost map as a sequence of blowing ups in appendix \ref{AppendixA} and a generalization of Dwork's Lemma to the relative de Rham-Witt complex in appendix \ref{AppendixB}.
\renewcommand{\thesubsection}{A}
\subsection[A geometric interpretation of the ghost map]{\textbf{A geometric interpretation of the ghost map.}}\label{AppendixA} Let $S$ be a finite truncation set and $X$ an $S$-torsion free scheme. In this section we are going to prove the following theorem, which endows the ghost map $\gh_S\colon \coprod_{n\in S} X\to W_S(X)$ with a geometric meaning:
\begin{theorems}\thlabel{blowupS} Let $S$ be a finite truncation set and $X$ an an $S$-torsion free scheme. There is an explicit finite sequence of blowing ups 
	$$X_1\defeq\coprod_{n\in S} X \to \dots\to X_{|S|}\defeq X_S\defeq W_S(X)$$ 	
	corresponding to the ghost map $\gh_S\colon \coprod_{n\in S}X\to W_S(X)$.
\end{theorems}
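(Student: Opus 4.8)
First I would set up an induction on the cardinality $|S|$, peeling off a single maximal element of the truncation set at each step and exhibiting that step as one blowing up. The case $|S|=1$ is trivial, since $\gh_{\{1\}}\colon X\to W_{\{1\}}(X)=X$ is the identity. For the inductive step, fix $m\in S$ maximal for divisibility and put $S'\defeq S\setminus\{m\}$, again a truncation set; maximality forces $S/m=\{1\}$, so $W_{S/m}=W_{\{1\}}$. By \thref{closedimmersion}(ii) the restriction $W_S(R)\twoheadrightarrow W_{S'}(R)$ gives the closed immersion $\iota_{S',S}\colon W_{S'}(X)\hookrightarrow W_S(X)$, whose ideal is $\mathrm{im}(V_m)=V_m(W_{S/m}(\OO_X))$, and the $m$-th ghost component gives $\gh_m\colon X\to W_S(X)$ (which, for $m\neq 1$, is in general not a closed immersion; cf.\ \thref{ghostscheme}(i)). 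These assemble into a finite morphism $\pi\colon W_{S'}(X)\sqcup X\to W_S(X)$. Because $R$, and hence $W_S(R)$, is $S$-torsion free, \thref{tensorqsghost}(i) makes $\gh_S$ an isomorphism after inverting the primes dividing elements of $S$; over that open locus $\pi$ is the tautological identification $R^S_{\Q_S}\cong R^{S'}_{\Q_S}\times R_{\Q_S}$. So $\pi$ is a finite modification, concentrated over the fibres at the primes dividing $m$.

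The crux is to realise $\pi$ as the blowing up of $W_S(X)$ along an explicit ideal sheaf. Locally, write $A\defeq W_S(R)$, set $\sigma\defeq\lcm(S)$ (an integer multiple of $m$, so $\tfrac{\sigma}{m}\in\Z$), and consider $g\defeq\sigma\cdot 1-\tfrac{\sigma}{m}V_m(1)\in A$ together with the ideal $V_m(R)\subseteq A$; put $\mathcal I_m\defeq(g)+V_m(R)$. The key algebraic fact, immediate on ghost components --- $g$ has ghost coordinate $\sigma$ at every index $n\neq m$ and $0$ at $n=m$, whereas $V_m(r)$ is supported only at $n=m$ --- is that $g\cdot V_m(r)=0$ for all $r\in R$; in fact $V_m(R)=\mathrm{Ann}_A(g)$ and the sum $\mathcal I_m=(g)\oplus V_m(R)$ is direct. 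Hence the cross terms in $\mathcal I_m^{\,n}$ vanish, the Rees algebra $\bigoplus_n\mathcal I_m^{\,n}$ is the fibre product over $A$ of the Rees algebras of $(g)$ and of $V_m(R)$, so that
$$\mathrm{Bl}_{\mathcal I_m}(\spec A)\;=\;\mathrm{Bl}_{(g)}(\spec A)\;\sqcup\;\mathrm{Bl}_{V_m(R)}(\spec A).$$
Blowing up the principal ideal $(g)$ gives $\spec(A/\mathrm{Ann}_A(g))=\spec(A/\ker(\res^S_{S'}))=W_{S'}(X)$ with structure map $\iota_{S',S}$. Blowing up $V_m(R)$ gives, after a short computation in the chart $D_+(V_m(1)t)$ --- where, using \thref{Wittlocali}(ii) and \thref{tensorqsghost}(ii), one has $A[1/V_m(1)]=R[1/m]$ and $\gh_m$ becomes surjective --- the scheme $\spec R=X$ with structure map $\gh_m$; it is essential here to blow up all of $V_m(R)$ and not merely the principal ideal $(V_m(1))$, since Frobenius need not be surjective modulo the primes dividing $m$, whence the scheme-theoretic image of $\gh_m$ is in general a proper closed subscheme of $X$ which only the larger centre ``resolves''. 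One direction of all this is formal: $\mathcal I_m\OO$ restricts to the invertible ideals $(\sigma)$ on $W_{S'}(X)$ and $(m)$ on $X$, so the universal property of the blowing up yields a factorisation $W_{S'}(X)\sqcup X\to\mathrm{Bl}_{\mathcal I_m}(W_S(X))\to W_S(X)$; the reverse inclusion is the chart computation just indicated. Globalising is routine, as $\mathcal I_m$ is built functorially from the Witt operations and blowing up commutes with the gluing defining $W_S(X)$, so it suffices to work over an affine open cover of $X$ by $S$-torsion free rings.

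Granting this key lemma, the theorem follows by iteration. Pick a linear refinement $m_1=1,m_2,\dots,m_k$ of the divisibility order on $S$ such that every $S_i\defeq\{m_1,\dots,m_i\}$ is a truncation set, and set $X_i\defeq W_{S_i}(X)\sqcup\coprod_{j>i}X$, so that $X_1=\coprod_{n\in S}X$ and $X_k=W_S(X)$. Applying the key lemma to $S_i=S_{i-1}\cup\{m_i\}$, with $m_i$ maximal in $S_i$, exhibits $W_{S_{i-1}}(X)\sqcup X\to W_{S_i}(X)$ as a blowing up; disjointly adjoining $\coprod_{j>i}X$, over which the blowing up is an isomorphism, shows $X_{i-1}\to X_i$ is a blowing up. Finally, on the copy of $X$ indexed by $m_j$ the composite $X_1\to X_k$ is $\iota_{S_j,S}\circ\gh_{m_j}^{S_j}=\gh_{m_j}^{S}$ by the compatibility of ghost components with the restriction maps, so the composite is precisely the ghost map $\gh_S$.

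The step I expect to be the main obstacle is the precise identification of $\pi$ with $\mathrm{Bl}_{\mathcal I_m}(W_S(X))$ --- proving equality, not merely the factorisation handed to us by the universal property. The delicate point is exactly that $\gh_m$ is not a closed immersion: one must check that $\mathcal I_m$ is large enough for its Rees algebra to recover all of $R$ on the relevant chart, yet not so large that the blowing up sprouts spurious components over $V(\sigma)$; this is where the identity $V_m(R)=\mathrm{Ann}_A(g)$, and the resulting splitting of the Rees algebra, does the essential work. A secondary complication, requiring bookkeeping rather than a new idea, arises when $m$ has several prime divisors: adjoining $m$ to $S'$ then imposes one congruence datum modulo $p^{\ord_p(m)}$ for each prime $p\mid m$, so the explicit generators of $\mathcal I_m$ --- and the verification that $g$ annihilates $V_m(R)$ --- have to be organised prime by prime.
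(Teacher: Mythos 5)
Your argument is correct in substance, but it takes a genuinely different route from the paper's. The paper decomposes along a prime: its Lemma \thref{onestepblowup} blows up $W_S(X)$ along the closed subscheme $W_{S(p)}(X)\times_{\spec \Z}\spec\F_p$, i.e.\ along $I_p=(p,\,V_m[r]\mid m\in p\cdot S/p)$, and identifies the result with $W_{S(p)}(X)\sqcup W_{S/p}(X)$ via $\iota_{S(p),S}\times F_p$; the computation is made painless by two devices, namely the relation $V_m\la r\ra^2=p\cdot\frac{m}{p}V_m\la r^2\ra$, which shows the blowup is the single chart $D_+(p^{(1)})$, and the fact that $p$ is a nonzerodivisor (by $S$-torsion freeness), so that chart is an honest subring $X_S(R)[I_p/p]$ of $R^S_{\Q}$ that can be written down explicitly; iterating over primes reaches $|S|$ copies of $\{1\}$ after exactly $|S|-1$ steps. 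You instead peel off one maximal element $m$ at a time and blow up along $(g)+V_m(R)$ with $g=\sigma-\frac{\sigma}{m}V_m(1)$; your key identities do check out on ghost components: $g\cdot V_m(R)=0$, and $\mathrm{Ann}_A(g^N)=V_m(R)$ for all $N$ (this last needs the short inductive argument that a Witt vector whose ghost components vanish away from the maximal $m$ lies in $V_m(R)$, using $S$-torsion freeness, and it is the power-annihilator that enters when you blow up the principal ideal $(g)$), so $\mathrm{Proj}$ of the Rees algebra does split into $\mathrm{Bl}_{(g)}=\spec(A/\ker\res^S_{S'})=W_{S'}(X)$ and $\mathrm{Bl}_{V_m(R)}\cong X$ mapping by $\gh_m$, because the charts $D_+(x)$ with $x\in(g)$ and $D_+(y)$ with $y\in V_m(R)$ are pairwise disjoint and the complementary graded ideal dies on each family of charts. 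Pleasingly, your centre is scheme-theoretically $W_{S'}(X)\times_{\spec\Z}\spec\Z/\sigma\Z$, the exact analogue of the paper's centre. What the paper's route buys is that the centre contains a nonzerodivisor, so everything happens inside a localization of $R^S$ and one chart suffices; what yours buys is a literal realization of the chain $X_1,\dots,X_{|S|}$, splitting off one copy of $X$ per step, at the price of centres consisting of zerodivisors, so charts must be computed as homogeneous localizations with power-annihilators taken into account. The one point you leave implicit and should spell out is that $\mathrm{Bl}_{V_m(R)}$ is covered by the single chart $D_+(V_m(1)^{(1)})$; this follows from $V_m(r)^2=V_m(1)\cdot V_m(r^2)$, the same trick the paper applies to $p$, and then your chart computation indeed produces $\spec R$ with structure map $\gh_m$.
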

Indeed, we need exactly $|S|-1$ blowing ups of the following type:
\begin{lemmas}\thlabel{onestepblowup}
	Let $p$ be a prime number, $S$ a finite truncation set and $X$ an $S$-torsion free scheme.
	The blowing up of $X_S=W_S(X)$ along the closed subscheme $X_p\defeq W_{S(p)}(X)\times_{\spec \Z} \spec \F_p$  is given by
	\begin{equation*}
		\pi_{S,p}\defeq \iota_{S(p),S}\times F_p\colon	X_{S(p)}\sqcup X_{S/p}\to X_S.
	\end{equation*}
\end{lemmas}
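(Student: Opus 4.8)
The plan is to reduce at once to the affine case. Blowing up commutes with flat base change, hence with the open immersions $W_S(U)\hookrightarrow W_S(X)$, and the data $W_{S(p)}(X)$, $W_{S/p}(X)$, $X_p$, $\iota_{S(p),S}$ and $F_p$ all restrict compatibly to affine opens; so I would take $X=\spec R$ with $R$ an $S$-torsion free ring. One may also assume $p$ divides an element of $S$ — the only case needed in \thref{blowupS} — so that $p\in S$ and $R$ is $p$-torsion free (if $p\nmid n$ for all $n\in S$ then $S(p)=S$, $S/p=\varnothing$ and the statement degenerates). Write $A\defeq W_S(R)$. The first task is to pin down the ideal defining $X_p$: the closed immersion $X_p\hookrightarrow\spec A$ is $\iota_{S(p),S}\colon W_{S(p)}(X)\hookrightarrow\spec A$ followed by the $\F_p$-reduction, so it corresponds to $A\twoheadrightarrow W_{S(p)}(R)\twoheadrightarrow W_{S(p)}(R)/pW_{S(p)}(R)$; thus $X_p$ is defined by $J\defeq I+pA$, where $I\defeq\ker(\res^S_{S(p)})$ is the ideal of Witt vectors supported on the multiples of $p$, i.e. $I=\im(V_p)$ for the Verschiebung $V_p\colon W_{S/p}(R)\to A$.

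Next I would prove the key identity $J^2=pJ$. Since $R$ is $S$-torsion free with $p\in S$, the injective ghost map (\thref{tensorqsghost}) shows $p$ is a nonzerodivisor on $A$. The projection formula gives $V_p(a)V_p(b)=V_p\bigl(F_p(V_p(a))\cdot b\bigr)=V_p(pa\,b)=pV_p(ab)$, and as $I$ is generated as an ideal by $\im(V_p)$ this yields $I^2=pI$, whence $J^2=I^2+pI+p^2A=pI+p^2A=pJ$. A direct look at the Rees algebra $A[Jt]$ then finishes the structure of the blow-up: because $J^{k}=p^{k-1}J$ for $k\ge1$, every product $a_nb_m\,t^{n+m}$ with $a_n\in J^n$, $b_m\in J^m$, $n+m\ge2$ is divisible by $pt$, so $(A[Jt]_{+})^2\subseteq(pt)\,A[Jt]$ and the chart $D_{+}(pt)$ already covers $\Proj(A[Jt])$; hence $\mathrm{Bl}_{X_p}(\spec A)=\spec A[J/p]$. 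Moreover, since $J^2=pJ$ with $pA\subseteq J$, the affine blow-up algebra $A[J/p]$ is simply the subring $J/p=A+\tfrac1p\im(V_p)$ of $A[1/p]$.

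It remains to identify this subring. I would embed $A[1/p]\hookrightarrow(R[1/p])^{S}$ by the ghost map and split $S=S(p)\sqcup p\cdot(S/p)$, so $(R[1/p])^{S}\cong(R[1/p])^{S(p)}\times(R[1/p])^{S/p}$. Using $\gh_k\circ F_p=\gh_{pk}$, the compatibility $\gh_n\circ\res^S_{S(p)}=\gh_n$ for $n\in S(p)$, and $\gh_n(V_p(s))=p\,\gh_{n/p}(s)$ for $p\mid n$ (and $0$ otherwise), a short computation shows that for $a\in A$ and $s\in W_{S/p}(R)$ the element $\gh_S(a)+\tfrac1p\gh_S(V_p(s))$ has $S(p)$-part $\gh_{S(p)}(\res^S_{S(p)}(a))$ and $p(S/p)$-part $\gh_{S/p}(F_p(a)+s)$; as $a$ and $s$ vary, surjectivity of $\res^S_{S(p)}$ decouples the two factors, so $J/p\cong W_{S(p)}(R)\times W_{S/p}(R)$ (each ghost map being injective as $S(p),S/p\subseteq S$), and under this isomorphism the structure map $A\to J/p$ becomes $a\mapsto(\res^S_{S(p)}(a),F_p(a))$. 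Taking $\spec$ gives $\mathrm{Bl}_{X_p}(\spec A)=\spec W_{S(p)}(R)\sqcup\spec W_{S/p}(R)$ with blow-down morphism $\iota_{S(p),S}\sqcup(\spec F_p)=\pi_{S,p}$, and gluing over the affine cover completes the proof.

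The hard part is this two-step core: first establishing $J^2=pJ$, which is exactly what collapses the blow-up to the single affine chart $J/p$, and then recognising that chart — through the somewhat fiddly ghost-coordinate bookkeeping above — as the product ring $W_{S(p)}(R)\times W_{S/p}(R)$, all the while keeping the computation inside $A[1/p]$ where $p$ is a nonzerodivisor, which is precisely what the hypothesis $p\in S$ secures.
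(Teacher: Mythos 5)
Your proposal is correct and follows essentially the same route as the paper's proof: reduce to the affine case, show the blow-up collapses to the single affine chart at $p$, and identify that chart inside the ghost coordinates, split along $S=S(p)\sqcup p\cdot S/p$, with $W_{S(p)}(R)\times W_{S/p}(R)$ so that the structure map becomes $(\res^S_{S(p)},F_p)$. Your identity $J^2=pJ$ (via the projection formula) is just a compact repackaging of the paper's generator-wise chart inclusion $D_+(V_m\la r\ra^{(1)})\subseteq D_+(p^{(1)})$, and your explicit reduction to the case $p\in S$ makes precise the same assumption the paper's proof uses implicitly when it divides by $p$ inside $R^S$.
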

If moreover $X$ is normal, we will show that the ghost map $\gh_S$ coincides with the normali\-zation of $W_S(X)$.
\begin{lemmas}\thlabel{quotwitt}
	Let $S$ be a finite truncation set and $R$ an $S$-torsion free integral domain. There are exactly $|S|$ minimal prime ideals in $W_S(R)$ given by $\p_n=\ker(\gh_n), n\in S$. Via the inclusion $\gh_S\colon W_S(R)\hookrightarrow R^S$ the total ring of fractions of $W_S(R)$ is given by $Q(W_S(R))=Q(R)^S$. 
	\begin{proof}
		The statement is trivial for $S=\{1\}$, so we assume $|S|\geq 2$.
		As $R$ is an integral domain, the minimal prime ideals in $W_S(R)$ are of the form $\p_n\defeq \gh_S^{-1}(\q_n)$ for some $n\in S$ where $\q_n=(0)\times A^{S\setminus\{n\}}$ with $(0)$ at the $n$-th position, i.e. given by the kernels of the maps $\gh_n=\pr_n\circ \gh_S$. In particular, there are at most $|S|$ of them. 
		Given a prime number $p\in S$ we have $V_p[1]\notin \p_m$ for all $m\in p\cdot S/p$. Hence $\p_1\neq \p_m$ for all $1\neq m\in S$. 
		For $1\neq n\in S$ and $\sigma\defeq \lcm (S)$ the $k$-th ghost component of $$\mu_n\defeq \sum_{m\in n\cdot S/n}\mu\left(\frac{m}{n}\right)\frac{\sigma}{m}~ V_m[1]\in W_S(R),$$
		where $\mu \colon \N\to \{-1,0,1\}$ is the M\"obius function, is equal to $\sigma$ for $n=k$ and zero otherwise. 
		Thus there are exactly $|S|$ minimal prime ideals.\\
		As $R$ is an integral domain, $W_S(R)\subseteq R^S$ is reduced and its set of zero-divisors is given by the union of its minimal prime ideals (see \cite[ \href{https://stacks.math.columbia.edu/tag/00EW}{00EW (3)}]{StacksProject}).
		Thus by \cite[ \href{https://stacks.math.columbia.edu/tag/02LX}{02LX}]{StacksProject} the total ring of fractions is given by
		$Q(W_S(R))=\prod_{n \in S}W_S(R)_{\p_n}$. 
		As soon as we have shown $W_S(R)_{\p_n}\cong Q(R)$ for every $n\in S$ we are done.
		Consider the commutative diagram
		\begin{mycenter}[-2mm]
			\begin{tikzcd}
				W_S(R)_{\p_n}\arrow[r,"\gh_S"]\arrow[dr,"\gh_S"']
				&(R^S)_{\q_n}\arrow[d]\\
				&(R^S)_{\gh_S(\p_n)},
			\end{tikzcd}
		\end{mycenter}
		where the perpendicular map is induced by the inclusion $\gh_S(\p_n)\subseteq \q_n$ and the ring $(R^S)_{\gh_S(\p_n)}$ is the localization of $R^S$ at the multiplicative subset $R^S\setminus \gh_S(\p_n)$. 
		The dia\-gonal homomorphism is injective by exactness of localization. Hence the horizontal map is injective. 
		The projection $\pr_n\colon R^S\to R$ induces an isomorphism $\pr_n\colon (R^S)_{\q_n}\to Q(R)$. 
		Observe that $\gh_S(V_n[r]/V_n[s])$ is a representative of the preimage of $r/s\in Q(R)$ in $(R^S)_{\q_n}$ under $\pr_n$ to conclude that the horizontal map is an isomorphism.
	\end{proof}
\end{lemmas}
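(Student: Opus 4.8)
The plan is to run the standard reduced-ring argument, with the only non-formal ingredient being the construction of elements of $W_S(R)$ that separate the candidate minimal primes. The case $S=\{1\}$ is trivial, since then $W_{\{1\}}(R)=R$ and $\gh_1=\id$, so I assume $|S|\geq 2$. First I would record that $\gh_S\colon W_S(R)\hookrightarrow R^S$ is injective by \thref{tensorqsghost}(i), so $W_S(R)$ is a subring of the reduced ring $R^S=\prod_{n\in S}R$, hence itself reduced. Each $\p_n=\ker(\gh_n)$ is prime because $W_S(R)/\p_n$ embeds into the domain $R$, and $\bigcap_{n\in S}\p_n=\gh_S^{-1}(\bigcap_{n\in S}\ker(\pr_n))=\gh_S^{-1}(0)=0$. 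Therefore $\bigcap_n\p_n$ is the nilradical of $W_S(R)$, every prime of $W_S(R)$ contains some $\p_n$, and in particular every minimal prime is one of the $\p_n$ — so there are at most $|S|$ of them.

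The crux is to show the $\p_n$ are pairwise incomparable, which forces each to be minimal and hence gives exactly $|S|$ of them. For each $n\in S$ I would exhibit an element lying in $\p_m$ for every $m\neq n$ but not in $\p_n$. With $\sigma\defeq\lcm(S)$ and $\mu$ the M\"obius function, set
$$\mu_n\defeq\sum_{m\in n\cdot S/n}\mu(m/n)\,\frac{\sigma}{m}\,V_m[1]\ \in\ W_S(R),$$
which makes sense since $m\mid\sigma$ for all $m\in S$. Using $\gh_k(V_m[1])=m$ when $m\mid k$ (and $0$ otherwise) together with the M\"obius identity $\sum_{j\mid\ell}\mu(j)=\delta_{\ell,1}$, a short computation gives $\gh_k(\mu_n)=\sigma\,\delta_{k,n}$ (the side condition $m\in S$ being automatic since $m\mid k\in S$). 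As $\sigma$ is a product of primes in $S$ and $R$ is $S$-torsion free, $\sigma\neq 0$ in $R$, so $\mu_n\in\p_m$ for $m\neq n$ but $\mu_n\notin\p_n$; hence the $\p_n$ are pairwise distinct and incomparable, and all $|S|$ of them are minimal. (For just $\p_1\neq\p_m$ one can use $V_p[1]$ with $p\mid m$ prime as a warm-up; equivalently $\sigma^{-1}\mu_n$ is the $n$-th standard idempotent of $R^S_{\Q_S}$.)

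For the total ring of fractions I would invoke the standard facts that in a reduced ring the zerodivisors form the union of the (finitely many) minimal primes and that $Q(W_S(R))\cong\prod_{n\in S}W_S(R)_{\p_n}$, each factor being a field. It remains to identify $W_S(R)_{\p_n}=\kappa(\p_n)$ with $Q(R)$: the factorization $W_S(R)\twoheadrightarrow W_S(R)/\p_n\hookrightarrow R$ of $\gh_n$ induces an injection $\kappa(\p_n)\hookrightarrow Q(R)$, and it is surjective because for $r,s\in R$ with $s\neq 0$ one has $V_n[r],V_n[s]\in W_S(R)$ with $\gh_n(V_n[s])=ns\neq 0$, so $V_n[s]\notin\p_n$ and the class of $V_n[r]/V_n[s]$ maps to $nr/ns=r/s$. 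Finally, $\gh_S$ sends non-zerodivisors to non-zerodivisors (their ghost coordinates being all nonzero), so it extends to total fraction rings, $Q(R^S)=Q(R)^S$, and the factorwise isomorphisms assemble to $Q(W_S(R))\cong Q(R)^S$ compatibly with $\gh_S$. I expect the $\mu_n$-construction and the bookkeeping of its ghost components to be the only genuinely clever point; the rest is routine reduced-ring commutative algebra.
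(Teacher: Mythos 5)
Your proposal is correct and follows essentially the same route as the paper: the same M\"obius element $\mu_n$ with ghost vector $\sigma\delta_{k,n}$ to separate the minimal primes $\p_n=\ker(\gh_n)$, the same reduced-ring facts giving $Q(W_S(R))\cong\prod_n W_S(R)_{\p_n}$, and the same $V_n[r]/V_n[s]$ trick to identify each factor with $Q(R)$. The only differences are cosmetic: you treat $n=1$ uniformly via $\mu_1$ instead of the paper's separate $V_p[1]$ step, and you identify $W_S(R)_{\p_n}\cong Q(R)$ through the embedding $W_S(R)/\p_n\hookrightarrow R$ rather than the paper's localization diagram.
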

The normalization of the ring of Witt vectors over a normal domain is simply given by its ghost map (cf. \cite[Cor. 8.2]{Bor15a}).
\begin{cors}\thlabel{normalscheme}
	Let $S$ be a finite truncation set and $X$ an $S$-torsion free, normal scheme such that every quasi-compact open subscheme of $W_S(X)$ has finitely many irreducible components. The normalization of $W_S(X)$ is given by the ghost map
	$\gh_S\colon \coprod_{n\in S} X\to W_S(X)$.
	\begin{proof}
		By \cite[ \href{https://stacks.math.columbia.edu/tag/035P}{035P}]{StacksProject} it is enough to compute the normalization for affine open subschemes $\spec R\subseteq X$.
		For $n\in S$ we set $R_n\defeq  R/\p_n \cong \im(\gh_n)\subseteq R$ with $\p_n=\ker(\gh_n)$. 
		To see that $R$ is integral over $R_n$ consider the monic polynomial $f(t)\defeq \sum_{k|n} k\cdot x^{n/k}\in \Z[x]$.
		For $r\in R$ the element $f(r)=\sum_{k|n} k\cdot r^{n/k}\in R_n$ is the $n$-th ghost component of the Witt vector ${(r,\dots,r)\in W_S(R)}$. As $f(x)-f(r)$ is a monic polynomial with coefficients in $R_n$, $r$ is integral over $R_n$.
		Finally, the integral closures of $R_n$ and $R$ in $Q(R)$ coincide and $R$ is integrally closed in $Q(R)$ by assumption.
		Varying over all $n\in S$ yields the claim.
	\end{proof}
\end{cors}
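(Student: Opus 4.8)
The plan is to reduce to the affine case of a normal, $S$-torsion free domain $R$ and then, using the description of the total ring of fractions of $W_{S}(R)$ supplied by \thref{quotwitt}, to identify $\coprod_{n\in S}\spec R$ with the normalization of $\spec W_{S}(R)$ one minimal prime at a time. Since the formation of the normalization is local on the target, I would first restrict to an affine open $\spec R\subseteq X$, which is then quasi-compact and hence, by hypothesis, has only finitely many irreducible components; as $R$ is normal (in particular reduced) this forces a finite product decomposition of $R$ into normal $S$-torsion free domains. Because $W_{S}$ commutes with finite products, because $\coprod_{n\in S}\spec(-)$ does, and because the normalization is compatible with finite disjoint unions, it suffices to treat the case where $R$ itself is a normal $S$-torsion free domain.

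Now I would invoke \thref{quotwitt}: through $\gh_{S}$, the ring $W_{S}(R)$ is a reduced subring of $R^{S}$ whose $|S|$ minimal primes are the $\p_{n}=\ker(\gh_{n})$, $n\in S$, and whose total ring of fractions is $Q(W_{S}(R))=Q(R)^{S}=\prod_{n\in S}Q(R)$. Therefore the normalization of $W_{S}(R)$ is the integral closure of $W_{S}(R)$ inside $\prod_{n\in S}Q(R)$, and this integral closure splits as $\prod_{n\in S}\overline{R_{n}}$, where $R_{n}\defeq W_{S}(R)/\p_{n}\cong\im(\gh_{n})\subseteq R$ and the closure is taken in $Q(R)$. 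The heart of the argument is that $R$ is integral over $R_{n}$: for $r\in R$ the constant Witt vector $(r,\dots,r)\in W_{S}(R)$ has $n$-th ghost component $f(r)$, where $f(x)\defeq\sum_{k\mid n}k\,x^{n/k}$ is monic of degree $n$ with integer coefficients, so $r$ is a root of the monic polynomial $f(x)-f(r)\in R_{n}[x]$. Since $R\subseteq Q(R)$ is then integral over $R_{n}$ and $R$ is integrally closed in $Q(R)$ by normality, the integral closure of $R_{n}$ in $Q(R)$ equals $R$; assembling over $n\in S$ shows the normalization of $W_{S}(R)$ is $\prod_{n\in S}R=R^{S}$, with the inclusion $W_{S}(R)\hookrightarrow R^{S}$ being precisely $\gh_{S}$. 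Reglobalizing over the affine opens of $X$ finishes the proof.

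The main obstacle I anticipate is not arithmetic but bookkeeping: one must use the hypothesis that every quasi-compact open of $W_{S}(X)$ has finitely many irreducible components to guarantee that the normalization of $W_{S}(X)$ is defined at all, and to ensure that the decomposition $Q(W_{S}(R))=\prod_{n\in S}Q(R)$ is a \emph{finite} product, which is what legitimizes computing the integral closure componentwise on the factors of the total ring of fractions. Granting \thref{quotwitt}, everything else — the splitting of the total ring of fractions, the constant-Witt-vector integrality trick, and the passage between affine charts — is routine.
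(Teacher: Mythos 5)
Your proposal is correct and follows essentially the same route as the paper's proof: reduce to affine charts, use \thref{quotwitt} to identify $Q(W_S(R))$ with $Q(R)^S$ and the minimal primes with the $\ker(\gh_n)$, prove $R$ is integral over $\im(\gh_n)$ via the constant Witt vector $(r,\dots,r)$ and the monic polynomial $\sum_{k\mid n}k\,x^{n/k}$, and conclude by normality of $R$. You are merely more explicit about splitting off the non-domain case into a finite product of normal domains (where the finiteness of components of $\spec R$ should be deduced from the hypothesis on $W_S(X)$, e.g.\ by pulling back minimal primes along the surjection $\gh_1$ and using that the ghost map is integral), a point the paper leaves implicit.
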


For an $S$-torsion free ring $R$ and a prime number $p$ we define the ideal
$$I_p\defeq (p,V_m[r]\mid m\in p\cdot S/p,~r\in R)\subseteq W_S(R),$$
the kernel of the surjection
$\pr\circ \res^{S}_{S(p)}\colon W_S(R)\twoheadrightarrow W_{S(p)}(R)/pW_{S(p)}(R)$.
\begin{proof}[Proof of \thref{onestepblowup}]
	We may assume $X_S=\spec W_S(R)$ for an affine open subscheme $\spec R\subseteq X$.
	By \cite[ \href{https://stacks.math.columbia.edu/tag/0804}{0804}]{StacksProject} the blowing up $\proj (\bigoplus_{k\geq 0}I_p^k)$ of $X_S$ in $I_p$ can be covered by the basic homogeneous open subschemes 
	$D_+(x^{(1)})=\spec( W_S(R)[I_p/x])$,
	where $x$ runs over generators of $I_p$. We will work in $X_S(R)=\gh_S(W_S(R))\subseteq R^S$ from now on (see appendix \ref{AppendixB}). 
	By abuse of notation we set $I_p=(p,V_m\la r\ra\mid m\in p\cdot S/p,~r\in R)\subseteq X_S(R)$ with $V_m\la r\ra :=\gh_S(V_m[r])=(m\delta_{k|m}r^{m/k})_{k\in S}$.
	For all $r\in R$ and $m\in p\cdot S/p$ the inclusion
	\begin{equation*}
		D_+\left(V_m\la r \ra^{(1)}\right)
		=D_+\left(\left(V_m\la r \ra^{(1)}\right)^2\right)
		=D_+\left(p^{(1)}\right)\cap D_+(m/p\cdot V_{m}\la r^2 \ra^{(1)})
		\subseteq D_+\left(p^{(1)}\right)
	\end{equation*}
	tells us that $\proj (\bigoplus_{k\geq 0}I_p^k)=D_+(p^{(1)})$ is affine (see \cite[ \href{https://stacks.math.columbia.edu/tag/00JP}{00JP (2)}]{StacksProject}). 
	It can be identified with spectrum of the homogeneous localization
	\begin{align*}
		X_S(R)\left[ \frac{I_p}{p}\right]
		=X_S(R)+\sum_{\substack{m\in p\cdot S/p\\r\in R}}\Z\cdot \frac{V_m\la r\ra}{p}
		=\sum_{\substack{m\in S(p)\\r\in R}}\Z\cdot V_m\la r\ra+\sum_{\substack{m\in p\cdot S/p\\r\in R}}\Z\cdot \frac{V_m\la r\ra}{p}
	\end{align*}
	considered as a subring of $R^S$. Indeed, the multiplication of the generators follows the rule
	\begin{align*}
		\frac{V_m\la r\ra }{\delta_p(m)}~\frac{V_n\la s\ra }{\delta_p(n)}
		=\delta_S([m,n])\frac{(m,n)}{\delta_p((m,n))}\frac{V_{[m,n]}\la r^{\frac{n}{(m,n)}}s^{\frac{m}{(m,n)}}\ra}{\delta_p([m,n])}\text{ 	with }
		\delta_p(m)\defeq \begin{cases}
			p&,~p\mid m\\
			1&,~p\nmid m
	\end{cases}\end{align*}for $m\in\N$.
	Let us view the restriction $\res^S_{S(p)}$ and the Frobenius $F_p$ as ring homomorphisms from $R^S$ to $R^{S(p)}$ and to $R^{S/p}$ on the ``ghost side'', respectively.
	We claim that the homomorphism
	\begin{align*}
		\res^S_{S(p)}\times F_p\colon 	X_S(R)\left[ \frac{I_p}{p}\right] &\to X_{S(p)}(R)\times X_{S/p}(R)\\
		(a_k)_{k\in S}&\mapsto ((a_k)_{k\in S(p)},(a_{kp})_{k\in S/p})
	\end{align*} 
	is an isomorphism. 
	It is well-defined as $\res^S_{S(p)}(V_m\la r \ra_{S/m} )$ vanishes for $m\in p\cdot S/p$ and 
	\begin{align*}
		F_p\left(\frac{V_m\la r \ra_{S/m}}{\delta_p(m)}\right)
		=\begin{cases}
			\delta_{S/p}(m)V_m\la r^p\ra_{S/mp} &,~m\in S(p)\\
			V_{m/p}\la r \ra _{S/m}&,~m\in p\cdot S/p
		\end{cases}
	\end{align*}
	lies in $X_{S/p}(R)$.
	Moreover, it is surjective because we have
	$$(\res^S_{S(p)}\times F_p)(V_m\la r \ra_{S/m}-	\delta_{S/p}(m)V_{mp}\la r^p\ra_{S/mp}/p)
	=(V_m\la r \ra_{S(p)/m},	0)$$
	for $m\in S(p)$ and 
	$(\res^S_{S(p)}\times F_p)(V_{mp}\la r \ra_{S/mp}/p)
	=(0,V_m\la r \ra_{S/mp})$
	for $m\in S/p$. 
	The injectivity is clear by the decomposition $S=S(p)\sqcup p\cdot S/p$.
	Composing the isomorphism $\res^S_{S(p)}\times F_p$ with the inclusion $X_S(R)\hookrightarrow X_S(R)\left[\frac{I_p}{p}\right]$ yields the blowing up morphism
	\begin{equation*}
		\res^S_{S(p)}\times F_p\colon 	X_S(R)\hookrightarrow X_{S(p)}(R)\times X_{S/p}(R).
	\end{equation*}
	Since the ghost map is compatible with the restriction and the Frobenius, we are done.
\end{proof}

\begin{proof}[Proof of \thref{blowupS}]
	The theorem follows by successively applying the lemma to prime numbers contained in the truncation sets $S(p)$ and $S/p$ until ending up with exactly $|S|$ copies of the truncation set $\{1\}$ corresponding to the disjoint union $\spec R^S=\coprod_{n\in S} \spec R$.\\ 	
	We are left to show that the composition of these blowing ups coincides with the ghost map. We proceed via induction on the cardinality of $S$. 
	For $S=\{1\}$ there is nothing to show. 
	For $S=\{1,p\}$ and $r=(r_1,r_p)\in W_S(R)$ we have $(\res^S_{\{1\}}(r),F_p(r))=(r_1,r_1^p+pr_p)=(\gh_1(r),\gh_p(r))$. 
	For $|S|=m+1$ and a prime number $p\in S$ the cardinalities of $S(p)$ and $S/p$ are bounded above by $m$. 
	Hence blowing them up stepwise corresponds to the respective ghost maps $\gh_{S(p)}$ and $\gh_{S/p}$ by induction hypothesis. 
	We consider the image $(\res^S_{S(p)}(r),F_p(r))$ of $r\in W_S(R)$ in $W_{S(p)}(R)\times W_{S/p}(R)$ under the blowing up at $p$. The claim follows from the observations $$(\gh_{S(p)}\circ\res^S_{S(p)})(r)=(\gh_k(r))_{k\in S(p)}\quad\text{and}\quad(\gh_{S/p}\circ F_p)(r)=(\gh_{kp}(r))_{k\in S/p}$$ and the decomposition ${S=S(p)\sqcup p\cdot  S/p}$.
\end{proof}

From \thref{normalscheme} we deduce that the normalization (corresponding to the ghost map) is a finite sequence of blowing ups.

\begin{cors}
	Let $S$ be a finite truncation set and $X$ an $S$-torsion free, normal scheme such that every quasi-compact open subscheme of $W_S(X)$ has finitely many irreducible components. The normalization $\gh_S\colon \coprod_{n\in S} X\to W_S(X)$ of $W_S(X)$ is given by the explicit finite sequence of blowing ups in \thref{blowupS}.
\end{cors}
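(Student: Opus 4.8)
The plan is to combine the two results already in hand. By \thref{normalscheme}, the hypotheses of the corollary — that $S$ is finite, that $X$ is $S$-torsion free and normal, and that every quasi-compact open subscheme of $W_S(X)$ has finitely many irreducible components — guarantee that the ghost map $\gh_S\colon \coprod_{n\in S} X\to W_S(X)$ \emph{is} the normalization of $W_S(X)$ (where ``normalization'' here means normalization in the total ring of fractions, which by \thref{quotwitt} is $Q(R)^S$ on an affine chart $\spec R$). On the other hand, \thref{blowupS} — whose only standing assumptions are that $S$ is finite and that $X$ is $S$-torsion free, both of which hold here — exhibits this same morphism $\gh_S$ as the composite of the explicit finite chain of blowing ups
$$X_1\defeq\coprod_{n\in S} X \to \dots\to X_{|S|}= W_S(X),$$
obtained by iterating \thref{onestepblowup} over the primes occurring in the successively produced truncation sets $S(p)$ and $S/p$.

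Concretely, I would first record that the hypotheses demanded by \thref{normalscheme} and by \thref{blowupS} are each implied by the hypotheses of the corollary; this is immediate, since the corollary's assumptions are precisely those of \thref{normalscheme}, which in particular include $S$-torsion freeness, the single hypothesis needed for \thref{blowupS}. Then, invoking the uniqueness of the normalization up to unique isomorphism over $W_S(X)$, the identification of $\gh_S$ with the normalization furnished by \thref{normalscheme} and its identification with the iterated blowing up furnished by \thref{blowupS} combine to give a canonical identification of the normalization of $W_S(X)$ with that explicit sequence of blowing ups.

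I do not expect any genuine obstacle here: the corollary is a formal consequence of \thref{normalscheme} and \thref{blowupS}, and the only thing to verify is the compatibility of hypotheses, which is transparent. All the real content has already been absorbed into those two statements — the integral-extension argument over the subrings $R_n=R/\ker(\gh_n)$ on the one hand, and the one-step blowing up of $W_S(X)$ along the closed subscheme $W_{S(p)}(X)\times_{\spec\Z}\spec\F_p$ together with its iteration on the other.
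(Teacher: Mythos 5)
Your argument is exactly the paper's: the corollary is stated there as an immediate combination of \thref{normalscheme} (the ghost map is the normalization) and \thref{blowupS} (the ghost map is the explicit sequence of blowing ups), with no further proof given beyond this deduction. Your hypothesis check and appeal to uniqueness of the normalization fill in precisely what the paper leaves implicit, so the proposal is correct and follows the same route.
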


\renewcommand{\thesubsection}{B}
\subsection[Dwork's Lemma for the relative de Rham-Witt complex]{\textbf{Dwork's Lemma for the relative de Rham-Witt complex.}}\label{AppendixB} 
\label{ChapterDworksLemma}
We are going to characterize the image of the de Rham-Witt complex under the ghost map in terms of congruences modulo prime powers. We will be working in the language of \citeauthor{CD15} (see \cite[sec. 3]{CD15}), but only consider finite truncation sets $S$ and smooth homomorphisms $f\colon R_0\to R$ of flat $\Z$-algebras. We briefly recall the setting: Let \begin{equation}\tag{$\pi$}
	\begin{tikzcd}
		0\arrow[r]
		&I_0\arrow[r]\arrow[d,hook]
		&A_0\arrow[r,"\pi_0"]\arrow[d,hook]
		&R_0\arrow[r]\arrow[d,"f"]
		&0\\
		0\arrow[r]
		&I\arrow[r]
		&A\arrow[r,"\pi"]
		&R\arrow[r]
		&0
	\end{tikzcd}
\end{equation}
be any presentation of $R_0\to R$, i.e. a commutative diagram of exact rows where $A_0$ is a flat $\Z$-algebra and $A$ a polynomial ring over $A_0$. Let $X_S(A)\subseteq A^S$ denote the subring generated by elements $V_n\langle a\rangle\defeq \gh_S(V_n[a])=(n\delta_{k|n}a^{n/k}){k\in S}$ with $a\in A, n\in S$, $X_S(I)\subseteq X_S(A)$ the ideal generated by elements $V_n\langle a\rangle$ with $a\in I, n\in S$. We set $E_S(R,\pi)\defeq X_S(A)/X_S(I)$ (and analogously for $I_0, A_0$ and $R_0$).
\\Moreover, we denote by $X_S^{\bullet}(A)\subseteq ((\Omega_{A_\Q/A_{0,\Q}}^{\bullet})^S, \dd\defeq (\frac{1}{n}d)_{n\in S})$ the differential graded subring generated by $X_S(A)$ and by $X_S^{\bullet}(I,A)\subseteq X_S^{\bullet}(A)$ the differential graded ideal generated by $X_S(I)$. We set $E_S\Omega^{\bullet}(\pi)\defeq X_S^{\bullet}(A)/X_S^{\bullet}(I,A)$.\\
The ring $E_S(R,\pi)$ and the $E_S(R_0,\pi_0)$-dga $E_S\Omega^{\bullet}(R,\pi)$ can be identified with the ring of Witt vectors and the relative de Rham-Witt complex and are equipped with natural restriction, Frobenius and Verschiebung maps (see \cite[diag. (25), Thm. 3.12]{CD15}).\\
Under the inclusion $X_S^{\bullet}(A)\hookrightarrow (\Omega_{A/A_0}^{\bullet})^S$ the components of $X_S^{\bullet}(I,A)$ are mapped to the differential graded ideal $I^{\bullet}$ inside $\Omega_{A/A_0}^{\bullet}$ generated by $I=\ker(\pi)\subseteq A$. 
The inclusion above induces a natural homomorphism
$$\GG_S^{\bullet}\colon E_S\Omega^{\bullet}(\pi)=X_S^{\bullet}(A)/X_S^{\bullet}(I,A)\to (\Omega_{A/A_0}^{\bullet}/I^{\bullet})^S \overset{\sim}{\to}(\Omega_{R/R_0}^{\bullet})^S$$
of graded rings, whose degree zero component is the ghost map ${\GG_S^{0}=\GG_S\colon E_S(R,\pi)\to R^S}$.
It is compatible with Frobenius and Verschiebung maps and satisfies $d\GG_S^{\bullet}=(m)_{m\in S}\GG_S^{\bullet}\dd $.
From now on we fix a prime $p\in S$ and assume that $R$ is equipped with a Frobenius lift $\phi_p\colon R\to R$ and $R_0$ with an endomorphism $\varphi_p\colon R_0\to R_0$ satisfying $f\varphi_p=\phi_p f$. Then $\phi_p$ induces a degree preserving endomorphism on $\Omega_{R/R_0}^{\bullet}$.\\
Note that $p^{q}$ divides $d\phi_p(\omega)$ if $\omega$ is homogeneous of degree $q-1$. By abuse of notation we define the following homomorphism of $R$-modules for $q\geq 0$:
\begin{align*}
	\phi_p\colon \Omega_{R/R_0}^{q}~~~~~~~~&\to\Omega_{R/R_0}^{q}\\
	r_0dr_1\cdots dr_q&\mapsto p^{-q}\phi_p(r_0)d\phi_p(r_1)\cdots d\phi_p(r_q)
\end{align*}
The statement of the following theorem was suggested by \citeauthor{Chat12}. We are going to work out a proof in this section.\phantomsection\label{DworkdeRhamintro}
\begin{theorems}\thlabel{DworkdeRham}
	Let $f\colon R_0\to R$ be a smooth morphism of flat $\Z$-algebras and $S\hspace{-1mm}=\hspace{-1mm}\{1,p,...,p^{n}\}$, $n\geq 0$. Assume that $R$ is equipped with a Frobenius lift $\phi_p\colon R\to R$ and $R_0$ with an endomorphism $\varphi_p\colon R_0\to R_0$ such that $f \varphi_p=\phi_p f$.
	For $\omega=(\omega_k)_{k=0}^n\in (\Omega_{R/R_0}^{\bullet} )^S$ the following are equivalent:
	\begin{enumerate}[(i)]
		\item the element $\omega$ lies in	the image of the ghost map $\GG_S^{\bullet}\colon E_S\Omega^{\bullet}(\pi)\to (\Omega_{R/R_0}^{\bullet} )^S$;
		\item the element $\omega$ satisfies $\phi_p(\omega_k)-\omega_{k+1}\in p^{k+1}\Omega_{R/R_0}^{\bullet}+d\Omega_{R/R_0}^{\bullet}$ for all $0\leq k\leq n-1$.
	\end{enumerate}
\end{theorems}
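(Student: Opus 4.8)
The plan is to prove the two implications separately: (i)$\Rightarrow$(ii) by a direct computation on a generating set, and (ii)$\Rightarrow$(i) by induction on $n$, with (i)$\Rightarrow$(ii) feeding into the induction.

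For (i)$\Rightarrow$(ii), first note that for each fixed $k$ the set of $\omega\in(\Omega_{R/R_0}^{\bullet})^S$ with $\phi_p(\omega_k)-\omega_{k+1}\in p^{k+1}\Omega_{R/R_0}^{\bullet}+d\Omega_{R/R_0}^{\bullet}$ is an additive subgroup, since the modified $\phi_p$ is additive and the right-hand side is a subgroup; hence so is the subset cut out by (ii). The image of $\GG_S^{\bullet}$ is the differential graded subring of $((\Omega_{R/R_0}^{\bullet})^S,\dd)$ generated by $X_S(R)=\gh_S(W_S(R))$, so it is additively generated in degree $q$ by the forms $\gh_S(a_0)\dd\gh_S(a_1)\cdots\dd\gh_S(a_q)$ with $a_i\in W_S(R)$, whose component at $p^k$ is $\gh_{p^k}(a_0)\prod_{i=1}^{q}\bigl(\tfrac{1}{p^k}d\gh_{p^k}(a_i)\bigr)$, an integral form because $p^k\mid d\gh_{p^k}(a)$ for every $a\in W_S(R)$. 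It therefore suffices to verify (ii) on these generators. Writing $b_i=\gh_{p^k}(a_i)$ and $c_i=\gh_{p^{k+1}}(a_i)$, classical Dwork's Lemma~\thref{Dwork} (for the single prime $p$) gives $\phi_p(b_i)-c_i\in p^{k+1}R$; substituting $\phi_p(b_i)=c_i+p^{k+1}u_i$ into the modified $\phi_p$ of the component at $p^k$ and expanding, one checks after cancelling the normalizing power of $p$ that every remaining term lies in $p^{k+1}\Omega_{R/R_0}^{q}+d\Omega_{R/R_0}^{q-1}$. This uses the divisibilities $p^{k}\mid d\gh_{p^k}$, $p^{k+1}\mid d\gh_{p^{k+1}}$ and $p^{k+1}\mid d\phi_p(b_i)$ (the last again by Dwork's congruence), together with the Leibniz identity that rewrites $c_0\bigl(\prod_i\tfrac{dc_i}{p^{k+1}}\bigr)du$ as an exact form minus $(dc_0)(\cdots)u\in p^{k+1}\Omega_{R/R_0}^{q}$.

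For (ii)$\Rightarrow$(i) I argue by induction on $n$; the case $n=0$ is trivial, since then $S=\{1\}$, $\GG_{\{1\}}^{\bullet}=\id$ and (ii) is vacuous. For the inductive step put $S'=S/p=\{1,p,\dots,p^{n-1}\}$. Given $\omega$ satisfying (ii), its restriction $\res^{S}_{S'}\omega=(\omega_0,\dots,\omega_{n-1})$ satisfies (ii) for $S'$ (a subfamily of the same congruences), so by the inductive hypothesis $\res^{S}_{S'}\omega=\GG_{S'}^{\bullet}(\beta)$ for some $\beta$. The restriction $E_S\Omega^{\bullet}(\pi)\to E_{S'}\Omega^{\bullet}(\pi)$ is surjective, being induced by the surjection $W_S(R)\to W_{S'}(R)$ and compatible with products and with $\dd$; so I lift $\beta$ to $\tilde\beta\in E_S\Omega^{\bullet}(\pi)$ and set $\tilde\omega=\GG_S^{\bullet}(\tilde\beta)$. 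Since $\res^{S}_{S'}$ is truncation on ghost components, $\tilde\omega_k=\omega_k$ for $k\le n-1$. Now $\tilde\omega$ lies in the image of $\GG_S^{\bullet}$, so by the implication (i)$\Rightarrow$(ii) already shown, $\phi_p(\omega_{n-1})-\tilde\omega_n\in p^{n}\Omega_{R/R_0}^{\bullet}+d\Omega_{R/R_0}^{\bullet}$; subtracting this from the hypothesis $\phi_p(\omega_{n-1})-\omega_n\in p^{n}\Omega_{R/R_0}^{\bullet}+d\Omega_{R/R_0}^{\bullet}$ gives, in degree $q$, $\omega_n-\tilde\omega_n\in p^{n}\Omega_{R/R_0}^{q}+d\Omega_{R/R_0}^{q-1}$. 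It remains to realize this correction term: compatibility of $\GG_S^{\bullet}$ with the Verschiebung and with $\dd$, together with $\gh_S(V_{p^n}[r])=(0,\dots,0,p^n r)$, yields $\GG_S^{\bullet}(V_{p^n}(\eta))=(0,\dots,0,p^n\eta)$ and $\GG_S^{\bullet}(\dd V_{p^n}(\eta))=(0,\dots,0,d\eta)$, while $V_{p^n}(\eta)$ and $\dd V_{p^n}(\eta)$ lie in $\ker\res^{S}_{S'}$; hence there is $\gamma\in\ker\res^{S}_{S'}$ with $\GG_S^{\bullet}(\gamma)=(0,\dots,0,\omega_n-\tilde\omega_n)$, and $\tilde\beta+\gamma$ maps to $\omega$.

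The main obstacle is the bookkeeping in the expansion for (i)$\Rightarrow$(ii): one must keep simultaneous track of the powers of $p$ dividing each $d\gh_{p^j}(a)$ and each $d\phi_p(\gh_{p^j}(a))$, and recognize every contribution not divisible by $p^{k+1}$ as an exact form via the Leibniz rule. A secondary point needing care is pinning down the normalization (the factor $p^n$) of the Verschiebung on $q$-forms at the ghost level, which is forced by the degree-zero case together with the relation $V_n\dd=n\dd V_n$.
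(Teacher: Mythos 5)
Your proof is correct, but it is organized differently from the paper's, most visibly in the direction (ii)$\Rightarrow$(i). The paper proves (i)$\Rightarrow$(ii) via the monomial congruences of \thref{Frob-1} and \thref{congruences2} applied to the explicit ghost components $\delta_{m\leq k}(p^m\omega^{(k-m)}_{\overline r}+\omega^{(k-m)}_{(1,\overline s)})$ of the generators $V_{p^m}(\overline a;\overline b)$, and then proves (ii)$\Rightarrow$(i) by induction on the ghost index $k$, maintaining an explicit normal form $\omega_k=\sum_i\bigl(\omega^{(k)}_{\overline{r_{0,i}}}+\sum_{m=1}^k(p^m\omega^{(k-m)}_{\overline{r_{m,i}}}+\omega^{(k-m)}_{(1,\overline{s_{m,i}})})\bigr)$ and invoking \thref{congruences2} again at each step before absorbing the remainder into new top-level terms. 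You instead prove (i)$\Rightarrow$(ii) by reducing to additive generators $\gh_S(a_0)\dd\gh_S(a_1)\cdots\dd\gh_S(a_q)$ and combining the classical Dwork congruence $\phi_p(\gh_{p^k}(a))-\gh_{p^{k+1}}(a)\in p^{k+1}R$ with the divisibilities $p^k\mid d\gh_{p^k}(a)$ and a Leibniz manipulation (which is sound, granted the $\Z$-torsion freeness of $\Omega^{\bullet}_{R/R_0}$ that justifies the normalizations), and you prove (ii)$\Rightarrow$(i) by induction on $|S|$: lift a solution over $S'=S/p$ through the surjective restriction $E_S\Omega^{\bullet}(\pi)\to E_{S'}\Omega^{\bullet}(\pi)$, use the already-established implication (i)$\Rightarrow$(ii) on the lift to see that the discrepancy in the top ghost component lies in $p^n\Omega^{\bullet}_{R/R_0}+d\Omega^{\bullet}_{R/R_0}$, and correct it by elements supported at $p^n$. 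The correction elements are exactly the paper's $V_{p^n}(\overline a;\overline b)$ (your $V_{p^n}(\eta)$ and $\dd V_{p^n}(\eta)$, whose ghost images $(0,\dots,0,p^n\eta)$ and $(0,\dots,0,d\eta)$ you compute correctly), so the underlying mechanism coincides; what your route buys is that the converse direction needs no explicit normal form and no monomial congruences, only surjectivity of the restriction and the forward implication as a black box, which is arguably cleaner. What it costs is that your (i)$\Rightarrow$(ii) bookkeeping is done ad hoc, whereas the paper's \thref{congruences2} (and its variant \thref{congruences4}) is reused verbatim for the multi-prime statement \thref{DworkdeRhamsubset}; also note your argument is tied to the totally ordered truncation set $\{1,p,\dots,p^n\}$ through the single restriction $S\to S/p$, which is exactly the situation of the theorem, so no generality is lost.
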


Note that -- in comparison to \hyperref[Dwork'sLemma]{Dwork's Lemma} on the level of Witt vector rings -- it is not enough to consider congruences modulo powers of $p$ in degrees $q>0$:

\begin{exams}
	Consider the ring homomorphism $\Z\to \Z[t]$ and the module $X_S^1(\Z[t])$ with respect to $S=\{1,2\}$. For $\omega=(\omega_1,\omega_2)=\dd V_2\la t \ra$ we have
	$\phi_2(\omega_1)-\omega_2
	=0-dt=-dt$.
	Obviously, this does not lie in $2\Omega^1_{\Z[t]/\Z}$, but in $d\Z[t]$.
\end{exams}
We set the convention ${\Omega_{R/R_0}^{-1}= 0}$ and skip all proofs in degree $q=0$ (cf. \thref{Dwork}).

\begin{lemmas}\thlabel{Frob-1} 
	For all $r\in R$ and $m\geq 0$ we have $
		\phi_p(r^{p^m-1}dr)-r^{p^{m+1}-1}dr\in dR$.
	\begin{proof}
		We have $\ord_p\left(\binom{p^m}{i}\right)=m-\ord_p(i)$ for all $1\leq i\leq p^m$. We make use of the  factorization		$\binom{p^m-1}{i-1}=\frac{i}{p^m}\binom{p^m}{i}$ and the expansion formula
		$\binom{p^m}{i}=\binom{p^m-1}{i}+\binom{p^m-1}{i-1}$ for all $1\leq i\leq p^m-1$. For $r\in R$ there exists $s\in R$ such that $\phi_p(r)=r^p+ps$. The statement follows from the computation
		\begin{align*}
			&\phi_p(r^{p^m-1}dr)-r^{p^{m+1}-1}dr=(r^p+ps)^{p^m-1}(r^{p-1}dr+ds)-r^{p^{m+1}-1}dr\\
			=&\sum_{i=1}^{p^m-1}\binom{p^m-1}{i}p^ir^{(p^m-i)p-1}s^idr
			+\sum_{i=1}^{p^m}\binom{p^m-1}{i-1}p^{i-1}r^{(p^m-i)p}s^{i-1}ds\\
			=&\sum_{i=1}^{p^m-1}\frac{\binom{p^m}{i}}{p^{m-\ord_p(i)}}p^{i-\ord_p(i)-1} dr^{(p^m-i)p}s^i
		\end{align*}
		in $\Omega^1_{R/R_0}$ and the inequality $i-1 \geq \ord_p(i)$ for all $i\in\N$.
	\end{proof}
\end{lemmas}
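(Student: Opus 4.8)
The plan is to unwind the definition of $\phi_p$ on $\Omega^1_{R/R_0}$, verify the asserted relation after inverting $p$, and then check that the antiderivative produced this way already has coefficients in $R$, by an elementary $p$-adic valuation estimate.

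Two preliminaries come first. Since $\phi_p$ is a Frobenius lift and $R$ is $\Z$-flat, hence $p$-torsion free, for every $r\in R$ there is a \emph{unique} element $s=s_r\in R$ with $\phi_p(r)=r^{p}+ps$; consequently $d\phi_p(r)=p\bigl(r^{p-1}dr+ds\bigr)$, and unwinding the definition gives $\phi_p\bigl(r^{p^m-1}dr\bigr)=p^{-1}\phi_p(r^{p^m-1})\,d\phi_p(r)=\phi_p(r)^{p^m-1}\bigl(r^{p-1}dr+ds\bigr)$ as an element of $\Omega^1_{R/R_0}$ (no division involved). Secondly, because $R_0\to R$ is smooth, $\Omega^1_{R/R_0}$ is a finite locally free $R$-module, hence a direct summand of a free $R$-module, hence $\Z$-flat; therefore the canonical map $\Omega^1_{R/R_0}\to\Omega^1_{R/R_0}\otimes_{\Z}\Q$ is injective. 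It thus suffices to exhibit $t\in R$ with $\phi_p\bigl(r^{p^m-1}dr\bigr)-r^{p^{m+1}-1}dr=dt$ after $-\otimes_{\Z}\Q$, since equality then holds already in $\Omega^1_{R/R_0}$ and the left side lies in $dR$.

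Over $\Q$ both forms are visibly exact: from $x^{N-1}dx=\tfrac1N\,d(x^N)$ one gets $\phi_p\bigl(r^{p^m-1}dr\bigr)=\tfrac1p\cdot\tfrac1{p^m}\,d\bigl(\phi_p(r)^{p^m}\bigr)$ and $r^{p^{m+1}-1}dr=\tfrac1{p^{m+1}}\,d\bigl(r^{p^{m+1}}\bigr)$, so their difference is $\tfrac1{p^{m+1}}\,d\bigl(\phi_p(r)^{p^m}-r^{p^{m+1}}\bigr)$. The point with content is that $\phi_p(r)^{p^m}-r^{p^{m+1}}$ is divisible by $p^{m+1}$ in $R$. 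Writing $\phi_p(r)=r^{p}+ps$ and expanding, $\phi_p(r)^{p^m}-(r^{p})^{p^m}=\sum_{j=1}^{p^m}\binom{p^m}{j}p^{j}\,r^{p(p^m-j)}s^{j}$; by Legendre's formula $\ord_p\binom{p^m}{j}=m-\ord_p(j)$ for $1\le j\le p^m$, so $\ord_p\!\Bigl(\binom{p^m}{j}p^{j}\Bigr)=m+1+\bigl(j-1-\ord_p(j)\bigr)\ge m+1$, using $\ord_p(j)\le j-1$ for every $j\ge1$ (because $p^{\ord_p(j)}\le j$ and $p^{k}\ge k+1$). Hence $t:=\sum_{j=1}^{p^m}\binom{p^m}{j}p^{\,j-1-m}\,r^{p(p^m-j)}s^{j}\in R$ and $\phi_p\bigl(r^{p^m-1}dr\bigr)-r^{p^{m+1}-1}dr=dt$ in $\Omega^1_{R/R_0}\otimes\Q$, hence in $\Omega^1_{R/R_0}$, hence in $dR$. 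The degenerate case $m=0$ is automatically covered: there $t=s$ and the assertion reads $\phi_p(dr)-r^{p-1}dr=ds$.

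The only genuine obstacle is this divisibility $p^{m+1}\mid\binom{p^m}{j}p^{j}$, that is, combining the exact valuation $\ord_p\binom{p^m}{j}=m-\ord_p(j)$ with $\ord_p(j)\le j-1$; everything else is formal. One can equally avoid inverting $p$: expand $\phi_p(r)^{p^m-1}(r^{p-1}dr+ds)$ directly, peel off the $j=0$ summand, which is exactly $r^{p^{m+1}-1}dr$, and recognise the remaining terms as $\sum_{j\ge1}\binom{p^m}{j}p^{\,j-1-m}\,d\bigl(r^{p(p^m-j)}s^{j}\bigr)$ by means of $\binom{p^m-1}{j}=\tfrac{p^m-j}{p^m}\binom{p^m}{j}$ and $\binom{p^m-1}{j-1}=\tfrac{j}{p^m}\binom{p^m}{j}$; the same valuation estimate then shows the coefficients lie in $\Z$, so the sum lies in $dR$.
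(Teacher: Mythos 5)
Your proof is correct, and its arithmetic core is exactly the paper's: the valuation identity $\ord_p\binom{p^m}{i}=m-\ord_p(i)$ together with $\ord_p(i)\le i-1$. Where you differ is in the packaging. The paper stays integral throughout: it expands $(r^p+ps)^{p^m-1}(r^{p-1}dr+ds)-r^{p^{m+1}-1}dr$ and regroups the $dr$- and $ds$-terms pairwise into exact forms $d\bigl(r^{(p^m-i)p}s^i\bigr)$ with integer coefficients, using Pascal's rule and $\binom{p^m-1}{i-1}=\tfrac{i}{p^m}\binom{p^m}{i}$. You instead pass to $\Omega^1_{R/R_0}\otimes_{\Z}\Q$, where both forms integrate to $\tfrac1{p^{m+1}}d\bigl(\phi_p(r)^{p^m}-r^{p^{m+1}}\bigr)$, verify that $\phi_p(r)^{p^m}-r^{p^{m+1}}=p^{m+1}t$ with $t\in R$ by the same valuation estimate, and descend using that $\Omega^1_{R/R_0}$ is $\Z$-torsion free (which you correctly justify from smoothness of $R_0\to R$ and $\Z$-flatness of $R$, both in force in this appendix). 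Your route buys a cleaner computation -- a single divisibility check on a ring element rather than the term-by-term binomial regrouping -- at the price of invoking smoothness/flatness, which the paper's purely integral manipulation does not need for this lemma (it only needs $\phi_p$ to be well defined on forms); and your closing paragraph, avoiding the inversion of $p$, is essentially the paper's argument verbatim. Your explicit $t$ also confirms the small cosmetic slip in the paper's displayed sum, whose upper limit should be $p^m$ rather than $p^m-1$ so as to absorb the $i=p^m$ term $p^{p^m-1}s^{p^m-1}ds$.
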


We introduce the following notations for $q\geq 0$, $\overline{m}=(m_{j})_{j=0}^q\in \Z_{\geq 0}^{q+1}$ and $\overline{r}=(r_{j})_{j=0}^q\in R^{q+1}$:
\begin{align*}
	\omega_{\overline{r}}\defeq  r_{0}dr_{1}\cdots dr_{q}\in \Omega^q_{R/R_0},
	\omega_{\overline{r}}^{(\overline{m})}\defeq  r_{0}^{p^{m_0}}\Big(\prod_{j=1}^q r_{j}^{p^{m_j}-1}\Big) dr_{1}\cdots dr_{q}=\prod_{j=0}^q r_{j}^{p^{m_j}-1}\cdot \omega_{\overline{r}}\in \Omega^q_{R/R_0}.
\end{align*}
If there exists $m\in \Z_{\geq0}$ with $\overline{m}=m\cdot \overline{1}$, we omit the overlining in the notation.

\begin{props}\thlabel{congruences2}
	Let $\overline{m}\in\Z_{\geq 0}^{q+1}$.
	\begin{enumerate}[(i)]
		\item For $q\geq 1$ and $\overline{r}\in R^{q}$ we have 
		$\phi_p(\omega_{(1,\overline{r})}^{(\overline{m})})-\omega_{(1,\overline{r})}^{(\overline{m}+1)}\in d\Omega^{q-1}_{R/R_0}$.
		\item For $q\geq 0$ and $\overline{r}\in R^{q+1}$ we have $\phi_p(\omega_{\overline{r}}^{(\overline{m})})-\omega_{\overline{r}}^{(\overline{m}+1)}\in p^{m_0+1}\Omega^q_{R/R_0}+d\Omega^{q-1}_{R/R_0}$.
	\end{enumerate}
	\begin{proof}
		Let $q\geq 1$. By \thref{Frob-1} there exist $r_j'\in R$ with
		$\phi_p(r_j^{p^{m_j}-1}dr_j)=r_j^{p^{m_j+1}-1}dr_j+dr_j'$
		for every $0\leq j\leq q-1$. For $i\in\{0,1\}$ we set
		$r_j^{(0)}=r_j$ and $r_j^{(1)}=r_j'$ and compute
		\begin{align*}
			&\phi_p(\omega_{(1,\overline{r})}^{(\overline{m})})-\omega_{(1,\overline{r})}^{(\overline{m}+1)}
			=\phi_p\Big(\Big(\prod_{j=0}^{q-1}r_j^{p^{m_j}-1}\Big)dr_{0}\cdots dr_{q-1}\Big)
			-\Big(\prod_{j=0}^{q-1}r_j^{p^{m_j+1}-1}\Big)dr_{0}\cdots dr_{q-1}\\
			=&\sum_{i_0+\dots+i_{q-1}=1}^{q-1}\Big(\prod_{\substack{j=0,\\ i_j=0}}^{q-1}r_j^{p^{m_j+1}-1}\Big)dr_0^{(i_0)}\cdots dr_{q-1}^{(i_{q-1})},
		\end{align*}
		where $(i_0,\dots,i_{q-1})$ runs over ${\{0,1\}}^q$. We now analyse the individual summands and assume $i_0=\dots=i_l=1$ and $i_{l+1}=\dots=i_{q-1}=0$ for some $0\leq l\leq q-1$.
		Then
		\begin{align*}
			&r_{l+1}^{p^{m_{l+1}+1}-1}\cdots r_{q-1}^{p^{m_{q-1}+1}-1}dr_0'\cdots dr_l'dr_{l+1}\cdots dr_{q-1}\\
			=&dr_0'\cdots dr'_{l-1}d(r_l'r_{l+1}^{p^{m_{l+1}+1}-1})\cdots d(r_{q-2}r_{q-1}^{p^{m_{q-1}+1}-1}) dr_{q-1}
		\end{align*}
		proves (i).
		For part (ii) we rearrange
		\begin{equation*}
			\phi_p(\omega_{\overline{r}}^{(\overline{m})})-\omega_{\overline{r}}^{(\overline{m}+1)}\\
			=(\phi_p(r_{0}^{p^{m_0}})-r_{0}^{p^{m_0+1}})\phi_p(\omega_{(1,r_1,\dots,r_q)}^{(\overline{m})})
			+r_0^{p^{m_0+1}}(\phi_p(\omega_{(1,r_1,\dots,r_q)}^{(\overline{m})})-\omega_{(1,r_1,\dots,r_q)}^{(\overline{m}+1)}).
		\end{equation*}
		Since $\phi_p(r_{0}^{p^{m_0}})-r_{0}^{p^{m_0+1}}$ lies in $p^{{m_0}+1}R$, the first summand lies in $p^{{m_0}+1}\Omega^q_{R/R_0}$. 
		For the second summand note that there exists $\eta\in \Omega^{q-1}_{R/R_0}$  with $\phi_p(\omega_{(1,r_1,\dots,r_q)}^{(\overline{m})})-\omega_{(1,r_1,\dots,r_q)}^{(\overline{m}+1)}=d\eta$ by part (i). 
		The statement follows from the observation
		$
			r_0^{p^{m_0+1}}d\eta
			=d(r_0^{p^{{m_0}+1}}\eta)-p^{{m_0}+1}r_0^{p^{m_0+1}-1}dr_0\cdot \eta$.
	\end{proof}
\end{props}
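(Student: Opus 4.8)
The plan is to derive both parts from the one-variable congruence \thref{Frob-1} together with the fact that, in each degree $q$, the operator $\phi_p$ is a graded ring endomorphism of $\Omega^{\bullet}_{R/R_0}$: from the defining formula $\phi_p(r_0dr_1\cdots dr_q)=p^{-q}\phi_p(r_0)d\phi_p(r_1)\cdots d\phi_p(r_q)$ and the fact that $\phi_p\colon R\to R$ is a ring homomorphism one reads off $\phi_p(a\alpha)=\phi_p(a)\phi_p(\alpha)$ for $a\in R$ and $\phi_p(\alpha\wedge\beta)=\phi_p(\alpha)\wedge\phi_p(\beta)$. The case $q=0$ of (ii) is just the elementary congruence $\phi_p(r_0)\equiv r_0^p\bmod pR$ raised to a $p$-power, and (i) is vacuous for $q=0$ while for $q=1$ it is literally \thref{Frob-1}; so I may assume $q\geq 2$ when treating (i).

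For part (i), write $\omega_{(1,\overline{r})}^{(\overline{m})}$ as the wedge product $\prod_{j=1}^{q}\bigl(r_j^{p^{m_j}-1}dr_j\bigr)$ and apply \thref{Frob-1} factorwise to get elements $r_j'\in R$ with $\phi_p(r_j^{p^{m_j}-1}dr_j)=r_j^{p^{m_j+1}-1}dr_j+dr_j'$. By multiplicativity of $\phi_p$,
\[
\phi_p\bigl(\omega_{(1,\overline{r})}^{(\overline{m})}\bigr)=\prod_{j=1}^{q}\bigl(r_j^{p^{m_j+1}-1}dr_j+dr_j'\bigr)\qquad(\text{wedge product}).
\]
Expanding, the term in which every factor is $r_j^{p^{m_j+1}-1}dr_j$ is exactly $\omega_{(1,\overline{r})}^{(\overline{m}+1)}$, and every remaining term carries at least one exact factor $dr_j'$. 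It then remains to check that each such remaining term lies in $d\Omega^{q-1}_{R/R_0}$. I would permute the wedge (up to sign) so that the factors $dr_j'$ come first, and then absorb the leftover monomial coefficients $r_j^{p^{m_j+1}-1}$ into the adjacent differentials by a telescoping cascade of the elementary identity $s^{a}\,du\wedge ds=d(us^{a})\wedge ds$ (valid for $a\geq 0$ thanks to $ds\wedge ds=0$); this rewrites the term as a wedge $d\gamma_1\wedge\cdots\wedge d\gamma_q$ of exact one-forms, hence as $d(\gamma_1d\gamma_2\cdots d\gamma_q)$. The essential point is that all of this must be done \emph{integrally} inside $\Omega^{\bullet}_{R/R_0}$: the tempting identity $r^{p^{m+1}-1}dr=p^{-(m+1)}d(r^{p^{m+1}})$ is not available, so this integral absorption — and the bookkeeping that accompanies it — is the one genuinely delicate step of the proof.

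For part (ii) with $q\geq 1$, peel off the leading variable, $\omega_{\overline{r}}^{(\overline{m})}=r_0^{p^{m_0}}\cdot\omega_{(1,r_1,\dots,r_q)}^{(\overline{m})}$, and split
\begin{align*}
\phi_p\bigl(\omega_{\overline{r}}^{(\overline{m})}\bigr)-\omega_{\overline{r}}^{(\overline{m}+1)}
={}&\bigl(\phi_p(r_0^{p^{m_0}})-r_0^{p^{m_0+1}}\bigr)\,\phi_p\bigl(\omega_{(1,\dots)}^{(\overline{m})}\bigr)\\
&+\,r_0^{p^{m_0+1}}\bigl(\phi_p\bigl(\omega_{(1,\dots)}^{(\overline{m})}\bigr)-\omega_{(1,\dots)}^{(\overline{m}+1)}\bigr).
\end{align*}
Raising $\phi_p(r_0)\equiv r_0^p\bmod pR$ to the $p^{m_0}$-th power gives $\phi_p(r_0^{p^{m_0}})\equiv r_0^{p^{m_0+1}}\bmod p^{m_0+1}R$, so the first summand lies in $p^{m_0+1}\Omega^q_{R/R_0}$. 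For the second, part (i) provides $\eta\in\Omega^{q-1}_{R/R_0}$ with $\phi_p(\omega_{(1,\dots)}^{(\overline{m})})-\omega_{(1,\dots)}^{(\overline{m}+1)}=d\eta$, and the Leibniz rule gives $r_0^{p^{m_0+1}}d\eta=d(r_0^{p^{m_0+1}}\eta)-p^{m_0+1}r_0^{p^{m_0+1}-1}dr_0\wedge\eta\in d\Omega^{q-1}_{R/R_0}+p^{m_0+1}\Omega^q_{R/R_0}$. Adding the two contributions finishes the argument. I expect the only real obstacle to be the integral bookkeeping in part (i); once (i) is in hand, part (ii) is purely formal, combining it with the elementary $p$-adic estimate for $\phi_p$ on elements of $R$.
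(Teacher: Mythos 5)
Your proposal is correct and follows essentially the same route as the paper's proof: you expand $\phi_p$ factorwise via \thref{Frob-1} and handle the cross terms by absorbing the monomial coefficients into exact one-forms, where your iterated identity $s^{a}\,du\wedge ds=d(us^{a})\wedge ds$ is precisely the closed-form telescoping identity displayed in the paper's treatment of (i), and your two-term splitting for (ii) with the Leibniz-rule step is identical. The only differences are cosmetic: you make explicit the permutation putting the exact factors first, the low-degree cases $q\leq 1$, and the congruence $\phi_p(r_0^{p^{m_0}})\equiv r_0^{p^{m_0+1}} \bmod p^{m_0+1}R$, all of which the paper leaves implicit.
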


Now the implication ``$(i) \Rightarrow (ii)$'' of \thref{DworkdeRham} is a straightforward computation.
For the opposite direction we introduce the notation
\begin{equation*}
	V_{p^m}(\overline{a};\overline{b})\defeq \sum_{i=1}^t\left(V_{p^m}\la a_{i,0}\ra \dd V_{p^m}\la a_{i,1}\ra \cdots \dd V_{p^m}\la a_{i,q}\ra  
	+\dd V_{p^m}\la b_{i,1}\ra \cdots \dd V_{p^m}\la b_{i,q}\ra\right)\in X_S^q(A)
\end{equation*}
for $0\leq m\leq n$, $t\in\N$, $\overline{a}=(a_{i,j})_{0\leq j\leq q}^{1\leq i\leq t}\in A^{ (q+1)\times t}$ and $\overline{b}=(b_{i,j})_{1\leq j\leq q}^{1\leq i\leq t}\in A^{q\times t}$.

\begin{proof}[Proof of \thref{DworkdeRham}]
 For the direction ``(ii) $\to$ (i)'' let $\omega=(\omega_k)_{0\leq k\leq n}\in (\Omega^q_{R/R_0})^{n+1}$ be homogeneous of degree $q\geq 1$ satisfying 
	$\phi_p(\omega_k)-\omega_{k+1}\in p^{k+1}\Omega^q_{R/R_0}+d\Omega^{q-1}_{R/R_0}\text{ for all }0\leq k\leq n-1$.
	We claim that there exist $t\in \N$ and $\overline{r_{0,i}},\dots, \overline{r_{n,i}}\in R^{q+1}, \overline{s_{1,i}},\dots,\overline{s_{n,i}}\in R^{q}$ for all $1\leq i\leq t$ such that 
	$$\omega_k=\sum_{i=1}^t\left(\omega_{\overline{r_{0,i}}}^{(k)}
	+\sum_{m=1}^k \Big(p^m\omega_{\overline{r_{m,i}}}^{(k-m)}+\omega_{(1,\overline{s_{m,i}})}^{(k-m)}\Big)\right)\text{ for all }0\leq k \leq n.$$
	Afterwards we can choose preimages $\overline{a_{m}}\in A^{(q+1)\times t}$ of $(\overline{r_{m,i}})_{i=1}^t$ for $0\leq m\leq n$ and $\overline{b_{m}}\in A^{q\times t}$ of $(\overline{s_{m,i}})_{i=1}^t$ for $1\leq m\leq n$
	and complete the proof by remarking
	$$\GG_S^{\bullet}(V_{p^m}(\overline{a_m};\overline{b_m})+X_S^q(I,A))
	=\sum_{i=1}^t\left(\delta_{m\leq k}\left(p^m \omega_{\overline{r_{m,i}}}^{(k-m)}+\omega_{(1,\overline{s_{m,i}})}^{(k-m)}\right)\right)_{k=0}^n.$$
	We prove the claim via induction on $k$.
	For $k=0$ there exist $t\geq 1$ and $\overline{r_{0,i}}\in R^{q+1}$ for all $1\leq i\leq t$ such that 
	$\omega_0=\sum_{i=1}^t\omega_{\overline{r_{0,i}}}=\sum_{i=1}^t\omega_{\overline{r_{0,i}}}^{(0)}$.
	The integer $t\geq 1$ might increase during the induction. 
	By additivity we may always reset $t=1$. \\
	Let us assume that the claim holds for $k=n-1$. 
	By assumption, we have
	\begin{equation*}
		\omega_n- \phi_p(\omega_{n-1})
		\hspace{-1mm}=\hspace{-1mm}\omega_n-\Big(\phi_p\left(\omega_{\overline{r_{0}}}^{(n-1)}\right)
		+\sum_{m=1}^{n-1}\hspace{-1mm} \left(p^m\phi_p\left(\omega_{\overline{r_{m}}}^{(n-m-1)}\right)\hspace{-1mm}+\hspace{-1mm}\phi_p\left(\omega_{(1,\overline{s_{m}})}^{(n-m-1)}\right)\hspace{-1mm}\right)\hspace{-1mm}\Big)\hspace{-1mm}\in\hspace{-1mm} p^n\Omega^q_{R/R_0}+d\Omega^{q-1}_{R/R_0}.
	\end{equation*}
	We apply \thref{congruences2} and obtain
	\begin{equation*}
	\omega_n- \omega_{\overline{r_{0}}}^{(n)}
	+\sum_{m=1}^{n-1} \left(p^m\omega_{\overline{r_{m}}}^{(n-m)}+\omega_{(1,\overline{s_{m}})}^{(n-m)}\right)\in p^n\Omega^q_{R/R_0}+d\Omega^{q-1}_{R/R_0}.
	\end{equation*}
	Thus there exist $\overline{r_{n}} \in R^{q+1}$ and $\overline{s_{n}}\in R^{q}$ with
	\begin{equation*}
		\omega_n=\omega_{\overline{r_{0}}}^{(n)}
		+\sum_{m=1}^{n-1} \Big(p^m\omega_{\overline{r_{m}}}^{(n-m)}+\omega_{(1,\overline{s_{m}})}^{(n-m)}\Big)
		+p^{n}\omega_{\overline{r_{n}}}+\omega_{(1,\overline{s_{n}})}.\qedhere
	\end{equation*}
\end{proof}

In the multiple prime case we can only prove the implication ``$(i)\Rightarrow(ii)$'' of \thref{DworkdeRham}:

\begin{lemmas}\thlabel{DworkdeRhamsubset}
	Let $f\colon R_0\to R$ be a smooth homomorphism of flat $\Z$-algebras and $S$ a finite truncation set equipped with Frobenius lifts $\phi_p\colon R\to R$ and endomorphisms $\varphi_p\colon R_0\to R_0$ such that $f\varphi_p=\phi_pf$ for all prime numbers $p\in S$. Then every element $\omega\in E_S\Omega^{\bullet}(\pi)$ satisfies
	\begin{equation*}
		\phi_p(\GG_k(\omega))-\GG_{kp}(\omega)\in p^{\ord_p(k)+1}\Omega_{R/R_0}^{\bullet}+d\Omega_{R/R_0}^{\bullet}
	\end{equation*} 
	for all prime numbers $p\in S$ and all $k\in S/p$.
\end{lemmas}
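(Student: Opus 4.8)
The plan is to verify the congruence on explicit generators. In degree $q=0$ it is exactly \hyperref[Dwork'sLemma]{Dwork's Lemma}: $\GG_m(\omega)$ is then the $m$-th ghost component of a Witt vector, $d\Omega^{-1}_{R/R_0}=0$, and $\ord_p(kp)=\ord_p(k)+1$. So fix a prime $p\in S$, an element $k\in S/p$, and assume $q\ge1$. Both sides of the asserted membership are additive in $\omega$ and $\GG_S^{\bullet}$ is a ring homomorphism, so it suffices to treat the additive generators of $X_S^{\bullet}(A)$ (hence of $E_S\Omega^{\bullet}(\pi)$), namely $\omega=V_{n_0}\la a_0\ra\,\dd V_{n_1}\la a_1\ra\cdots\dd V_{n_q}\la a_q\ra$ and $\omega=\dd V_{n_1}\la a_1\ra\cdots\dd V_{n_q}\la a_q\ra$ with $n_j\in S$, $a_j\in A$. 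Their ghost components are governed by the single identity $\GG_k(\dd V_n\la a\ra)=\delta_{n|k}\,a^{k/n-1}\,da$ --- the factor $n/k$ produced by $\dd=(\tfrac{1}{m}d)_m$ cancels the factor $k/n$ from $d(a^{k/n})$, which is precisely why the modified differential of \cite[sec.~3]{CD15} is the right tool here. Hence, writing $L:=[n_0,\dots,n_q]$ (and deleting $n_0$ in the second family), $\GG_k(\omega)=\delta_{L|k}\cdot n_0a_0^{k/n_0}\prod_{j\ge1}a_j^{k/n_j-1}da_j$ in $\Omega^q_{R/R_0}$, and likewise for $\GG_{kp}(\omega)$ with $kp$ replacing $k$.

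The main ingredient is a generalisation of Proposition \thref{congruences2} to arbitrary exponents: for $\overline e=(e_0,\dots,e_q)\in\Z_{\geq1}^{q+1}$, putting $\omega_{\overline r}^{[\overline e]}:=r_0^{e_0}\prod_{j\ge1}r_j^{e_j-1}dr_j$, one has
\begin{align*}
	\phi_p\bigl(\omega_{\overline r}^{[\overline e]}\bigr)-\omega_{\overline r}^{[p\overline e]}&\in p^{\ord_p(e_0)+1}\,\Omega^q_{R/R_0}+d\Omega^{q-1}_{R/R_0},\\
	\phi_p\bigl(\omega_{(1,\overline r)}^{[\overline e]}\bigr)-\omega_{(1,\overline r)}^{[p\overline e]}&\in p^{N}\,\Omega^q_{R/R_0}+d\Omega^{q-1}_{R/R_0}\qquad\text{for every }N\ge1.
\end{align*}
I would prove this by replacing each $r_j$ with $r_j^{m_j}$, $m_j:=e_j/p^{\ord_p(e_j)}$ the prime-to-$p$ part of $e_j$: this turns $\bigl(\prod_{j\ge1}m_j\bigr)\omega_{\overline r}^{[\overline e]}$ into $\omega_{(r_0^{m_0},\dots,r_q^{m_q})}^{(\ord_p(e_0),\dots,\ord_p(e_q))}$, so \thref{congruences2} applies verbatim, after which one divides out the prime-to-$p$ integer $\prod_{j\ge1}m_j$, using that $d$ is $\Z$-linear (so $c\cdot d\Omega^{\bullet}\subseteq d\Omega^{\bullet}$ for $c\in\Z$) and that this integer is invertible modulo every power of $p$. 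This settles the generators with $L|k$: there $\GG_k(\omega)$ and $\GG_{kp}(\omega)$ are both nonzero, $\phi_p(\GG_k\omega)-\GG_{kp}\omega=n_0\bigl(\phi_p(\omega_{\overline a}^{[\overline e]})-\omega_{\overline a}^{[p\overline e]}\bigr)$ with $e_j=k/n_j$, and by the first line this lies in $n_0\bigl(p^{\ord_p(e_0)+1}\Omega^q+d\Omega^{q-1}\bigr)$; since $n_0|L|k$ gives $\ord_p(n_0)+\ord_p(e_0)=\ord_p(k)$, it is contained in $p^{\ord_p(k)+1}\Omega^q_{R/R_0}+d\Omega^{q-1}_{R/R_0}$ (for the second family use the second line). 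When $L\nmid kp$, both ghost components vanish and nothing is to be proved.

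The remaining --- and essentially only hard --- case is $L|kp$ but $L\nmid k$: then $\GG_k(\omega)=0$, and one must show $\GG_{kp}(\omega)\in p^{\ord_p(k)+1}\Omega^q_{R/R_0}+d\Omega^{q-1}_{R/R_0}$. Here $L|kp$ together with $L\nmid k$ forces $\ord_p(L)=\ord_p(k)+1$, so $\min_j\ord_p(kp/n_j)=0$ and some exponent $e_{j^{*}}:=kp/n_{j^{*}}$ is prime to $p$. If $j^{*}=0$ (possible only in the first family), then $\ord_p(n_0)=\ord_p(k)+1$ and $\GG_{kp}(\omega)\in n_0\Omega^q\subseteq p^{\ord_p(k)+1}\Omega^q$. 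If $j^{*}\ge1$, I would integrate by parts in the $j^{*}$-th slot: after reordering the one-forms, $\GG_{kp}(\omega)=\pm\,h\cdot a_{j^{*}}^{e_{j^{*}}-1}da_{j^{*}}\wedge\mu$ with $h=n_0a_0^{kp/n_0}$ (or $h=1$) and $\mu=\bigwedge_{1\le j\le q,\ j\ne j^{*}}a_j^{e_j-1}da_j$; the crucial point is $d\mu=0$, because the differential of the coefficient of $\mu$ involves only the $da_j$ with $j\ne j^{*}$, which already occur in $\mu$. Writing $g=a_{j^{*}}^{e_{j^{*}}}$, this gives $e_{j^{*}}\GG_{kp}(\omega)=\pm\,h\,dg\wedge\mu=\pm\,d(hg\mu)\mp g\,dh\wedge\mu$, and $g\,dh\wedge\mu\in p^{\ord_p(k)+1}\Omega^q$ since $dh=n_0\cdot(kp/n_0)\cdot a_0^{kp/n_0-1}da_0$ and $\ord_p\bigl(n_0\cdot kp/n_0\bigr)=\ord_p(n_0)+\bigl(\ord_p(k)+1-\ord_p(n_0)\bigr)=\ord_p(k)+1$. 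Dividing out the prime-to-$p$ integer $e_{j^{*}}$ as before then yields $\GG_{kp}(\omega)\in p^{\ord_p(k)+1}\Omega^q_{R/R_0}+d\Omega^{q-1}_{R/R_0}$. This last step is the main obstacle: unlike the $p$-typical situation of \thref{DworkdeRham}, one cannot reduce to generators with a single index $n$, since incomparable elements of $S$ give genuinely new classes in $X_S^{\bullet}(A)$, so the mixed-index integration-by-parts bookkeeping cannot be avoided.
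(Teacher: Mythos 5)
Your proposal is correct and takes essentially the same route as the paper's proof: reduction to the additive generators $V_{n_0}\la a_0\ra\,\dd V_{n_1}\la a_1\ra\cdots\dd V_{n_q}\la a_q\ra$, a case split on whether $L=[n_0,\dots,n_q]$ divides $k$, divides $kp$ only, or neither, the general-exponent congruence (the paper's \thref{congruences4}, which you re-derive from \thref{congruences2} via the substitution $r_j\mapsto r_j^{m_j}$ and division by prime-to-$p$ integers) in the case $L\mid k$, and in the case $L\mid kp$, $L\nmid k$ an integration by parts in a slot whose exponent is prime to $p$, with the leftover term carrying the factor $kp$ of valuation $\ord_p(k)+1$. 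This matches the paper's argument step for step, up to minor repackaging of the auxiliary congruences.
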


For $q\geq 0$, $\overline{m}=(m_{j})_{j=0}^q\in \N^{q+1}$ and $\overline{r}=(r_{j})_{j=0}^q\in R^{q+1}$ we set
\begin{equation*}
	\eta_{\overline{r}}^{(\overline{m})}\defeq  r_{0}^{m_0}\Big(\prod_{j=1}^q r_{j}^{m_j-1}\Big)dr_{1}\cdots dr_{q}
	=\Big(\prod_{j=0}^q r_{j}^{m_j-1}\Big) \omega_{(r_0,r_1,\dots,r_q)}\in\Omega_{R/R_0}^q.
\end{equation*}

\begin{props} Let $q\geq 1 $ and $n\in \N$.\thlabel{congruences4}
	\begin{enumerate}[(i)]
		\item 	For all $r\in R$ and $m\in\N$ we have \thlabel{Frob-12}
		$
			\phi_p(r^{m-1}dr)-r^{mp-1}dr\in p^{n}\Omega_{R/R_0}^1+dR$.
		\item For all $\overline{r}\in R^{q}$ and $\overline{m}\in \N^{q+1}$ we have $\phi_p(\eta_{(1,\overline{r})}^{(\overline{m})})-\eta_{(1,\overline{r})}^{(\overline{m}p)}\in p^n\Omega_{R/R_0}^q+d\Omega^{q-1}_{R/R_0}$.
		\item \thlabel{congruences3} For all $\overline{r}\in R^{q+1}$ and $\overline{m}\in \N^{q+1}$ we have
		$\phi_p(\eta_{\overline{r}}^{(\overline{m})})-\eta_{\overline{r}}^{(\overline{m}p)}\in p^{\ord_p(m_0)+1}\Omega^q_{R/R_0}+d\Omega^{q-1}_{R/R_0}.$
	\end{enumerate}
	
	\begin{proof}
		For part (i) let $k\in \N$ be defined by the relation $kp^{\ord_p(m)}=m$. We rearrange $r^{m-1}$ to $(r^{k})^{p^{\ord_p(m)}-1}r^{k-1}$ and let $l\in \N$ be an integer such that $kl- 1\in p^{n}\Z$.
		By \thref{Frob-1} there exists $s\in R$ such that 
		\begin{equation*}
			\phi_p(r^{m-1}dr)+p^{n}\Omega_{R/R_0}^1
			=l \phi_p((r^{k})^{p^{\ord_p(m)}-1}dr^{k})+p^{n}\Omega_{R/R_0}^1
			= r^{mp-1}dr+lds+p^{n}\Omega_{R/R_0}^1.
		\end{equation*}
		Part (ii) can be shown by following the strategy in the proof of \thref{congruences2} modulo $p^n\Omega_{R/R_0}^q$. Then part (iii) follows from $\phi_p(r_0^{m_0})-r_0^{m_0p}\in p^{\ord_p(m_0)+1}R$ and statement (ii) with ${n=\ord_p(m_0)}$.
	\end{proof}
\end{props}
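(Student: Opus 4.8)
The plan is to establish the three parts in order, bootstrapping (ii) out of (i) by a multi-variable expansion and (iii) out of (ii) by splitting off the leading variable, in close analogy with the proof of \thref{congruences2}; the only genuinely new point is keeping track of the auxiliary modulus $p^{n}$. For part (i) I would reduce to the prime-power case already treated in \thref{Frob-1}. Write $m=k\,p^{e}$ with $e=\ord_p(m)$ and $(k,p)=1$, and set $s\defeq r^{k}$. Since $R_0\to R$ is smooth and $R$ is flat over $\Z$, the $R$-module $\Omega^1_{R/R_0}$ is projective, hence flat over $\Z$, hence torsion free, so it embeds into $\Omega^1_{R_{\Q}/R_{0,\Q}}\cong\Omega^1_{R/R_0}\otimes_{\Z}\Q$, where one computes $s^{p^{e}-1}ds=k\,r^{m-1}dr$ and $s^{p^{e+1}-1}ds=k\,r^{mp-1}dr$. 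Applying the (modified) $\phi_p$ — which by construction is an endomorphism of $\Omega^1_{R/R_0}$ — and \thref{Frob-1} for $s$ produces $\sigma\in R$ with $k\big(\phi_p(r^{m-1}dr)-r^{mp-1}dr\big)=d\sigma$ in $\Omega^1_{R/R_0}$. Picking $l\in\Z$ with $kl\equiv 1\pmod{p^{n}}$, say $kl=1+p^{n}t$, one has $d(l\sigma)=l\,d\sigma=kl\big(\phi_p(r^{m-1}dr)-r^{mp-1}dr\big)$, hence
\[
\phi_p(r^{m-1}dr)-r^{mp-1}dr=d(l\sigma)-p^{n}t\big(\phi_p(r^{m-1}dr)-r^{mp-1}dr\big)\in dR+p^{n}\Omega^1_{R/R_0},
\]
using that $\phi_p(r^{m-1}dr)$ and $r^{mp-1}dr$ already lie in $\Omega^1_{R/R_0}$.

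For part (ii) I would apply (i) to each of the $q$ variables. Since the modified $\phi_p$ is multiplicative on $\Omega^{\bullet}_{R/R_0}$, we have $\phi_p(\eta_{(1,\overline{r})}^{(\overline{m})})=\bigwedge_{j=1}^{q}\phi_p(r_j^{m_j-1}dr_j)$, and by (i) we may write $\phi_p(r_j^{m_j-1}dr_j)=r_j^{m_jp-1}dr_j+d\tau_j+p^{n}\rho_j$ with $\tau_j\in R$ and $\rho_j\in\Omega^1_{R/R_0}$. Expanding the wedge: the summand in which every factor equals $r_j^{m_jp-1}dr_j$ is exactly $\eta_{(1,\overline{r})}^{(\overline{m}p)}$; every summand carrying a factor $p^{n}\rho_j$ lies in $p^{n}\Omega^{q}_{R/R_0}$; and every remaining summand has the shape $\omega'\wedge d\tau_{j_0}$ with $\omega'$ a wedge of the closed $1$-forms $r_j^{m_jp-1}dr_j$ and $d\tau_j$, so $\omega'\wedge d\tau_{j_0}=\pm d(\tau_{j_0}\,\omega')\in d\Omega^{q-1}_{R/R_0}$ — the manipulation already used in the proof of \thref{congruences2}(i). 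Summing yields the assertion of (ii).

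For part (iii) I would peel off $r_0$ as in \thref{congruences2}(ii): using $\eta_{\overline{r}}^{(\overline{m})}=r_0^{m_0}\,\eta_{(1,r_1,\dots,r_q)}^{(\overline{m})}$ and multiplicativity of $\phi_p$,
\[
\phi_p(\eta_{\overline{r}}^{(\overline{m})})-\eta_{\overline{r}}^{(\overline{m}p)}=\big(\phi_p(r_0)^{m_0}-r_0^{m_0p}\big)\,\phi_p(\eta_{(1,\dots)}^{(\overline{m})})+r_0^{m_0p}\big(\phi_p(\eta_{(1,\dots)}^{(\overline{m})})-\eta_{(1,\dots)}^{(\overline{m}p)}\big).
\]
From $\phi_p(r_0)\equiv r_0^{p}\pmod{p}$ and the elementary congruence $a\equiv b\pmod{p}\Rightarrow a^{m_0}\equiv b^{m_0}\pmod{p^{\ord_p(m_0)+1}}$ one gets $\phi_p(r_0)^{m_0}-r_0^{m_0p}\in p^{\ord_p(m_0)+1}R$, so the first summand lies in $p^{\ord_p(m_0)+1}\Omega^{q}_{R/R_0}$. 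Applying (ii) with $n=\ord_p(m_0)+1$ writes the bracket in the second summand as $p^{\ord_p(m_0)+1}\rho+d\eta$; multiplying by $r_0^{m_0p}$ keeps the first part in $p^{\ord_p(m_0)+1}\Omega^{q}_{R/R_0}$, and $r_0^{m_0p}\,d\eta=d(r_0^{m_0p}\eta)-m_0p\,r_0^{m_0p-1}dr_0\wedge\eta$ lands in $d\Omega^{q-1}_{R/R_0}+p^{\ord_p(m_0)+1}\Omega^{q}_{R/R_0}$ since $p^{\ord_p(m_0)+1}\mid m_0p$. The step I expect to be the real obstacle is part (i): transporting the prime-power computation of \thref{Frob-1} to an arbitrary exponent $m$ forces the denominator $k$, and hence both the flatness argument — so that the manipulation with $d\sigma$ is legitimate inside $\Omega^1_{R_{\Q}/R_{0,\Q}}$ — and the choice of the inverse $l$ of $k$ modulo $p^{n}$; once (i) is in hand, parts (ii) and (iii) are bookkeeping along the template of \thref{congruences2}.
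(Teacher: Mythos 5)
Your proposal is correct and follows essentially the same route as the paper's proof: part (i) via \thref{Frob-1} applied to $r^{k}$ with $m=kp^{\ord_p(m)}$ and an inverse $l$ of $k$ modulo $p^{n}$, part (ii) by expanding the product of the one-forms exactly as in \thref{congruences2} but modulo $p^{n}\Omega^{q}_{R/R_0}$, and part (iii) by splitting off $r_0^{m_0}$, using $\phi_p(r_0^{m_0})-r_0^{m_0p}\in p^{\ord_p(m_0)+1}R$ and then invoking (ii). The only deviation is that you apply (ii) with $n=\ord_p(m_0)+1$ rather than the paper's stated $n=\ord_p(m_0)$, which is in fact the choice the final multiplication by $r_0^{m_0p}$ requires, so your bookkeeping here is the more careful one.
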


\begin{proof}[Proof of \thref{DworkdeRhamsubset}]
		It is enough to prove the claim on the generators $\omega+ X_S^q(I,A)$ with 
	$\omega=V_{{m_0}}\la a_0\ra \dd V_{{m_1}}\la a_1\ra \cdots\dd V_{{m_q}}\la a_q\ra$,
	$a_0,\dots,a_q\in A$ and $m_0,\dots,m_q\in S$.
	We set $M\defeq\lcm\{m_0,\dots,m_q\}$ and compute
	$
		\eta_k\defeq \GG_k(\omega +X_S^q(I,A))
		=\delta_{M| k}m_0 \eta_{\overline{r}}^{(k/\overline{m})}
	$
	for $k\in S$, where $r\in R^{n+1}$ denotes the image of $a$ under the componentwise projection $A^{n+1}\to R^{n+1}$. Here $k/\overline{m}=(k/m_0,\dots,k/m_q)$ lies in $\N^{q+1}$, whenever $M$ divides $k$.\\
	We now fix a prime  $p$ and consider elements $k\in S/p$.
	If $M$ does not divide $kp$, then $\eta_k=\eta_{kp}$ vanishes and the congruence is trivially satisfied. 
	If $M$ divides $kp$, but does not divide $ k$, we obtain $\ord_p(M)=\ord_p(k)+1$ and $
		\phi_p(\eta_k)-\eta_{kp}=-m_0 \eta_{\overline{r}}^{(kp/\overline{m})}$. 
	We are done if $\ord_p(m_0)$ is equal to $\ord_p(M)$. Otherwise we can assume $\ord_p(m_1)=\ord_p(M)$ and choose $l\in \Z$ such that $kp/m_1\cdot l-1\in p^{\ord_p(k)+1}\Z$.
	We compute
	\begin{align*}
		&{m_0} \eta_{\overline{r}}^{(kp/\overline{m})}
		={m_0}\Big(\prod_{j=0}^q r_{j}^{kp/m_j-1}\Big)\omega_{(r_0,r_1,\dots,r_q)}\\
		=& d\Big(l{m_0}r_0^{{kp/m_0}}\Big(\prod_{j=1}^q r_{j}^{kp/m_j-1}\Big)\omega_{(r_1,\dots,r_q)}\Big)
		-l{m_0}r_1^{kp/m_1-1}d\Big(r_0^{kp/m_0}\Big(\prod_{j=2}^q r_{j}^{kp/m_j-1}\Big)\Big) \cdot \omega_{(r_1,\dots,r_q)}.
	\end{align*}
	in $\Omega_{R/R_0}^q/p^{\ord_p(k)+1}\Omega_{R/R_0}^q$. But the term
	\begin{align*}
		&{m_0}r_1^{kp/m_1-1}d\Big(r_0^{kp/m_0}\prod_{j=2}^q r_{j}^{kp/m_j-1}\Big) \cdot \omega_{(r_1,\dots,r_q)}\\
		=&kpr_0^{kp/m_0-1}\Big(\prod_{j=1}^q r_{j}^{kp/m_j-1}\Big)\omega_{(r_1,r_0,r_2,\dots,r_q)}
		+{m_0}r_0^{kp/m_0}r_1^{kp/m_1-1}d\Big(\prod_{j=2}^q r_{j}^{kp/m_j-1}\Big) \cdot \omega_{(r_1,\dots,r_q)}
	\end{align*}
	is congruent to zero in $\Omega_{R/R_0}^q/p^{\ord_p(k)+1}\Omega_{R/R_0}^q$ because the second summand vanishes by the Leibniz rule and the relation $drdr=0$ for $r\in R$.\\
	If $M|k$, we have $	\phi_p(\eta_k)-\eta_{kp}
		={m_0}\phi_p( \eta_{\overline{r}}^{(k/\overline{m})})-{m_0} \eta_{\overline{r}}^{(kp/\overline{m})}$.
	Since $\phi_p( \eta_{\overline{r}}^{(k/\overline{m})})- \eta_{\overline{r}}^{(kp/\overline{m})}$ is an element in ${p^{\ord_p(k/m_0)+1}\Omega_{R/R_0}^q+d\Omega_{R/R_0}^{q-1}}$ by part (ii) of \thref{congruences4}, multiplication with ${m_0}$ completes the proof.
\end{proof}

\printbibliography[title={\textsc{References}}]
\end{document}